\theoremstyle{plain}
\newtheorem{thm}{Theorem}[section]
\newtheorem*{thm*}{Theorem}
\newtheorem{lem}[thm]{Lemma}
\newtheorem{prop}[thm]{Proposition}
\newtheorem{cor}[thm]{Corollary}
\theoremstyle{definition}
\newtheorem{defn}[thm]{Definition}
\newtheorem{nota}[thm]{Notation}
\newtheorem{constr}[thm]{Construction}
\theoremstyle{remark}
\newtheorem{rem}[thm]{Remark}
\Crefname{thm}{Theorem}{Theorems}
\Crefname{lm}{Lemma}{Lemmata}
\Crefname{prop}{Proposition}{Propositions}
\Crefname{cor}{Corollary}{Corollaries}
\Crefname{hyp}{Hypothesis}{Hypotheses}
\Crefname{q}{Question}{Questions}
\Crefname{defn}{Definition}{Definitions}
\Crefname{nota}{Notation}{Notations}
\Crefname{ex}{Example}{Examples}
\Crefname{xca}{Exercise}{Exercises}
\Crefname{rem}{Remark}{Remarks}
\Crefname{constr}{Construction}{Constructions}
\Crefname{conj}{Conjecture}{Conjecture}
\newcommand{\Z}{\mathbb{Z}}
\newcommand{\Q}{\mathbb{Q}}
\newcommand{\C}{\mathbb{C}}
\newcommand{\Acal}{\mathcal{A}}
\newcommand{\Bcal}{\mathcal{B}}
\newcommand{\Ccal}{\mathcal{C}}
\newcommand{\Dcal}{\mathcal{D}}
\newcommand{\Ecal}{\mathcal{E}}
\newcommand{\Fcal}{\mathcal{F}}
\newcommand{\Gcal}{\mathcal{G}}
\newcommand{\Mcal}{\mathcal{M}}
\newcommand{\Ocal}{\mathcal{O}}
\newcommand{\Scal}{\mathcal{S}}
\newcommand{\Ucal}{\mathcal{U}}
\newcommand{\tfr}{\mathfrak{t}}
\newcommand{\id}{\textup{id}}
\newcommand{\Hom}{\textup{Hom}}
\newcommand{\unit}{\mathbf{1}}
\newcommand{\Sets}{\mathbf{Sets}}
\newcommand{\Ab}{\mathbf{Ab}}
\newcommand{\Hbb}{\mathbb{H}} 
\newcommand{\coker}{\textup{coker}} 
\newcommand{\im}{\textup{im}} 
\newcommand{\Fun}{\mathbf{Fun}} 
\newcommand{\Op}{\mathsf{Op}}
\newcommand{\vect}{\textup{vect}}
\newcommand{\Thom}{\mathsf{Th}}
\newcommand*{\isoarrow}[1]{\arrow[#1,"\rotatebox{90}{\(\sim\)}"]}
\newcommand{\Dbb}{\mathbb{D}}
\newcommand{\Aff}{\mathsf{Aff}}
\newcommand{\Sm}{\mathsf{Sm}}
\newcommand{\Spec}{\textup{Spec}}
\newcommand{\et}{\textup{ét}}
\newcommand{\Var}{\mathsf{Var}}
\newcommand{\A}{\mathbf{A}} 
\newcommand{\Rbf}{\mathbf{R}}
\newcommand{\Lbf}{\mathbf{L}}
\newcommand{\DA}{\mathbf{DA}} 
\newcommand{\DM}{\mathbf{DM}}
\newcommand{\Bti}{\mathsf{Bti}}
\newcommand{\Nri}{\mathsf{Nri}}
\newcommand{\can}{\textup{can}}
\newcommand{\var}{\textup{var}}
\newcommand{\Pairs}{\mathsf{Pairs}}
\newcommand{\conn}{\textup{conn}}
\newcommand{\Perv}{\mathsf{Perv}}
\newcommand{\Cof}{\textup{Cof}}
\newcommand{\Tot}{\textup{Tot}}
\newcommand{\Homcal}{\underline{\textup{Hom}}} 
\newcommand{\Rdf}{\mathbf{R}} 
\newcommand{\Ind}{\mathrm{Ind}} 
\newcommand{\Ct}{\mathcal{C}\textit{t}} 
\newcommand{\Loc}{\mathsf{Loc}} 
\newcommand{\twocolim}{\textup{2-colim}}
\newcommand\blfootnote[1]{%
	\begingroup
	\renewcommand\thefootnote{}\footnote{#1}%
	\addtocounter{footnote}{-1}%
	\endgroup
}
\newlength{\offsetpage}
\newenvironment{widepage}{\begin{adjustwidth}{-\offsetpage}{-\offsetpage}%
		\addtolength{\textwidth}{2\offsetpage}}%
	{\end{adjustwidth}}
\title{Tensor structure on perverse Nori motives}
\author{Luca Terenzi}
\address{Luca Terenzi \newline
	\indent UMPA, ENS de Lyon \newline
	\indent 46 Allée d'Italie, 69007 Lyon (France)}
\email{\normalfont\href{mailto:luca.terenzi@ens-lyon.fr}{luca.terenzi@ens-lyon.fr}}
\begin{document}

\maketitle

\begin{abstract}
	Let $k$ be a field of characteristic $0$ endowed with a complex embedding $\sigma: k \hookrightarrow \C$. 
	In this paper we complete the construction of the six functor formalism on perverse Nori motives over quasi-projective $k$-varieties, initiated by Ivorra--Morel. 
	Our main contribution is the construction of a closed monoidal structure on the derived categories of perverse Nori motives, compatibly with the analogous structure on the underlying constructible derived categories.
	This is based on an alternative presentation of perverse Nori motives, related to the conjectural motivic perverse $t$-structure on Voevodsky motivic sheaves.
	As a consequence, we obtain well-behaved Tannakian categories of motivic local systems over smooth, geometrically connected $k$-varieties.
	Exploiting the relation with Voevodsky motivic sheaves in its full strength, we are able to define Chern classes in the setting of perverse Nori motives, which leads to a motivic version of the relative Hard Lefschetz Theorem.
\end{abstract}

\tableofcontents

\blfootnote{\textit{\subjclassname}. 14F42, 14F43, 14C15.}
\blfootnote{\textit{\keywordsname}. Motives, perverse sheaves, sheaf operations.}

\blfootnote{The author acknowledges support by the GK 1821 "Cohomological Methods in Geometry" at the University of Freiburg and by the Labex Milyon at the ENS Lyon.}

\section*{Introduction}

\subsection*{Motivation and goal of the paper}

Let $k$ be a field of characteristic $0$ endowed with a complex embedding $\sigma: k \hookrightarrow \C$. 
In the late 1990s, Nori constructed an unconditional candidate for the conjectural $\Q$-linear abelian category of \textit{mixed motives} over $k$. 
By design, Nori's category $\Mcal(k)$ is the target of the universal cohomology theory on $k$-varieties comparable to $\Q$-linear singular cohomology of the underlying complex-analytic spaces. 
It comes equipped with a faithful exact functor into finite-dimensional $\Q$-vector spaces
\begin{equation}\label{iota_k:intro}
	\iota_k: \Mcal(k) \rightarrow \vect_{\Q},
\end{equation}  
which recovers singular cohomology groups when evaluated on the cohomological motives of $k$-varieties. 
Through the classical comparison isomorphisms relating singular cohomology with algebraic de Rham cohomology and with the various $\ell$-adic cohomologies, Nori's category acquires similar forgetful functors into the associated categories of coefficients.

According to the conjectures summarized by Beilinson in \cite[\S~5.10]{BeiHeight}, the theory of mixed motives should extend to a coefficient system: 
there should exist $\Q$-linear abelian categories of \textit{mixed motivic sheaves} over all (quasi-projective) $k$-varieties $S$, providing a motivic enhancement of the classical categories of perverse sheaves; 
as $S$ varies, their bounded derived categories should admit a complete six functor formalism, analogously to the classical constructible derived categories, as well as a compatible theory of weights.
The theory of \textit{perverse Nori motives}, introduced by Ivorra--Morel in \cite{IM19}, provides unconditional candidates for the conjectural abelian categories of mixed motivic sheaves: 
these are abelian categories $\Mcal(S)$ equipped with faithful exact functors into algebraically constructible $\Q$-linear perverse sheaves
\begin{equation}\label{iota_S:intro}
	\iota_S: \Mcal(S) \rightarrow \Perv(S),
\end{equation} 
recovering Nori's category $\Mcal(k)$ and its forgetful functor \eqref{iota_k:intro} when $S = \Spec(k)$. 
The main result \cite[Thm.~5.1]{IM19} asserts that, as $S$ varies, the derived categories $D^b(\Mcal(S))$ are endowed with Grothendieck's operations $f^*$, $f_*$, $f_!$ and $f^!$, compatibly with the analogous operations on the underlying constructible derived categories. 
The main goal of the present paper is to complete the construction of this motivic six functor formalism:
we define a canonical closed monoidal structure on the derived categories $D^b(\Mcal(S))$, again compatibly with the one on the underlying constructible derived categories.

\subsection*{History and previous work}

Before explaining our results in more detail, let us review the construction of perverse Nori motives. 
In order to put it into perspective, it is convenient to start from the very beginning of the theory. 
Nori's original definition of $\Mcal(k)$ is based on an abstract result about representations of quivers into vector spaces. 
The idea is to introduce a quiver $\Pairs_k$ encoding the basic features of singular cohomology of $k$-varieties: functoriality along morphisms of pairs, long exact cohomology sequences of triples, and Tate twists. 
Vertices of $\Pairs_k$ are labels of the form $(X,Y;n,w)$ consisting of a $k$-variety $X$, a closed subvariety $Y$, a cohomological degree $n$ and a Tate degree $w$. 
Singular cohomology can be regarded as a quiver representation
\begin{equation}\label{b_k:intro}
	b_k: \Pairs_k \rightarrow \vect_{\Q}, \quad (X,Y;n,w) \mapsto H^n(X,Y;\Q)(w),
\end{equation}
and the conjectural category of mixed motives over $k$ should be the smallest non-full abelian subcategory of $\vect_{\Q}$ containing the image of \eqref{b_k:intro}. 
Nori showed how to construct such an abelian category starting from an abstract quiver representation: 
in the case of the singular representation \eqref{b_k:intro}, this defines the abelian category $\Mcal(k)$.
While Nori's construction was tailored to quiver representations into finite-dimensional vector spaces (or, slightly more generally, finitely generated modules over a Noetherian ring), it was later extended to representations into arbitrary abelian categories in \cite{BVCL18}, using the language of categorical logic. 
The latter result was then translated into the familiar language of categories in \cite{BV-P}, based on Freyd's classical construction of the abelian hull of an additive category from \cite{Freyd}. 
The final version asserts that, for every quiver representation into an abelian category
\begin{equation*}
	\beta: \Dcal \rightarrow \Acal,
\end{equation*}
there exists a smallest non-full abelian subcategory of $\Acal$ containing the image of $\beta$, called the associated \textit{universal abelian factorization}.
However, the simplicity of Nori's construction and of its later variants hides a serious drawback: it does not allow one to compute morphisms (let alone extension groups) in $\Mcal(k)$ explicitly, except for a few trivial cases. 

One of the most surprising achievements of Nori's work was the construction of a tensor product on $\Mcal(k)$ making it neutral Tannakian over $\Q$, with canonical fibre functor the forgetful functor \eqref{iota_k:intro}. 
The Tannaka dual $\Gcal_{mot}(k)$ of $\Mcal(k)$ is called the \textit{motivic Galois group} of $k$. 
This mysterious group allows for an elegant motivic interpretation of the Kontsevich--Zagier conjecture about periods of algebraic varieties - this was arguably Nori's original motivation for introducing $\Mcal(k)$ (see \cite[\S\S~II, III]{HMS17} for more details).

In contrast with the existence of the abelian category $\Mcal(k)$, the existence of the tensor structure is a highly non-formal result. 
In order to obtain a tensor product on $\Mcal(k)$, one needs to define a suitable pairing on the quiver $\Pairs_k$, compatibly with tensor product of singular cohomology groups. 
However, the only reasonable guess for such a pairing on $\Pairs_k$, given by the rule
\begin{equation*}
	(X_1,Y_1;n_1,w_1) \times (X_2,Y_2;n_2,w_2) := (X_1 \times X_2, Y_1 \times X_2 \cup X_1 \times Y_2; n_1 + n_2, w_1 + w_2),
\end{equation*}
is not always compatible with tensor product in singular cohomology.
By K\"{u}nneth's formula, this naive pairing is well-behaved only when restricted to the subquiver $\Pairs_k^0 \subset \Pairs_k$ collecting all those vertices $(X,Y;n,w)$ such that the singular cohomology of the pair $(X,Y)$ (twisted by $w$) is concentrated precisely in degree $n$. 
Applying Nori's categorical construction to the restricted representation
\begin{equation*}
	b_k^0 := b_k|_{\Pairs_k^0}: \Pairs_k^0 \rightarrow \vect_{\Q},
\end{equation*}
one obtains an abelian $\Mcal^0(k)$ endowed with a well-defined tensor product
\begin{equation*}
	- \otimes -: \Mcal^0(k) \times \Mcal^0(k) \rightarrow \Mcal^0(k).
\end{equation*}
Now the problem becomes to show that the natural faithful exact functor $\Mcal^0(k) \to \Mcal(k)$ is in fact an equivalence. 
This was proved by Nori using Beilinson's result \cite[Lemma~3.3]{Bei87}, quoted in Nori's paper \cite{NoriConstr} as the \textit{Basic Lemma}. 
The Basic Lemma is interpreted by Nori as a result about the existence of algebraic skeletal filtrations on affine $k$-varieties.
It is also the fundamental ingredient in the geometric interpretation of the perverse filtration in cohomology obtained by de Cataldo--Migliorini in \cite{DeC-Mig}.

The Basic Lemma turns out to be a crucial input into another major result of Nori's theory: 
namely, the existence of a canonical triangulated realization 
\begin{equation}\label{R_k:intro}
	\Nri^*_k: \DM_{gm}(k,\Q) \rightarrow D^b(\Mcal^0(k)).
\end{equation}
Here, $\DM_{gm}(k,\Q)$ denotes Voevodsky's triangulated category of $\Q$-linear geometric motives over $k$, which so far represents the best known candidate for the bounded derived category of the conjectural abelian category of mixed motives over $k$; 
it is endowed with a triangulated functor
\begin{equation}\label{Bti_k:intro}
	\Bti_k^*: \DM_{gm}(k,\Q) \rightarrow D^b(\vect_{\Q})
\end{equation} 
called the Betti realization, which should be regarded as a triangulated analogue of \eqref{iota_k:intro}. 
Nori's arguments show that the composite functor
\begin{equation*}
	\DM_{gm}(k,\Q) \xrightarrow{\Nri^*_k} D^b(\Mcal^0(k)) \xrightarrow{\iota_k} D^b(\vect_{\Q})
\end{equation*}
recovers the Betti realization \eqref{Bti_k:intro} up to canonical natural isomorphism. 
Since, in addition, the singular representation \eqref{b_k:intro} factors as the composite
\begin{equation*}
	\Pairs_k \xrightarrow{M_k} \DM_{gm}(k,\Q) \xrightarrow{\Bti_k^*} D^b(\vect_{\Q}) \xrightarrow{H^0} \vect_{\Q}
\end{equation*}
for a suitable map of quivers $M_k: \Pairs_k \rightarrow \DM_{gm}(k,\Q)$, the equivalence $\Mcal^0(k) = \Mcal(k)$ follows formally. 
Furthermore, it follows that Nori's category $\Mcal(k)$ can be reconstructed directly from Voevodsky's category $\DM_{gm}(k,\Q)$: 
namely, $\Mcal(k)$ coincides with the universal abelian factorization of the composite functor
\begin{equation}\label{eq:beta_k-intro}
	\beta_k: \DM_{gm}(k,\Q) \xrightarrow{\Bti^*_k} D^b(\vect_{\Q}) \xrightarrow{H^0} \vect_{\Q},
\end{equation}
regarded as a quiver representation. 
As pointed out in \cite[Thm.~4.10]{BVHP20}, the tensor product on $\Mcal(k)$ can be also constructed starting from the tensor product of $\DM_{gm}(k,\Q)$, and Nori's functor \eqref{R_k:intro} is tensor-triangulated.
The existence of a natural bridge between Voevodsky's and Nori's theories is a strong indication that Nori's category should gives the correct definition of mixed motives over $k$. 
In fact, the triangulated category $\DM_{gm}(k,\Q)$ is expected to carry a \textit{motivic $t$-structure} making the Betti realization \eqref{Bti_k:intro} $t$-exact; 
its heart would be the conjectural abelian category of mixed motives (see \cite[\S~3.3]{AyoConj}). 
A formal consequence of the construction of $\Mcal(k)$ out of \eqref{eq:beta_k-intro} is that, if the motivic $t$-structure on $\DM_{gm}(k,\Q)$ exists, its heart is necessarily equivalent to $\Mcal(k)$. 
Let us mention that an alternative construction of the motivic Galois group $\Gcal_{mot}(k)$ avoiding Nori motives was provided by Ayoub in \cite{Ayo14H1, Ayo14H2}, based on his weak Tannakian formalism for tensor-triangulated categories. 
The fact that Ayoub's motivic Galois group coincides with Nori's $\Gcal_{mot}(k)$ was proved by Chudhurry--Gallauer in \cite{ChudGal}, using Nori's realization \eqref{R_k:intro} in a crucial way.

The theory of Voevodsky motives has been already expanded to a theory of motivic sheaves satisfying a large part of Beilinson's predictions:
for every (quasi-projective) $k$-variety $S$ there exists a triangulated category $\DM_{gm}(S,\Q)$ of $\Q$-linear geometric motives over $S$; 
as $S$ varies, they enjoy a complete six functor formalism, and they admit a compatible weight structure in the sense of Bondarko's work \cite{Bon14}. 
The construction of the six operations in the motivic world was sketched by Voevodsky in his lecture \cite{DelVoe}, studied systematically in Ayoub's thesis \cite{Ayo07a,Ayo07b}, and further refined in Cisinski--Déglise's book \cite{CisDeg}. 
By \cite[Thm.~B.1]{AyoEt}, Voevodsky's category $\DM_{gm}(S,\Q)$ is canonically equivalent to Ayoub's category $\DA_{ct}^{\et}(S,\Q)$ of constructible étale motives over $S$ without transfers, defined in \cite[\S~3]{AyoEt}; 
by \cite[Thm.~16.2.18]{CisDeg}, the latter is also equivalent to the category of constructible Beilinson motives, defined in \cite[\S~IV]{CisDeg}.
The theory of \cite{IM19} is developed using mostly Ayoub's categories $\DA_{ct}^{\et}(S,\Q)$ and adopting the language of \textit{stable homotopy $2$-functors} developed in \cite{Ayo07a,Ayo07b}, complemented by \cite{Ayo10} and \cite{AyoEt}. 
For every $k$-variety $S$, Ayoub constructed in \cite{Ayo10} a triangulated Betti realization
\begin{equation*}
	\Bti_{S}^*: \DA_{ct}^{\et}(S,\Q) \rightarrow D^b_c(S,\Q)
\end{equation*} 
into the usual algebraically constructible derived category $D^b_c(S,\Q)$ associated to the complex-analytic space $S^{\sigma}$. 
By \cite[Thm.~3.19]{Ayo10}, the Betti realization functors are compatible with the six operations, thereby defining a morphism of stable homotopy $2$-functors
\begin{equation}\label{Bti:intro}
	\Bti^* \colon \DA_{ct}^{\et}(\cdot,\Q) \rightarrow D^b_c(\cdot,\Q).
\end{equation} 
By \cite[Prop.~4.7]{Ayo10}, they are also compatible with nearby and vanishing cycles.

The category of perverse Nori motives $\Mcal(S)$ is defined in \cite[\S~2.1]{IM19} as the universal abelian factorization of the homological functor
\begin{equation}\label{intro:beta_S}
	\beta_S: \DA_{ct}^{\et}(S,\Q) \xrightarrow{\Bti_{S}^*} D^b_c(S,\Q) \xrightarrow{{^p H^0}} \Perv(S),
\end{equation}
which may be also regarded as a quiver representation. 
The associated forgetful functor \eqref{iota_S:intro} extends to a conservative triangulated functor
\begin{equation}\label{iota_S:Db-intro}
	\iota_S \colon D^b(\Mcal(S)) \to D^b(\Perv(S)) \xrightarrow{\sim} D^b_c(S,\Q),
\end{equation}
where the second passage witnesses Beilinson's equivalence \cite[Thm.~1.3]{Bei87}.
By \cite[Thm.~5.1]{IM19}, the triangulated categories $D^b(\Mcal(S))$ assemble into a stable homotopy $2$-functor, and the triangulated functors \eqref{iota_S:Db-intro} assemble into a morphism of stable homotopy $2$-functors
\begin{equation}\label{iota-mot}
	\iota: D^b(\Mcal(\cdot)) \rightarrow D^b(\Perv(\cdot)).
\end{equation}
Furthermore, as proved in \cite[\S~3]{IM19}, the theory of perverse Nori motives is endowed with the analogue of Beilinson's gluing functors from \cite{BeiGlue}, including unipotent nearby and vanishing cycles. 
These functors, whose definition builds upon the theory of specialization systems of \cite{Ayo07b}, are used crucially in the construction of the motivic inverse and direct image functors in \cite[\S\S~4, 5]{IM19}.
Indeed, the very definition of perverse Nori motives as universal abelian factorizations allows one to lift functors from the underlying constructible derived categories, provided they can be expressed in terms of the six operations and they are $t$-exact for the perverse $t$-structures.
This lifting principle allows one to construct inverse images under smooth morphisms and direct images under closed immersions directly, but cannot be applied to construct general inverse or direct image functors.
The arguments of \cite{IM19} show that one can recover the four operations by combining $t$-exact functors and natural transformations thereof in a clever way.

\subsection*{Main results}

Our main result asserts that the four functor formalism on perverse Nori motives constructed in \cite{IM19} can be completed to a six functor formalism. In the language of stable homotopy $2$-functors, it can be conveniently summarized as follows:

\begin{thm*}[\Cref{thm-boxtimesM}, \Cref{thm_homcal}]
	The stable homotopy $2$-functor $D^b(\Mcal(\cdot))$ is canonically closed unitary symmetric monoidal. 
	Moreover, the morphism of stable homotopy $2$-functors \eqref{iota-mot} is canonically unitary symmetric monoidal, and it respects internal homomorphisms.
\end{thm*}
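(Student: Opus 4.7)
My strategy is to first construct an external tensor product $\boxtimes_\Mcal$ on the collection $\{D^b(\Mcal(S))\}_S$, and then extract the internal tensor product, the unit and the internal Hom from $\boxtimes_\Mcal$ via the four-functor formalism of \cite[Thm.~5.1]{IM19}. The cornerstone is the classical fact that the external product of two perverse sheaves is again perverse on the product variety, combined with the compatibility between the Betti realization \eqref{Bti:intro} and external products on $\DA_{ct}^{\et}(\cdot,\Q)$ established in \cite[Thm.~3.19]{Ayo10}. Together these imply that the composite
\begin{equation*}
\DA_{ct}^{\et}(S,\Q) \times \DA_{ct}^{\et}(T,\Q) \xrightarrow{\boxtimes} \DA_{ct}^{\et}(S \times T,\Q) \xrightarrow{\beta_{S \times T}} \Perv(S \times T)
\end{equation*}
factors, up to canonical natural isomorphism, through $\beta_S \times \beta_T$ followed by the external product of perverse sheaves.

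Applying a bi-additive variant of the universal property of the abelian factorization to this diagram yields a bi-exact functor $\boxtimes_\Mcal : \Mcal(S) \times \Mcal(T) \to \Mcal(S \times T)$ which is compatible with the external product on perverse sheaves via $\iota$. To upgrade $\boxtimes_\Mcal$ to the bounded derived categories, I would adapt the gluing construction that \cite[\S\S~4,~5]{IM19} use for the motivic inverse and direct images, relying on the specialization systems and Beilinson-type gluing functors of \cite[\S~3]{IM19} together with a bivariate form of the Basic Lemma of \cite{Bei87, NoriConstr} to produce compatible affine skeletal filtrations on $S$ and $T$. Associativity, commutativity and the unit axioms then transfer from $\DA_{ct}^{\et}(\cdot,\Q)$ via the conservativity of the realizations \eqref{iota_S:Db-intro}; the unit is $\unit_S := p_S^* \unit_k$, with $p_S : S \to \Spec(k)$ the structural morphism. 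The internal tensor product is obtained as $M \otimes_\Mcal N := \Delta_S^*(M \boxtimes_\Mcal N)$ via the motivic inverse image along the diagonal, and the internal Hom can be assembled as $\Homcal_\Mcal(M,N) := (p_2)_* \Homcal^{\mathrm{ext}}_\Mcal(p_1^* M, p_2^! N)$ once an external internal Hom $\Homcal^{\mathrm{ext}}_\Mcal$ is obtained as right adjoint to $\boxtimes_\Mcal$. Compatibility of $\iota$ with $\otimes_\Mcal$ and $\Homcal_\Mcal$ then follows from its already established compatibility with $\boxtimes$, $f^*$, $f_*$ and $f^!$.

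The principal obstacle will be the derived-level construction of $\boxtimes_\Mcal$. Unlike the unary operations $f^*, f_*, f_!, f^!$ of \cite{IM19}, this is a bifunctor of two independent variables whose gluing data must be matched simultaneously; moreover, its identification with the external product of perverse sheaves under $\iota$ is not formal from the universal property at the derived level, because the latter characterizes the abelian categories $\Mcal(S \times T)$ and not their bounded derived categories. I expect the resolution to require a bivariate extension of the Basic Lemma, yielding affine skeletal filtrations on a product that are compatible with those on each factor, together with a bivariate refinement of the Beilinson gluing machinery of \cite[\S~3]{IM19}. Once $\boxtimes_\Mcal$ is in place, checking the remaining higher coherences (pentagon, hexagon, unit triangles) is largely automatic by the faithfulness of $\iota$, but verifying that the whole package yields a morphism of closed unitary symmetric monoidal stable homotopy $2$-functors still requires a careful diagrammatic check against the base-change isomorphisms of \cite{IM19}.
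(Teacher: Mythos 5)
Your first step already contains a fatal error. You claim that the composite $\DA_{ct}^{\et}(S,\Q) \times \DA_{ct}^{\et}(T,\Q) \to \Perv(S\times T)$ factors, up to natural isomorphism, through $\beta_S \times \beta_T$ followed by the external product of perverse sheaves. This is false: in general
\begin{equation*}
{^p H^0}\bigl(\Bti_S^*(A)\bigr) \boxtimes {^p H^0}\bigl(\Bti_T^*(B)\bigr) \neq {^p H^0}\bigl(\Bti_{S\times T}^*(A \boxtimes B)\bigr),
\end{equation*}
because $\Bti_S^*(A)$ and $\Bti_T^*(B)$ are arbitrary bounded constructible complexes and the external product of non-perverse complexes followed by ${^p H^0}$ does not commute with ${^p H^0} \times {^p H^0}$. (K\"{u}nneth's formula creates cross-terms.) This is not a technicality — it is exactly the obstruction that motivates Nori's restriction to the subquiver of \emph{good pairs} in the classical case, and here it prevents the universal property of $\Mcal(S\times T)$ from directly producing $\boxtimes_\Mcal$. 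Your plan never accounts for it, so the bi-exact functor you would "obtain" by applying the bi-additive universal property simply does not come out of the diagram you wrote.

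The paper's fix, which you do not anticipate, is to replace $\Dcal(S) = \DA_{ct}^{\et}(S,\Q)$ by the full additive subcategory $\Dcal^0(S)$ of objects $A$ with $\Bti_S^*(A)$ already perverse; then $\beta^0_{S\times T}(A \boxtimes B) = \beta^0_S(A) \boxtimes \beta^0_T(B)$ holds on the nose, and the universal property does apply. The price is a new theorem, $\Mcal^0(S) \simeq \Mcal(S)$ for every $S$, proved not by a bivariate Basic Lemma (there is no such thing in the paper) but by Noetherian induction: the base case over function fields comes from Nori's classical good-pairs argument (\cite[Prop.~7.1]{ChudGal}, using the usual univariate Basic Lemma), and the inductive step uses the motivic Beilinson gluing formalism of \cite[\S~3]{IM19} to recover $\Mcal(S)$ from $\Mcal^0$ of a smaller open subset and a proper closed subvariety. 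Your proposed "bivariate Basic Lemma yielding compatible affine skeletal filtrations" is both unproven and unnecessary. Similarly, your derived-level worry about bi-exact functors is resolved cheaply: bi-exact functors on abelian categories extend to multi-triangulated functors on bounded derived categories by a total-complex construction, and Vologodsky's dg-rigidity lemma (\cite[Thm.~1]{VoloDG}) handles the identification; no bivariate gluing machinery is needed.

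Your internal Hom plan is also underspecified in the place where the actual work happens. Declaring $\Homcal^{\mathrm{ext}}_\Mcal$ to be "the right adjoint to $\boxtimes_\Mcal$" begs the question: $D^b(\Mcal(S))$ is not compactly generated or presentable, so no adjoint functor theorem applies, and it is not at all formal that a right adjoint preserving boundedness and constructibility exists. The paper spends a full section on this: pass to $D(\Ind\Mcal(S))$, extend $-\otimes M^\bullet$ by a \v{C}ech construction, invoke Brown representability there, and then prove by a localization/Noetherian induction (using a concrete adjunction established only for distinguished motivic local systems via Verdier duality) that the abstract right adjoint sends $D^b(\Mcal(S))$ to itself and agrees with $\Homcal$ on perverse sheaves under $\iota$. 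Finally, the unit is also more delicate than "$\unit_S := p_S^*\unit_k$" suggests, because $\unit_S$ is not perverse when $S$ is singular; the paper constructs the unit section on $\Sm_k$ only and extends by the embedding method, and verifying the unit axioms requires the special description of $M\otimes L$ for distinguished local systems $L$. Your plan is pointing in the right general direction (external product first, internal structure from it, coherences checked after $\iota$), but the two genuinely nontrivial steps — the $\Mcal^0 = \Mcal$ comparison and the boundedness of internal Hom — are exactly the ones you treat as routine.
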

\noindent
The first step is the construction of the monoidal structure.
One basic technical issue is that the internal tensor product
\begin{equation*}
	- \otimes -: D^b(\Mcal(S)) \times D^b(\Mcal(S)) \rightarrow D^b(\Mcal(S))
\end{equation*} 
is not $t$-exact for the perverse $t$-structures unless $\dim(S) = 0$. 
The abstract theory developed in \cite{Ter23Fib} allows us to overcome this issue by working with the external tensor product
\begin{equation*}
	- \boxtimes -: D^b(\Mcal(S_1)) \times D^b(\Mcal(S_2)) \rightarrow D^b(\Mcal(S_1 \times S_2)),
\end{equation*}
which is $t$-exact in each variable by \cite[\S~4.2.7]{BBD82}. 
But this is still not enough, even when $\dim(S) = 0$:
indeed, in order to apply the general lifting principles of universal abelian factorizations, one would need to have the equality
\begin{equation*}
	{^p H^0}(K_1^{\bullet}) \boxtimes {^p H^0}(K_2^{\bullet}) = {^p H^0}(K_1^{\bullet} \boxtimes K_2^{\bullet})
\end{equation*} 
for all choices of $K_i^{\bullet} \in D^b(\Perv(S_i))$.
This naive equality does not hold unless, for example, one among $K_1^{\bullet}$ and $K_2^{\bullet}$ is concentrated in perverse degree $0$. 
In fact, this is just a different incarnation of the K\"{u}nneth formula issue for the tensor product on $\Mcal(k)$, which we commented on above.
Inspired by Nori's strategy, for every $k$-variety $S$ we introduce a convenient variant $\Mcal^0(S)$ of $\Mcal(S)$ on which the external tensor product can be defined directly, and we then show that it coincides with $\Mcal(S)$. 
Here is the precise statement:
\begin{thm*}[\Cref{thm_Mp0=Mp}]
	For every (quasi-projective) $k$-variety $S$, consider the full additive subcategory
	\begin{equation*}
		\DA_{ct}^{\et,0}(S,\Q) := \left\{A \in \DA_{ct}^{\et}(S,\Q) \; | \; \Bti_S^*(A) \in \Perv(S) \right\} \subset \DA_{ct}^{\et}(S,\Q),
	\end{equation*}  
    and let $\Mcal^0(S)$ denote the universal abelian factorization of the additive functor
	\begin{equation*}
		\beta_S^0 := \beta_S|_{\DA_{ct}^{\et,0}(S,\Q)}: \DA_{ct}^{\et,0}(S,\Q) \rightarrow \Perv(S).
	\end{equation*}
    Then the canonical faithful exact functor $\Mcal^0(S) \hookrightarrow \Mcal(S)$ is in fact an equivalence.
\end{thm*}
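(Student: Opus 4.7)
The plan is to reduce the equivalence to a lifting statement for the Betti realization. Since $\beta_S^0$ is the restriction of $\beta_S$ to the subcategory $\DA_{ct}^{\et,0}(S,\Q) \subset \DA_{ct}^{\et}(S,\Q)$, the inclusion $\Mcal^0(S) \subseteq \Mcal(S)$ is immediate from the universal property of $\Mcal(S)$ as the smallest non-full abelian subcategory of $\Perv(S)$ containing the image of $\beta_S$. To prove the reverse inclusion, it suffices to show that for every object $A \in \DA_{ct}^{\et}(S,\Q)$, the perverse sheaf $\beta_S(A) = {^p H^0}(\Bti_S^*(A))$ already belongs to $\Mcal^0(S)$.

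Following the template of Nori's classical argument in the case $S = \Spec(k)$, the natural way to obtain this is to construct a triangulated realization functor
\begin{equation*}
	R_S: \DA_{ct}^{\et}(S,\Q) \rightarrow D^b(\Mcal^0(S))
\end{equation*}
compatible with $\Bti_S^*$ along the faithful exact inclusion $\Mcal^0(S) \hookrightarrow \Perv(S)$ and Beilinson's equivalence $D^b(\Perv(S)) \simeq D^b_c(S,\Q)$. Granting such an $R_S$, the exactness of $\Mcal^0(S) \hookrightarrow \Perv(S)$ implies that $\beta_S(A)$ is the image under this inclusion of $H^0(R_S(A)) \in \Mcal^0(S)$, which yields the desired conclusion. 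In more concrete terms, the construction of $R_S$ amounts to producing, functorially in $A$, a finite resolution of $A$ in $\DA_{ct}^{\et}(S,\Q)$ by objects of $\DA_{ct}^{\et,0}(S,\Q)$ whose image under $\Bti_S^*$ recovers the perverse filtration of $\Bti_S^*(A)$ in $D^b_c(S,\Q)$.

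The construction of such resolutions rests on an analogue of Beilinson's Basic Lemma adapted to the perverse setting over an arbitrary base. The plan is to proceed by noetherian induction on $\dim(S)$: the base case $\dim(S) = 0$ reduces, via the compatibility of $\iota_S$ with exterior operations, to Nori's classical equivalence $\Mcal^0(k) = \Mcal(k)$. For the inductive step, one exploits an open-closed decomposition $S = U \sqcup Z$ with $U$ smooth dense open, together with the recollement for perverse Nori motives provided by the six functor formalism of \cite{IM19}. This reduces the problem to treating generating objects of the form $f_!\unit_U(n)[d]$ for smooth affine morphisms $f: U \to S$ of relative dimension $d$, for which the classical Basic Lemma yields an algebraic skeletal filtration of $U$ whose associated perverse graded pieces are concentrated in a single perverse degree; this filtration lifts motivically to give the required resolution in $\DA_{ct}^{\et,0}(S,\Q)$.

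The main technical obstacle is to verify that the motivic cells produced by the Basic Lemma indeed belong to $\DA_{ct}^{\et,0}(S,\Q)$, which amounts to checking appropriate perverse $t$-exactness properties of the motivic six functors. Thanks to the conservativity of the triangulated realization $\iota_S: D^b(\Mcal(S)) \to D^b(\Perv(S))$ and the compatibility of the motivic six functors with their classical analogues — both established in \cite{IM19} — these $t$-exactness statements reduce to the corresponding classical facts in $D^b(\Perv(\cdot))$. Once this compatibility is in hand, the realization $R_S$ is assembled from the local data by the universality of $\Mcal^0(S)$ in $\Perv(S)$, thereby yielding the desired equivalence $\Mcal^0(S) = \Mcal(S)$.
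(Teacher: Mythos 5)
Your strategy is to produce a triangulated realization $R_S: \DA_{ct}^{\et}(S,\Q) \to D^b(\Mcal^0(S))$ lifting $\Bti_S^*$ over every $S$, by noetherian induction and a relative Basic Lemma. Granting $R_S$, the conclusion $\Mcal^0(S) = \Mcal(S)$ would indeed follow — the paper invokes exactly this implication via \cite[Prop.~3.8]{BVHP20} in the final step, but only over finitely generated fields $F$. The gap is the construction of $R_S$ over general $S$, and the paper itself flags it as a wall: the introduction states that the author was "not able prove our comparison result by generalizing Nori's geometric ideas over arbitrary base varieties", and the discussion of related work explains that the only known candidate for such an $R_S$ is Ivorra's construction \cite{IvPerv}, whose compatibility with the Betti realization and with the six operations is not established, "mostly due to concrete technical issues concerning filtrations of varieties over a base". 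Your sketch — reduce to generators $f_!\unit_U(n)[d]$, apply the Basic Lemma, "this filtration lifts motivically" — hides precisely those unsolved issues: a triangulated functor out of $\DA_{ct}^{\et}(S,\Q)$ is not determined by its values on generators without an $\infty$-categorical or diagram-level enhancement (which you do not supply), and making the skeletal filtrations functorial over $S$ and compatible under both the motivic and topological realizations is the known sticking point.

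The paper's actual proof never constructs $R_S$. Instead it proves an abstract gluing criterion (\Cref{thm-Beilinson-simpler}) using the motivic Beilinson gluing functors ${^p \Psi_f}, {^p \Phi_f}, {^p \Xi_f}, {^p \Omega_f}$ and the reconstruction formula $M = \gamma_{S,f}\delta_{S,f}(M)$ of \Cref{prop-Bei-glue}: any system of abelian subcategories stable under these functors and their connecting exact sequences is automatically a system of equivalences as soon as it is weakly cofinal over $\Sm\Op(S)$, i.e.\ an equivalence generically. The noetherian induction then runs with this far weaker hypothesis, because the terms and differentials of the reconstruction complex for $M \in \Mcal(S)$ land in $\Mcal^0(S)$ by the inductive hypothesis on $V(f)$ and by the generic-point condition on $D(f)$; one never needs a filtration of $M$ itself. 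Condition (iii) of the criterion is then verified by identifying the colimit $2-\varinjlim_{U \in \Sm\Op(S)^{op}}$ with Nori motives over the function field $L = k(S)$, passing to finitely generated subfields of $L$ via \Cref{prop:M(L)-indep} and \Cref{cor:M(L)}, and invoking Nori's classical lifting \cite[Prop.~7.1]{ChudGal}, \cite[Thm.~7.4.17]{Harrer} — which uses the Basic Lemma only over fields. In short, the paper trades your direct relative construction for a structural gluing induction that needs the Basic Lemma only at the generic fibre; that is what allows the argument to close.
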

\noindent
Note that $\DA_{ct}^{\et,0}(S,\Q)$ is the candidate for the heart of the conjectural motivic perverse $t$-structure on $\DA_{ct}(S,\Q)$.
Our result asserts that, as far as the Betti realization can measure, the putative perverse heart really contains the perverse cohomology objects of all motivic complexes;
some results in the same vein have been obtained by Cisinski in his recent work \cite{Cis24}.
Unfortunately, we are not able prove the equality $\Mcal^0(S) = \Mcal(S)$ by generalizing Nori's geometric ideas over arbitrary base varieties - although we believe that this should be possible. Instead, we use Nori's equivalence over fields as the starting step of a Noetherian induction argument, the inductive step of which exploits Beilinson's gluing functors.

Once the single external tensor product functors are constructed, turning their collection into the sought-after monoidal structure amounts to constructing monoidality isomorphisms along morphisms of (quasi-projective) $k$-varieties as well as compatible associativity, commutativity, and unit constraints. 
Using the abstract theory developed in \cite{Ter23Fact,Ter23Emb}, most of this construction can be reduced to the level of abelian categories, natural transformations between exact functors can be lifted formally.
The most delicate point is constructing the unit constraint and checking its compatibility with the rest of the structure. 
This requires a closer analysis of the tensor product with certain distinguished motivic local systems.
 
The consequences of this analysis play a crucial role also in the construction of internal homomorphisms, which completes the six functor formalism. 
Our strategy is inspired by a construction of Arapura from \cite[\S~5]{Ara-mot}: 
in a first step, we extend part of the functoriality of perverse Nori motives to the derived ind-categories $D(\Ind \Mcal(S))$, which allows us to define internal homomorphisms abstractly as a formal right-adjoint of the tensor product; 
in a second step, we show that these formal internal homomorphisms respect the bounded derived categories $D^b(\Mcal(S))$ and are compatible with the usual internal homomorphisms on the underlying constructible derived categories.
The proof is by Noetherian induction, using localization triangles in order to reduce to the case of distinguished motivic local systems.

As a consequence of our construction, we obtain a well-behaved Tannakian theory of \textit{motivic local systems} over general $k$-varieties:
\begin{thm*}[\Cref{thm_Mloc}]
	For every smooth, geometrically connected $k$-variety $S$, the full subcategory
	\begin{equation*}
		\Mcal\Loc_p(S) := \left\{M \in \Mcal(S) \; | \; \iota_S(M) \in \Loc_p(S) \right\} \subset \Mcal(S)
	\end{equation*}
    is canonically neutral Tannakian over $\Q$.
\end{thm*}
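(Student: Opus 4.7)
The plan is to reduce the Tannakian axioms on $\Mcal\Loc_p(S)$ to their classical counterparts on $\Loc(S)[d]$, exploiting the faithful exact realisation $\iota_S$ together with the closed monoidal six functor formalism established in \Cref{thm-boxtimesM} and \Cref{thm_homcal}. As a preliminary, I would check that $\Mcal\Loc_p(S)$ is a Serre $\Q$-linear abelian subcategory of $\Mcal(S)$: for smooth $S$ of dimension $d$, the shifted local systems $\Loc(S)[d]$ form a Serre subcategory of $\Perv(S)$ (a perverse sheaf lies there precisely when its ordinary cohomology sheaves are concentrated in degree $-d$ and locally constant, a property preserved by subobjects, quotients and extensions), and since $\iota_S$ is exact and faithful, its preimage $\Mcal\Loc_p(S)$ inherits this property.

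Next I would install a rigid symmetric monoidal structure. The tensor unit $\unit_S \in D^b(\Mcal(S))$ satisfies $\iota_S(\unit_S) = \Q_S[d]$, hence lies in $\Mcal\Loc_p(S)$. For $M_1, M_2 \in \Mcal\Loc_p(S)$ with $\iota_S(M_i) = L_i[d]$, the derived tensor product $M_1 \otimes^L M_2$ in $D^b(\Mcal(S))$ is sent by the symmetric monoidal realisation of \Cref{thm-boxtimesM} to $(L_1 \otimes L_2)[2d]$, which is concentrated in perverse degree $-d$; by conservativity of $\iota_S$ the motivic object $M_1 \otimes^L M_2$ is itself concentrated in perverse degree $-d$, so one may set
\begin{equation*}
	M_1 \otimes_p M_2 := {^p H^{-d}}(M_1 \otimes^L M_2) \in \Mcal\Loc_p(S).
\end{equation*}
Associativity, commutativity and unit constraints descend from those on $D^b(\Mcal(S))$, their coherence being enforced by faithfulness of $\iota_S$. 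Duals are provided by Verdier duality $\Dbb_S$, which exists as part of the motivic six functor formalism of \cite[Thm.~5.1]{IM19} and is perverse $t$-exact on smooth $S$: setting $M^\vee := \Dbb_S(M)$ yields an object of $\Mcal\Loc_p(S)$ lifting $L^\vee[d]$, and the evaluation and coevaluation morphisms are produced from the closed monoidal structure on $D^b(\Mcal(S))$ via the compatibility of $\iota$ with internal homomorphisms granted by \Cref{thm_homcal}.

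For the fibre functor I would pick a complex point $s \in S^\sigma$, interpret it via the generalised framework of the paper (allowing $\C$ as base field) as a morphism $s : \Spec(\C) \to S$, and define
\begin{equation*}
	\omega_s : \Mcal\Loc_p(S) \longrightarrow \vect_{\Q}, \qquad M \longmapsto \iota_{\C}\bigl({^p H^{-d}}(s^* M)\bigr).
\end{equation*}
Under $\iota_S$ and $\iota_{\C}$ this recovers the classical fibre functor $L[d] \mapsto L_s$, which is faithful, exact and symmetric monoidal; combined with the faithful exactness of $\iota_S$ and $\iota_{\C}$, this implies that $\omega_s$ itself has these properties. The equality $\textup{End}(\unit_S) = \Q$ then follows from $\omega_s(\unit_S) = \Q$ and the faithfulness of $\omega_s$.

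The hardest step is the construction of the rigid structure: since $\iota_S$ is faithful but not full, evaluation and coevaluation morphisms for $M \otimes_p M^\vee$ and $\unit_S$ cannot be pulled back from $\Loc(S)[d]$ directly. They must be produced via natural transformations in the six functor formalism on $D^b(\Mcal(\cdot))$ — for instance via the counit of the adjunction defining the internal hom — and shown to satisfy the triangle identities by transport through $\iota_S$. Once these morphisms are in hand, all remaining Tannakian coherences reduce to the $\Perv(S)$-side by faithful exactness.
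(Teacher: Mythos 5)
Your overall strategy — transporting the Tannakian structure from $\Loc_p(S) \simeq \Loc(S)[d]$ along the faithful exact tensor realisation $\iota_S$, and producing the fibre functor by composition with a stalk — is exactly the paper's. The serious issue lies in your treatment of duals. You set $M^{\lor} := \Dbb_S(M)$. On a smooth connected $S$ of dimension $d$, the dualising object is $a_S^! \unit_{\Spec(k)} \cong \unit_S(d)[2d]$, so
\begin{equation*}
	\Dbb_S(M) = \Homcal_S\bigl(M, \unit_S(d)[2d]\bigr) = \Homcal_S\bigl(M[-d], \unit^{\Mcal}_S\bigr)(d)[d],
\end{equation*}
which differs from the Tannakian dual in $\bigl(\Mcal\Loc_p(S), \otimes^{\dagger}, K^{\Mcal}_S\bigr)$ by a Tate twist $(d)$. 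This twist is invisible under $\iota_S$ (so your claim that $\Dbb_S(M)$ "lifts $L^{\lor}[d]$" is true but misleading), yet it is non-trivial in $\Mcal(S)$: the evaluation morphism adjoint to the identity lands in $K^{\Mcal}_S(d)$ rather than in the monoidal unit $K^{\Mcal}_S = \unit^{\Mcal}_S[d]$, so the triangle identities cannot be satisfied with respect to the unit you chose. The paper avoids this by defining $M^{\lor} := \Homcal_S(M[-d],\unit^{\Mcal}_S)[d]$, i.e. the internal hom to the honest unit (not the dualising complex), and checks the two rigidity axioms by conservativity of $\iota_S$.

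A second, more cosmetic point: your shifted tensor product $M_1 \otimes_p M_2 := {}^p H^{-d}(M_1 \otimes^L M_2)$ amounts to $(M_1 \otimes^L M_2)[-d]$, whereas the paper uses $(M_1[-d] \otimes M_2[-d])[d]$. These agree up to a sign $(-1)^d$ (the paper flags this explicitly in the remark following the theorem), and it is the paper's normalisation that makes the associativity, commutativity and unit constraints descend from $\Loc(S)$ by naive transport of structure with no extra sign bookkeeping. With your normalisation the commutativity constraint of $\Loc_p(S)$ picks up a Koszul sign $(-1)^{d^2} = (-1)^d$ relative to that of $\Loc(S)$, and you would need to verify the hexagon and compatibility with the unit constraint directly rather than citing transport. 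Finally, for the fibre functor, it suffices to compose $\iota_S : \Mcal\Loc_p(S) \hookrightarrow \Loc_p(S)$ with a classical stalk functor; invoking a motivic inverse image $s^*$ along a complex point is unnecessary overhead.
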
 
\noindent
Here, $\Loc_p(S)$ denotes the image of the $\dim(S)$-shifted embedding of the abelian category $\Loc(S)$ of ordinary local systems into $D^b_c(S,\Q)$; 
the smoothness assumption ensures that shifted local systems are perverse sheaves.
The relation between the motivic Galois group of $\Mcal\Loc_p(S)$ and Nori's motivic Galois group $\Gcal_{mot}(k)$ has been studied by Jacobsen in \cite{JacMalcev}:
he shows that the difference between motivic local systems over $S$ and motives over $k$ is measured by the theory of local systems of geometric origin on $S^{\sigma}$.

Lastly, we equip the derived categories of perverse Nori motives with a canonical \textit{orientation}, in the sense of \cite[\S~2.4.c]{CisDeg}:
this allows us to construct Chern classes of vector bundles in terms of the six operations.
As an application, we promote the classical relative Hard Lefschetz Theorem \cite[Thm.~6.2.10]{BBD82} to the motivic level:
\begin{thm*}[\Cref{cor:chern}, \Cref{thm:Lefs}]
	Let $f \colon X \to S$ be a projective morphism of $k$-varieties, and let $L$ be a relatively ample line bundle over $X$.
	Then, for every $r \geq 0$, there is a canonical natural transformation between functors $\Mcal(X) \to \Mcal(S)$
	\begin{equation*}
		c_1^{\Mcal}(L)^r \colon {^p H^{-r}} (f_* M) \to {^p H^n} (f_* M)(n),
	\end{equation*}
	which is invertible when evaluated on semi-simple objects.
\end{thm*}
\noindent
This uses the theory of weights for perverse motives, developed in \cite[\S~6]{IM19}, which characterizes the semi-simple objects of $\Mcal(S)$ as those objects with split weight filtration.

\subsection*{Related work}

In the same period when the results of the present paper were completed, significant progress in the theory of perverse Nori was achieved in a complementary direction by Tubach. 
Building upon Nori's classical argument from \cite{NoriConstr}, he shows in \cite{Tub23} that the triangulated category $D^b(\Mcal(S))$ is canonically equivalent to the bounded derived category of its so-called constructible heart (as defined in \cite[\S~5.4]{IM19}). 
The main consequence is that the six operations on perverse Nori motives can be canonically enhanced to $\infty$-functors. 
In addition to extending the six operations to not necessarily quasi-projective $k$-varieties, this puts Nori's realization functor \eqref{R_k:intro} into a morphism of stable homotopy $2$-functors
\begin{equation}\label{Nri:intro}
	\Nri^*: \DA_{ct}^{\et}(\cdot,\Q) \rightarrow D^b(\Mcal(\cdot))
\end{equation} 
factoring the Betti realization \eqref{Bti:intro}:
both the existence and the essential uniqueness of \eqref{Nri:intro} follow immediately from Drew--Gallauer's work \cite{DrewGal}, where it is shown that Voevodsky motivic sheaves afford the universal six functor formalism in the setting of stable $\infty$-categories.
Let us stress that even constructing the single realization functors $\Nri_S^*$ over general $S$ directly seems a hard task. To the author's knowledge, the only plausible candidate construction available to date is the one proposed by Ivorra in \cite{IvReal}, which only works over smooth bases.
The main problem with Ivorra's realization is that its compatibility with the Betti realization and with the six operations is not fully understood yet.

The results of \cite{Tub23} are related to our results in two ways. 
In one direction, the closed monoidal structure of perverse Nori motives is used crucially in Tubach's variation of Nori's argument, as described in \cite[\S~2]{Tub23}. 
In the other direction, we use the Nori realization in the proof of some results of the present paper:
\Cref{prop:MVerdier} about the relation between Verdier duality and internal homomorphisms, \Cref{prop:orient_Nori} about the orientation, and \Cref{lem:MLoc_p-semisimple}, which is used in the proof of the motivic relative Hard Lefschetz Theorem.
Let us stress that these results are not involved in the construction of the six functor formalism, so there are no circular arguments here.

In our recent work \cite{JacTer25}, Jacobsen and I provide an alternative description of perverse Nori motives as a six functor formalism, based on some constructions from Ayoub's paper \cite{AyoAnab}:
namely, we show that $D^b(\Mcal(S))$ is equivalent to the category of $\Gcal_{mot}(k)$-equivariant constructible complexes of geometric origin on $S^{\sigma}$.
Intuitively, this means that Nori motivic sheaves can be thought of as sheaves of Nori motives.
The proof is based on Jacobsen's study of motivic Galois groups from \cite{JacMalcev}, which uses the full six functor formalism on perverse Nori motives as well as Tubach's results on mixed Hodge modules from \cite{Tub23} (but not the ones on Nori motives).
As a consequence, we get an independent construction of the morphism \eqref{Nri:intro}.

\subsection*{Structure of the paper}

Most constructions of the present paper are based on the abstract properties of perverse motives as universal abelian factorizations.
The basic functoriality results for universal abelian factorizations used in the setting of perverse motives are quickly discussed in \cite[\S~2.1]{IM19}, based on the detailed results of \cite{IvPerv}. A more systematic treatment, which also includes the extension of the above discussion to multi-linear functors, is given in our note \cite{Ter23UAF}, which we will often refer to. 

The goal of \Cref{sect:glue-Mp} is to describe an alternative presentation of perverse Nori motives (\Cref{thm_Mp0=Mp}), which is more adapted to constructing the external tensor product functors.
In the first place, we recall the construction of Beilinson's gluing functors in the setting of perverse motives from \cite[\S\S~3, 4]{IM19}.
We check that this allows one to glue perverse motives exactly as with perverse sheaves (\Cref{prop-Bei-glue}), and we deduce an abstract comparison criterion for subcategories of perverse motives stable under the gluing formalism (\Cref{thm-Beilinson-simpler}). 
Finally, we obtained the sought-after alternative presentation:
we introduce a suitable system of subcategories (\Cref{nota:Mp0}), and we show that they satisfy the hypotheses of the abstract comparison criterion (\Cref{thm_Mp0=Mp}).
This requires some preliminary observations about Nori motives over function fields of $k$-varieties (\Cref{prop:M(L)-indep} and \Cref{cor:M(L)}). 

In \Cref{sect_ETS} we state our first main result about the monoidal structure on perverse motives (\Cref{thm-boxtimesM}). 
After some general categorical preliminaries (\Cref{prop:VoloDG}), we construct the whole monoidal structure except for the unit constraint:
this includes the single external tensor product functors (\Cref{prop-extcore-Mp}), the external monoidality isomorphisms (\Cref{prop-extcore-Mp}), and the associativity and commutativity constraints (\Cref{lem-asso-Mp}, \Cref{lem-comm-Mp} and \Cref{lem-ac-Mp}). 

\Cref{sect_otimes-dist} is a crucial technical interlude, needed both for the unit constraint and for internal homomorphisms.
We introduce abelian subcategories of distinguished motivic local systems over smooth varieties (\Cref{nota:Mp0}), and we study the restriction of the internal tensor product to these. 
The main technical result is that the restricted internal tensor product, which is obtained indirectly from the external tensor product, admits a more direct definition (\Cref{prop-distinguished-otimes}). 
We readily deduce the existence of internal homomorphisms attached to distinguished motivic local systems (\Cref{cor:adj-Mloc}).

In \Cref{sect_unit-Mp}, we finally complete the proof of \Cref{thm-boxtimesM} by constructing the unit constraint for perverse motives (\Cref{thm:unit-constr-Mp}). 
The first step is to construct the motivic unit section (\Cref{prop-unitMcal}). 
The second step is to construct the motivic unit constraint and to check that its compatibility with the associativity and commutativity constraints defined previously (\Cref{lem:au-Mp} and \Cref{lem:cu-Mp}).

In \Cref{sect_Homcal} we prove the existence of internal homomorphisms (\Cref{thm_homcal}). 
In the first place, we construct an abstract ind-adjoint to the tensor product functors (\Cref{constr_homcal}).
Then, using the partial results obtained at the end of \Cref{sect_otimes-dist}, we check that these functors respects the bounded derived categories and that they are compatible with the usual internal homomorphisms on the underlying constructible derived categories by applying an abstract criterion (\Cref{prop-crit-homcal} and \Cref{prop-crit-homcal-R}). 
As a final comment, we check that Verdier duality and internal homomorphisms have the expected relation (\Cref{prop:MVerdier}).

In the final \Cref{sect:appli-compl} we describe some important consequences of our construction.
We introduce abelian categories of motivic local systems over smooth, geometrically connected $k$-varieties (\Cref{defn:MLoc_p}) and we show that they are neutral Tannakian over $\Q$ (\Cref{thm_Mloc}). 
This implies a topological reconstruction result for perverse motives (\Cref{thm:reconstr}).
Lastly, we construct a canonical orientation on the derived categories of perverse motives (\Cref{prop:orient_Nori}), and we lift the relative Hard Lefschetz Theorem to the motivic level (\Cref{thm:Lefs}).

\subsection*{Acknowledgments}

The contents of this paper correspond to the fourth and central chapter of my Ph.D. thesis, written at the University of Freiburg under the supervision of Annette Huber-Klawitter. It is a pleasure to thank her for introducing me to the subject of Nori motives and suggesting to me the problem of constructing the tensor structure on perverse motives, for following my progresses on it closely, as well as for her constant support and encouragement. I would also like to thank Joseph Ayoub, Denis-Charles Cisinski, Martin Gallauer, Fritz H\"{o}rmann, Florian Ivorra, Emil Jacobsen, Sophie Morel, Wolfgang Soergel, and Swann Tubach for useful discussions around the subject of this article.

\section*{Notation and conventions}

\subsection*{Fibered categories}

\begin{itemize}
	\item We follow the notation and conventions on fibered categories used in \cite{Ayo07a} and in \cite{CisDeg}, as displayed explicitly in \cite[\S~1]{Ter23Fib}.
\end{itemize}

\subsection*{Algebraic Geometry}

\begin{itemize}
	\item We fix a field $k$ of characteristic $0$ endowed with a fixed field embedding $\sigma: k \hookrightarrow \C$. 
	\item $\Var_k$ denotes the category of algebraic varieties over $k$: objects are reduced, separated, finite type $k$-schemes; morphisms are $k$-morphisms.
	\item By the closed immersion \textit{complementary} to a given open immersion $j: U \hookrightarrow S$ in $\Var_k$ we mean the closed immersion $i: Z \hookrightarrow S$ where $Z$ denotes the closed complement of $U$ in $S$ endowed with its reduced scheme structure.
	\item We consider the following full subcategories of $\Var_k$:
	\begin{itemize}
		\item $\Sm_k$: the subcategory of smooth $k$-varieties;
		\item $\Aff_k$: the subcategory of affine $k$-varieties.
	\end{itemize}
	\item For every $S \in \Var_k$ we write:
	\begin{itemize}
		\item $a_S: S \rightarrow \Spec(k)$ for the structural morphism;
		\item $\Ocal(S)$ for the $k$-algebra of global regular functions on $S$;
		\item $\Op(S)$ for the full subcategory of the slice category $\Sm_k/S$ whose objects are the Zariski-dense open immersions into $S$;
		\item $\Aff\Op(S)$ for the full subcategory $\Aff_k \subset \Op(S)$;
		\item $\Sm\Op(S)$ for the full subcategory $\Sm_k \subset \Op(S)$.
	\end{itemize}
\end{itemize}

\subsection*{Analytic sheaves}

\begin{itemize}
	\item For every $S \in \Var_k$, we use the following notation:
	\begin{itemize}
		\item $S^{\sigma}$ denotes the complex analytic space whose points are the $\C$-points of $S$ over $k$ via $\sigma$.
		\item $\Ct(S)$ denotes the abelian category of sheaves of $\Q$-vector spaces on $S^{\sigma}$ which are algebraically constructible.
		\item $\Loc(S)$ denotes the full abelian subcategory of $\Ct(S)$ consisting of all locally constant sheaves.
		\item $D^b_c(S,\Q)$ denotes the derived category of complexes of sheaves of $\Q$-vector spaces on $S^{\sigma}$ with bounded algebraically constructible cohomology.
		\item $\Perv(S)$ denotes the heart of the self-dual perverse $t$-structure on $D^b_c(S,\Q)$ as defined in \cite{BBD82}, and ${^p H^0}: D^b_c(S,\Q) \rightarrow \Perv(S)$ denotes the corresponding homological functor.
	\end{itemize}
\end{itemize}

\subsection*{$\ell$-adic sheaves}

\begin{itemize}
	\item Let $\ell$ be a fixed prime number. For every $S \in \Var_k$, we use the following notation:
	\begin{itemize}
		\item $\Ct(S)_{\ell}$ denotes the abelian category of $\ell$-adic algebraically constructible sheaves on the small étale site $S_{\et}$.
		\item $\Loc(S)_{\ell}$ denotes the full abelian subcategory of $\Ct(S)_{\ell}$ consisting of all lisse sheaves.
		\item $D^b_{c,\ell}(S,\Q_{\ell})$ denotes the constructible $\ell$-adic derived category as defined by Ekedahl in \cite{Eke} or equivalently by Bhatt--Scholze in \cite{BS}.
		\item $\Perv(S)_{\ell}$ denotes the heart of the self-dual perverse $t$-structure on $D^b_{c,\ell}(S,\Q_{\ell})$ as defined in \cite[\S~2.2]{MorelEll}, and ${^p H^0}: D^b_{c,\ell}(S,\Q_{\ell}) \rightarrow \Perv(S)_{\ell}$ denotes the corresponding homological functor.
	\end{itemize}
\end{itemize}

\subsection*{Triangulated motives and realizations}

\begin{itemize}
	\item For every $S \in \Var_k$, we use the following notation:
	\begin{itemize}
		\item $\DA_{ct}^{\et}(S,\Q)$ denotes the triangulated category of constructible étale motives with $\Q$-coefficients constructed in \cite[\S~4.5.2]{Ayo07b} (see also \cite[\S~3]{AyoEt}), implicitly identified with the category of constructible Beilinson motives defined in \cite[\S~IV]{CisDeg} via the equivalence of \cite[Thm.~16.2.18]{CisDeg}.
		\item We let 
		\begin{equation*}
			\Bti_S^*: \DA_{ct}^{\et}(S,\Q) \rightarrow D^b_{c,\ell}(S,\Q)
		\end{equation*}
		denote the \textit{Betti realization} over $S$ associated to $\sigma$ constructed in \cite{Ayo10}.
		\item Let $\ell$ be a fixed prime number. We let
		\begin{equation*}
			R_{\ell,S}: \DA_{ct}^{\et}(S,\Q) \rightarrow D^b_{c,\ell}(S,\Q_{\ell})
		\end{equation*}
		denote the \textit{$\ell$-adic realization} over $S$ constructed in \cite{AyoEt}.
	\end{itemize}
\end{itemize}

\subsection*{Universal abelian factorizations}

\begin{itemize}
	\item For every additive category $\Dcal$, we write:
	\begin{itemize}
		\item $\Rbf(\Dcal)$ for the full subcategory of $\Fun(\Dcal^{op},\Ab)$ consisting of all presheaves of finite presentation (i.e. cokernels of representable presheaves).
		\item $\Lbf(\Dcal) := \Rbf(\Dcal^{op})^{op}$.
		\item $\A(\Dcal) := \Lbf(\Rbf(\Dcal))$ for Freyd's abelian hull of $\Dcal$ introduced in \cite{Freyd} (see also \cite[\S~1]{Ter23UAF}). 
	\end{itemize}
    \item For every additive functor $\beta: \Dcal \rightarrow \Acal$ from an additive category $\Dcal$ to an abelian category $\Acal$, we write:
    \begin{itemize}
    	\item $\beta^+: \A(\Dcal) \rightarrow \Acal$ for the unique exact functor extending $\beta$;
    	\item $\A(\beta): \A(\Dcal)/\ker(\beta^+)$ for the associated universal abelian factorization.
    \end{itemize}
\end{itemize}

\subsection*{Perverse Nori motives}

\begin{itemize}
	\item For every $S \in \Var_k$, we write:
	\begin{itemize}
		\item $\Dcal(S)$ for the underlying additive category of $\DA_{ct}^{\et}(S,\Q)$;
		\item $\beta_S: \Dcal(S) \rightarrow \Perv(S)$ for the homological functor 
		\begin{equation*}
			\DA_{ct}^{\et}(S,\Q) \xrightarrow{\Bti_{S}^*} D^b_c(S,\Q) \xrightarrow{{^p H^0}} \Perv(S)
		\end{equation*}
		regarded as an additive functor.
		\item $\Mcal(S)$ for the category of \textit{perverse Nori motives} over $S$, defined as the universal abelian factorization $\A(\beta_S)$of the additive functor $\beta_S$.
		\item $\pi_S$ for the quotient functor $\A(\Dcal(S)) \rightarrow \Mcal(S)$, and also for the composite functor $\Dcal(S) \rightarrow \A(\Dcal(S)) \rightarrow \Mcal(S)$,
		\item $\iota_S$ for the induced faithful exact functor $\Mcal(S) \hookrightarrow \Perv(S)$.
	\end{itemize}
\end{itemize}

\section{Alternative presentation of perverse motives}\label{sect:glue-Mp}

The goal of the present section is to obtain an alternative presentation of perverse motives, which is more adapted to defining the external tensor product.
The precise result is \Cref{thm_Mp0=Mp} below, which can be regarded as an extension of Nori's yoga of \textit{good pairs} (see \cite[\S~II.9]{HMS17}) to arbitrary base varieties. 
The proof relies on an abstract comparison criterion, formalized as \Cref{thm-Beilinson-simpler}, which is based on the gluing formalism for perverse motives.

First of all, we review the construction of Beilinson's gluing functors in the setting of perverse motives, obtained in \cite[\S~3.5]{IM19}.
We then lift Beilinson's gluing formalism for perverse sheaves, as stated in \cite{BeiGlue}, to the motivic level; our presentation is modeled on Morel's note \cite{MorelGlue}. 
In the final part, we describe the promised alternative presentation of perverse motives.
The proof of the main result is preceded by a discussion about Nori motives over function fields, which leads to some auxiliary results complementing those of \cite[\S~6.5]{IM19}.

While \Cref{thm_Mp0=Mp} is the starting point for the construction of \Cref{sect_ETS}, the motivic gluing formalism is not used directly in the rest of the paper.
The reader is therefore invited to skip most of the technical discussion about the gluing functors.

\subsection{Review of the motivic gluing functors}

We start by recalling the relevant results from \cite[\S~3.5]{IM19}. To begin with, fix a $k$-variety $S$ and a global regular function $f \in \Ocal(S)$ with vanishing locus $Z$; let $i: Z \hookrightarrow S$ denote the corresponding closed immersion, and let $j: U \hookrightarrow S$ denote the complementary open immersion. 
Since $j$ is affine by construction, by \cite[\S~2.4]{IM19} there exist canonical exact functors
\begin{equation*}
	j_!, \; j_*: \Mcal(U) \rightarrow \Mcal(S)
\end{equation*}
which mimic the formal properties of the underlying functors on perverse sheaves, namely: they are left and right adjoint to the inverse image functor $j^*: \Mcal(S) \rightarrow \Mcal(U)$, respectively; they are fully faithful; they are intertwined under the motivic Verdier duality functors of \cite[\S~2.8]{IM19}.

As shown in \cite[\S~3.5]{IM19}, the gluing functors for perverse sheaves considered in \cite{BeiGlue} can be canonically lifted to exact functors
\begin{equation*}
	\begin{aligned}
		{^p \Psi_f}: \Mcal(U) \rightarrow \Mcal(Z), & \quad {^p \Phi_f}: \Mcal(S) \rightarrow \Mcal(Z), \\ \quad
		{^p \Xi_f}: \Mcal(S) \rightarrow \Mcal(S), & \quad {^p \Omega_f}: \Mcal(S) \rightarrow \Mcal(S).
	\end{aligned}
\end{equation*}
In the setting of perverse sheaves, the functors ${^p \Psi_f}$ and ${^p \Phi_f}$ are the classical unipotent nearby cycles and vanishing cycles functors (shifted by $-1$, so that they become $t$-exact for the perverse $t$-structures), while the functor ${^p \Xi_f}$ is Beilinson's maximal extension functor (or rather, the composite of the latter with $j^*$). The functor $\Omega_f$ first appeared in Saito's work on mixed Hodge modules \cite{Sai90}.

\begin{rem}
	In \cite{IM19} the functor ${^p \Psi_f}: \Mcal(U) \rightarrow \Mcal(Z)$ is denoted ${^p \mathsf{Log}_f}$, since it comes from the logarithmic specialization system of \cite[\S~3.6]{Ayo07b}.
	We prefer to keep our notation close to that of \cite{BeiGlue} and \cite{MorelGlue}.
\end{rem}

The functors ${^p \Psi_f}$ and ${^p \Xi_f}$ fit into the two short exact sequences of functors $\Mcal(S) \rightarrow \Mcal(S)$
\begin{equation}\label{ses_glue1}
	0 \rightarrow j_! j^* M \rightarrow {^p \Xi_f}(M) \rightarrow i_* {^p \Psi_f}(j^* M)(-1) \rightarrow 0
\end{equation}
and
\begin{equation}\label{ses_glue2}
	0 \rightarrow i_* {^p \Psi_f}(j^* M) \rightarrow {^p \Xi_f}(M) \rightarrow j_* j^* M \rightarrow 0,
\end{equation}
which are intertwined under motivic Verdier duality. 
Thus the composite $j_! j^* M \hookrightarrow {^p \Xi_f}(M) \twoheadrightarrow j_* j^* M$ coincides with the composite $j_! j^* M \xrightarrow{\epsilon} M \xrightarrow{\eta} j_* j^* M$.
There are two further relevant short exact sequences of functors $\Mcal(S) \rightarrow \Mcal(S)$
\begin{equation}\label{ses-deduced-gluey1}
	0 \rightarrow i_* {^p \Psi_f}(j^* M) \rightarrow {^p \Omega_f}(M) \rightarrow M \rightarrow 0
\end{equation}
and
\begin{equation}\label{ses-deduced-gluey2}
	0 \rightarrow j_! j^* M \rightarrow {^p \Omega_f}(M) \rightarrow i_* {^p \Phi_f}(M) \rightarrow 0.
\end{equation}
The functors ${^p \Omega_f}$ and ${^p \Xi_f}$ are also related by a canonical natural transformation of functors $\Mcal(S) \rightarrow \Mcal(S)$
\begin{equation*}
	{^p \Omega_f}(M) \rightarrow {^p \Xi_f}(M),
\end{equation*}
which allows one to combine the exact sequences \eqref{ses_glue2} and \eqref{ses-deduced-gluey1} into a commutative diagram of the form
\begin{equation}\label{dia:Omega_f-Xi_f}
	\begin{tikzcd}
		0 \arrow{r} & i_* {^p \Psi_f}(j^* M) \arrow{r} & {^p \Xi_f}(M) \arrow{r} & j_* j^* M \arrow{r} & 0 \\
		0 \arrow{r} & i_* {^p \Psi_f}(j^* M) \arrow{r} \arrow[equal]{u} & {^p \Omega_f}(M) \arrow{r} \arrow{u} & M \arrow{u}{\eta} \arrow{r} & 0.
	\end{tikzcd}
\end{equation}
All these fact were noted in \cite[\S~3.5]{IM19}. Our first task is to deduce further information about the mutual relations among the gluing functors.

In the proof of the following results, we often need to use the fully faithfulness of the direct image functor $i_*: \Mcal(Z) \rightarrow \Mcal(S)$, proved in \cite[Thm.~4.1]{IM19}.

\begin{lem}\label{lem:OmegaPhi-mot}
	With the above notation, there exists a canonical natural isomorphism between functors $\Mcal(S) \rightarrow \Mcal(S)$
	\begin{equation}\label{Omega_f:ker}
		{^p \Omega_f}(M) = \ker\left\{{^p \Xi_f}(M) \oplus M \xrightarrow{-} j_* j^* M \right\}
	\end{equation}
	and a canonical natural isomorphism between functors $\Mcal(S) \rightarrow \Mcal(Z)$
	\begin{equation}\label{Phi_f:H^0}
		{^p \Phi_f}(M) = i^* H^0[j_! j^* M \xrightarrow{+} {^p \Xi_f}(M) \oplus M \xrightarrow{-} j_* j^* M].
	\end{equation}
\end{lem}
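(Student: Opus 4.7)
The plan is to deduce \eqref{Omega_f:ker} from the commutative diagram \eqref{dia:Omega_f-Xi_f} via a five-lemma comparison of two short exact sequences, and then to obtain \eqref{Phi_f:H^0} by combining \eqref{Omega_f:ker} with the short exact sequence \eqref{ses-deduced-gluey2}.

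For the first isomorphism, let $\alpha: {^p \Xi_f}(M) \twoheadrightarrow j_* j^* M$ denote the surjection in \eqref{ses_glue2} and set
\[
K := \ker\bigl\{{^p \Xi_f}(M) \oplus M \xrightarrow{(\alpha,\, -\eta)} j_* j^* M\bigr\},
\]
namely the fibre product of $\alpha$ and $\eta$ in $\Mcal(S)$. The commutativity of the square of \eqref{dia:Omega_f-Xi_f} involving ${^p \Omega_f}(M) \to {^p \Xi_f}(M)$ and $\eta$ says precisely that the pair $\bigl({^p \Omega_f}(M) \to {^p \Xi_f}(M),\; {^p \Omega_f}(M) \to M\bigr)$ is equalized by $(\alpha, -\eta)$, so it factors through a canonical, natural morphism $\phi_M: {^p \Omega_f}(M) \to K$. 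Pulling back $\alpha$ along $\eta$ realizes $K$ as an extension $0 \to \ker(\alpha) \to K \to M \to 0$, which via \eqref{ses_glue2} reads $0 \to i_* {^p \Psi_f}(j^* M) \to K \to M \to 0$. The morphism $\phi_M$ fits into a morphism of short exact sequences from \eqref{ses-deduced-gluey1} to this one, inducing the identity on the subobject $i_* {^p \Psi_f}(j^* M)$ (by the other commutative square of \eqref{dia:Omega_f-Xi_f}, combined with the vanishing of $i_* {^p \Psi_f}(j^* M) \hookrightarrow {^p \Omega_f}(M) \to M$) and the identity on the quotient $M$ (by construction of $\phi_M$). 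The five lemma then forces $\phi_M$ to be an isomorphism.

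For the second isomorphism, let $C_M^{\bullet}$ denote the three-term complex $j_! j^* M \xrightarrow{u} {^p \Xi_f}(M) \oplus M \xrightarrow{v} j_* j^* M$, where $u = (\iota, \epsilon)$ with $\iota: j_! j^* M \hookrightarrow {^p \Xi_f}(M)$ the inclusion from \eqref{ses_glue1} and $\epsilon: j_! j^* M \to M$ the counit, and where $v = (\alpha, -\eta)$. The composite $v \circ u$ vanishes thanks to the identity $\alpha \circ \iota = \eta \circ \epsilon$ recalled after \eqref{ses_glue2}. By definition $H^0(C_M^{\bullet}) = K/\im(u)$, with $K$ as above. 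Under the isomorphism $\phi_M: {^p \Omega_f}(M) \xrightarrow{\sim} K$, $\im(u)$ corresponds to the image of the canonical inclusion $j_! j^* M \hookrightarrow {^p \Omega_f}(M)$ from \eqref{ses-deduced-gluey2}: chasing the construction of ${^p \Omega_f}$ in \cite[\S~3.5]{IM19}, the composites $j_! j^* M \hookrightarrow {^p \Omega_f}(M) \to {^p \Xi_f}(M)$ and $j_! j^* M \hookrightarrow {^p \Omega_f}(M) \to M$ agree with $\iota$ and $\epsilon$ respectively. Combining with \eqref{ses-deduced-gluey2} we obtain a natural isomorphism $H^0(C_M^{\bullet}) \cong i_* {^p \Phi_f}(M)$, and applying $i^*$, which is a left inverse of $i_*$ by the full faithfulness proven in \cite[Thm.~4.1]{IM19}, yields \eqref{Phi_f:H^0}.

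The only non-formal step I anticipate is the identification of $\im(u) \subset K$ with the image of $j_! j^* M$ in ${^p \Omega_f}(M)$ under $\phi_M$: this will require unwinding the definition of ${^p \Omega_f}$ from \cite[\S~3.5]{IM19} enough to verify that the canonical inclusion $j_! j^* M \hookrightarrow {^p \Omega_f}(M)$ of \eqref{ses-deduced-gluey2} has projections $\iota$ and $\epsilon$ through the two maps out of ${^p \Omega_f}(M)$ featured in \eqref{dia:Omega_f-Xi_f}. Naturality of both isomorphisms in $M$ follows automatically from the naturality of every ingredient invoked (the gluing functors, the short exact sequences \eqref{ses_glue1}--\eqref{ses-deduced-gluey2}, the diagram \eqref{dia:Omega_f-Xi_f}, and the adjunction counits and units for $j_! \dashv j^* \dashv j_*$).
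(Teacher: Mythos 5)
Your proof is correct, and it takes a genuinely different route from the paper's for \eqref{Omega_f:ker}: you factor the pair $\bigl({^p \Omega_f}(M) \to {^p \Xi_f}(M),\ {^p \Omega_f}(M) \to M\bigr)$ through the kernel $K$ by the universal property, exhibit the pullback short exact sequence $0 \to i_* {^p \Psi_f}(j^* M) \to K \to M \to 0$, and compare with \eqref{ses-deduced-gluey1} by the five lemma. The paper instead views \eqref{dia:Omega_f-Xi_f} as a two-row double complex whose total complex is acyclic (both rows being exact), truncates away the first column (an identity map, hence acyclic), and reads off the kernel identification from the acyclic truncated total complex. Both routes are valid and of comparable length; yours avoids the double-complex acyclicity argument at the cost of assembling the morphism of extensions by hand, and your verification that the induced maps on $i_* {^p \Psi_f}(j^* M)$ and on $M$ are identities is exactly what makes the five lemma applicable, so none of those checks can be skipped.

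For \eqref{Phi_f:H^0} the two arguments are essentially the same, and you are right to isolate the one non-formal point: one must check that the mono $j_! j^* M \hookrightarrow {^p \Omega_f}(M)$ of \eqref{ses-deduced-gluey2}, transported along the isomorphism \eqref{Omega_f:ker}, is the map $(\iota,\epsilon)$ into ${^p \Xi_f}(M) \oplus M$. The paper makes this identification implicitly when it rewrites $\coker\{j_! j^* M \to {^p \Omega_f}(M)\}$ as $\coker\{j_! j^* M \xrightarrow{+} \ker\{\cdots\}\}$, and its observation that $j_! j^* M \to {^p \Xi_f}(M)$ is a monomorphism does not by itself pin down the transported map. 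So your explicit flag of this step, which ultimately has to be unwound from the construction of ${^p \Omega_f}$ in \cite[\S~3.5]{IM19}, is if anything slightly more careful than the paper; it is not a gap, since the identification is indeed part of the content of \eqref{dia:Omega_f-Xi_f} and the gluing package being invoked.
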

\begin{proof}
	Regard the commutative diagram \eqref{dia:Omega_f-Xi_f} as a double complex. Since both rows are exact, we deduce that its total complex is exact as well. Hence the total complex of the truncated double complex 
	\begin{equation*}
		\begin{tikzcd}
			0 \arrow{r} & {^p \Xi_f}(M) \arrow{r} & j_* j^* M \arrow{r} & 0 \\
			0 \arrow{r} \arrow[equal]{u} & {^p \Omega_f}(M) \arrow{r} \arrow{u} & M \arrow{u}{\eta} \arrow{r} & 0
		\end{tikzcd}
	\end{equation*}
	is exact as well. This yields the natural isomorphism \eqref{Omega_f:ker}. Moreover, since the arrow $j_! j^* M \rightarrow {^p \Xi_f}(M)$ in the short exact sequence \eqref{ses_glue1} is a monomorphism, the canonical arrow
	\begin{equation*}
		\im\left\{j_! j^* M \xrightarrow{+} {^p \Xi_f}(M) \oplus M \right\} \rightarrow \im\left\{j_! j^* M \rightarrow {^p \Xi_f}(M) \right\}
	\end{equation*}
	is an isomorphism. This yields a natural isomorphism
	\begin{align*}
		i_* {^p \Phi_f}(M) &= \coker\left\{j_! j^* M \rightarrow {^p \Omega_f}(M) \right\} && \textup{by \eqref{ses-deduced-gluey2}}\\
		&= \coker\left\{j_! j^* M \xrightarrow{+} \ker\left\{{^p \Xi_f}(M) \oplus M \xrightarrow{-} j_* j^* M \right\}\right\} && \textup{by \eqref{Omega_f:ker}} \\
		&= H^0[j_! j^* M \xrightarrow{+} {^p \Xi_f}(M) \oplus M \xrightarrow{-} j_* j^* M]. &&
	\end{align*}
	Thus we obtain the natural isomorphism \eqref{Phi_f:H^0} as the composite
	\begin{equation*}
		{^p \Phi_f}(M) = i^* i_* {^p \Phi_f}(M) = i^* H^0[j_! j^* M \xrightarrow{+} {^p \Xi_f}(M) \oplus M \xrightarrow{-} j_* j^* M],
	\end{equation*}
	where the first passage follows from the fully faithfulness of $i_*$.
\end{proof}

The following consequence plays a fundamental role in the construction of motivic inverse images under closed immersions, described in \cite[\S~4.1]{IM19}. We spell it out since it is relevant for our constructions as well.

\begin{cor}
	With the above notation, there exists a canonical isomorphism of functors $\Mcal(Z) \rightarrow \Mcal(Z)$
	\begin{equation}\label{iso:Phi_f-i_*=id}
		{^p \Phi_f}(i_* M) = M.
	\end{equation} 
	In other words, the functor ${^p \Phi_f}$ is canonically left-inverse to $i_*$.
\end{cor}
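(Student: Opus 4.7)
The plan is to apply the formula \eqref{Phi_f:H^0} from the preceding lemma to the input $i_* M$ and observe that all the non-trivial contributions collapse. Specifically, I will substitute $i_* M$ into the diagram
\[
j_! j^* (i_* M) \xrightarrow{+} {^p \Xi_f}(i_* M) \oplus i_* M \xrightarrow{-} j_* j^* (i_* M),
\]
compute $H^0$ of this three-term complex, and then apply $i^*$. The key observations are that $j^* i_* = 0$ in $\Mcal$, a basic fact proved in \cite{IM19} that lifts the corresponding statement for perverse sheaves, and that ${^p \Xi_f}$ vanishes on the essential image of $i_*$.

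First, I would record that $j^* (i_* M) = 0$, so that both outer terms $j_! j^* (i_* M)$ and $j_* j^* (i_* M)$ vanish. Next, I would feed this into the short exact sequence \eqref{ses_glue1} applied to $i_* M$, namely
\[
0 \to j_! j^* (i_* M) \to {^p \Xi_f}(i_* M) \to i_* {^p \Psi_f}(j^* i_* M)(-1) \to 0,
\]
whose outer terms both vanish; hence ${^p \Xi_f}(i_* M) = 0$. Putting everything together, the three-term complex above reduces to the trivial complex $0 \to i_* M \to 0$, whose degree-zero cohomology is simply $i_* M$.

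Finally, using the fully faithfulness of $i_*$ (so that $i^*$ acts as the inverse of $i_*$ on its essential image, exactly as in the concluding step of the proof of \Cref{lem:OmegaPhi-mot}), I obtain
\[
{^p \Phi_f}(i_* M) = i^* \, H^0\bigl[\,0 \to i_* M \to 0\,\bigr] = i^* i_* M = M,
\]
and the isomorphism is canonical because every step in the reduction is. The main subtlety to double-check is the vanishing $j^* i_* = 0$ at the level of perverse Nori motives, but this is built into the construction of $i_*$ in \cite[\S~4.1]{IM19} and is compatible with the analogous vanishing on the underlying perverse sheaves via the faithful exact functor $\iota_S$.
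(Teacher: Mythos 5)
Your proof is correct and follows essentially the same route as the paper: you use one of the short exact sequences (\eqref{ses_glue1}) together with the vanishing $j^* \circ i_* = 0$ to deduce ${^p \Xi_f} \circ i_* = 0$, then substitute into the formula \eqref{Phi_f:H^0} to collapse the three-term complex and conclude via fully faithfulness of $i_*$. The paper even notes that either of \eqref{ses_glue1} or \eqref{ses_glue2} works for the intermediate vanishing, and your explicit choice of \eqref{ses_glue1} is one of the two.
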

\begin{proof}
	Combining any of the two short exact sequences \eqref{ses_glue1} and \eqref{ses_glue2} with the vanishing $j^* \circ i_* = 0$, we deduce the vanishing ${^p \Xi_f} \circ i_* = 0$. Hence we obtain the sought-after isomorphism \eqref{iso:Phi_f-i_*=id} as the composite
	\begin{align*}
		{^p \Phi_f}(i_* M) &= i^* H^0[j_! j^* i_* M \xrightarrow{+} {^p \Xi_f}(i_* M) \oplus i_* M \xrightarrow{-} j_* j^* i_* M] && \textup{by \eqref{Phi_f:H^0}} \\
		&= i^* H^0[0 \rightarrow 0 \oplus i_* M \oplus 0] && \textup{as $j^* \circ i_* = 0$} \\
		&= i^* i_* M = M, 
	\end{align*}
	where the last passage follows again from the fully faithfulness of $i_*$.
\end{proof}

With these results at our disposal, we can lift the classical nilpotent monodromy operator $N_f$, together with its canonical factorization (see \cite[\S\S~2, 6]{MorelGlue}), to the setting of perverse motives.

\begin{constr}\label{constr_canvar}
	Keep the above notation. Consider the composite natural transformation of functors $\Mcal(S) \rightarrow \Mcal(S)$
	\begin{equation}\label{first:N_f}
		i_* {^p \Psi_f}(j^* M) \hookrightarrow {^p \Xi_f}(M) \twoheadrightarrow i_* {^p \Psi_f}(j^* M)(-1),
	\end{equation}
	where the two arrows come from the short exact sequences \eqref{ses_glue1} and \eqref{ses_glue2}, respectively. Since the functor $i_*: \Mcal(Z) \rightarrow \Mcal(S)$ is fully faithful, there is a unique natural transformation of functors $\Mcal(S) \rightarrow \Mcal(Z)$
	\begin{equation}\label{N_f-mot}
		N_f: {^p \Psi_f}(j^* M) \rightarrow {^p \Psi_f}(j^* M)(-1)
	\end{equation}
	whose image under $i_*$ coincides with \eqref{first:N_f}. We look for a factorization of $N_f$ through the functor ${^p \Phi_f}$.
	To this end, note that the arrow $i_* {^p \Psi_f}(j^* M) \hookrightarrow {^p \Xi_f}(M)$ factors through ${^p \Omega_f}(M) = \ker\left\{{^p \Xi_f}(M) \oplus M \xrightarrow{-} j_* j^* M \right\}$, because we have
	\begin{equation*}
		\Hom_{\Mcal(S)}(i_* {^p \Psi_f}(j^* M),j_* j^* M) = \Hom_{\Mcal(U)}(j^* i_* {^p \Psi_f}(j^* M),j^* M) = 0
	\end{equation*}
	as $j^* \circ i_* = 0$. Hence we obtain a canonical natural transformation of functors $\Mcal(S) \rightarrow \Mcal(Z)$
	\begin{equation}\label{can_f-mot}
		\can_f: {^p \Psi_f}(j^* M) \rightarrow {^p \Phi_f}(M)
	\end{equation}
	by declaring that its image under the fully faithful functor $i_*: \Mcal(Z) \rightarrow \Mcal(S)$ be the induced arrow
	\begin{equation*}
		\begin{tikzcd}
			i_* {^p \Psi_f}(j^* M) \arrow{d} &  i_* {^p \Phi_f}(M). \arrow[equal]{d} \\
			\ker\left\{{^p \Xi_f}(M) \oplus M \xrightarrow{-} j_* j^* M \right\} \arrow[two heads]{r} & H^0[j_! j^* M \xrightarrow{+} {^p \Xi_f}(M) \oplus M \xrightarrow{-} j_* j^* M] 
		\end{tikzcd}
	\end{equation*}
	Dually, note that the arrow ${^p \Xi_f}(M) \twoheadrightarrow i_* {^p \Psi_f}(M)(-1)$ is trivial on $\im\left\{j_! j^* M \xrightarrow{+} {^p \Xi_f}(M) \oplus M \right\}$, because we have
	\begin{equation*}
		\Hom_{\Mcal(S)}(j_! j^* M,i_* {^p \Psi_f}(M)) = \Hom_{\Mcal(U)}(j^* M, j^* i_* {^p \Psi_f}(M)) = 0
	\end{equation*}
	as $j^* \circ i_* = 0$. Hence we obtain a canonical natural transformation of functors $\Mcal(S) \rightarrow \Mcal(Z)$
	\begin{equation}\label{var_f-mot}
		\var_f: {^p \Phi_f}(M) \rightarrow i_* {^p \Psi_f}(j^* M)(-1)
	\end{equation}
	by declaring that its image under the fully faithful functor $i_*: \Mcal(Z) \rightarrow \Mcal(S)$ be the induced arrow
	\begin{equation*}
		\begin{tikzcd}
			i_* {^p \Phi_f}(M) \arrow[equal]{d} & i_* {^p \Psi_f}(j^* M). \\
			H^0[j_! j^* M \xrightarrow{+} {^p \Xi_f}(M) \oplus M \xrightarrow{-} j_* j^* M] \arrow[hook]{r} & \coker\left\{j_! j^* M \xrightarrow{+} {^p \Xi_f}(M) \oplus M \right\} \arrow{u}
		\end{tikzcd}
	\end{equation*}
	By construction, we have the equality of natural transformations
	\begin{equation}\label{varcanN}
		\var_f \circ \can_f = N_f: {^p \Psi_f}(j^* M) \rightarrow {^p \Psi_f}(j^* M)(-1).
	\end{equation}
\end{constr}

Before stating the main gluing results, it is convenient to define one last short exact sequence of gluing functors.

\begin{constr}
	Keep the above notation. We have a canonical natural isomorphism between functors $\Mcal(U) \rightarrow \Mcal(Z)$
	\begin{equation}\label{iso_Phi_j=Psi}
		{^p \Phi_f}(j_! M) = {^p \Psi_f}(M)
	\end{equation}
	given by the composite
	\begin{align*}
		{^p \Phi_f}(j_! M) &= i^* H^0[j_! j^* (j_! M) \xrightarrow{+} {^p \Xi_f}(j_! M) \oplus j_! M \xrightarrow{-} j_* j^* (j_! M)] && \textup{by \eqref{Phi_f:H^0}} \\
		&= i^* H^0[j_! M \xrightarrow{+} {^p \Xi_f}(j_! M) \oplus j_! M \xrightarrow{-} j_* j^* (j_! M)] \\
		&= i^* \ker\left\{{^p \Xi_f}(j_! M) \rightarrow j_* j^* (j_! M) \right\} \\
		&= i^* i_* {^p \Psi_f}(j^* j_! M) && \textup{by \eqref{ses_glue2}} \\
		&= {^p \Psi_f}(M),
	\end{align*}
	where the last passage follows from the fully faithfulness of $i_*$ and of $j_!$.
	Now, composing the short exact sequence \eqref{ses_glue1} with the exact functor $j_!$ on the right and with the exact functor ${^p \Phi_f}$ on the left, and using the isomorphisms \eqref{iso:Phi_f-i_*=id} and \eqref{iso_Phi_j=Psi} together with the fully faithfulness of $j_!$, we obtain a canonical short exact sequence of functors $\Mcal(U) \rightarrow \Mcal(Z)$
	\begin{equation}\label{ses_glue3}
		0 \rightarrow {^p \Psi_f}(M) \rightarrow {^p \Phi_f} {^p \Xi_f}(j_! M) \rightarrow {^p \Psi_f}(M)(-1) \rightarrow 0.
	\end{equation}
\end{constr}

\subsection{Motivic gluing formalism and applications}

We are ready to lift Beilinson's gluing construction to the motivic setting. We keep the notation introduced in the previous subsection: we work over a fixed $k$-variety $S$ and consider a global regular function $f \in \Ocal(S)$, with vanishing locus $i: Z \hookrightarrow S$ and non-vanishing locus $j: U \hookrightarrow S$.

\begin{nota}
	We introduce the category of \textit{motivic gluing data} $\Mcal(S,f)$ defined as follows:
	\begin{itemize}
		\item objects are quadruples $(M_U, M_Z; u,v)$ consisting of
		\begin{itemize}
			\item two objects $M_U \in \Mcal(U)$, $M_Z \in \Mcal(Z)$,
			\item two morphisms $u: {^p \Psi_f}(M_U) \rightarrow M_Z$ and $v: M_Z \rightarrow {^p \Psi_f}(M_U)(-1)$ satisfying the relation
			\begin{equation*}
				v \circ u = N_f(M_U): {^p \Psi_f}(M_U) \rightarrow {^p \Psi_f}(M_U)(-1);
			\end{equation*}
		\end{itemize}
		\item a morphism $\alpha: (M_U, M_Z; u,v) \rightarrow (M'_U, M'_Z; u',v')$ is the datum of two morphisms $\alpha_U: M_U \rightarrow M'_U$ and $\alpha_Z: M_Z \rightarrow M'_Z$ such that the diagram in $\Mcal(Z)$
		\begin{equation*}
			\begin{tikzcd}
				{^p \Psi_f}(M_U) \arrow{r}{u} \arrow{d}{{^p \Psi_f}(\alpha_U)} & M_Z \arrow{r}{v} \arrow{d}{\alpha_Z} & {^p \Psi_f}(M_U)(-1) \arrow{d}{{^p \Psi_f}(\alpha_U)(-1)} \\
				{^p \Psi_f}(M'_U) \arrow{r}{u'} & M'_Z \arrow{r}{v'} & {^p \Psi_f}(M'_U)(-1)
			\end{tikzcd}
		\end{equation*}
		is commutative.
	\end{itemize}
\end{nota}

The following result is the motivic lifting of \cite[Prop.~3.1]{BeiGlue}. The proof that we present is merely an adaptation of the detailed argument given in \cite[Thm.~8.1]{MorelGlue}.

\begin{prop}\label{prop-Bei-glue}
	Keep the above notation. Then the functors
	\begin{equation*}
		\delta_{S,f}: \Mcal(S) \rightarrow \Mcal(S,f), \qquad \gamma_{S,f}: \Mcal(S,f) \rightarrow \Mcal(S)
	\end{equation*}
	defined by the formulae
	\begin{equation*}
		\delta_{S,f}(M) := (j^* M, {^p \Phi_f}(M); \can_f, \var_f)
	\end{equation*}
	and
	\begin{equation*}
		\gamma_{S,f}(M_U, M_Z; u,v) := H^0[i_* {^p \Psi_f}(M_U) \xrightarrow{+} {^p \Xi_f}(j_! M_U) \oplus i_* M_Z \xrightarrow{-} i_* {^p \Psi_f}(M_U)(-1)]
	\end{equation*}
	are canonically mutually quasi-inverse equivalences.
\end{prop}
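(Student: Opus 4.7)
The plan is to verify well-definedness of both functors, construct canonical natural transformations $\alpha_M: \gamma_{S,f}(\delta_{S,f}(M)) \to M$ and $\beta_{(M_U, M_Z; u, v)}: (M_U, M_Z; u, v) \to \delta_{S,f}(\gamma_{S,f}(M_U, M_Z; u, v))$, and then check these are isomorphisms. The overall strategy mirrors Beilinson's classical argument for perverse sheaves \cite[Prop.~3.1]{BeiGlue} as presented in \cite[Thm.~8.1]{MorelGlue}, exploiting the fully faithfulness of $i_*$, $j_!$, $j_*$ together with the characterizations \eqref{Omega_f:ker}, \eqref{Phi_f:H^0} and the gluing short exact sequences.

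First, well-definedness. For $\delta_{S,f}$, the required relation $\var_f \circ \can_f = N_f$ is precisely Equation \eqref{varcanN}, and functoriality is automatic. For $\gamma_{S,f}$, one verifies that the defining two-term sequence is a complex: the composite of $i_* {^p \Psi_f}(M_U) \hookrightarrow {^p \Xi_f}(j_! M_U)$ with ${^p \Xi_f}(j_! M_U) \twoheadrightarrow i_* {^p \Psi_f}(M_U)(-1)$ equals $i_*(N_f(M_U))$ by \eqref{first:N_f} applied to $j_! M_U$ (using the identification $j^* j_! = \id$ from the fully faithfulness of $j_!$), while the composite through $i_* M_Z$ equals $i_*(v \circ u) = i_*(N_f(M_U))$ by the constraint imposed on morphisms of motivic gluing data. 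With the signs $+$ and $-$, the total composite vanishes.

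Next, the construction of the natural transformations. For $\beta$: since $j^*$ and ${^p \Phi_f}$ are exact, they commute with the $H^0$-construction. Applying $j^*$ to the complex defining $\gamma_{S,f}(M_U, M_Z; u, v)$ and using $j^* \circ i_* = 0$ together with the identification $j^* {^p \Xi_f}(j_! M_U) = j^* j_! M_U = M_U$ (obtained by applying $j^*$ to \eqref{ses_glue1} for $j_! M_U$) yields $j^*(\gamma_{S,f}(\ldots)) = M_U$. Applying ${^p \Phi_f}$ and using ${^p \Phi_f} \circ i_* = \id$ from \eqref{iso:Phi_f-i_*=id} together with the description of ${^p \Phi_f} {^p \Xi_f}(j_! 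M_U)$ via \eqref{ses_glue3} yields ${^p \Phi_f}(\gamma_{S,f}(\ldots)) = M_Z$; the compatibility with $u$ and $v$ is then built in via the explicit constructions \eqref{can_f-mot}, \eqref{var_f-mot}. For $\alpha$: combine the kernel/$H^0$ characterizations \eqref{Omega_f:ker}, \eqref{Phi_f:H^0} with the commutative diagram \eqref{dia:Omega_f-Xi_f} and the natural map ${^p \Xi_f}(j_! j^* M) \to {^p \Xi_f}(M)$ induced by the counit $j_! j^* M \to M$ to realise $M$ canonically as the required subquotient.

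To show that $\alpha$ and $\beta$ are isomorphisms, the cleanest route is to apply the faithful exact functor $\iota_S$ and its natural extension $\Mcal(S, f) \to \Perv(S, f)$ (the latter defined in terms of perverse-sheaf gluing data), both of which reflect isomorphisms. The motivic gluing functors from \cite[\S~3.5]{IM19} were constructed precisely so as to lift the classical perverse sheaf gluing functors along $\iota_S$, hence $\alpha$ and $\beta$ map to the analogous natural transformations at the perverse sheaf level, which are isomorphisms by Beilinson's theorem. The main technical obstacle is expected to be the construction of $\alpha$: the identification of $M$ with the $H^0$-expression defining $\gamma_{S,f}(\delta_{S,f}(M))$ requires a somewhat intricate diagram chase interleaving \eqref{ses_glue1}, \eqref{ses_glue2}, and \eqref{dia:Omega_f-Xi_f}, with careful tracking of signs to match the sign convention of the two-term complex defining $\gamma_{S,f}$. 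Naturality in $M$ and in $(M_U, M_Z; u, v)$ follows formally from the naturality of all constituent constructions.
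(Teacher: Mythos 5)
Your proposal is essentially correct and mirrors the paper's computations for one half of the argument, but it diverges from the paper's strategy in how the natural transformations are verified to be isomorphisms, so a comparison is warranted. Your computation of $j^*\gamma_{S,f}(M_U,M_Z;u,v) = M_U$ and ${^p\Phi_f}\gamma_{S,f}(M_U,M_Z;u,v) = M_Z$ via $j^*\circ i_* = 0$, the fully faithfulness of $j_!$, \eqref{iso:Phi_f-i_*=id} and \eqref{ses_glue3} is exactly what the paper does for $\delta_{S,f}\circ\gamma_{S,f}\simeq\id$. For $\gamma_{S,f}\circ\delta_{S,f}\simeq\id$ the paper is more specific: it introduces the auxiliary object $O_f(M):=\ker\{{^p\Xi_f}(M)\oplus i_*{^p\Phi_f}(M)\to i_*{^p\Psi_f}(j^*M)(-1)\}$, writes the target as $\coker\{i_*{^p\Psi_f}(j^*M)\to O_f(M)\}$, and shows ${^p\Omega_f}(M)\to O_f(M)$ is an isomorphism via the Snake Lemma applied to a two-row diagram whose rows are controlled by \eqref{ses_glue1} and \eqref{ses-deduced-gluey2}; then \eqref{ses-deduced-gluey1} finishes. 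Your ``somewhat intricate diagram chase'' is pointing in the same direction, but the Snake-Lemma formulation is where the paper makes it clean, and you would need to flesh this out to make it rigorous. The more substantive difference is your proposal to verify invertibility by applying $\iota_S$ (and an extension to gluing-data categories) and citing Beilinson's classical theorem. This is a legitimate shortcut that the paper does not take; the paper proves invertibility entirely within the motivic framework. Your route works because faithful exact functors between abelian categories reflect isomorphisms, but it carries a hidden obligation: you must check that your motivic $\alpha$ and $\beta$ actually map under $\iota_S$ (and its extension $\Mcal(S,f)\to\Perv(S,f)$) to the classical Beilinson natural transformations, which requires confirming that the motivic gluing functors and short exact sequences of \cite[\S~3.5]{IM19} are strictly compatible with the classical ones under $\iota$ — true, but not free. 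The paper's direct verification avoids this bookkeeping at the cost of the explicit Snake-Lemma argument; your approach is arguably more conceptual and better displays the role of the universal property, but it is not shorter once the compatibility checks are written out.
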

\begin{proof}
	The fact that $\delta_{S,f}$ is well-defined follows from the relation \eqref{varcanN}.
	Let us construct the natural isomorphism $\delta_{S,f} \circ \gamma_{S,f} \simeq \id_{\Mcal(S,f)}$. Given an object $(M_U, M_Z; u,v) \in \Mcal(S,f)$, write the object $\delta_{S,f} \gamma_{S,f}(M_U, M_Z; u,v) \in \Mcal(S,f)$ as a quadruple $(M'_U, M'_Z; u',v')$. We have a canonical isomorphism
	\begin{align*}
		M'_U &:= j^* \gamma_{S,f}(M_U, M_Z; u,v) \\
		&:= j^* H^0[i_* {^p \Psi_f}(M_U) \xrightarrow{+} {^p \Xi_f}(j_! M_U) \oplus i_* M_Z \xrightarrow{-} i_* {^p \Psi_f}(M_U)(-1)] \\
		&= H^0[j^* i_* {^p \Psi_f}(M_U) \xrightarrow{+} j^* {^p \Xi_f}(j_! M_U) \oplus j^* i_* M_Z \xrightarrow{-} j^* i_* {^p \Psi_f}(M_U)(-1)] \\
		&= H^0[0 \rightarrow j^* {^p \Xi_f}(j_! M_U) \oplus 0 \rightarrow 0] && \textup{as $j^* \circ i_* = 0$} \\
		&= j^* {^p \Xi_f}(j_! M_U) \\
		&= j^* j_! M_U && \textup{by \eqref{ses_glue1}} \\
		&= M_U, \tag*{}
	\end{align*}
	where the last passage follows from the fully faithfulness of $j_!$.
	Similarly, we have a canonical isomorphism
	\begin{align*}
		M'_Z &:= {^p \Phi_f} \gamma_{S,f}(M_U, M_Z; u,v) \\
		&:= {^p \Phi_f} H^0[i_* {^p \Psi_f}(M_U) \xrightarrow{+} {^p \Xi_f}(j_! M_U) \oplus M_Z \xrightarrow{-} i_* {^p \Psi_f}(M_U)(-1)] \\
		&= H^0[{^p \Phi_f} i_* {^p \Psi_f}(M_U) \xrightarrow{+} {^p \Phi_f} {^p \Xi_f}(j_! M_U) \oplus {^p \Phi_f} i_* M_Z \xrightarrow{-} {^p \Phi_f} i_* {^p \Psi_f}(M_U)(-1)] \\
		&= H^0[{^p \Psi_f}(M_U) \xrightarrow{+} {^p \Phi_f} {^p \Xi_f}(j_! M_U) \oplus M_Z \xrightarrow{-} {^p \Psi_f}(M_U)(-1)] && \textup{by \eqref{iso:Phi_f-i_*=id}} \\
		&= M_Z. && \textup{by \eqref{ses_glue3}}
	\end{align*}
	The fact that these two isomorphisms glue to an isomorphism in $\Mcal(S,f)$ follows formally from the construction; we omit the details.
	
	In the other direction, let us construct the natural isomorphism $\gamma_{S,f} \circ \delta_{S,f} \simeq \id_{\Mcal(S)}$. Given an object $M \in \Mcal(S)$, we have
	\begin{equation*}
		\begin{aligned}
			\delta_{S,f} \gamma_{S,f}(M) &:= H^0[i_* {^p \Psi_f}(j^* M) \xrightarrow{+} {^p \Xi_f}(M) \oplus i_* {^p \Phi_f}(M) \xrightarrow{-} i_* {^p \Psi_f}(j^* M)(-1)] = \\
			&= \coker\left\{i_* {^p \Psi_f}(j^* M) \rightarrow O_f(M)\right\},
		\end{aligned}
	\end{equation*}
	where, for notational convenience, we have set
	\begin{equation*}
		O_f(M) := \ker\left\{{^p \Xi_f}(M) \oplus i_* {^p \Phi_f}(M) \xrightarrow{-} i_* {^p \Psi_f}(j^* M)(-1)\right\}.
	\end{equation*}
	Applying the Snake Lemma to the commutative diagram with exact rows
	\begin{equation*}
		\begin{tikzcd}
			0 \arrow{r} & {^p \Omega_f}(M) \arrow{r}{+} \arrow{d} & {^p \Xi_f}(M) \oplus {^p \Omega_f}(M) \arrow{r}{-} \arrow{d} & {^p \Xi_f}(M) \arrow{r} \arrow{d} & 0 \\
			0 \arrow{r} & O_f(M) \arrow{r}{+}  & {^p \Xi_f}(M) \oplus i_* {^p \Phi_f}(M) \arrow{r}{-}  & i_* {^p \Psi_f}(j^* M)(-1) \arrow{r}  & 0,
		\end{tikzcd}
	\end{equation*}
	and using the short exact sequences \eqref{ses_glue1} and \eqref{ses-deduced-gluey2}, we obtain the vanishing
	\begin{equation*}
		\ker\left\{{^p \Omega_f}(M) \rightarrow O_f(M)\right\} = 0 = \coker\left\{{^p \Omega_f}(M) \rightarrow O_f(M)\right\},
	\end{equation*}
	from which it follows that the canonical arrow ${^p \Omega_f}(M) \rightarrow O_f(M)$ is an isomorphism. Hence we obtain a canonical isomorphism
	\begin{equation*}
		\delta_{S,f} \gamma_{S,f}(M) = \coker\left\{i_* {^p \Psi_f}(j^* M) \rightarrow O_f(M)\right\}
		= \coker\left\{i_* {^p \Psi_f}(j^* M) \rightarrow {^p \Omega_f}(M)\right\} = M,
	\end{equation*}
	where the last passage follows from the short exact sequence \eqref{ses-deduced-gluey1}.
\end{proof}

As a consequence of the motivic gluing formalism, we can now state an abstract comparison criterion for systems of subcategories stable under the gluing functors.

For sake of clarity, it is convenient to introduce a suitable notion of cofinality for inclusions of fibered categories over a cofiltered small category:
\begin{defn}\label{defn:w-local}
	Let $\Ucal$ be a cofiltered small category. Given a $\Ucal$-fibered category $\Ccal$ and a $\Ucal$-fibered subcategory $\Ccal^0$, we say that the inclusion of $\Ucal$-fibered categories $\Ccal^0 \subset \Ccal$ is \textit{cofinal} if the canonical functor
	\begin{equation*}
		\twocolim_{U \in \Ucal^{op}} \Ccal^0(U) \rightarrow \twocolim_{U \in \Ucal^{op}} \Ccal(U)
	\end{equation*}
	is an equivalence.
\end{defn}

For every $k$-variety $S$, we let $\Sm\Op(S)$ denote the poset of smooth Zariski-dense open subsets of $S$. 
We can state our abstract comparison criterion as follows:

\begin{prop}\label{thm-Beilinson-simpler}
	Suppose that we are given, for every $k$-variety $S$, a (not necessarily full) abelian subcategory $\Mcal^0(S)$ of $\Mcal(S)$, in such a way that the following conditions are satisfied:
	\begin{enumerate}
		\item[(i)] As $S$ varies, the categories $\Mcal^0(S)$ are stable under the following exact functors:
		\begin{enumerate}
			\item for every affine open immersion $j: U \hookrightarrow S$, the inverse image functor $j^*$ as well as the direct image functors $j_!$ and $j_*$;
			\item for every closed immersion $i: Z \hookrightarrow S$, the direct image functor $i_*$;
			\item for every $k$-variety $S$, the Tate twist functor $(-)(-1)$;
			\item for every $k$-variety $S$ and every $f \in \Ocal(S)$, the functors ${^p \Psi_f}$, ${^p \Phi_f}$, ${^p \Xi_f}$ and ${^p \Omega_f}$.
		\end{enumerate}
		\item[(ii)] For every affine $k$-variety $S$ and every $f \in \Ocal(S)$, the natural transformations in the exact sequences \eqref{ses_glue1}, \eqref{ses_glue2}, \eqref{ses-deduced-gluey1} and \eqref{ses-deduced-gluey2} restrict to natural transformations of functors $\Mcal^0(S) \rightarrow \Mcal^0(S)$.
		\item[(iii)] For every affine $k$-variety $S$, the natural inclusion of $\Sm\Op(S)$-fibered categories $\Mcal^0 \subset \Mcal$ is cofinal in the sense of \Cref{defn:w-local}.
	\end{enumerate}
	Then, for every $k$-variety $S$, the inclusion $\Mcal^0(S) \subset \Mcal(S)$ is in fact an equivalence.
\end{prop}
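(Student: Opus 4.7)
The natural approach is a Noetherian induction on $\dim S$, with the inductive step powered by the motivic gluing equivalence of \Cref{prop-Bei-glue}. One may assume $S$ is irreducible, otherwise splitting into components. In the base case $\dim S = 0$ we have $S = \Spec(L)$ for a finite extension $L/k$, and the poset $\Sm\Op(S)$ reduces to the one-element set $\{S\}$, so hypothesis (iii) yields $\Mcal^0(S) = \Mcal(S)$ at once.

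For the affine inductive step, take $M \in \Mcal(S)$. By hypothesis (iii), there exists a smooth dense open $j_0: U_0 \hookrightarrow S$ such that $j_0^* M \in \Mcal^0(U_0)$ (up to isomorphism). Pick any nonzero $f \in \Ocal(S)$ vanishing on $S \setminus U_0$; then the principal open $j: U := S \setminus Z(f) \hookrightarrow S$ sits inside $U_0$ via an affine localisation, so (i)(a) still gives $j^* M \in \Mcal^0(U)$. Let $i: Z := Z(f) \hookrightarrow S$ be the complementary closed immersion; since $\dim Z < \dim S$, induction yields $\Mcal^0(Z) = \Mcal(Z)$ as a full equality. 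Via \Cref{prop-Bei-glue},
\begin{equation*}
	M \simeq H^0\bigl[i_* {^p \Psi_f}(j^* M) \xrightarrow{+} {^p \Xi_f}(j_! j^* M) \oplus i_* {^p \Phi_f}(M) \xrightarrow{-} i_* {^p \Psi_f}(j^* M)(-1)\bigr].
\end{equation*}
All three entries belong to $\Mcal^0(S)$: starting from $j^* M \in \Mcal^0(U)$ and ${^p \Phi_f}(M) \in \Mcal(Z) = \Mcal^0(Z)$, repeated use of the stability hypotheses in (i) transfers them through $j_!$, ${^p \Xi_f}$, ${^p \Psi_f}$, $i_*$, and the Tate twist. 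The two connecting morphisms either come from the natural transformations in \eqref{ses_glue1}--\eqref{ses_glue2} applied to $j_! j^* M \in \Mcal^0(S)$---morphisms in $\Mcal^0(S)$ by (ii)---or coincide with $i_* \can_f$ and $i_* \var_f$, which lie in $\Mcal^0(S)$ because $\can_f$ and $\var_f$ are morphisms in $\Mcal(Z) = \Mcal^0(Z)$ by induction and $i_*$ preserves $\Mcal^0$ by (i)(b). Since $\Mcal^0(S)$ is an abelian subcategory of $\Mcal(S)$, it is stable under the resulting $H^0$, whence $M \in \Mcal^0(S)$. The same shrink-and-glue procedure applied to a morphism $\alpha: M \to M'$ between objects of $\Mcal^0(S)$---using (iii) to arrange $j^* \alpha \in \Mcal^0(U)$ after possibly further shrinking---shows that $\Mcal^0(S) \subset \Mcal(S)$ is full, so the inclusion is an equivalence.

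The principal obstacle is the passage from affine $S$ to arbitrary quasi-projective $S$, since (ii) and (iii) are stated only over affine bases and \Cref{prop-Bei-glue} requires the closed complement to be cut out by a single global regular function. The plan here is to select a dense affine open $j: U \hookrightarrow S$, which exists because $S$ is quasi-projective and is an affine morphism since $S$ is separated; then the affine case just proved gives $\Mcal^0(U) = \Mcal(U)$, whence $j_! j^* M, j_* j^* M \in \Mcal^0(S)$ by (i)(a), and $\Mcal^0(Z) = \Mcal(Z)$ for $Z := S \setminus U$ by induction. One then recovers $M \in \Mcal^0(S)$ by patching the local gluing equivalences coming from an affine cover refining $S$, using (i)(b) to transport closed pushforwards across the patches. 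This patching---tracking coherences across the cover together with the reconstruction of $M$ from its inverse images on the affine charts---is expected to be the technically most delicate part of the argument, although the essential conceptual inputs (the affine gluing equivalence and the inductive control on both $U$ and $Z$) are already in place.
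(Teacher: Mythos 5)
Your affine Noetherian induction matches the paper's Step~2 essentially verbatim: the zero-dimensional base case via hypothesis (iii), the choice of $f$ with smooth dense affine principal open $D(f)$ on which $M$ restricts into $\Mcal^0$, the gluing reconstruction from \Cref{prop-Bei-glue}, the stability observations from (i), the morphism analysis via (ii) and fullness of $i_*$ after induction on $V(f)$. That part is correct, and the difference between ${^p\Xi_f}(M)$ and ${^p\Xi_f}(j_!j^*M)$ is harmless since ${^p\Xi_f}$ only sees $j^*(-)$.

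The gap is in the passage from affine to general $S$, and you are right to flag it as the unfinished part — but your sketch is aimed in the wrong direction and the claim that ``the essential conceptual inputs are already in place'' is not justified. Choosing a single dense affine open $U$, controlling its closed complement $Z$ by induction, and then ``patching local gluing equivalences'' does not work as stated: \Cref{prop-Bei-glue} needs the closed complement to be $V(f)$ for a \emph{global} $f \in \Ocal(S)$, and on a non-affine $S$ a general dense affine $U$ has complement which is not a principal closed subset, so there is no global gluing datum to patch. The paper avoids the gluing formalism entirely in this step. It instead runs the non-affine reduction \emph{first}, with no induction at all: choose a finite affine cover $\{u_r: U_r \hookrightarrow S\}$ and use the exact sequence of exact functors
\begin{equation*}
	\bigoplus_{r,s} u_{r,s,!}\, u_{r,s}^* M \longrightarrow \bigoplus_{r} u_{r,!}\, u_r^* M \longrightarrow M \longrightarrow 0,
\end{equation*}
where $u_{r,s}: U_r \cap U_s \hookrightarrow S$. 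Once the affine case is known, the two left-hand terms land in (the essential image of) $\Mcal^0(S)$ by condition (i)(a) alone, since each $u_r^*M$ lies in $\Mcal^0(U_r)$ and each $u_{r,!}$ preserves $\Mcal^0$; and the left arrow is built from co-unit maps $u^{(r)}_{s,!}u^{(r),*}_s u_r^* M \to u_r^*M$ which live in $\Mcal^0(U_r)$ and are then pushed forward. So $M$ is realised as a cokernel inside $\Mcal^0(S)$, giving both essential surjectivity and fullness at once. You should replace your final paragraph with this \v{C}ech-descent argument (which needs only (i)(a), not (ii), (iii), or the gluing functors) and, for cleanliness, carry it out \emph{before} the Noetherian induction, so that the induction lives purely in $\Aff_k$.
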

\begin{proof}
	First of all, we reduce the proof to the affine case:
	assume that the inclusion is known to be an equivalence for every affine $k$-variety, and let us show that the same conclusion holds for arbitrary $k$-varieties. 
	This is just a standard Zariski-descent argument, but we spell out the details for the reader's convenience. 
	So fix a $k$-variety $S$, and choose a finite affine open covering $\left\{u_r: U_r \hookrightarrow S \right\}_{r \in I}$; 
	for every $r,s \in I$, form the Cartesian diagram
	\begin{equation*}
		\begin{tikzcd}
			U_r \cap U_s \arrow{r}{u_s^{(r)}} \arrow{d}{u_r^{(s)}} & U_r \arrow{d}{u_r} \\
			U_s \arrow{r}{u_s} & S,
		\end{tikzcd}
	\end{equation*}
	and let $u_{r,s}$ denote the composite open immersion $U_r \cap U_s \hookrightarrow S$. 
	Note that each $u_r$, each $u_{r}^{(s)}$ and each $u_{r,s}$ is affine, because $S$ is separated over $k$ by definition. Consider the usual exact sequence of exact functors $\Mcal(S) \rightarrow \Mcal(S)$
	\begin{equation*}
		\bigoplus_{r,s \in I} u_{r,s,!} u_{r,s}^* M \rightarrow \bigoplus_{r \in I} u_{r,!} u_r^* M \rightarrow M \rightarrow 0.
	\end{equation*}
	Note that the functors $\bigoplus_{r,s \in I} u_{r,s,!} u_{r,s}^*$ and $\bigoplus_{r \in I} u_{r,!} u_r^*$ take values in the essential image of $\Mcal^0(S)$ by condition (i) in the statement, since, by our assumption, the functors $u_{r,s}^*$ and $u_r^*$ take values in the essential image of $\Mcal^0(U_r \cap U_s)$ and $\Mcal^0(U_r)$, respectively. We claim that the natural transformation of functors $\Mcal(S) \rightarrow \Mcal(S)$ 
	\begin{equation*}
		\bigoplus_{r,s \in I} u_{r,s,!} u_{r,s}^* M \rightarrow \bigoplus_{r \in I} u_{r,!} u_r^* M
	\end{equation*} 
	takes values in $\Mcal^0(S)$ as well; this will immediately imply that the inclusion of abelian categories $\Mcal^0(S) \subset \Mcal(S)$ is both essentially surjective and fully faithful, thus an equivalence.
	To prove the claim, it suffices to recall that, for every $r,s \in I$, the arrow $u_{r,s,!} u_{r,s}^* M \rightarrow u_{r,!} u_r^* M$ is nothing but the composite
	\begin{equation*}
		u_{r,s,!} u_{r,s}^* M = u_{r,!} u_{s,!}^{(r)} u_s^{(r),*} u_r^* M \xrightarrow{\epsilon} u_{r,!} u_r^* M,
	\end{equation*}
	and to note the second arrow belongs to $\Mcal^0(S)$ by condition (i), since the arrow $u_{s,!}^{(r)} u_s^{(r),*} u_r^* M \xrightarrow{\epsilon} u_r^* M$ belongs to $\Mcal^0(U_r)$ by our assumption. 
	
	This reduces the proof to the affine case.
	Now we show that the inclusion $\Mcal^0(S) \subset \Mcal(S)$ is an equivalence if $S$ is affine. 
	We argue by Noetherian induction on $S$, starting from the case where $\dim(S) = 0$. 
	The base step follows directly from hypothesis (iii), since in this case the only object of $\Sm\Op(S)$ is $S$ itself. 
	For the inductive step, assume that $\dim(S) > 0$ and that the result is known to hold for all proper closed subvarieties $Z$ of $S$ (this is legitimate, since every such $Z$ is itself affine).
	
	Let us first show that the inclusion $\Mcal^0(S) \subset \Mcal(S)$ is essentially surjective. 
	Given an object $M \in \Mcal(S)$, choose any regular function $f \in \Ocal(S)$ with smooth Zariski-dense non-vanishing locus $D(f)$ such that the object $M|_{D(f)} \in \Mcal(D(f))$ belongs to $\Mcal^0(D(f))$: 
	the existence of such a function follows from condition (iii) in the statement (using the fact that, as $f$ varies in $\Ocal(S)$, the distinguished open subsets $D(f)$ form a basis for the Zariski topology of the affine variety $S$). 
	Let $j: D(f) \hookrightarrow S$ and $i: V(f) \hookrightarrow S$ denote the corresponding open and closed immersions. 
	By \Cref{prop-Bei-glue}, we have a canonical isomorphism in $\Mcal(S)$
	\begin{equation*}
		M = \gamma_{S,f} \delta_{S,f}(M) := H^0[i_* {^p \Psi_f}(j^* M) \xrightarrow{+} {^p \Xi_f}(M) \oplus i_* {^p \Phi_f}(M) \xrightarrow{-} i_* {^p \Psi_f}(j^* M)(-1)].
	\end{equation*}
	We claim that the complex on the right is isomorphic to a complex in $\Mcal^0(S)$; 
	this will imply that $M$ belongs to the essential image of the abelian subcategory $\Mcal^0(S)$ inside $\Mcal(S)$. 
	In order to prove the claim, it suffices to note that:
	\begin{itemize}
		\item[–] The objects $i_* {^p \Psi_f}(j^* M)$, ${^p \Xi_f}(M)$, $i_* {^p \Psi_f}(j^* M)(-1)$ belong to the essential image of $\Mcal^0(S)$ by condition (i), since $j^* M$ lies in the essential image of $\Mcal^0(D(f))$ by construction. 
		\item[–] The object $i_* {^p \Phi_f}(M)$ belongs to $\Mcal^0(S)$ by condition (i), since the inclusion $\Mcal^0(V(f)) \subset \Mcal(V(f))$ is essentially surjective by inductive hypothesis.
		\item[–] The arrows $i_* {^p \Psi_f}(j^* M) \rightarrow {^p \Xi_f}(M)$ and ${^p \Xi_f}(M) \rightarrow i_* {^p \Psi_f}(j^* M)(-1)$ belong to $\Mcal^0(S)$ by condition (ii).
		\item[–] The arrows $i_* {^p \Psi_f}(j^* M) \rightarrow i_* {^p \Phi_f}(M)$ and $i_* {^p \Phi_f}(M) \rightarrow i_* {^p \Psi_f}(j^* M)(-1)$, being in the image of $i_*$, belong to $\Mcal^0(S)$ by condition (i), since the inclusion $\Mcal^0(V(f)) \subset \Mcal(V(f))$ is fully faithful by inductive hypothesis.
	\end{itemize}
	This shows the essential surjectivity.
	
	Let us now show that the inclusion $\Mcal^0(S) \subset \Mcal(S)$ is full. 
	Given a morphism $\alpha: M_1 \rightarrow M_2$ in $\Mcal(S)$, choose any regular function $f \in \Ocal(S)$ with smooth, Zariski-dense non-vanishing locus $D(f)$ such that the objects $M_1|_{D(f)}, M_2|_{D(f)} \in \Mcal(D(f))$ belong to the essential image of $\Mcal^0(D(f))$ and the morphism $\alpha|_{D(f)}: M_1|_{D(f)} \rightarrow M_2|_{D(f)}$ also lies in $\Mcal^0(D(f))$: 
	as in the previous case, the existence of such a function follows from condition (iii) in the statement. 
	As before, let $j: D(f) \hookrightarrow S$ and $i: V(f) \hookrightarrow S$ denote the corresponding open and closed immersions. Under the canonical isomorphisms $M_r = \gamma_{S,f} \delta_{S,f}(M_r)$, $r = 1,2$, constructed in the proof of \Cref{prop-Bei-glue}, the morphism $\alpha$ is induced by the morphism of complexes in $\Mcal(S)$
	\begin{equation*}
		\begin{tikzcd}
			i_* {^p \Psi_f}(j^* M_1) \arrow{r}{+} \arrow{d}{i_* {^p \Psi_f}(j^* \alpha)} & {^p \Xi_f}(M_1) \oplus i_* {^p \Phi_f}(M_1) \arrow{r}{-} \arrow{d}{{^p \Xi_f}(\alpha) \oplus i_* {^p \Phi_f}(\alpha)} & i_* {^p \Psi_f}(j^* M_1)(-1) \arrow{d}{i_* {^p \Psi_f}(\alpha)(-1)} \\
			i_* {^p \Psi_f}(j^* M_2) \arrow{r}{+} & {^p \Xi_f}(M_2) \oplus i_* {^p \Phi_f}(M_2) \arrow{r}{-} & i_* {^p \Psi_f}(j^* M_2)(-1).
		\end{tikzcd}
	\end{equation*}
	By the same argument as in the previous paragraph, both complexes are isomorphic to complexes in $\Mcal^0(S)$. 
	We claim that the three vertical arrows lie in $\Mcal(S)$ as well; this will imply that the morphism $\alpha$ belongs to the abelian subcategory $\Mcal^0(S)$ of $\Mcal(S)$. 
	In order to prove the claim, it suffices to note that:
	\begin{itemize}
		\item[–] The arrows $i_* {^p \Psi_f}(j^* \alpha)$, ${^p \Xi_f}(\alpha)$ and $i_* {^p \Psi_f}(j^* \alpha)(-1)$ lie in $\Mcal^0(S)$ by condition (i), since the arrow $j^* \alpha$ lies in $\Mcal^0(D(f))$ by construction.
		\item[–] The arrow $i_* {^p \Phi_f}(\alpha)$, being in the essential image of the fully faithful functor $i_*$, lies in $\Mcal^0(S)$ by condition (i), since the inclusion $\Mcal^0(V(f)) \subset \Mcal(V(f))$ is full by inductive hypothesis.
	\end{itemize}  
	This shows the fully faithfulness, thereby concluding the proof.
\end{proof}

\subsection{Construction of the alternative presentation}

In the final part of this section, we apply the criterion of \Cref{thm-Beilinson-simpler} to obtain the promised alternative presentation of perverse motives. 
Let us first fix the relevant notation:

\begin{nota}\label{nota:Mp^00}
	Let $S$ be a $k$-variety.
	\begin{itemize}
		\item We introduce the full additive subcategory
		\begin{equation*}
			\Dcal^0(S) := \left\{A \in \DA_{ct}^{\et}(S,\Q) \; | \; \Bti_S^*(A) \in \Perv(S) \right\} \subset \Dcal(S).
		\end{equation*}
		\item We let $\Mcal^0(S)$ denote the universal abelian factorization of the additive functor
		\begin{equation*}
			\beta_S^{0} := \beta_S|_{\Dcal^0(S)}: \Dcal^0(S) \rightarrow \Perv(S).
		\end{equation*}
		We implicitly regard it as a (possibly non-full) abelian subcategory of $\Mcal(S)$ via the canonical faithful exact functor 
		\begin{equation*}
			\Mcal^0(S) \hookrightarrow \Mcal(S)
		\end{equation*}
		obtained by applying \cite[Prop.~2.5]{Ter23UAF} to the diagram
		\begin{equation*}
			\begin{tikzcd}
				\Dcal^0(S) \arrow[hook]{r} \arrow{d}{\beta_S^0} & \Dcal(S) \arrow{d}{\beta_S} \\
				\Perv(S)\arrow{r}{\id} & \Perv(S).
			\end{tikzcd}
		\end{equation*}
		\item For simplicity, we still write
		\begin{itemize}
			\item $\pi_S$ for the quotient functor $\A(\Dcal^0(S)) \rightarrow \Mcal^0(S)$, and also for the composite functor $\Dcal^0(S) \rightarrow \A(\Dcal^0(S)) \rightarrow \Mcal^0(S)$,
			\item $\iota_S$ for the induced faithful exact functor $\Mcal^0(S) \hookrightarrow \Perv(S)$.
		\end{itemize}
		This convention will not create any ambiguity.
	\end{itemize}
\end{nota}

Even if the definition of the additive categories $\Dcal^0(S)$ (and, therefore, also that of the abelian categories $\Mcal^0(S)$) depends a priori on the complex embedding $\sigma: k \hookrightarrow \C$, we purposely omit any mention of it from the notation. This choice is justified by the following result:

\begin{lem}\label{lem:D^0}
	Choose two distinct complex embeddings $\sigma_1, \sigma_2: k \hookrightarrow \C$. 
	For every $k$-variety $S$, write $\Bti_{S,\sigma_i}^*: \DA_{ct}^{\et}(S,\Q) \rightarrow D^b_c(S^{\sigma_i},\Q)$ for the Betti realization functor associated to $\sigma_i$, and set
	\begin{equation*}
		\Dcal^0_{\sigma_i}(S) := \left\{A \in \DA_{ct}^{\et}(S,\Q) \; | \; \Bti_{S,\sigma_i}^*(A) \in \Perv(S^{\sigma_i})\right\} \subset \Dcal(S).
	\end{equation*}
	Then we have $\Dcal^0_{\sigma_1}(S) = \Dcal^0_{\sigma_2}(S)$.
\end{lem}
\begin{proof}
	We have to show that, for every object $A \in \DA_{ct}^{\et}(S,\Q)$, the equivalence
    \begin{equation*}
		\Bti_{S,\sigma_1}^*(A) \in \Perv(S) \iff \Bti_{S,\sigma_2}(A) \in \Perv(S)_{\ell}
	\end{equation*}
	holds. 
	We claim that, in fact, for every fixed $n \in \Z$, the equivalences
	\begin{equation*}
		{^p H^n}(\Bti_{S,\sigma_i}^*(A)) = 0 \iff {^p H^n}(R_{\ell,S}(A)) = 0 \quad (i = 1,2)
	\end{equation*}
	hold; 
	the claim clearly implies the thesis. 
	The claim can be proved by induction on $\dim(S)$ using the method of the proof of \cite[Prop.~6.11]{IM19};
	we leave the details to the interested reader.
\end{proof}

The rest of this section is devoted to the proof of the following result:

\begin{thm}\label{thm_Mp0=Mp}
	For every $k$-variety $S$, the canonical inclusion $\Mcal^0(S) \subset \Mcal(S)$ is in fact an equivalence.
\end{thm}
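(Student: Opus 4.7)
The plan is to apply the abstract comparison criterion \Cref{thm-Beilinson-simpler} to the system of abelian subcategories $\{\Mcal^0(S)\}_{S \in \Var_k}$ introduced in \Cref{nota:Mp^00}. This reduces the theorem to the verification of the three hypotheses (i), (ii), (iii) of that proposition.

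Hypotheses (i) and (ii) are essentially formal. Their verification rests on the observation that the additive subcategories $\Dcal^0(S) \subset \Dcal(S)$ are stable under the $\DA_{ct}^{\et}$-level counterparts of all the listed functors, because each such functor is perverse $t$-exact and compatible with the Betti realization by \cite[Thm.~3.19, Prop.~4.7]{Ayo10}: indeed, open inverse images $j^*$, affine open direct images $j_!, j_*$, closed direct images $i_*$, the Tate twist, and the gluing functors ${^p \Psi_f}, {^p \Phi_f}, {^p \Xi_f}, {^p \Omega_f}$ are all perverse $t$-exact by construction. Combined with the functoriality of the universal abelian factorization from \cite[Prop.~2.5]{Ter23UAF}, this yields the required stability of the subcategories $\Mcal^0(S)$ under the corresponding exact functors on $\Mcal(S)$, while the natural transformations appearing in the short exact sequences \eqref{ses_glue1}--\eqref{ses-deduced-gluey2} restrict verbatim.

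The substantial part of the proof is the verification of hypothesis (iii): weak cofinality of $\Mcal^0 \subset \Mcal$ as $\Sm\Op(S)$-fibered categories for every $S \in \Aff_k$. The strategy is to identify both 2-colimits
\begin{equation*}
    2-\varinjlim_{U \in \Sm\Op(S)^{op}} \Mcal(U) \quad \text{and} \quad 2-\varinjlim_{U \in \Sm\Op(S)^{op}} \Mcal^0(U)
\end{equation*}
with suitable categories $\Mcal(K)$ and $\Mcal^0(K)$ of Nori motives over the function field $K = K(S)$ of $S$. Since $K$ is not finite over $k$ in general, such categories must first be defined via a passage to the limit along smooth affine $k$-models of $\Spec(K)$, and their independence from the chosen system of models must be established. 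This is precisely what the preparatory results \Cref{prop:M(L)-indep} and \Cref{cor:M(L)} accomplish. Granted these identifications, (iii) reduces to the equality $\Mcal^0(K) = \Mcal(K)$, which is Nori's classical equivalence applied to the characteristic-$0$ field $K$ --- a direct consequence of the Basic Lemma.

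The hard part is the identification of the two 2-colimits with Nori motives over the function field. Universal abelian factorizations do not, in general, commute with filtered colimits of the input quiver representations, so one has to argue directly that the specific filtered systems $\{\Dcal(U)\}$ and $\{\Dcal^0(U)\}$ over $\Sm\Op(S)^{op}$ interact well with the UAF construction, which in turn depends on the compatibility of the Betti realization with inverse limits of smooth affine $k$-varieties. Once (iii) is established, the Noetherian induction internal to \Cref{thm-Beilinson-simpler} delivers the theorem with no further input.
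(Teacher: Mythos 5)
Your overall strategy is the one the paper follows: invoke \Cref{thm-Beilinson-simpler}, dispose of conditions (i) and (ii) by the functoriality of universal abelian factorizations, and reduce (iii) to a statement about Nori motives over function fields that is ultimately fed by Nori's classical result via the Basic Lemma. However, there are two issues with the way you locate the real difficulty in (iii).

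First, a minor misdirection: you identify the commutation of universal abelian factorizations with filtered $2$-colimits as the hard part, but this is a general lemma (\cite[Lemma~5.12]{Ter23UAF}) that the paper simply cites inside \Cref{constr:M(L)}; there is nothing specific to the systems $\{\Dcal(U)\}_U$ that needs to be ``argued directly.''

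Second, and more seriously, the claim that ``(iii) reduces to the equality $\Mcal^0(K) = \Mcal(K)$, which is Nori's classical equivalence applied to the characteristic-$0$ field $K$'' skips an essential step and mischaracterizes what \Cref{prop:M(L)-indep} and \Cref{cor:M(L)} actually do. Nori's equivalence $\Mcal^0 = \Mcal$ over a base field requires a complex embedding of that field, and $K = k(S)$ typically admits none extending $\sigma$ (already when $k = \C$ and $\dim S > 0$). The categories $\Mcal(\Spec K)$ and $\Mcal^0(\Spec K)$ are therefore defined purely over $k$ via $\sigma$, and one cannot directly apply Nori to them. The paper's proof instead uses \Cref{cor:M(L)} to rewrite the inclusion as a filtered $2$-colimit of the inclusions $\Mcal^0(F) \hookrightarrow \Mcal(F)$ over finitely generated subfields $F \subset K$ over $\Q$, each of which \emph{does} admit a complex embedding extending $\sigma|_{F \cap k}$; Nori's factorization result (via \cite{ChudGal} or \cite{Harrer}) is applied over each such $F$. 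Accordingly, \Cref{prop:M(L)-indep} is not about independence of the chosen system of smooth models but about the categories being insensitive to whether one realizes via $\sigma$ (viewing $\Spec L$ as a $k$-scheme) or via an extension $\Sigma$ (viewing $L$ as the base field); this comparison, not the model choice, is what the de Rham/Huber realization argument in that proposition buys you. And \Cref{cor:M(L)} is precisely the reduction to finitely generated subfields over $\Q$. Finally, you omit the Zariski-descent reduction to irreducible $S$ before passing to the generic point, which is needed since $\Sm\Op(S)$ for a reducible $S$ does not present a single function field.
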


The proof of this result is based on \Cref{thm-Beilinson-simpler}.
At first sight, the amount of conditions to check looks massive. Actually, all the actual work goes into checking the weak cofinality property at point (iii). 
This requires some careful preparation about Nori motives over function fields:
the goal is to understand categories of Nori motives over the function field $L$ of a $k$-variety $S$ (defined by the natural colimit construction over dense open subvarieties of $S$) as actual categories of Nori motives (defined starting from a complex embedding of $L$).
The principle is that a category of motives over a scheme should only depend on the scheme itself.
Some results is this direction are collected in \cite[\S~6.5]{IM19}.

\begin{constr}\label{constr:M(L)}
	Let $L$ be a finitely generated extension field of $k$, and fix a $k$-variety $S$ with function field $L := k(S)$: this means that $S$ is irreducible and the sheaf of $\Ocal_S$-modules
	\begin{equation*}
		U \mapsto \textup{Frac}(\Ocal_S(U))
	\end{equation*} 
    is identified with the constant sheaf with value $L$ over the Zariski site of $S$.
	We can write 
	\begin{equation*}
		\Spec(L) = \varprojlim_{U \in \Aff\Op(S)} U
	\end{equation*}
    as the inverse limit of all affine open subsets of $S$ in the category of $k$-schemes. 
    By the continuity property of triangulated étale motives proved in \cite{Ayo07a}, the canonical functor
    \begin{equation*}
    	\twocolim_{U \in \Aff\Op(S)^{op}} \DA_{ct}^{\et}(U,\Q) \rightarrow \DA_{ct}^{\et}(\Spec(L),\Q)
    \end{equation*}
    induced by the inverse image functors $\DA_{ct}^{\et}(U,\Q) \rightarrow \DA_{ct}^{\et}(\Spec(L),\Q)$ is an equivalence. 
    Therefore, by \cite[Lemma~5.12(2)]{Ter23UAF}, the filtered $2$-colimit of abelian categories
    \begin{equation}\label{formula:M(L)}
    	\twocolim_{U \in \Aff\Op(S)} \Mcal(U)
    \end{equation}
    can be canonically identified with the universal abelian factorization of the additive functor
    \begin{equation*}
    	\beta_{L,\sigma}: \Dcal(\Spec(L)) = \twocolim_{U \subset S} \Dcal(U) \xrightarrow{\beta_U} \twocolim_{U \subset S} \Perv(U),
    \end{equation*}
    where, in analogy to the notation $\Dcal(U)$ for $\DA_{ct}^{\et}(U,\Q)$ employed for $U \in \Var_k$, we write $\Dcal(\Spec(L))$ for the underlying additive category of $\DA_{ct}^{\et}(\Spec(L),\Q)$.
    Let us suggestively write $\Mcal(\Spec(L))$ for the abelian category \eqref{formula:M(L)}; note that, in the case where the extension $L/k$ is finite (so that the $k$-scheme $\Spec(L)$ belongs to $\Var_k$), this agrees with the definition of perverse motives over $\Spec(L)$ given in \cite{IM19}. 
    Clearly, in the definition of the $2$-colimit \eqref{formula:M(L)}, one can replace the poset $\Aff\Op(S)$ by any cofinal subposet: for instance, we can use the poset $\Sm\Op(S) \cap \Aff\Op(S)$ consisting of all smooth affine open subsets of $S$.
    It is easy to see that the definition of $\Mcal(\Spec(L))$ is independent of the chosen model $S$ of $L$.
    Keeping the above notation, let us also introduce the additive subcategory 
    \begin{equation*}
    	\Dcal^0(\Spec(L)) := \twocolim_{U \in \Aff\Op(S)^{op}} \Dcal^0(U) \subset \Dcal(\Spec(L)).
    \end{equation*}
    Again, this definition does not depend on the chosen model $S$ of $L$ and does not change if one replaces the poset $\Aff\Op(S)$ by any cofinal subposet such as $\Sm\Op(S) \cap \Aff\Op(S)$. 
    Moreover, let $\Mcal^0(\Spec(L))$ denote the universal abelian factorization of the additive functor
    \begin{equation*}
    	\beta_{L,\sigma}^0 := \beta_{L,\Sigma}|_{\Dcal^0(\Spec(L))}: \Dcal^0(\Spec(L)) = \twocolim_{U \subset S} \Dcal^0(U) \xrightarrow{\beta_U} \twocolim_{U \subset S} \Perv(U),
    \end{equation*}
    which is independent of the chosen presentation as well; if the extension $L/k$ is finite, this is compatible with the original definition given in \Cref{nota:Mp0}. 
    Applying \cite[Lemma~5.12]{Ter23UAF} as above, we see that $\Mcal^0(\Spec(L))$ can be canonically identified with the filtered $2$-colimit of abelian categories
    \begin{equation*}
    	\twocolim_{U \in \Aff\Op(S)^{op}} \Mcal^0(U),
    \end{equation*}
    or with the analogous $2$-colimit over the cofinal subposet $\Sm\Op(S) \cap \Aff\Op(S)$.
\end{constr}

A priori, the categories of perverse Nori motives over function fields just introduced are not categories of motives over a field in Nori's sense; this might give rise to some confusion.
For instance, suppose that the field extension $L/k$ is finite, so that $\Spec(L)$ is a $k$-variety; 
fix a complex embedding $\Sigma: L \hookrightarrow \C$ extending $\sigma: k \hookrightarrow \C$.
Then we have two distinct ways to define an abelian category of perverse Nori motives out of $\DA_{ct}^{\et}(\Spec(L),\Q)$: on the one side, as the category $\Mcal(\Spec(L))$ obtained via $\sigma$ (by considering $\Spec(L)$ as a $k$-variety); on the other side, as the category $\Mcal(L)$ obtained via $\Sigma$ (by considering $L$ as the base field).
The following result shows that this does not make any difference; 
the same holds if the field extension $L/k$ is finitely generated.

\begin{prop}\label{prop:M(L)-indep}
	Let $L/k$ be a finitely generated field extension, and assume that there exists a field embedding $\Sigma: L \hookrightarrow \C$ extending the original embedding $\sigma: k \hookrightarrow \C$.
	Then the two exact functors 
	\begin{equation*}
		\beta_{\Spec(L),\sigma}^+: \A(\Dcal(L)) \rightarrow \vect_{\Q}, \qquad \beta_{L,\Sigma}^+: \A(\Dcal(L)) \rightarrow \vect_{\Q}
	\end{equation*}
    \begin{equation*}
    	\textup{(resp. $\beta_{\Spec(L),\sigma}^{0,+}: \A(\Dcal^0(L)) \rightarrow \vect_{\Q}, \qquad \beta_{L,\Sigma}^{0,+}: \A(\Dcal^0(L)) \rightarrow \vect_{\Q}$)}
    \end{equation*}
    have the same kernel. In particular, the category $\Mcal(\Spec(L))$ (resp. $\Mcal^0(\Spec(L))$) is canonically equivalent to $\Mcal(L)$ (resp. $\Mcal^0(L)$).
\end{prop}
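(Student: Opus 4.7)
The plan is to relate the two exact functors via the compatibility of Ayoub's Betti realization with $*$-inverse images, and then to conclude by comparing their kernels on the Freyd abelian hull $\A(\Dcal(L))$.

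Choose a model $S \in \Var_k$ with function field $k(S) = L$. By the continuity equivalence
\begin{equation*}
	\Dcal(\Spec(L)) = 2\textup{-}\varinjlim_{U \in \Aff\Op(S)} \Dcal(U),
\end{equation*}
every object and every morphism of $\Dcal(\Spec(L))$ admits a representative over some sufficiently small $U$. The embedding $\Sigma: L \hookrightarrow \C$ extending $\sigma$ canonically determines a $\C$-point $s_\Sigma: \Spec(\C) \to U$ factoring through the canonical morphism $j_U: \Spec(L) \to U$; its analytification is a closed analytic point of $U^\sigma$ (which we still denote by $s_\Sigma$). By the compatibility of the Betti realization with $*$-inverse images from \cite[\S~3]{Ayo10}, there is a canonical natural isomorphism
\begin{equation*}
	\Bti_{L,\Sigma}^* \circ j_U^* \;\cong\; i_{s_\Sigma}^* \circ \Bti_U^*
\end{equation*}
of additive functors $\Dcal(U) \to D^b(\vect_\Q)$, where $i_{s_\Sigma}: \{s_\Sigma\} \hookrightarrow U^\sigma$ denotes the inclusion of the analytic point.

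Next, I would use the above isomorphism to produce a commutative triangle of additive functors
\begin{equation*}
	\begin{tikzcd}[column sep=small]
		\Dcal(\Spec(L)) \arrow[rr,"\beta_{\Spec(L),\sigma}"] \arrow[dr,"\beta_{L,\Sigma}"'] & & \varinjlim_U \Perv(U^\sigma) \arrow[dl,"\psi_\Sigma"] \\
		& \vect_\Q &
	\end{tikzcd}
\end{equation*}
in which $\psi_\Sigma$ is a generic fibre functor at $s_\Sigma$: on a representative $F \in \Perv(U^\sigma)$ it is defined by restricting $F$ to the largest Zariski-open $V \subset U$ over which $F$ is lisse and then taking the fibre at $s_\Sigma$, normalised by the appropriate shift so as to land in $\vect_\Q$. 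The main technical claim is that $\psi_\Sigma$ is well-defined on the $2$-colimit (independently of the chosen model $S$ and representative), and is an exact and faithful functor. Faithfulness uses that Ayoub's Betti realizations of étale motives on $U$ are algebraically constructible with respect to a stratification of $U^\sigma$ defined over $k$; hence a representative perverse sheaf with vanishing stalk at $s_\Sigma$ must be supported on a proper $k$-algebraic closed subvariety of $U$, and therefore becomes trivial in the $2$-colimit after further shrinking of $U$.

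From the triangle above one formally obtains a commutative triangle of exact functors $\A(\Dcal(L)) \to \vect_\Q$ by passing to canonical extensions, and the faithfulness of $\psi_\Sigma$ immediately yields the equality of kernels $\ker(\beta_{\Spec(L),\sigma}^+) = \ker(\beta_{L,\Sigma}^+)$, giving the sought-after equivalence $\Mcal(\Spec(L)) \simeq \Mcal(L)$ between the associated universal abelian factorizations. The statement for $\Mcal^0$ is obtained by restricting the same diagram to the additive subcategory $\Dcal^0(\Spec(L)) = \varinjlim_U \Dcal^0(U)$, which is well-defined by \Cref{lem:D^0}. The hard part will be the careful bookkeeping of shifts and perverse truncations in the construction of $\psi_\Sigma$: both the dimension of the model $S$ and the location of $s_\Sigma$ with respect to the smooth strata play a delicate role, and one must verify that the naive generic-fibre recipe is compatible with the passage from the perverse $t$-structure on $U^\sigma$ to the standard $t$-structure on a point so that the comparison triangle commutes on the nose.
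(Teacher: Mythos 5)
Your approach is the same as the paper's in its geometric idea: represent the complex embedding $\Sigma$ by a "generic" complex point $s_\Sigma$ of each model $U$, identify $\beta_{L,\Sigma}$ with the composition of $\beta_{\Spec(L),\sigma}$ with the shifted stalk functor at $s_\Sigma$, and deduce the kernel equality from conservativity of the stalk functor. This is exactly the paper's commutative diagram \eqref{dia:z_U-colim}. But your faithfulness claim is false in general, and this is precisely the delicate point the paper has to address.

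Concretely, you assert that "a representative perverse sheaf with vanishing stalk at $s_\Sigma$ must be supported on a proper $k$-algebraic closed subvariety of $U$." This fails whenever $U^{\sigma}$ is disconnected — which happens exactly when $U$ is not geometrically connected, i.e.\ when $L/k$ is not geometrically integral. In that case $U^\sigma$ breaks into several connected components, and an algebraically constructible local system concentrated on a component not containing $s_\Sigma$ is nonzero, remains nonzero after shrinking $U$, and has vanishing stalk at $s_\Sigma$ — yet it is supported on an open-closed subset, not a proper $k$-algebraic subvariety. So the shifted stalk functor $\psi_\Sigma$ is \emph{not} conservative on the relevant $2$-colimit, and the kernel-comparison argument does not go through as written. (The paper itself singles out the geometrically integral case as the "simple case" exactly because this is where your argument works verbatim.)

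To repair this, the paper uses Noether normalisation to present $U$ explicitly, then considers \emph{all} complex embeddings $\Sigma_1,\dots,\Sigma_{\deg P}$ of $L$ extending $\Sigma|_{L_0}$ (where $L_0$ is a purely transcendental subextension), shows that the corresponding points $z_{U,\Sigma_i}$ jointly hit every connected component of $U^{\sigma}$ — which yields joint faithfulness of the shifted stalk functors — and then has to separately prove that the functors $\beta_{L,\Sigma_1}^+,\dots,\beta_{L,\Sigma_{\deg P}}^+$ all have the same kernel. That last step is not formal: it is where the algebraic de Rham realization via Huber's mixed realizations enters, since de Rham cohomology is independent of the chosen embedding and compares (after extension of scalars) with each of the Betti realizations. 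Without this extra ingredient — or some other embedding-independent realization playing the same role — the argument cannot be completed in the non-geometrically-integral case. Your proposal would need this additional layer to be correct.
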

\begin{proof}
	For sake of simplicity, we only write down the argument for $\Mcal(\Spec(L))$ and $\Mcal(L)$; the argument for $\Mcal^0(\Spec(L))$ and $\Mcal^0(L)$ is a minor variant of the former, and we leave it to the interested reader.
	
    We start by describing the key geometric construction. Fix a $k$-variety $S$ with function field $L$, and write 
	\begin{equation*}
		\Spec(L) = \varprojlim_{U \in \Sm\Op(S)} U
	\end{equation*}
    as the limit in the category of $k$-schemes over all smooth non-empty open subsets of $S$. For every $U \in \Sm\Op(S)$, the canonical morphism of $k$-schemes $\eta_U: \Spec(L) \rightarrow U$ is the inclusion of the generic point; the induced monomorphism of sheaves of $k$-algebras  
    \begin{equation*}
    	\Ocal_U \xrightarrow{\eta_U^*} L \xrightarrow{\Sigma} \C
    \end{equation*}
    defines a complex point $z_{U,\Sigma} \in U^{\sigma}$ which, by construction, does not lie on any proper algebraic $k$-subvariety of $U$. In fact, for every inclusion $V \subset U$ in $\Sm\Op(S)$, the induced inclusion of complex-analytic spaces $V^{\sigma} \subset U^{\sigma}$ identifies $z_{V,\Sigma}$ with $z_{U,\Sigma}$.
    By construction, for every $U \in \Sm\Op(S)$ the diagram 
    \begin{equation*}
        \begin{tikzcd}
    		\DA_{ct}^{\et}(U,\Q) \arrow{rr}{\eta_U^*} \arrow{d}{\Bti_{U,\sigma}^*} && \DA_{ct}^{\et}(L,\Q) \arrow{d}{\Bti_{L,\Sigma}^*} \\
    		D^b_c(U,\Q) \arrow{rr}{z_{U,\Sigma}^*} && D^b(\vect_{\Q})
        \end{tikzcd}
    \end{equation*}
    commutes up to canonical natural isomorphism. In fact, we have a canonical natural isomorphism
    \begin{equation*}
    	z_{U,\Sigma}^* \circ \Bti_{U,\sigma}^* = \Bti_{L,\Sigma}^* \circ \eta_U^*
    \end{equation*} 
    as functors on the entire category of étale motives $\DA^{\et}(U,\Q)$: this extends the identification of complex-analytic spaces
    \begin{equation*}
    	X^{\sigma} \times_{U^{\sigma}} z_{U,\Sigma} = (X \times_U \Spec(L))^{\Sigma},
    \end{equation*}
    functorial with respect to $X \in \Sm/U$ (this fact was observed by Ayoub in the proof of \cite[Prop.~2.20]{Ayo14H2}). 
    From the compatibility of the complex points $z_{U,\Sigma}$ under restriction, we deduce that the diagram
    \begin{equation*}
    	\begin{tikzcd}
    		\twocolim_{U \in \Sm\Op(S)^{op}} \DA_{ct}^{\et}(U,\Q) \arrow{rr}{\sim} \arrow{d}{(\Bti_{U,\sigma}^*)_U} && \DA_{ct}^{\et}(L,\Q) \arrow{d}{\Bti_{L,\Sigma}^*} \\
    		\twocolim_{U \in \Sm\Op(S)^{op}} D^b_c(U,\Q) \arrow{rr}{(z_{U,\Sigma}^*)_U} && D^b(\vect_{\Q})
    	\end{tikzcd}
    \end{equation*}
    commutes up to natural isomorphism as well. 
    Applying the perverse truncation functors, we deduce a commutative diagram of the form
    \begin{equation*}
    	\begin{tikzcd}
    		\twocolim_{U \in \Sm\Op(S)^{op}} \DA_{ct}^{\et}(U,\Q) \arrow{rr}{\sim} \arrow{d}{(\Bti_{U,\sigma}^*)_U} && \DA_{ct}^{\et}(L,\Q) \arrow{d}{\Bti_{L,\Sigma}^*} \\
    		\twocolim_{U \in \Sm\Op(S)^{op}} D^b_c(U,\Q) \arrow{rr}{(z_U^*)_U} \arrow{d}{{^p H^0}} && D^b(\vect_{\Q}) \arrow{d}{H^0} \\
    		\twocolim_{U \in \Sm\Op(S)^{op}} \Perv(U) & \twocolim_{U \in \Sm\Op(S)^{op}} \Loc_p(U) \arrow{l}{\sim} \arrow{r}{(z_{U,\Sigma}^{\dagger})_U} & \vect_{\Q},
    	\end{tikzcd}
    \end{equation*}
    where the lower-left horizontal arrow is the equivalence induced by the inclusions $\Loc_p(U) \subset \Perv(U)$ for $U \in \Sm\Op(S)$ while the lower-right horizontal arrow is defined by the exact shifted inverse image functors
    \begin{equation}\label{formula:z_U^*}
    	z_{U,\Sigma}^{\dagger} := z_{U,\Sigma}^*[-\dim(U)]: \Loc_p(U) \rightarrow \vect_{\Q}.
    \end{equation}
    Passing to the abelian hulls, we obtain a commutative diagram of the form
    \begin{equation}\label{dia:z_U-colim}
    	\begin{tikzcd}
    		\A(\twocolim_{U \in \Sm\Op(S)^{op}} \DA_{ct}^{\et}(U,\Q)) \arrow{d}{\beta_{\Spec(L),\sigma}^+} \arrow{rr}{\sim} && \A(\DA_{ct}^{\et}(L,\Q)) \arrow{d}{\beta_{L,\Sigma}^+} \\
    		\twocolim_{U \in \Sm\Op(S)^{op}} \Loc_p(U) \arrow{rr}{(z_{U,\Sigma}^{\dagger})_U} && \vect_{\Q}.
    	\end{tikzcd}
    \end{equation}
    Proving the thesis amounts to showing that the equivalence in the upper horizontal arrow identifies the kernels of the two vertical exact functors.
     
    Let us first explain how to prove this in the simple case where the field extension $L/k$ is geometrically integral. In this case, each $U \in \Sm\Op(S)$ is geometrically connected, and so the associated complex-analytic space $U^{\sigma}$ is connected as well. This implies that the exact functors \eqref{formula:z_U^*} are all faithful. Therefore the same holds for the lower horizontal arrow in \eqref{dia:z_U-colim}. But then it follows formally that the two vertical exact functors in \eqref{dia:z_U-colim} have the same kernel (for example, see the proof of \cite[Prop.~1.10]{Ter23UAF}).
    
    If the field extension $L/k$ is not assumed to be geometrically integral, the argument is similar but slightly more involved: the issue is that the exact functors \eqref{formula:z_U^*} are no longer necessarily faithful, since the complex-analytic spaces $U^{\sigma}$ are no longer necessarily connected.
    To remedy this issue, the rough idea is to consider multiple complex embeddings of $L$ at once so that te corresponding points $z_{U,\Sigma}$ hit all connected components of $U^{\sigma}$. In order to make this idea work, we need to describe the construction of the points $z_{U,\Sigma}$ in more explicit geometric terms. 
    To this end, fix a transcendence basis $t_1,\dots,t_n$ for $L/k$ such that we can write
    \begin{equation*}
    	L = k(t_1,\dots,t_n)[x]/P(x),
    \end{equation*}  
    where $P(x) \in k(t_1,\dots,t_n)[x]$ is a monic polynomial in $x$ with coefficients in $k[t_1,\dots,t_n]$ defining a smooth subvariety inside the affine space $\A^{n+1}_k$ with coordinates $t_1,\dots,t_n,x$ (the existence of such a basis follows from the strong form of Noether's Normalization Lemma, applied to some chosen affine $U \in \Sm\Op(S)$). 
    In this way, we get an identification
    \begin{equation*}
    	U = \Spec(k[t_1,\dots,t_n,x]/P(x))
    \end{equation*} 
    as closed subvarieties of $\A^{n+1}_k$. Note that, by construction, $P(x)$ is irreducible in $k(t_1,\dots,t_n)[x]$.
    For sake of brevity, set $L_0 := k(t_1,\dots,t_n)$; write $\Sigma_0 := \Sigma|_{L_0} : L_0 \hookrightarrow \C$, and let $\Sigma_1, \dots, \Sigma_{\deg(P)}$ denote the $\deg(P)$ distinct complex embeddings of $L$ extending $\Sigma_0$ (among which figures the original embedding $\Sigma$). 
    Moreover, write $P^{\sigma}(x)$ for the image of $P(x)$ in $\C(t_1,\dots,t_n)[x]$ via $\sigma$, and let
    \begin{equation*}
    	P^{\sigma}(x) = \prod_{i=1}^{r} Q_i(x)
    \end{equation*}
    be its factorization into irreducible polynomials.
    Note that, since the coefficients of $P^{\sigma}(X)$ belong to the unique factorization domain $\C[t_1,\dots,t_n]$, the same holds for the coefficients of each factor $Q_i(x)$. Thus each $Q_i(x)$ defines a connected closed subvariety $U_i$ of $\A^{n+1}_{\C}$, and the complex-analytic space $U^{\sigma}$ breaks into connected components as
    \begin{equation*}
    	U^{\sigma} = \coprod_{i=1}^{r} U_i.
    \end{equation*}
    Note that each complex variety $U_i$ is smooth, since so is $U^{\sigma}$. 
    Moreover, write $P^{\Sigma_0}(x)$ for the image of $P(x)$ in $\C[x]$ via $\Sigma_0$, and consider the factorization 
    \begin{equation*}
    	P^{\Sigma_0}(x) = \prod_{i=1}^{r} \bar{Q}_i(x),
    \end{equation*}
    where $\bar{Q}_i(x)$ denotes the image of $Q_i(x)$ in $\C[x]$ via the $\C$-algebra homomorphism
    \begin{equation*}
    	\C[t_1,\dots,t_n] \rightarrow \C, \quad t_i \mapsto \Sigma_0(t_i).
    \end{equation*}
    The complex embeddings $\Sigma_1,\dots,\Sigma_{\deg(P)}$ extending $\Sigma_0$ are in natural bijection with the complex roots of $P^{\Sigma_0}(x)$: giving such an embedding $\Sigma'$ amounts to choosing a factor $Q_i(x)$ of $P^{\sigma}(x)$ and specifying a complex root of $\bar{Q}_i(x)$; by construction, the corresponding complex point $z_{U,\Sigma'} \in U^{\sigma}$ belongs to the connected component $U_i$. Thus the complex points
    \begin{equation*}
    	z_{U,\Sigma_1}, \dots, z_{U,\Sigma_{\deg(P)}} 
    \end{equation*}
    jointly hit every connected component of $U^{\sigma}$. Similarly, for each dense open subset $V \subset U$, the points $z_{U,\Sigma_i}$ restrict to a family of complex points $z_{V,\Sigma_i} \in V^{\sigma}$ which jointly hit every connected component of $V^{\sigma}$. 
    In order to conclude the proof, consider the commutative diagram
    \begin{equation}\label{dia:z_U-colim(2)}
    	\begin{tikzcd}
    		\A(\twocolim_{U \in \Sm\Op(S)^{op}} \DA_{ct}^{\et}(U,\Q)) \arrow{d}{\beta_{\Spec(L),\sigma}^+} \arrow{rr}{\sim} && \A(\DA_{ct}^{\et}(L,\Q)) \arrow{d}{(\beta_{L,\Sigma_i}^+)_i} \\
    		\twocolim_{U \in \Sm\Op(S)^{op}} \Loc_p(U) \arrow{rr}{(z_{U,\Sigma_i}^{\dagger})_{U,i}} && \prod_{i=1}^{\deg(P)} \vect_{\Q}
    	\end{tikzcd}
    \end{equation}
    obtained by tying together the individual diagrams of shape \eqref{dia:z_U-colim} for the embeddings $\Sigma_1,\dots,\Sigma_{\deg(P)}$. For every dense open subset $V \subset U$, the collection of exact shifted inverse image functors
    \begin{equation*}
    	z_{V,\Sigma_1}^{\dagger}, \dots, z_{V,\Sigma_{\deg(P)}}^{\dagger}: \Loc_p(V) \rightarrow \vect_{\Q}
    \end{equation*}
    is jointly faithful, precisely because the points $z_{V,\Sigma_i}$ hit every connected component of $V^{\sigma}$. Therefore the the lower horizontal arrow in \eqref{dia:z_U-colim(2)} is a faithful exact functor as well. As in the geometrically connected case, this implies formally that the two vertical exact functors in \eqref{dia:z_U-colim(2)} have the same kernel. 
    It now suffices to show that the right-most vertical arrow in the two diagrams \eqref{dia:z_U-colim} and \eqref{dia:z_U-colim(2)} have the same kernel or, equivalently, that the various exact functors
    \begin{equation*}
    	\beta_{L,\Sigma_1}^+, \dots, \beta_{L,\Sigma_{\deg(P)}}^+: \A(\DA_{ct}^{\et}(L,\Q)) \rightarrow \vect_{\Q}
    \end{equation*}
    all have the same kernel. This follows from the fact that the induced exact functors 
    \begin{equation*}
    	\beta_{L,\Sigma_1}^+, \dots, \beta_{L,\Sigma_{\deg(P)}}^+: \A(\DA_{ct}^{\et}(L,\Q)) \rightarrow \vect_{\Q} \xrightarrow{- \otimes_{\Q} L} \vect_L
    \end{equation*}
    are all naturally isomorphic to each other: indeed, the homological functors
    \begin{equation*}
    	\beta_{L,\Sigma_1}, \dots, \beta_{L,\Sigma_{\deg(P)}}: \DA_{ct}^{\et}(L,\Q) \rightarrow \vect_{\Q} \xrightarrow{- \otimes_{\Q} L} \vect_L
    \end{equation*}
    are all isomorphic to the algebraic de Rham realization of Voevodsky motives, via Huber's construction of mixed realizations from \cite{Hub00}. 
\end{proof}

	

Note that, in general, it might be simply impossible to extend the original embedding $\sigma: k \hookrightarrow \C$ to a given function field $L/k$ (for instance if $k = \C$ and $\sigma = \id_{\C}$). 
Luckily, we can always reduce to the case where $L$ (or equivalently, $k$) is finitely generated over $\Q$. 
The next result is a consequence of \Cref{prop:M(L)-indep}, applied not directly to the original field extension $L/k$ but rather to the induced extensions between suitable finitely generated subfields thereof.

\begin{cor}\label{cor:M(L)}
	Let $L/k$ be a finitely generated field extension. Let $\Scal_L$ denote the filtered poset of all subfields of $L$ which are finitely generated over $\Q$; for each $F \in \Scal_L$, choose an embedding $\Sigma_F: F \hookrightarrow \C$ extending the restriction $\sigma|_{F \cap k}: F \cap k \subset k \hookrightarrow \C$.
	
	Then, as $F$ varies in $\Scal_L$, the abelian categories $\Mcal(F)$ (resp. $\Mcal^0(F)$) defined via the chosen embeddings $\Sigma_F$ canonically assemble into an abelian fibered category over $\Scal_L^{op}$, and we have a canonical equivalence
	\begin{equation*}
		\Mcal(L) = \twocolim_{F \in \Scal_L} \Mcal(F)
	\end{equation*}
    \begin{equation*}
    	\textup{(resp. $\Mcal^0(L) = \twocolim_{F \in \Scal_L} \Mcal^0(F)$)}.
    \end{equation*}
\end{cor}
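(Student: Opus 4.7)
The plan is to derive the statement from the continuity property of constructible étale motives applied to $\Spec(L) = \varprojlim_{F \in \Scal_L} \Spec(F)$, combined with the compatibility of universal abelian factorizations with filtered $2$-colimits from \cite[Lemma~5.12(2)]{Ter23UAF}. The whole argument will run in close parallel to the construction of $\Mcal(\Spec(L))$ carried out in \Cref{constr:M(L)}, with the filtered poset $\Scal_L$ now playing the role of the poset of affine open subsets of a model.

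The first step will be to assemble the fibered structure on $\{\Mcal(F)\}_{F \in \Scal_L}$. For each inclusion $F \subset F'$ in $\Scal_L$, the induced morphism $\Spec(F') \to \Spec(F)$ yields a canonical pullback functor on $\DA_{ct}^{\et}$ and hence an exact functor on the Freyd hulls $\A(\Dcal(F)) \to \A(\Dcal(F'))$. The real point is to check that this sends $\ker(\beta_{F,\Sigma_F}^+)$ into $\ker(\beta_{F',\Sigma_{F'}}^+)$, so as to descend to an exact functor $\Mcal(F) \to \Mcal(F')$. This would be immediate if the chosen embeddings were compatible, i.e. if $\Sigma_{F'}|_F = \Sigma_F$, since then the Betti realizations intertwine with the pullback; in general this compatibility fails, but \Cref{prop:M(L)-indep} ensures that the kernel of $\beta_{F,\Sigma_F}^+$ does not depend on the extension of $\sigma|_{F \cap k}$, so the kernel computation may be performed using $\Sigma_{F'}|_F$ in place of $\Sigma_F$. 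An analogous check for composites yields the full fibered structure.

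In the second step I will apply continuity of $\DA_{ct}^{\et}(\cdot,\Q)$ from \cite{Ayo07a} to the affine filtered inverse system $\{\Spec(F)\}_{F \in \Scal_L}$, whose inverse limit is $\Spec(L)$ by virtue of the equality $L = \bigcup_{F \in \Scal_L} F$; this gives a canonical equivalence
\begin{equation*}
\Dcal(L) \simeq 2-\varinjlim_{F \in \Scal_L} \Dcal(F).
\end{equation*}
An application of \cite[Lemma~5.12(2)]{Ter23UAF}, in the same spirit as in \Cref{constr:M(L)}, then identifies $2-\varinjlim_F \Mcal(F)$ with the universal abelian factorization of the colimit homological functor. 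Combined with the identifications $\Mcal(F) \simeq \Mcal(\Spec(F))$ from \Cref{prop:M(L)-indep}, this yields the desired equivalence $\Mcal(L) \simeq 2-\varinjlim_F \Mcal(F)$. The corresponding statement for $\Mcal^0$ follows by the same argument with $\Dcal^0$ and $\beta^0$ in place of $\Dcal$ and $\beta$.

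The only genuine subtlety lies in the first step: the ambiguity coming from the non-coherent choice of embeddings $\Sigma_F$. Since the global embedding $L \hookrightarrow \C$ that one would like to restrict to every $F$ simply need not exist, the independence statement of \Cref{prop:M(L)-indep} is essential in order to produce the fibered structure at all. Once this is in place, the rest of the argument is a formal combination of continuity and the colimit formula for universal abelian factorizations.
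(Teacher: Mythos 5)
Your overall plan — apply continuity to $\Spec(L) = \varprojlim_{F} \Spec(F)$ and then invoke \cite[Lemma~5.12(2)]{Ter23UAF} — is a genuinely different and more economical route than the one in the paper, which instead chooses a model $S$ of $L$ over $k$, puts it in Noether normal form $\Spec(k[t_1,\dots,t_n][x]/P(x))$, introduces the auxiliary poset $\Scal_{k/k_0}$ of finitely generated subfields of $k$, and performs a double-colimit manipulation over $\Aff\Op_{k'}(S)$ and $k'$, using a base-change-of-base-field compatibility for Betti realizations along the way. Your Step~1 (the construction of the fibered structure via the embedding-independence of the kernel) is essentially the first paragraph of the paper's proof and is fine, although the independence of $\ker(\beta_{F,\Sigma_F}^+)$ from $\Sigma_F$ is really a consequence of Huber's mixed realizations as recorded in \cite[Prop.~6.11]{IM19}, with \Cref{prop:M(L)-indep} being the further bridge between the $k$-relative and $F$-absolute constructions — but the two overlap closely enough that this is a minor point.

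The genuine gap is in Step~2. The lemma \cite[Lemma~5.12(2)]{Ter23UAF} identifies a filtered $2$-colimit of universal abelian factorizations with the UAF of a colimit \emph{morphism of filtered systems} $(\Dcal_i, \beta_i, \Acal_i)_i$: one needs a coherent system of homological functors, not merely a coherent system of quotient abelian categories. In \Cref{constr:M(L)} this coherence is automatic because all the $\beta_U = {^pH^0} \circ \Bti_U^*$ over $U \in \Aff\Op(S)$ use the single fixed embedding $\sigma$, so they commute strictly (up to canonical isomorphism) with pullbacks. In your setting, the diagrams $\beta_{F,\Sigma_F}: \Dcal(F) \to \vect_\Q$ do \emph{not} commute with the transition functors $\Dcal(F) \to \Dcal(F')$, precisely because the $\Sigma_F$ are incompatible; the phrase ``the colimit homological functor'' therefore has no obvious referent. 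Your Step~1 shows the \emph{kernels} behave well under transition (which gives the fibered structure), but that does not yield the coherent diagram of homological functors that \cite[Lemma~5.12(2)]{Ter23UAF} requires as input, and the quick ``once this is in place, the rest of the argument is a formal combination'' papers over exactly this. The paper's more convoluted route — going through subfields $k'\subseteq k$, where the embeddings $\sigma|_{k'}$ \emph{do} cohere, and switching to the arbitrary $\Sigma_F$ only in the last two lines of the chain of equivalences — is precisely how it avoids needing a nonexistent global embedding $L \hookrightarrow \C$. To complete your approach you would at minimum need to replace $(\Dcal(F),\beta_{F,\Sigma_F},\vect_\Q)$ with a coherent substitute, e.g. by precomposing $\beta_{\Spec(L),\sigma}$ with the transition maps $\Dcal(F)\to\Dcal(L)$ and then proving that the resulting kernel agrees with $\ker(\beta_{F,\Sigma_F}^+)$ — and showing that last identification is essentially where all the content of the paper's geometric argument (and of \Cref{prop:M(L)-indep} applied over the shifting base fields $F\cap k$) reappears.
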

\begin{proof}
	Again, we only give the argument for the categories $\Mcal(L)$ and $\Mcal(F)$; 
	the argument for the categories $\Mcal^0(L)$ and $\Mcal^0(F)$ is an easy variant of the former, and we leave it to the interested reader.
	
	For any fixed $F \in \Scal_L$, and any choice of two complex embeddings $\Sigma_F^{(1)}, \Sigma_F^{(2)}: F \hookrightarrow \C$, the abelian categories $\Mcal(F_1,\Sigma_F^{(1)})$ and $\Mcal(F_2,\Sigma_F^{(2)})$ coincide, as a consequence of Huber's construction of mixed realizations (this was noted by Ivorra--Morel in the proof of \cite[Prop.~6.11]{IM19}).
    Therefore, for every inclusion $F_1 \subset F_2$ in $\Scal_L$, we have a well-defined exact functor
	\begin{equation*}
		\Mcal(F_1) := \Mcal(F_1,\Sigma_{F_1}) = \Mcal(F_1,\Sigma_{F_2}|_{F_1}) \rightarrow \Mcal(\Spec(F_2),\Sigma_{F_2}|_{F_1}) = \Mcal(F_2,\Sigma_{F_2}) =: \Mcal(F_2), 
	\end{equation*} 
    where the last equivalence is given by \Cref{prop:M(L)-indep}.
    It is easy to see that these functors are compatible with composition of inclusions in $\Scal_L$, thereby defining an abelian fibered category as stated. 
    
    The proof of the last assertion is not difficult but a bit lengthy, since it forces us to describe everything in more explicit geometric terms, as for the proof of \Cref{prop:M(L)-indep}. 
    To begin with, fix an affine $k$-variety $S$ with function field $L$, as done at the beginning of \Cref{constr:M(L)} - it is not relevant whether $S$ is smooth or not. 
    Applying the strong form of Noether's Normalization Lemma, we can write
    \begin{equation*}
    	S = \Spec(k[t_1,\dots,t_n][x]/P(x))
    \end{equation*}
    where $t_1,\dots,t_n$ form a transcendence basis for $L/k$ and $P(x) \in k[t_1,\dots,t_n][x]$ is an irreducible monic polynomial in $x$. 
    From now on, we regard $S$ as a closed subvariety of the affine space $\A^{n+1}_k$ with coordinates $t_1,\dots,t_n,x$, and we tacitly identify the function field $L := k(S)$ with $k(t_1,\dots,t_n)[x]/P(x)$.
    
    Let $k_0$ denote the smallest subfield of $k$ such that $P(x) \in k_0[t_1,\dots,t_n][x]$: 
    by construction, it is finitely generated over $\Q$. 
    Consider the filtered poset $\Scal_{k/k_0}$ consisting of all subfields of $k$ which are finitely generated over $\Q$ and contain $k_0$, and let $\Scal_{L/k_0} \subset \Scal_L$ denote the cofinal subposet consisting of fields of the form $k'(t_1,\dots,t_n)/P(x)$ with $k' \in \Scal_{k/k_0}$.
    We have an obvious map of posets $\Scal_{k/k_0} \rightarrow \Scal_{L/k_0}$, and we write the image of a typical element $k' \in \Scal_{k/k_0}$ under it as $F'$.
    
    For each $k' \in \Scal_{k/k_0}$, let $\Aff\Op_{k'}(S) \subset \Aff\Op(S)$ denote the subposet consisting of all affine open subsets of $S$ which are defined over $k'$: we declare that an open subset $U \in \Aff\Op(S)$ belongs to $\Aff\Op_{k'}(S)$ if the closed subvariety $S \setminus U \subset \A^{n+1}_k$ is defined over $k'$, which means that the associated ideal $\mathbb{I}(S \setminus U) \subset k[x_1,\dots,x_n]$ satisfies the equality
    \begin{equation*}
    	\mathbb{I}(S \setminus U) = (\mathbb{I}(S \setminus U) \cap k'[x_1,\dots,x_n]) \otimes_{k'} k.
    \end{equation*} 
    Note that the definition of $\Aff\Op_{k'}(S)$ depends on the chosen presentation of $S$ as a closed subvariety of $\A^{n+1}_k$ - for instance, it is not stable under $k$-automorphisms of $\A^{n+1}_k$.
    Nevertheless, since every closed subvariety of $\A^{n+1}_k$ is defined over some field finitely generated over $\Q$, we have the filtered union
    \begin{equation}\label{eq:AffOp_F}
    	\Aff\Op(S) = \bigcup_{k' \in \Scal_{k'/k_0}} \Aff\Op_{k'}(S).
    \end{equation}
    For each $k' \in \Scal_{k/k_0}$, we let $S_{(k')}$ denote the closed subvariety of $\A^{n+1}_{k'}$ defined by the polynomial $P(x) \in k'[t_1,\dots,t_n][x]$.
    Similarly, for every $U \in \Aff\Op_{k'}(S)$, we let $U_{(k')}$ denote the corresponding affine $k'$-variety: its closed complement $S_{(k')} \setminus U_{(k')}$ is the closed subvariety of $\A^n_{k'}$ with associated ideal $\mathbb{I}(S \setminus U) \cap k'[x_1,\dots,x_n]$. 
    Note that, by construction, every $U \in \Aff\Op(S)$ can be written as a filtered inverse limit with affine transition maps
    \begin{equation*}
    	U = \varprojlim_{k' \in \Scal_{k/k_0}^{op}} U_{(k')}
    \end{equation*}
    in the category of schemes. Therefore, by \cite[Cor.~3.22]{AyoEt}, we have a canonical equivalence
    \begin{equation*}
    	\twocolim_{k' \in \Scal_{k'/k_0}^{op}} \DA_{ct}^{\et}(U_{(k')},\Q) \xrightarrow{\sim} \DA_{ct}^{\et}(U,\Q).
    \end{equation*}
    Note also that, for $k' \in \Scal_{k/k_0}$ and $U \in \Aff\Op_{k'}(S)$, we have a canonical identification of complex-analytic spaces
    \begin{equation*}
    	U_{(k')}^{\sigma|_{k'}} = U^{\sigma},
    \end{equation*}
    and the resulting diagram
    \begin{equation*}
    	\begin{tikzcd}
    		\DA_{ct}^{\et}(U_{(k')},\Q) \arrow{rr} \arrow{d}{\Bti_{U_{(k')},\sigma|_{k'}}^*} && \DA_{ct}^{\et}(U,\Q) \arrow{d}{\Bti_{U,\sigma}^*} \\
    		D^b_c(U_{(k')}^{\sigma|_{k'}},\Q) \arrow{rr} && D^b_c(U^{\sigma},\Q)
    	\end{tikzcd}
    \end{equation*}
    is commutative up to natural isomorphism. In fact, we have a canonical natural isomorphism
    \begin{equation*}
    	\Bti_{U_{(k')},\sigma_{k'}}^* = \Bti_{U,\sigma}^*
    \end{equation*}
    as functors on the entire category of \'{e}tale motives $\DA^{\et}(U_{(k')},\Q)$: this extends the identification of complex-analytic spaces
    \begin{equation*}
    	X^{\sigma_{k'}} = (X \otimes_{k'} k)^{\sigma},
    \end{equation*}
    functorial with respect to $X \in \Sm/{U_{(k')}}$. Passing to the colimit over $\Scal_{k/k_0}^{op}$, and applying \cite[Lemma~5.12]{Ter23UAF}, this yields a canonical equivalence
    \begin{equation}\label{eq:M(UF)}
    	\twocolim_{k' \in \Scal_{k/k_0}} \Mcal(U_{(k')},\sigma|_{k'}) = \Mcal(U,\sigma),
    \end{equation}
    which is clearly compatible with restrictions under inclusions in $\Aff\Op(S)$. Lastly, note that, for every $k' \in \Scal_{k/k_0}$, the obvious inclusion of posets $\Aff\Op_{k'}(S) \subset \Aff\Op(S_{(k')})$ is cofinal.
    Putting all these observation together, we obtain the chain of equivalences
    \begin{align*}
    	\Mcal(L) = & \Mcal(\Spec(L),\sigma) && \textup{by \Cref{prop:M(L)-indep}} \\
    	&= \twocolim_{U \in \Aff\Op(S)^{op}} \Mcal(U,\sigma) && \textup{by definition} \\ 
    	&= \twocolim_{k' \in \Scal_{k/k_0}} 2-\varinjlim_{U \in \Aff\Op_{k'}(S)} \Mcal(U,\sigma) && \textup{by \eqref{eq:AffOp_F}}  \\
    	&= \twocolim_{k' \in \Scal_{k/k_0}} 2-\varinjlim_{U \in \Aff\Op_{k'}(S)} \Mcal(U_{(k')},\sigma|_{k'}) && \textup{by \eqref{eq:M(UF)}} \\
    	&= \twocolim_{k \in \Scal_{k/k_0}} 2-\varinjlim_{V \in \Aff\Op(S_{(k')})} \Mcal(V,\sigma|_{k'}) && \textup{by cofinality} \\
    	&= \twocolim_{k' \in \Scal_{k/k_0}} \Mcal(\Spec(F'),\sigma|_{k'}) && \textup{by definition} \\
    	&= \twocolim_{k' \in \Scal_{k/k_0}} \Mcal(F',\Sigma_{F'}) && \textup{by \Cref{prop:M(L)-indep}} \\
    	&= \twocolim_{F \in \Scal_L} \Mcal(F,\Sigma_F) && \textup{by cofinality}.
    \end{align*}
    This concludes the proof.
\end{proof}

\begin{proof}[Proof of \Cref{thm_Mp0=Mp}]
	We claim that, as $S$ varies, the abelian subcategories $\Mcal^0(S)$ of $\Mcal(S)$ satisfy the conditions in the statement of \Cref{thm-Beilinson-simpler}. This will imply the thesis.
	
	The validity of condition (i) follows from the fact that all the exact functors involved are constructed via the method described in \cite[Prop.~2.5]{Ter23UAF}. Similarly, the validity of condition (ii) follows from the fact that all the natural transformations involved are constructed via the method described in \cite[Prop.~3.4]{Ter23UAF}.
	
	Let us check the validity of condition (iii): we have to show that, for every $k$-variety $S$, the induced faithful exact functor
	\begin{equation}\label{Mp0=Mp-generic}
		\twocolim_{U \in \Sm\Op(S)^{op}} \Mcal^0(U) \hookrightarrow \twocolim_{U \in \Sm\Op(S)^{op}} \Mcal(U)
	\end{equation}
	is an equivalence. 
	Note that $\Sm\Op(S)$-fibered categories $\Mcal$ and $\Mcal^0$ are stacks for the Zariski topology: for $\Mcal$ this result is proved in \cite[Prop.~2.7]{IM19}, while for $\Mcal^0$ the same result follows using condition (i). 
	As a consequence, in order to prove that \eqref{Mp0=Mp-generic} is an equivalence for a general $k$-variety $S$, it suffices to treat each irreducible component of $S$ separately; in other words, without loss of generality we may assume $S$ irreducible.
	
	In this situation, let $L := k(S)$ denote the function field of $S$. Then, by the discussion of \Cref{constr:M(L)}, the functor \eqref{Mp0=Mp-generic} can be identified with the canonical inclusion
	\begin{equation*}
		\Mcal^0(\Spec(L)) \hookrightarrow \Mcal(\Spec(L))
	\end{equation*}
    and, by \Cref{cor:M(L)}, the latter can be identified with the functor
    \begin{equation*}
    	\twocolim_{F \in \Scal_L^{op}} \Mcal^0(F) \hookrightarrow \twocolim_{F \in \Scal_L^{op}} \Mcal(F)
    \end{equation*}
    induced by the inclusions of categories of Nori motives
    \begin{equation}\label{incl:M(F)}
    	\Mcal^0(F) \hookrightarrow \Mcal(F)
    \end{equation}
    over every finitely generated subfield $F \subset L$; here, the category $\Mcal(F)$ is defined in terms of a complex embedding $\Sigma_F: F \hookrightarrow \C$ extending $\sigma|_{F \cap k}$. In order to conclude, it suffices to show that the functors \eqref{incl:M(F)} are all equivalences. 
    So fix a field $F$ finitely generated over $\Q$.
    By \cite[Prop.~3.8]{BVHP20}, in order to prove that \eqref{incl:M(F)} is an equivalence, it suffices to show that the Betti realization
    \begin{equation*}
    	\Bti_{F,\Sigma_F}^*: \DA_{ct}^{\et}(F,\Q) \rightarrow D^b(\vect_{\Q})
    \end{equation*}
    factors through the functor
    \begin{equation*}
    	\iota^0_{F,\Sigma_F}: D^b(\Mcal^0(F)) \rightarrow D^b(\vect_{\Q})
    \end{equation*}
    up to natural isomorphism.  
    The latter fact was established by Nori; see \cite[Prop.~7.1]{ChudGal} or \cite[Thm.~7.4.17]{Harrer} for a detailed proof.
\end{proof}

\section{Construction of the external tensor structure}\label{sect_ETS}

In this section, we start constructing and studying the tensor structure on perverse motives. 
We work in the framework of stable homotopy $2$-functors over the category $\Var_k$ of quasi-projective $k$-varieties, as developed in \cite{Ayo07a,Ayo07b} and \cite{Ayo10}.
Our first main result can be stated as follows:
\begin{thm}\label{thm-boxtimesM}
	The stable homotopy $2$-functor $D^b(\Mcal(\cdot))$ s canonically unitary symmetric monoidal in the sense of \cite[Defn.~2.3.1]{Ayo07a}. Moreover, the morphism of stable homotopy $2$-functors 
	\begin{equation*}
		\iota: D^b(\Mcal(\cdot)) \rightarrow D^b(\Perv(\cdot))
	\end{equation*} 
    is canonically unitary symmetric monoidal in the sense of \cite[Defn.~3.2]{Ayo10}.
\end{thm}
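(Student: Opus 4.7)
The plan is to leverage \Cref{thm_Mp0=Mp} in an essential way: on the equivalent subcategories $\Mcal^0(S)$ the external tensor product can be lifted directly, because the external box of two perverse sheaves remains perverse by \cite[\S~4.2.7]{BBD82}, and the subcategory $\Dcal^0(S)$ was designed precisely so that Betti realization of its objects lands in $\Perv(S)$. The strategy is to build the full monoidal structure on the $\Mcal^0(S)$, then transport it via the equivalence to the $\Mcal(S)$.

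First I would construct, for every pair $(S_1,S_2) \in \Var_k^2$, a biadditive external tensor product
\begin{equation*}
	-\boxtimes-: \Dcal^0(S_1) \times \Dcal^0(S_2) \rightarrow \Dcal^0(S_1 \times S_2)
\end{equation*}
coming from the external monoidal structure of étale motives and the compatibility of the Betti realization with $\boxtimes$. Using the multilinear variant of the universal property of abelian factorizations developed in \cite{Ter23UAF}, this descends canonically to a biexact functor
\begin{equation*}
	-\boxtimes-: \Mcal^0(S_1) \times \Mcal^0(S_2) \rightarrow \Mcal^0(S_1 \times S_2)
\end{equation*}
which is intertwined along $\iota$ with the external tensor product of perverse sheaves. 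This biexact functor extends to the bounded derived categories $D^b(\Mcal(\cdot))$ by standard $\delta$-functor machinery.

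Next I would produce the external monoidality isomorphisms comparing $\boxtimes$ with the inverse and direct image functors of the four-functor formalism constructed in \cite{IM19}. Following the reduction strategy of \cite{Ter23Fact}, each such comparison isomorphism is uniquely determined by its underlying natural transformation on perverse sheaves, where it is part of the classical formalism; the same technique produces the associativity and commutativity constraints by lifting the corresponding constraints on $\Perv(\cdot)$. The pentagon and hexagon coherence identities can then be checked after applying the conservative faithful exact functor $\iota$, reducing them to their classical counterparts on perverse sheaves.

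The main obstacle is the unit constraint. One needs a coherent motivic unit $\unit_S \in \Mcal^0(S)$ together with natural isomorphisms $\unit_S \boxtimes M \simeq M$ that are compatible with both the associativity and the commutativity constraints. I expect to construct the motivic unit section via the embedding reduction technique of \cite{Ter23Emb}, which formally produces a candidate unit object lifted from the unit of $\Perv$. The real work will be verifying that the resulting unit constraint is well-behaved: the universal property alone is too blunt here, because unit axioms involve identifying two a priori distinct lifts of the same natural transformation, and this requires concrete information about how $\boxtimes$ interacts with the constant motivic local systems shifted into perverse degree. This is the point where a separate analysis of tensor products with distinguished motivic local systems becomes unavoidable. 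Once the unit constraint is verified in $\Mcal^0$, the claim that $\iota$ is a unitary symmetric monoidal morphism of stable homotopy $2$-functors is essentially built into the construction, since every piece of data has been lifted uniquely from the corresponding perverse-sheaf data; transporting along the equivalence of \Cref{thm_Mp0=Mp} yields the asserted structure on $D^b(\Mcal(\cdot))$.
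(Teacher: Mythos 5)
Your proposal matches the paper's approach: construct the external tensor structure on $\Mcal^0(\cdot)$ via the multilinear universal property of abelian factorizations, reduce external monoidality isomorphisms using the smooth/closed-immersion factorization technique of \cite{Ter23Fact}, lift associativity and commutativity constraints by the same lifting principle, and isolate the unit constraint as the delicate point requiring a separate analysis of tensor products with distinguished motivic local systems (carried out in the paper via the \v{C}ech-complex computations leading to \Cref{prop-distinguished-otimes}, and the embedding technique of \cite{Ter23Emb} for the unit section). Your recognition that the universal property alone is insufficient for the unit axioms—because they compare two a priori distinct lifts—is exactly the observation motivating the technical interlude of \Cref{sect_otimes-dist}; the one stylistic difference is that where you propose to verify coherence diagrams by post-composing with the faithful functor $\iota$ and reducing to $\Perv(\cdot)$, the paper instead reduces to the source category $\DA_{ct}^{\et}(\cdot,\Q)$ via \cite{Ter23UAF}, but both reductions are valid.
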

\noindent
The proof of this result occupies most of \Cref{sect_ETS,sect_otimes-dist,sect_unit-Mp}. 
In the present section, after a preliminary discussion about multi-linear functors, we construct the tensor product functors and the monoidality isomorphisms.
In the final part, we construct the associativity and commutativity constraints and check their compatibility.
The construction of the motivic unit constraint, which is more delicate, is the subject of \Cref{sect_unit-Mp}.

\subsection{Categorical preliminaries}

Several constructions in Ivorra--Morel's paper \cite{IM19} are based on a technical recognition principle for functors and natural transformations in the constructible derived categories: given two $k$-varieties $T$ and $S$ and a triangulated functor $F: D^b_c(S,\Q) \rightarrow D^b_c(T,\Q)$ which is $t$-exact for the perverse $t$-structures, one wants to know whether $F$ coincides with the trivial derived functor of its restriction to the perverse hearts
\begin{equation*}
	F: \Perv(S) \rightarrow \Perv(T),
\end{equation*} 
under Beilinson's equivalences. More abstractly, given two abelian categories $\Acal$ and $\Bcal$ and a triangulated functor $F: D^b(\Acal) \rightarrow D^b(\Bcal)$ which is $t$-exact for the obvious $t$-structures, one can ask whether $F$ coincides with the trivial derived functor of its restriction to the hearts
\begin{equation*}
	F: \Acal \rightarrow \Bcal.
\end{equation*} 
This is not always the case, due to the technical defects of triangulated categories. However, by \cite[Thm.~1]{VoloDG}, in order for this to hold it suffices that $F$ admit a dg-enhancement; a similar result holds for triangulated natural transformations between $t$-exact triangulated functors. This is summarized in \cite[Prop.~4.4]{IM19}. 

We are going to apply the same recognition principle to multi-linear functors and natural transformations thereof. Before stating the precise result, let us spell out the relevant notions:
\begin{defn}
	Let $\Dcal_1, \dots, \Dcal_n$ and $\Ecal$ be triangulated categories.
	\begin{enumerate}
		\item A \textit{multi-triangulated functor} from $\Dcal_1 \times \dots \times \Dcal_n$ to $\Ecal$ is a functor
		\begin{equation*}
			F: \Dcal_1 \times \dots \times \Dcal_n \rightarrow \Ecal
		\end{equation*}
	    which is functorially triangulated separately in each variable in such a way that, for every choice of $i,j \in \left\{1, \dots, n\right\}$ with $i < j$, the diagram of functors $\Dcal_1 \times \dots \times \Dcal_n \rightarrow \Ecal$
	    \begin{equation*}
	    	\begin{tikzcd}
	    		F(A_1^{\bullet}, \dots, A_i^{\bullet}[1], \dots, A_j^{\bullet}[1], \dots, A_n^{\bullet}) \arrow{r}{\sim} \isoarrow{d} & F(A_1^{\bullet}, \dots, A_i^{\bullet}, \dots, A_j^{\bullet}[1], \dots, A_n^{\bullet})[1] \isoarrow{d} \\
	    		F(A_1^{\bullet}, \dots, A_i^{\bullet}[1], \dots, A_j^{\bullet}, \dots, A_n^{\bullet})[1] \arrow{r}{\sim} & F(A_1^{\bullet}, \dots, A_i^{\bullet}, \dots, A_j^{\bullet}, \dots, A_n^{\bullet})[2]
	    	\end{tikzcd}
	    \end{equation*}
	    is anti-commutative.
	    \item A \textit{multi-triangulated natural transformation} between $F$ and $G$ is a natural transformation of functors $\Dcal_1 \times \dots \times \Dcal_n \rightarrow \Ecal$
	    \begin{equation*}
	    	\alpha: F \rightarrow G
	    \end{equation*} 
        which is triangulated separately in each variable in the sense that, for every choice of $i \in \left\{1,\dots,n\right\}$, the diagram of functors $\Dcal_1 \times \dots \times \Dcal_n \rightarrow \Ecal$ 
        \begin{equation*}
        	\begin{tikzcd}
        		F(A_1^{\bullet}, \dots, A_i^{\bullet}[1], \dots, A_n^{\bullet}) \arrow{r}{\alpha} \isoarrow{d} & G(A_1^{\bullet}, \dots, A_i^{\bullet}[1], \dots, A_n^{\bullet}) \isoarrow{d} \\
        		F(A_1^{\bullet}, \dots, A_i^{\bullet}, \dots, A_n^{\bullet})[1] \arrow{r}{\alpha} & G(A_1^{\bullet}, \dots, A_i^{\bullet}, \dots, A_n^{\bullet})[1]
        	\end{tikzcd}
        \end{equation*}
        is commutative.
	\end{enumerate}
\end{defn}  

In order to motivate the definition, we quickly review the fundamental example: namely, derived functors of multi-exact functors. For this, we need some ad hoc notation about multi-dimensional complexes:

\begin{nota}
	Let $\Acal$ be an abelian category. For every integer $n \geq 1$, we use the following notation:
	\begin{itemize}
		\item We regard $\Z^n$ as a poset with the order relation defined by
		\begin{equation*}
			(i_1, \dots,i_n) \leq (i'_1, \dots, i'_n) \iff i_r \leq i'_r \;\; \forall r = 1, \dots, n.
		\end{equation*}
		\item We let $\square^b_n(\Acal)$ denote the category of $n$-dimensional bounded complexes with values in $\Acal$: it is the full subcategory of the functor category $\Fun(\Z^n,\Acal)$ consisting of those functors $A^{\square} = (A^{i_1,\dots,i_n})_{i_1,\dots,i_n}$ such that:
		\begin{enumerate}
			\item[(i)] For every $(i_1, \dots, i_n) \in \Z^n$ and every $r = 1, \dots, n$, the composite morphism
			\begin{equation*}
				A^{i_1,\dots, i_r,\dots, i_n} \rightarrow A^{i_1,\dots, i_r + 1,\dots, i_n} \rightarrow A^{i_1,\dots, i_r + 2,\dots, i_n}
			\end{equation*}
			vanishes.
			\item[(ii)] There exists $N \geq 0$ (depending on $A^{\bullet}$) such that $A^{i_1,\dots, i_n} = 0$ whenever $|i_1| + \dots + |i_n| > N$.
		\end{enumerate}
		\item We let
		\begin{equation*}
			\Tot^{\square}: \square^b_n(\Acal) \rightarrow \square^b_1(\Acal) = C^b(\Acal)
		\end{equation*}
		denote the total complex functor with respect to a fixed choice of signs.
	\end{itemize} 
\end{nota}

\begin{lem}\label{lem:multiex-funct}
	Let $\Acal_1, \dots, \Acal_n$ and $\Bcal$ be abelian categories, and let $F: \Acal_1 \times \dots \times \Acal_n \rightarrow \Bcal$ be a multi-exact functor. Then the formula
	\begin{equation*}
		F(A_1^{\bullet}, \dots, A_n^{\bullet}) := \Tot^{\square}(F(A_n^{i_1}, \dots, A_n^{i_n})_{i_1,\dots,i_n})
	\end{equation*}
	canonically defines a multi-triangulated functor
	\begin{equation*}
		F: D^b(\Acal_1) \times \dots \times D^b(\Acal_n) \rightarrow D^b(\Bcal).
	\end{equation*}
\end{lem}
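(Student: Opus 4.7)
The plan is to define $F$ at the level of complexes via the totalization formula, verify it descends through homotopy and then through quasi-isomorphisms in each variable, and finally identify the multi-triangulated structure.

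First I would define the functor on multi-complexes: the assignment
\begin{equation*}
    (A_1^\bullet, \dots, A_n^\bullet) \mapsto \Tot^{\square}\bigl(F(A_1^{i_1}, \dots, A_n^{i_n})\bigr)_{i_1,\dots,i_n}
\end{equation*}
takes $C^b(\Acal_1) \times \dots \times C^b(\Acal_n)$ to $C^b(\Bcal)$, since only finitely many terms in each multi-degree are nonzero. The differential in the totalization is defined with the usual Koszul sign convention $d = \sum_r (-1)^{i_1 + \dots + i_{r-1}} d_r$, and multi-exactness of $F$ ensures $d^2 = 0$ (here we only use additivity). Next I would check that it descends to the product of bounded homotopy categories: if $h_r: A_r^\bullet \to A_r^{\prime,\bullet}[-1]$ is a homotopy in the $r$-th slot, then applying $F$ in the $r$-th slot and totalizing produces a chain homotopy between the corresponding totalizations, again thanks to additivity of $F$ in that variable.

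The key step is descent to the product of bounded derived categories, i.e. passing to quasi-isomorphisms separately in each variable. By symmetry it suffices to show that if $A_1^\bullet$ is acyclic and bounded, then the totalization above is acyclic for fixed bounded $A_2^\bullet, \dots, A_n^\bullet$. This follows by induction on $n$, reducing to the classical statement for a single bi-complex (acyclic bounded bi-complexes have acyclic totalization), combined with the fact that $F(-, A_2^{i_2}, \dots, A_n^{i_n})$ is an exact functor in the first slot and thus preserves acyclicity of $A_1^\bullet$ levelwise. Alternatively, a direct spectral sequence argument on the filtration by $i_1$ works. Boundedness is essential here and keeps the argument elementary.

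Finally I would install the multi-triangulated structure. Functorial triangulation in the $r$-th variable is obtained by noting that a distinguished triangle $A_r^\bullet \to A_r^{\prime,\bullet} \to A_r^{\prime\prime,\bullet} \xrightarrow{+1}$ in $D^b(\Acal_r)$ is represented, up to quasi-isomorphism, by a termwise split short exact sequence of complexes; multi-exactness of $F$ in the $r$-th slot turns this into a termwise split short exact sequence of multi-complexes, whose totalization is a distinguished triangle in $D^b(\Bcal)$. The anti-commutativity of the double shift on the nose is then a direct consequence of the Koszul sign rule in the definition of $\Tot^{\square}$: shifting $A_i^\bullet$ and $A_j^\bullet$ simultaneously reindexes two coordinates by one and forces an extra sign $(-1)^{i \cdot j}$ in the canonical identification, which specializes to $-1$ in the relevant comparison square. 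I expect this last sign bookkeeping to be the only delicate point; everything else is formal once the totalization is set up carefully.
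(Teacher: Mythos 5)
Your proof is correct and fills in exactly the computation the paper has in mind: the paper's own proof is a one-sentence assertion that the classical argument for tensor products of complexes of vector spaces ``carries over to the general framework,'' with details omitted. Your route — descent along homotopies by additivity, the acyclicity-in-one-variable argument reducing to the bicomplex case via multi-exactness and the filtration by the first index, and the Koszul-sign bookkeeping for the double-shift anti-commutativity — is precisely that classical computation spelled out.
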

\begin{proof}
	If $n = 2$, the classical computation in the case where $F$ is the tensor product functor in the category of vector spaces over some field carries over to the general framework described in the statement. The same computation also works for $n > 2$. We omit the details. 
\end{proof}

\begin{lem}\label{lem:multiex-nat}
	Let $\Acal_1, \dots, \Acal_n$ and $\Bcal$ be abelian categories, and let $F, G: \Acal_1 \times \dots \times \Acal_n \rightarrow \Bcal$ be two multi-exact functors. Suppose that we are given a natural transformation of functors $\Acal_1 \times \dots \Acal_n \rightarrow \Bcal$
	\begin{equation*}
		\alpha: F(A_1, \dots, A_n) \rightarrow G(A_1, \dots, A_n).
	\end{equation*}
	Then the formula
	\begin{equation*}
		\begin{tikzcd}[font=\small]
			\alpha(A_1^{\bullet}, \dots,  A_n^{\bullet}): & F(A_1^{\bullet}, \dots,  A_n^{\bullet}) \arrow[equal]{d} &&&& G(A_1^{\bullet}, \dots, A_n^{\bullet}) \arrow[equal]{d} \\ 
			& \Tot^{\square}(F(A_1^{i_1}, \dots, A_n^{i_n})_{i_1,\dots,i_n}) \arrow{rrrr}{(\kappa(A_1^{i_1}, \dots, A_n^{i_n})_{i_1,\dots,i_n})} &&&& \Tot^{\square}(G(A_1^{i_1}, \dots, A_n^{i_n})_{i_1,\dots,i_n})
		\end{tikzcd}
	\end{equation*}
	defines a multi-triangulated natural transformation between functors $D^b(\Acal_1) \times \dots \times D^b(\Acal_n) \rightarrow D^b(\Bcal)$
	\begin{equation*}
		\alpha: F(A_1^{\bullet}, \dots, A_n^{\bullet}) \rightarrow G(A_1^{\bullet}, \dots, A_n^{\bullet})
	\end{equation*}
	Moreover, this construction is compatible with identities of functors as well as with composition of natural transformations; in particular, it preserves invertibility of natural transformations.
\end{lem}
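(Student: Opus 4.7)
The plan is to construct the claimed natural transformation first at the level of $n$-dimensional complexes in $\Bcal$, then transport it to the bounded derived category via the total complex construction, exactly mirroring the strategy used in \Cref{lem:multiex-funct}. Given $n$-tuples of complexes $(A_1^{\bullet},\dots,A_n^{\bullet})$, applying $\alpha$ componentwise produces, for every multi-index $(i_1,\dots,i_n) \in \Z^n$, a morphism
\begin{equation*}
\alpha(A_1^{i_1},\dots,A_n^{i_n}): F(A_1^{i_1},\dots,A_n^{i_n}) \to G(A_1^{i_1},\dots,A_n^{i_n})
\end{equation*}
in $\Bcal$. The naturality of $\alpha$ in each variable, applied to the differentials of each factor $A_r^{\bullet}$, guarantees that these morphisms assemble into a morphism in $\square^b_n(\Bcal)$ between the multi-complexes $(F(A_1^{i_1},\dots,A_n^{i_n}))_{i_1,\dots,i_n}$ and $(G(A_1^{i_1},\dots,A_n^{i_n}))_{i_1,\dots,i_n}$. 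Since $\Tot^{\square}$ is an additive functor from $\square^b_n(\Bcal)$ to $C^b(\Bcal)$, we obtain a morphism of complexes, hence a morphism in $D^b(\Bcal)$, which is the definition of $\alpha(A_1^{\bullet},\dots,A_n^{\bullet})$.

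Next, I would check that this assignment is natural separately in each variable $A_r^{\bullet}$. For any morphism of complexes $A_r^{\bullet} \to \tilde A_r^{\bullet}$, the induced square in $\square^b_n(\Bcal)$ commutes componentwise by naturality of the original $\alpha$ in the $r$-th variable; applying $\Tot^{\square}$ then yields the required commutativity in $D^b(\Bcal)$. To verify the multi-triangulated property, fix $i \in \{1,\dots,n\}$: the canonical isomorphisms $F(A_1^{\bullet},\dots,A_i^{\bullet}[1],\dots,A_n^{\bullet}) \simeq F(A_1^{\bullet},\dots,A_i^{\bullet},\dots,A_n^{\bullet})[1]$ and its analogue for $G$ are induced by the same permutation of indices and the same sign convention on $\Tot^{\square}$, and they do not interact with $\alpha$ (which acts by identity on indices); hence the componentwise application of $\alpha$ manifestly commutes with them, giving the required square.

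Finally, compatibility with identities and with composition is immediate from the componentwise definition: for $\alpha = \id_F$, the induced morphism of multi-complexes is the identity, and $\Tot^{\square}$ sends it to the identity of complexes; for a composite $\beta \circ \alpha$, the componentwise morphism in $\square^b_n(\Bcal)$ is the componentwise composite, and functoriality of $\Tot^{\square}$ preserves this. Preservation of invertibility then follows formally from compatibility with identities and composition. I do not anticipate any substantive obstacle in this proof: the argument is essentially a bookkeeping exercise, and the only mild subtlety is the sign conventions hidden in $\Tot^{\square}$, which however play no role because $F$ and $G$ are treated symmetrically throughout and $\alpha$ does not shift indices.
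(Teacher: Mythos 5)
Your proof is correct and follows exactly the approach the paper has in mind: the paper's own proof consists of a single remark that "the standard computation in the case where $F$ and $G$ are suitable combinations of tensor product functors... carries over to the general setting described in the statement" with all details omitted, and your argument simply supplies those omitted details (componentwise definition, compatibility with differentials via naturality of $\alpha$, functoriality of $\Tot^{\square}$, the sign-convention remark for the multi-triangulated condition, and the formal compatibility with identities and composition).
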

\begin{proof}
	The standard computation in the case where $F$ and $G$ are suitable combinations of tensor product functors in the category of vector spaces over some field carries over to the general setting described in the statement. We omit the details.
\end{proof}

We can then state the aforementioned generalization of \cite[Thm.~1]{VoloDG} to the multi-linear setting:
\begin{prop}\label{prop:VoloDG}
	Let $\Acal_1, \dots, \Acal_n$ and $\Bcal$ be abelian categories. The following statements hold:
	\begin{enumerate}
		\item Let $F: D^b(\Acal_1) \times \dots D^b(\Acal_n) \rightarrow D^b(\Bcal)$ be a multi-triangulated functor which is $t$-exact in each variable for the obvious $t$-structures, and suppose that it can be enhanced to a dg-quasifunctor. 
		
		Then $F$ is canonically isomorphic to the functor obtained from the induced multi-exact functor
		\begin{equation*}
			F: \Acal_1 \times \dots \times \Acal_n \rightarrow \Bcal
		\end{equation*} 
	    via the construction of \Cref{lem:multiex-funct}.
		\item Let $F,G: D^b(\Acal_1) \times \dots \times D^b(\Acal_n) \rightarrow D^b(\Bcal)$ be two multi-triangulated functors which are $t$-exact in each variable for the obvious $t$-structures, and suppose that they can be enhanced to dg-quasifunctors; fix one such enhancement. Moreover, let $\alpha: F \rightarrow G$ be a multi-triangulated natural transformation, and suppose that it can be enhanced to a morphism of dg-quasifunctors.
		
		Then $\alpha$ coincides with the canonical natural transformation obtained from the induced natural transformation of functors $\Acal_1 \times \dots \times \Acal_n \rightarrow \Bcal$
		\begin{equation*}
			\alpha: F \rightarrow G
		\end{equation*}
	    via the construction of \Cref{lem:multiex-nat}.
	\end{enumerate}
\end{prop}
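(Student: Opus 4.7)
The plan is to reduce both statements to the known single-variable version \cite[Thm.~1]{VoloDG} by induction on $n$, using the multi-triangulation to glue successive iterations together. To set up the comparison, I would first exploit the $t$-exactness of $F$ in each variable to extract the restriction
\begin{equation*}
F_0 := F|_{\Acal_1 \times \dots \times \Acal_n} : \Acal_1 \times \dots \times \Acal_n \to \Bcal,
\end{equation*}
which is automatically multi-exact, and then apply \Cref{lem:multiex-funct} to produce a second multi-triangulated functor $\tilde F: D^b(\Acal_1) \times \dots \times D^b(\Acal_n) \to D^b(\Bcal)$. By construction $\tilde F$ is $t$-exact in each variable, agrees tautologically with $F$ on tuples of objects concentrated in degree zero, and carries a canonical dg-enhancement (totalisation of bounded multi-complexes being itself a dg-functor). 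Statement (1) then amounts to producing a canonical isomorphism $F \simeq \tilde F$ of multi-triangulated functors.

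For this I would freeze all but one variable at a time. Fixing $A_i \in \Acal_i$ for $i \neq j$, both one-variable restrictions
\begin{equation*}
F(A_1, \dots, -, \dots, A_n), \; \tilde F(A_1, \dots, -, \dots, A_n) : D^b(\Acal_j) \to D^b(\Bcal)
\end{equation*}
are triangulated, $t$-exact, dg-enhanced, and agree with $F_0(A_1, \dots, -, \dots, A_n)$ on the heart. The single-variable result \cite[Thm.~1]{VoloDG} therefore yields a canonical isomorphism between them. Keeping one of the newly extended complexes fixed and moving on to the next variable, the process iterates: at each step the partially extended functor is still $t$-exact and dg-enhanced in the remaining variable, because $F$ was assumed multi-triangulated, separately $t$-exact, and dg-enhanced, and the analogous properties of $\tilde F$ follow from its totalisation construction. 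After $n$ iterations one obtains a natural isomorphism $F(A_1^\bullet, \dots, A_n^\bullet) \simeq \tilde F(A_1^\bullet, \dots, A_n^\bullet)$ on all tuples of complexes.

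Part (2) proceeds along identical lines. Restricting $\alpha$ to the hearts yields a natural transformation of multi-exact functors $\alpha_0: F_0 \to G_0$, and \Cref{lem:multiex-nat} turns it into a multi-triangulated natural transformation $\tilde \alpha: \tilde F \to \tilde G$ coinciding with $\alpha_0$ on $\Acal_1 \times \dots \times \Acal_n$. Via the isomorphisms $F \simeq \tilde F$ and $G \simeq \tilde G$ built in (1), the claim $\alpha = \tilde \alpha$ on arbitrary complexes is reduced, by the same variable-by-variable freezing argument, to the uniqueness half of \cite[Thm.~1]{VoloDG}.

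The main obstacle, and the precise place where the dg-enhancement hypothesis enters, is the \emph{coherence} of the iterated isomorphisms. Each individual application of \cite[Thm.~1]{VoloDG} is canonical, but to assemble the successive one-variable identifications into a single natural isomorphism of \emph{multi}-triangulated functors one needs the essential uniqueness statement that dg-enhancements guarantee; the anti-commutativity of the shift diagrams in the definition of multi-triangulated functor then supplies exactly the sign compatibility that makes the glued isomorphism respect the entire multi-triangulated structure simultaneously in all variables.
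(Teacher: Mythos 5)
Your overall strategy is the same as the paper's: apply \cite[Thm.~1]{VoloDG} one variable at a time and then extend across the remaining variables. But the iteration step as you describe it does not quite work. You claim that after extending one variable to arbitrary bounded complexes, ``the partially extended functor is still $t$-exact and dg-enhanced in the remaining variable''; the second half of this is fine, but the first half is false. The hypothesis on $F$ is $t$-exactness in each variable \emph{with the other variables frozen on heart objects}. Once $A_j^{\bullet}$ is an honest complex of amplitude $>0$, the one-variable functor $F(A_1,\dots,A_j^{\bullet},\dots,-,\dots,A_n)$ is merely triangulated, not $t$-exact (this is already visible for the external or internal tensor product of complexes). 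Consequently \cite[Thm.~1]{VoloDG} cannot be invoked a second time as you do.

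What replaces the later applications of Vologodsky's theorem is d\'evissage, which is exactly the word the paper uses. Having produced, for all heart objects $A_1$ and all $A_2^{\bullet} \in D^b(\Acal_2)$, a comparison isomorphism $F(A_1,A_2^{\bullet}) \simeq \tilde F(A_1,A_2^{\bullet})$ compatible with the triangulated structure in the second slot, one extends to arbitrary $A_1^{\bullet}$ by induction on the cohomological amplitude of $A_1^{\bullet}$, using the truncation triangles and the fact that both $F(-,A_2^{\bullet})$ and $\tilde F(-,A_2^{\bullet})$ are triangulated in the first variable and agree on the heart. This needs no $t$-exactness in the first slot; it only needs the base case supplied by the first application of \cite[Thm.~1]{VoloDG}, plus the naturality and anti-commutativity built into the notion of a multi-triangulated functor to control the induced maps on cones. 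Your closing paragraph already anticipates that the dg-enhancement is what controls the coherence of the glued isomorphism, which is correct, but the mechanism at each step after the first should be d\'evissage, not a second invocation of the $t$-exact comparison theorem. With that substitution your argument matches the paper's sketch.
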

\begin{proof}
	For sake of simplicity, we only sketch the argument in the case $n = 2$. 
	\begin{enumerate}
		\item For every fixed object $A_1 \in \Acal_1$, the induced triangulated functor
		\begin{equation*}
			F(A_1,-): D^b(\Acal_2) \rightarrow D^b(\Bcal)
		\end{equation*}
	    it $t$-exact by hypothesis and inherits a dg-enhancement from $F$. Applying \cite[Thm.~1]{VoloDG} to it, we deduce that it coincides with the trivial derived functor of the induced exact functor 
	    \begin{equation*}
	    	F(A_1,-): \Acal_2 \rightarrow \Bcal.
	    \end{equation*}
        Now, arguing by dévissage in the first variable, one obtains the analogous conclusion for $F$.
        \item For every fixed object $A_1 \in \Acal_1$, the induced natural transformation of triangulated functors $\Acal_2 \rightarrow \Bcal$
        \begin{equation*}
        	\alpha(A_1,-): F(A_1,-) \rightarrow G(A_1,-)
        \end{equation*}
        is triangulated by hypothesis and inherits a dg-enhancement from $\alpha$. Applying \cite[Thm.~1]{VoloDG} to it, we deduce that it coincides with the canonical natural transformation induced by the natural transformation between functors $\Acal_2 \rightarrow \Bcal$
        \begin{equation*}
        	\alpha(A_1,-): F(A_1,-) \rightarrow G(A_1,-).
        \end{equation*}
        Again, arguing by dévissage in the first variable, one obtains the analogous conclusion for $\alpha$.
	\end{enumerate}
\end{proof}

\begin{rem}\label{rem:utility-VoloDG}
	In particular, since the construction of \Cref{lem:multiex-nat} is compatible with composition of natural transformations, the result of \Cref{prop:VoloDG}(2) can be used to show the commutativity of natural diagrams of $t$-exact multi-triangulated functors by checking the commutativity of the induced diagram of multi-exact functors on the hearts of the various $t$-structures. 
\end{rem}

In the setting of perverse motives, \Cref{prop:VoloDG}(1) asserts that, given $k$-varieties $S_1, \dots, S_n, T$, every dg-enhanced multi-triangulated functor $F: D^b_c(S_1,\Q) \times \dots \times D^b_c(S_n,\Q) \rightarrow D^b_c(T,\Q)$ which is $t$-exact for the perverse $t$-structures separately in each variable is canonically isomorphic to the multi-derived functor of its restriction
\begin{equation*}
	F: \Perv(S_1) \times \dots \times \Perv(S_n) \rightarrow \Perv(T),
\end{equation*}
under Beilinson's equivalences. \Cref{prop:VoloDG}(2) asserts that the same holds for natural transformations between $t$-exact multi-triangulated functors. 
All the multi-triangulated functors on the constructible derived categories that we need to consider here are combinations of three types of functors: shifted inverse images under smooth morphisms, direct images under closed immersions, and external tensor product functors.


\subsection{External tensor structure}

After the preliminary discussion above, we can start constructing the monoidal structure on perverse motives. As explained in the introduction of the paper, the lack of $t$-exactness of the internal tensor product forces one to work systematically with the external tensor product. The legitimacy of this approach is  justified by our paper \cite{Ter23Fib}, where we show how the theory of monoidal fibered categories can be expressed in terms of the external tensor product: there is a canonical dictionary relating the structure and properties of the internal tensor product to the structure and properties of the external tensor product. In the rest of this section, we adopt the language of \textit{external tensor structures} introduced in \cite{Ter23Fib}, which allows us to express our main constructions concisely.

In fact, all the results below are proved for the abelian categories $\Mcal^0(S)$ defined in \Cref{nota:Mp0}; the corresponding results for the categories $\Mcal(S)$ are deduced by transport of structure via the equivalences of \Cref{thm_Mp0=Mp}. This operation is harmless because the categories $\Mcal^0(S)$ are stable under all the exact functors listed in the statement of \Cref{thm-Beilinson-simpler}, and the structure of stable homotopy $2$-functor of $D^b(\Mcal(\cdot))$ is constructed using only these functors (together with the Verdier duality functors, which also preserve the subcategories $\Mcal^0(S)$).
We prefer to use the notation "$\Mcal^0$" in place of "$\Mcal$" in those constructions and proofs where it is necessary to work with the alternative presentation of perverse motives.
However, as in the case of \Cref{thm-boxtimesM} above, we stick to the notation "$\Mcal$" in the formulation of our main results.

Our first task is to construct the single external tensor product functors, together with the external monoidality isomorphisms witnessing their compatibility with inverse image functors of perverse motives. 
This is based on the lifting results for external tensor structures on abelian fibered categories collected in \cite[\S~6]{Ter23UAF}. Since general inverse image functors are definitely not exact, we apply the method of \cite{Ter23Fact} in order to encode everything in terms of inverse images under smooth morphisms and direct images under closed immersions.
Our first intermediate result towards \Cref{thm-boxtimesM} can be stated as follows:

\begin{prop}\label{prop-extcore-Mp}
	The triangulated $\Var_k$-fibered category $D^b(\Mcal(\cdot))$ carries a canonical triangulated external tensor structure in the sense of \cite[Defn.~2.6]{Ter23Fib}. Moreover, the morphism of $\Var_k$-fibered categories $\iota: D^b(\Mcal(\cdot)) \rightarrow D^b(\Perv(\cdot))$ carries a canonical external tensor structure in the sense of \cite[Defn.~8.5]{Ter23Fib}.
\end{prop}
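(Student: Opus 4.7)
The plan is to reduce everything to data that can be constructed at the abelian level using the universal property of perverse Nori motives, then use \Cref{prop:VoloDG} and the abelian-to-triangulated passage from \Cref{lem:multiex-funct} to lift to the derived categories. Throughout I work with the subcategories $\Mcal^0(S)$ of \Cref{nota:Mp^00}, since they are tailor-made so that the external tensor product of étale motives lands in $\Perv$ after Betti realization; the final statement for $\Mcal(S)$ follows by transport of structure along \Cref{thm_Mp0=Mp}.

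First I would construct the single external tensor product bifunctors at the abelian level. For $S_1,S_2\in\Var_k$, the classical external tensor product on étale motives gives an additive bifunctor $-\boxtimes-:\Dcal^0(S_1)\times\Dcal^0(S_2)\rightarrow\Dcal^0(S_1\times S_2)$, and by definition of $\Dcal^0$ together with the $t$-exactness of $-\boxtimes-:\Perv(S_1)\times\Perv(S_2)\rightarrow\Perv(S_1\times S_2)$ from \cite[\S~4.2.7]{BBD82}, the composite with $\beta_{S_1\times S_2}^0$ factors through $\beta_{S_1}^0\boxtimes\beta_{S_2}^0$. Applying the multi-linear version of \cite[Prop.~2.5]{Ter23UAF} (as summarized in \cite[\S~6]{Ter23UAF}) I obtain a canonical bi-exact functor
\begin{equation*}
    -\boxtimes-:\Mcal^0(S_1)\times\Mcal^0(S_2)\rightarrow\Mcal^0(S_1\times S_2)
\end{equation*}
compatible with $\iota$ by construction. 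Deriving via \Cref{lem:multiex-funct} yields a $t$-exact bi-triangulated functor on the bounded derived categories, and this compatibility is preserved.

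Next, I would build the external monoidality isomorphisms. For morphisms $f_r:T_r\rightarrow S_r$ ($r=1,2$), what is needed is a natural isomorphism $(f_1\times f_2)^*(M_1\boxtimes M_2)\simeq f_1^*M_1\boxtimes f_2^*M_2$ in $D^b(\Mcal^0(T_1\times T_2))$, compatible with composition. The obstacle is that general $f_r^*$ is not $t$-exact, so one cannot build these isomorphisms directly at the abelian level. This is exactly the situation the reduction results of \cite{Ter23Fact} are designed for: they show that, given the existence of the bifunctors, an external tensor structure on a fibered category is determined by its restrictions to two classes of morphisms generating $\Var_k$, namely smooth morphisms and closed immersions. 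For smooth $f_r$, the shifted inverse images $f_r^*[\dim f_r]$ are $t$-exact and are induced from exact functors on $\Mcal^0$, so \Cref{prop:VoloDG}(1) together with the analogous abelian lifting (\cite[Prop.~3.4]{Ter23UAF}) produces the isomorphisms from the corresponding isomorphisms on perverse sheaves. For closed immersions $i_r$, the direct images $i_{r,*}$ are exact and fully faithful on $\Mcal^0$, with $i_r^*$ characterized by the localization triangle of \cite[\S~4.1]{IM19}; lifting the perverse external Künneth isomorphism for closed immersions via the universal property and then transporting across the adjunction gives the desired monoidality isomorphism on $i_r^*$. The hexagonal and compatibility coherences for these two classes of morphisms are checked at the abelian level by invoking \Cref{prop:VoloDG}(2) and the faithfulness of $\iota$.

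Having produced the minimal data, I would then invoke \cite{Ter23Fact} to assemble it into a full triangulated external tensor structure on $D^b(\Mcal(\cdot))$, and verify the coherence axioms of \cite[Defn.~2.6]{Ter23Fib}. Since all constructions are performed by universal lifting from the corresponding structure on $D^b(\Perv(\cdot))$, the morphism $\iota$ inherits by design an external tensor structure in the sense of \cite[Defn.~8.5]{Ter23Fib}; more precisely, the monoidality isomorphisms for $\iota$ are identities on $\Mcal^0$ by construction (the bifunctor and the inverse-image data were both lifted so as to fit over the sheaf-theoretic side), and the required coherences follow from faithfulness of $\iota_S$ combined with \Cref{prop:VoloDG}(2) applied to diagrams of $t$-exact bi-triangulated functors.

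The main obstacle is precisely the non-exactness of general $f^*$: without the reduction to smooth morphisms and closed immersions afforded by \cite{Ter23Fact}, one could not use the universal property of $\Mcal^0(S)$ to define the monoidality data. Everything else is bookkeeping: the $t$-exactness of $\boxtimes$ in each variable, together with the existence of canonical dg-enhancements for all the triangulated functors involved (inherited from the dg-enhancements of $D^b_c(\cdot,\Q)$ and of the derived categories of abelian categories), ensures that the hypotheses of \Cref{prop:VoloDG} are met each time we need to promote an abelian-level identity to a triangulated-level one.
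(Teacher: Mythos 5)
Your proposal is essentially the paper's own argument: build the bifunctor $\boxtimes$ on $\Mcal^0(\cdot)$ from the universal property of the abelian factorization, derive it via \Cref{lem:multiex-funct}, then use the factorization formalism of \cite{Ter23Fact} to reduce the full external tensor structure to an external tensor core over $\Var_k^{sm}$ and the closed immersions, each of which can be lifted at the level of exact functors and then re-derived. The assembly into a full structure, and the compatibility of $\iota$, are obtained exactly as you describe.

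One technical point worth flagging, which the paper treats carefully and your sketch glosses over. The reduction of \cite{Ter23Fact} requires verifying two compatibility conditions for the core, (C'-ETC) and (T-ETC). For (C'-ETC) the reduction to the universal abelian factorization level works verbatim, because all the functors and natural transformations involved ($p^\dagger$, $z_*$, their composites) are obtained by the lifting principle, so commutativity is inherited from $\DA_{ct}^{\et}(\cdot,\Q)$. The subtle case is (T-ETC), which involves a triangle $Q \hookrightarrow P \to S$ with a \emph{closed immersion between smooth varieties inside a smooth-smooth factorization}. The inverse image $h^\dagger$ along such an $h$ is \emph{not} defined via the universal abelian factorization lifting (it is constructed in \cite[\S~4.1]{IM19} via localization triangles / Čech complexes), so one cannot deduce the relevant commutativity from the corresponding identity on $\DA$. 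Instead, as you say in passing, one must apply the faithful functor $\iota_{Q_1\times Q_2}$ and verify commutativity by hand, decomposing into pieces that are each commutative by naturality, by the axioms of an external tensor structure on a morphism of fibered categories, or by the known structure on $D^b(\Perv(\cdot))$. Your mention of "faithfulness of $\iota$" points at the right remedy, but the proposal reads as though the whole coherence check reduces uniformly to the abelian lifting principle, which it does not; a complete write-up would need to isolate (T-ETC) and perform the diagram expansion under $\iota$ explicitly.
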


The theory of triangulated motives provides us with external tensor structures on the $\Var_k$-fibered categories $\DA_{ct}^{\et}(\cdot,\Q)$ and $D^b_c(\cdot,\Q)$, related by an external tensor structure on the Betti realization
\begin{equation*}
	\Bti^*: \DA_{ct}^{\et}(\cdot,\Q) \rightarrow D^b_c(\cdot,\Q).
\end{equation*}
Under the dictionary of \cite{Ter23Fib}, these correspond to the usual monoidal structures constructed as part of the six functor formalism in \cite{Ayo07a,Ayo07b} and \cite{Ayo10}.

\begin{nota}
	From now on, we adopt the following notation:
	\begin{itemize}
		\item Let $f: T \rightarrow S$ be a morphism of $k$-varieties.
		\begin{itemize}
			\item We let $\theta = \theta_f: f^* \circ \Bti_S^* \xrightarrow{\sim} \Bti_T \circ f^*$ denote the $\Bti^*$-transition isomorphism along $f$ constructed in \cite{Ayo10}.
			\item We let $\tilde{\theta} = \tilde{\theta}_f: f^* \circ \iota_S \xrightarrow{\sim} \iota_T \circ f^*$ denote the $\iota$-transition isomorphism along $f$ constructed in \cite[\S~2.1]{IM19}.
		\end{itemize}
		\item Let $z: Z \hookrightarrow S$ be a closed immersion of $k$-varieties.
		\begin{itemize}
			\item We let $\bar{\theta} = \bar{\theta}_z: \Bti_S^* \circ z_* \xrightarrow{\sim} z_* \circ \Bti_Z^*$ denote the natural isomorphism obtained as in \cite[\S~1]{Ter23Fact} from $\theta_z$.
			\item We let $\bar{\tilde{\theta}} = \bar{\tilde{\theta}}_z: \iota_S \circ z_* \xrightarrow{\sim} z_* \circ \iota_Z$ denote the natural isomorphism obtained as in \cite[\S~1]{Ter23Fact} from $\tilde{\theta}_z$.
		\end{itemize}
		\item We use the symbol $(\boxtimes,m;a,c)$ for the symmetric associative triangulated external tensor structures on $\DA_{ct}^{\et}(\cdot,\Q)$ and $D^b_c(\cdot,\Q)$.
		\item We let $\rho$ denote the triangulated external tensor structure on the morphism of $\Var_k$-fibered categories $\Bti^*: \DA_{ct}^{\et}(\cdot,\Q) \rightarrow D^b_c(\cdot,\Q)$.
	\end{itemize}
\end{nota}

In the proof of \Cref{prop-extcore-Mp}, we apply the abstract factorization method described in \cite{Ter23Fact} to fibered categories over $\Var_k$, with respect to the subcategories $\Var_k^{sm}$ and $\Var_k^{cl}$ collecting the smooth morphisms and the closed immersions, respectively (see \cite[Ex.~2.2]{Ter23Fact}). 

In order to simplify the notation in some parts of the proofs, it is convenient to adopt the following convention:
\begin{nota}\label{nota-diamond1}
	Let $\Hbb$ be a triangulated $\Var_k$-fibered category which is localic in the sense of \cite[Defn.~3.4]{Ter23Fact}.
	\begin{itemize}
		\item Let $f: T \rightarrow S$ be a morphism of $k$-varieties such that the number $e := \dim(\Ocal_{T,t}) - \dim(\Ocal_{S,f(t)})$ is independent of $t \in T$.  We define the shifted inverse image functor
		\begin{equation*}
			f^{\dagger} := f^*[e]: \Hbb(S) \rightarrow \Hbb(T).
		\end{equation*} 
		The two cases of interest for us are the following:
		\begin{enumerate}
			\item[(i)] when $f$ is a smooth morphism of constant relative dimension $e$;
			\item[(ii)] when $f$ is a regular closed immersion of constant codimension $e$.
		\end{enumerate}
	    \item More generally, if $f: T \rightarrow S$ is a morphism whose restriction to each connected component of $S$ has the property described in the previous point, we define the shifted inverse image functor $f^{\dagger}$ by extending the previous definition in the obvious way.
	\end{itemize}
	This notation is compatible with composition of morphisms of $k$-varieties as well as with morphisms of $\Var_k$-fibered categories.
\end{nota}

\begin{proof}[Proof of \Cref{prop-extcore-Mp}]
	To begin with, fix two $k$-varieties $S_1$ and $S_2$, and consider the diagram
	\begin{equation*}
		\begin{tikzcd}
			\Dcal^0(S_1) \times \Dcal^0(S_2) \arrow{rr}{-\boxtimes-} \arrow{d}{\beta_{S_1}^0 \times \beta_{S_2}^0} && \Dcal^0(S_1 \times S_2) \arrow{d}{\beta_{S_1 \times S_2}^0} \\
			\Perv(S_1) \times \Perv(S_2) \arrow{rr}{-\boxtimes-} && \Perv(S_1 \times S_2).
		\end{tikzcd}
	\end{equation*}
    The natural isomorphism of functors $\DA_{ct}^{\et}(S_1,\Q) \times \DA_{ct}^{\et}(S_2,\Q) \rightarrow D^b_c(S_1 \times S_2,\Q)$
    \begin{equation*}
	    \Bti_{S_1}^*(A_1) \boxtimes \Bti_{S_2}^*(A_2) \xrightarrow{\sim} \Bti^*_{S_1 \times S_2}(A_1 \boxtimes A_2)
    \end{equation*}
    restricts to a natural isomorphism between the two composite functors  $\Dcal^0(S_1) \times \Dcal^0(S_2) \rightarrow \Perv(S_1 \times S_2)$. Applying \cite[Prop.~3.3]{Ter23UAF}, we obtain a canonical bi-exact functor
    \begin{equation}\label{boxtimes-Mcal_p}
    	- \boxtimes - = - \boxtimes_{S_1,S_2} -: \Mcal^0(S_1) \times \Mcal^0(S_2) \rightarrow \Mcal^0(S_1 \times S_2)
    \end{equation}
    together with a canonical natural isomorphism of functors $\Mcal^0(S_1) \times \Mcal^0(S_2) \rightarrow \Perv(S_1 \times S_2)$
    \begin{equation}\label{rho-Mcal_p}
    	\tilde{\rho} = \tilde{\rho}_{S_1,S_2}: \iota_{S_1}(M_1) \boxtimes \iota_{S_2}(M_2) \xrightarrow{\sim} \iota_{S_1 \times S_2}(M_1 \boxtimes M_2).
    \end{equation}
    By \Cref{lem:multiex-funct} and \Cref{lem:multiex-nat}, these extend to a bi-triangulated functor
	\begin{equation}\label{boxtimes-D^bMp}
		- \boxtimes -: D^b(\Mcal^0(S_1)) \times D^b(\Mcal^0(S_2)) \rightarrow D^b(\Mcal^0(S_1 \times S_2))
	\end{equation}
	and a bi-triangulated natural isomorphism of functors $D^b(\Mcal^0(S_1)) \times D^b(\Mcal^0(S_2)) \rightarrow D^b(\Perv(S_1 \times S_2))$
	\begin{equation}\label{rho-D^bMp}
		\tilde{\rho} = \tilde{\rho}_{S_1,S_2}: \iota_{S_1}(M_1^{\bullet}) \boxtimes \iota_{S_2}(M_2^{\bullet}) \xrightarrow{\sim} \iota_{S_1 \times S_2}(M_1^{\bullet} \boxtimes M_2^{\bullet}).
	\end{equation}
    In order to turn the collection of functors \eqref{boxtimes-D^bMp} into an external tensor structure on $D^b(\Mcal(\cdot))$, we have to construct the external monoidality isomorphisms with respect to inverse image functors in $\Var_k$, compatibly with composition. Once this is done, in order to check that the collection of natural transformations \eqref{rho-D^bMp} defines an external tensor structure on the morphism $\iota: D^b(\Mcal(\cdot)) \rightarrow D^b(\Perv(\cdot))$, we have to show that they are compatible with the external monoidality isomorphisms. If this holds, then the conservativity of $\iota$ implies the validity of the projection formulae on $D^b(\Mcal(\cdot))$. 
    
    As a consequence of the discussion in \cite[\S\S~3, 5]{Ter23Fib}, we see that the sought-after external tensor structures on $D^b(\Mcal(\cdot))$ and on $\iota$ are completely determined by the underlying external tensor cores, as defined in \cite[Defn.~5.2, Defn.~5.6]{Ter23Fib}. To construct these, we apply the results of \cite[\S~6]{Ter23UAF} to the underlying $\Var_k^{sm}$-fibered and $\Var_k^{cl,op}$-fibered categories.
    
    On the one hand, regard the Betti realization as a morphism of $\Var_k^{sm}$-fibered categories with respect to the shifted inverse image functors $p^{\dagger}$ introduced in \Cref{nota-diamond1}. Using the additivity of relative dimensions with respect to direct products of morphisms, we see that the external tensor structures on $\DA_{ct}^{\et}(\cdot,\Q)$, $D^b_c(\cdot,\Q)$ and $\Bti^*$ induce external tensor structures on the $\Var_k^{sm}$-fibered categories considered. By restriction, these in turn induce analogous external tensor structures on the $\Var_k^{sm}$-fibered categories $\Dcal^0(\cdot)$ and $\Perv(\cdot)$ as well as on the morphism of $\Var_k^{sm}$-fibered categories $\beta^0: \Dcal^0(\cdot) \rightarrow \Perv(\cdot)$. Applying \cite[Prop.~6.4]{Ter23UAF}, we obtain external tensor structures on the abelian $\Var_k^{sm}$-fibered category $\Mcal^0(\cdot)$ as well as on the morphism of abelian $\Var_k^{sm}$-fibered categories $\iota: \Mcal^0(\cdot) \rightarrow \Perv(\cdot)$. Finally, applying \Cref{lem:multiex-nat} and shifting the inverse image functors back, the latter define triangulated external tensor structures on the triangulated $\Var_k^{sm}$-fibered category $D^b(\Mcal(\cdot))$ as well as on the triangulated morphism $\iota: D^b(\Mcal(\cdot)) \rightarrow D^b(\Perv(\cdot))$.
    
    On the other hand, regard the Betti realization as a morphism of $\Var_k^{cl,op}$-fibered categories with respect to the direct image functors $z_*$. Using \cite[Lemma~1.9, Lemma~1.13]{Ter23Fact}, we see that the external tensor structures on $\DA_{ct}^{\et}(\cdot,\Q)$, $D^b_c(\cdot,\Q)$ and $\Bti^*$ induce external tensor structures on the $\Var_k^{cl,op}$-fibered categories considered. By restriction, these in turn induce analogous external tensor structures on the $\Var_k^{cl,op}$-fibered categories $\Dcal^0(\cdot)$ and $\Perv(\cdot)$ as well as on the morphism of $\Var_k^{cl,op}$-fibered categories $\beta^0: \Dcal^0(\cdot) \rightarrow \Perv(\cdot)$. Applying \cite[Prop.~6.4]{Ter23UAF}, we obtain external tensor structures on the abelian $\Var_k^{cl,op}$-fibered category $\Mcal^0(\cdot)$ as well as on the morphism of abelian $\Var_k^{cl,op}$-fibered categories $\iota: \Mcal^0(\cdot) \rightarrow \Perv(\cdot)$. Finally, applying \Cref{lem:multiex-nat} and using again \cite[Lemma~1.9, Lemma~1.13]{Ter23Fact}, the latter define triangulated external tensor structures on the triangulated $\Var_k^{cl,op}$-fibered category $D^b(\Mcal(\cdot))$ as well as on the triangulated morphism $\iota: D^b(\Mcal(\cdot)) \rightarrow D^b(\Perv(\cdot))$.
    
    We claim that these two constructions satisfy the compatibility condition of \cite[Defn.~5.2]{Ter23Fact} and thus define an external tensor core on the $\Var_k$-fibered category $D^b(\Mcal(\cdot))$. 
    Since the validity of the conditions of \cite[Defn.~8.5]{Ter23Fact} has been already justified above, this will imply that the same constructions also define an external tensor core on $\iota$. 
    
    Let us then check the two conditions (C'-ETC) and (T-ETC) of \cite[Defn.~5.2]{Ter23Fact} are satisfied:
    \begin{enumerate}
    	\item[(C'-ETC)] Given two Cartesian squares of $k$-varieties
    	\begin{equation*}
    		\begin{tikzcd}
    			P_{i,Z_i} \arrow{r}{z'_i} \arrow{d}{p'_i} & P_i \arrow{d}{p_i} \\
    			Z_i \arrow{r}{z_i} & S_i
    		\end{tikzcd} 
    		\qquad (i = 1,2)
    	\end{equation*}
    	where $p_i$ (and hence $p'_i$) is smooth while $z_i$ (and hence $z'_r$) is a closed immersion, we have to show that the diagram of functors $D^b(\Mcal^0(Z_1)) \times D^b(\Mcal^0(Z_2)) \rightarrow D^b(\Mcal^0(P_1~\times~P_2))$
    	\begin{equation*}
    		\begin{tikzcd}[font=\small]
    			p_1^* z_{1,*} M_1^{\bullet} \boxtimes p_2^* z_{2,*} M_2^{\bullet} \isoarrow{d} \arrow{r}{\tilde{m}} & (p_1 \times p_2)^* (z_{1,*} M_1^{\bullet} \boxtimes z_{2,*} M_2^{\bullet}) \arrow{r}{\bar{\tilde{m}}} & (p_1 \times p_2)^* (z_1 \times z_2)_* (M_1^{\bullet} \boxtimes M_2^{\bullet}) \isoarrow{d} \\
    			z'_{1,*} {p'}_1^* M_1^{\bullet} \boxtimes z'_{2,*} {p'}_2^* M_2^{\bullet} \arrow{r}{\bar{\tilde{m}}} & (z'_1 \times z'_2)_*(p_1^* M_1^{\bullet} \boxtimes p_2^* M_2^{\bullet}) \arrow{r}{\tilde{m}} & (z'_1 \times z'_2)_* (p_1 \times p_2)^* (M_1^{\bullet} \boxtimes M_2^{\bullet})
    		\end{tikzcd}
    	\end{equation*}
    	is commutative. After shifting the inverse image functors under smooth morphisms as in \Cref{nota-diamond1}, all the triangulated functors involved become $t$-exact for the obvious $t$-structures. Since all of them are obviously dg-enhanced, we can apply \Cref{lem:multiex-nat} (see \Cref{rem:utility-VoloDG}), so we reduce to showing that the diagram of exact functors $\Mcal^0(Z_1) \times \Mcal^0(Z_2) \rightarrow \Mcal^0(P_1 \times P_2)$
    	\begin{equation*}
    		\begin{tikzcd}[font=\small]
    			p_1^{\dagger} z_{1,*} M_1 \boxtimes p_2^{\dagger} z_{2,*} M_2 \isoarrow{d} \arrow{r}{\tilde{m}} & (p_1 \times p_2)^{\dagger} (z_{1,*} M_1 \boxtimes z_{2,*} M_2) \arrow{r}{\bar{\tilde{m}}} & (p_1 \times p_2)^{\dagger} (z_1 \times z_2)_* (M_1 \boxtimes M_2) \isoarrow{d} \\
    			z'_{1,*} {p'}_1^{\dagger} M_1 \boxtimes z'_{2,*} {p'}_2^{\dagger} M_2 \arrow{r}{\bar{\tilde{m}}} & (z'_1 \times z'_2)_*(p_1^{\dagger} M_1 \boxtimes p_2^{\dagger} M_2) \arrow{r}{\tilde{m}} & (z'_1 \times z'_2)_* (p_1 \times p_2)^{\dagger} (M_1 \boxtimes M_2)
    		\end{tikzcd}
    	\end{equation*}
    	commutes. Since all the functors and natural transformations in the latter diagram are obtained via the lifting principles for universal abelian factorizations, using \cite[Rem.~4.7(1)(2)]{Ter23UAF} we reduce to checking the commutativity of the analogous diagram of functors on $\DA_{ct}^{\et}(\cdot,\Q)$. After shifting back, the latter follows from the validity of axiom (C'-ETC) on $\DA_{ct}^{\et}(\cdot,\Q)$.
    	\item[(T-ETC)] Given two commutative triangles of $k$-varieties
    	\begin{equation*}
    		\begin{tikzcd}
    			Q_i \arrow{r}{h_i} \arrow{dr}{q_i} & P_i \arrow{d}{p_i} \\
    			& S_i
    		\end{tikzcd}
    		\qquad (i = 1,2)
    	\end{equation*}
    	where $p_i$ and $q_i$ are smooth while $h_i$ is a closed immersion (necessarily regular), we have to show that the diagram of functors $D^b(\Mcal^0(S_1)) \times D^b(\Mcal^0(S_2)) \rightarrow D^b(\Mcal^0(Q_1 \times Q_2))$
    	\begin{equation*}
    		\begin{tikzcd}[font=\small]
    			q_1^* M_1^{\bullet} \boxtimes q_2^* M_2^{\bullet} \arrow{rr}{\tilde{m}} \arrow[equal]{d} && (q_1 \times q_2)^*(M_1^{\bullet} \boxtimes M_2^{\bullet}) \arrow[equal]{d} \\
    			h_1^* p_1^* M_1^{\bullet} \boxtimes h_2^* p_2^* M_2^{\bullet} \arrow{r}{\tilde{m}} & (h_1 \times h_2)^* (p_1^* M_1^{\bullet} \boxtimes p_2^* M_2^{\bullet}) \arrow{r}{\tilde{m}} & (h_1 \times h_2)^* (p_1 \times p_2)^*(M_1^{\bullet} \boxtimes M_2^{\bullet})
    		\end{tikzcd}
    	\end{equation*}
    	is commutative. As before, we may assume that $p_1,p_2$ and $q_1,q_2$ are equidimensional. In this case, after shifting appropriately so that all the functors involved become $t$-exact for the obvious $t$-structures, we are reduced to showing that the diagram of exact functors
    	$\Mcal^0(S_1) \times \Mcal^0(S_2) \rightarrow \Mcal^0(Q_1 \times Q_2)$
    	\begin{equation*}
    		\begin{tikzcd}[font=\small]
    			q_1^{\dagger} M_1 \boxtimes q_2^{\dagger} M_2 \arrow{rr}{\tilde{m}} \arrow[equal]{d} && (q_1 \times q_2)^{\dagger}(M_1 \boxtimes M_2) \arrow[equal]{d} \\
    			h_1^{\dagger} p_1^{\dagger} M_1 \boxtimes h_2^{\dagger} p_2^{\dagger} M_2 \arrow{r}{\tilde{m}} & (h_1 \times h_2)^{\dagger} (p_1^{\dagger} M_1 \boxtimes p_2^{\dagger} M_2) \arrow{r}{\tilde{m}} & (h_1 \times h_2)^{\dagger} (p_1 \times p_2)^{\dagger}(M_1 \boxtimes M_2)
    		\end{tikzcd}
    	\end{equation*}
    	is commutative. Unfortunately, this cannot be deduced directly from general properties of universal abelian factorizations, since the inverse images under closed immersions on perverse motives constructed in \cite[\S~4.1]{IM19} are not defined via the lifting principles of universal abelian factorizations. In any case, in order to prove the above diagram is commutative, it suffices to show that its image under the faithful exact functor $\iota_{Q_1 \times Q_2}$
    	\begin{widepage}
    		\begin{equation*}
    			\begin{tikzcd}[font=\small]
    				\iota_{Q_1 \times Q_2}(q_1^{\dagger} M_1 \boxtimes q_2^{\dagger} M_2) \arrow{rr}{\tilde{m}} \arrow[equal]{d} && \iota_{Q_1 \times Q_2}((q_1 \times q_2)^{\dagger}(M_1 \boxtimes M_2)) \arrow[equal]{d} \\
    				\iota_{Q_1 \times Q_2}(h_1^{\dagger} p_1^{\dagger} M_1 \boxtimes h_2^{\dagger} p_2^{\dagger} M_2) \arrow{r}{\tilde{m}} & \iota_{Q_1 \times Q_2}((h_1 \times h_2)^{\dagger} (p_1^{\dagger} M_1 \boxtimes p_2^{\dagger} M_2)) \arrow{r}{\tilde{m}} & \iota_{Q_1 \times Q_2}((h_1 \times h_2)^{\dagger} (p_1 \times p_2)^{\dagger}(M_1 \boxtimes M_2))
    			\end{tikzcd}
    		\end{equation*}
    	\end{widepage}
    	is commutative, and this can be checked directly. Indeed, by inserting, on the left side of the latter diagram, the diagram
    	\begin{equation*}
    		\begin{tikzcd}[font=\small]
    			\iota_{Q_1 \times Q_2}(q_1^{\dagger} M_1 \boxtimes q_2^{\dagger} M_2) \arrow[equal]{dd} & \iota_{Q_1}(q_1^{\dagger} M_1) \boxtimes \iota_{Q_2}(q_2^{\dagger} M_2) \arrow[equal]{dd} \arrow{l}{\tilde{\rho}} & q_1^{\dagger} \iota_{S_1}(M_1) \boxtimes q_2^{\dagger} \iota_{S_2}(M_2) \arrow[equal]{d} \arrow{l}{\tilde{\theta}} \\
    			&& h_1^{\dagger} p_1^{\dagger} \iota_{S_1}(M_1) \boxtimes h_2^{\dagger} p_2^{\dagger} \iota_{S_2}(M_2)\arrow{d}{\tilde{\theta}} \\
    			\iota_{Q_1 \times Q_2}(h_1^{\dagger} p_1^{\dagger} M_1 \boxtimes h_2^{\dagger} p_2^{\dagger} M_2) & \iota_{Q_1}(h_1^{\dagger} p_1^{\dagger} M_1) \boxtimes \iota_{Q_2}(h_2^{\dagger} p_2^{\dagger} M_2) \arrow{l}{\tilde{\rho}} & h_1^{\dagger} \iota_{P_1}(p_1^{\dagger} M_1) \boxtimes h_2^{\dagger} \iota_{P_2}(p_2^{\dagger} M_2) \arrow{l}{\tilde{\theta}}
    		\end{tikzcd}
    	\end{equation*}
    	and, on its right side, the diagram
    	\begin{widepage}
    		\begin{equation*}
    			\begin{tikzcd}[font=\tiny]
    				(q_1 \times q_2)^{\dagger} (\iota_{S_1}(M_1) \boxtimes \iota_{S_2}(M_2)) \arrow[equal]{dd} \arrow{r}{\tilde{\rho}} & (q_1 \times q_2)^{\dagger} \iota_{S_1 \times S_2}(M_1 \boxtimes M_2) \arrow[equal]{dd} \arrow{r}{\tilde{\theta}} & \iota_{Q_1 \times Q_2}((q_1 \times q_2)^{\dagger} (M_1 \boxtimes M_2)) \arrow[equal]{d} \\
    				&& \iota_{Q_1 \times Q_2}((h_1 \times h_2)^{\dagger} (p_1 \times p_2)^{\dagger} (M_1 \boxtimes M_2)) \\
    				(h_1 \times h_2)^{\dagger} (p_1 \times p_2)^{\dagger} (\iota_{S_1}(M_1) \boxtimes \iota_{S_2}(M_2)) \arrow{r}{\tilde{\rho}} & (h_1 \times h_2)^{\dagger} (p_1 \times p_2)^{\dagger} \iota_{S_1 \times S_2}(M_1 \boxtimes M_2) \arrow{r}{\tilde{\theta}} & (h_1 \times h_2)^{\dagger} \iota_{P_1 \times P_2}((p_1 \times p_2)^{\dagger}(M_1 \boxtimes M_2)) \arrow{u}{\theta}
    			\end{tikzcd}
    		\end{equation*}
    	\end{widepage}
    	both of which are commutative by naturality and by axiom (mor-$\Var_k$-fib) of \cite[Defn.~8.5]{Ter23Fib}, we are reduced to considering the outer part of the diagram
    	\begin{equation*}
    		\begin{tikzcd}[font=\tiny]
    			\iota_{Q_1 \times Q_2}(q_1^{\dagger} M_1 \boxtimes q_2^{\dagger} M_2)  \arrow{rr}{\tilde{m}} && \iota_{Q_1 \times Q_2}((q_1 \times q_2)^{\dagger}(M_1 \boxtimes M_2)) \\
    			\iota_{Q_1}(q_1^{\dagger} M_1) \boxtimes \iota_{Q_2}(q_2^{\dagger} M_2) \arrow{u}{\tilde{\rho}} && (q_1 \times q_2)^{\dagger} \iota_{S_1 \times S_2}(M_1 \boxtimes M_2) \arrow{u}{\tilde{\theta}} \\
    			q_1^{\dagger} \iota_{S_1}(M_1) \boxtimes q_2^{\dagger} \iota_{S_2}(M_2) \arrow{rr}{\tilde{m}} \arrow{u}{\tilde{\theta}} \arrow[equal]{d} && (q_1 \times q_2)^{\dagger}(\iota_{S_1}(M_1) \boxtimes \iota_{S_2}(M_2)) \arrow{u}{\tilde{\rho}} \arrow[equal]{d} \\
    			h_1^{\dagger} p_1^{\dagger} \iota_{S_1}(M_1) \boxtimes h_2^{\dagger} p_2^{\dagger} \iota_{S_2}(M_2) \arrow{r}{\tilde{m}} \arrow{d}{\tilde{\theta}} & (h_1 \times h_2)^{\dagger} (p_1^{\dagger} \iota_{S_1}(M_1) \boxtimes p_2^{\dagger} \iota_{S_2}(M_2)) \arrow{r}{\tilde{m}} \arrow{d}{\tilde{\theta}} & (h_1 \times h_2)^{\dagger} (p_1 \times p_2)^{\dagger} (\iota_{S_1}(M_1) \boxtimes \iota_{S_2}(M_2)) \arrow{d}{\tilde{\rho}} \\
    			h_1^{\dagger} \iota_{P_1}(p_1^{\dagger} M_1) \boxtimes h_2^{\dagger} \iota_{P_2}(p_2^{\dagger} M_2) \arrow{r}{\tilde{m}} \arrow{d}{\tilde{\theta}} & (h_1 \times h_2)^{\dagger} (\iota_{P_1}(p_1^{\dagger} M_1) \boxtimes \iota_{P_2}(p_2^{\dagger} M_2)) \arrow{d}{\tilde{\rho}} & (h_1 \times h_2)^{\dagger} (p_1 \times p_2)^{\dagger} \iota_{S_1 \times S_2}(M_1 \boxtimes M_2) \arrow{d}{\tilde{\theta}} \\
    			\iota_{Q_1}(h_1^{\dagger} p_1^{\dagger} M_1) \boxtimes \iota_{Q_2}(h_2^{\dagger} p_2^{\dagger} M_2) \arrow{d}{\tilde{\rho}} & (h_1 \times h_2)^{\dagger} \iota_{P_1 \times P_2}(p_1^{\dagger} M_1 \boxtimes p_2^{\dagger} M_2) \arrow{d}{\tilde{\theta}} \arrow{r}{\tilde{m}} & (h_1 \times h_2)^{\dagger} \iota_{P_1 \times P_2}((p_1 \times p_2)^{\dagger}(M_1 \boxtimes M_2)) \arrow{d}{\tilde{\theta}} \\
    			\iota_{Q_1 \times Q_2}(h_1^{\dagger} p_1^{\dagger} M_1 \boxtimes h_2^{\dagger} p_2^{\dagger} M_2) \arrow{r}{\tilde{m}} & \iota_{Q_1 \times Q_2}((h_1 \times h_2)^{\dagger}(p_1^{\dagger} M_1 \boxtimes p_2^{\dagger} M_2)) \arrow{r}{\tilde{m}} & \iota_{Q_1 \times Q_2}((h_1 \times h_2)^{\dagger} (p_1 \times p_2)^{\dagger} (M_1 \boxtimes M_2)).
    		\end{tikzcd}
    	\end{equation*}
    	Here, the two four-term pieces are commutative by naturality, the three six-term pieces are commutative by axiom (mor-ETS) of \cite[Defn. 3.6]{Ter23Fib}, and the central five-term piece (after shifting back) is commutative by the validity of axiom (T-ETC) on $D^b(\Perv(\cdot))$. 
    \end{enumerate}
    This proves the claim, thereby concluding the proof.
\end{proof}

\subsection{External associativity and commutativity constraints}

The next step is to equip the external tensor core of \Cref{prop-extcore-Mp} with compatible external associativity and commutativity constraints(see \cite[\S\S~3--5]{Ter23Fib} and \cite[\S~4]{Ter23Fact}). 
For sake of clarity, we construct the two constraints separately, and in a second moment we check their compatibility.

\begin{lem}\label{lem-asso-Mp}
	The external tensor structure on $D^b(\Mcal(\cdot))$ carries a canonical external associativity constraint in the sense of \cite[Defn.~3.2]{Ter23Fib}, with respect to which the morphism of $\Var_k$-fibered categories $\iota: D^b(\Mcal(\cdot)) \rightarrow D^b(\Perv(\cdot))$ is associative in the sense of \cite[Defn.~8.13]{Ter23Fib}.
\end{lem}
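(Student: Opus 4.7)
The plan is to construct the associativity constraint by lifting the classical associativity constraint from perverse sheaves via the universal property of $\Mcal^0(S)$, and to verify the required compatibilities by reducing to the underlying abelian categories where faithfulness of $\iota_S$ lets us inherit everything from $D^b(\Perv(\cdot))$.

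First, for each triple $S_1,S_2,S_3 \in \Var_k$ I would consider the two tri-exact functors
\begin{equation*}
	(M_1,M_2,M_3) \mapsto (M_1 \boxtimes M_2) \boxtimes M_3, \qquad (M_1,M_2,M_3) \mapsto M_1 \boxtimes (M_2 \boxtimes M_3)
\end{equation*}
from $\Mcal^0(S_1) \times \Mcal^0(S_2) \times \Mcal^0(S_3)$ to $\Mcal^0(S_1 \times S_2 \times S_3)$ obtained by iterating the bi-exact functor \eqref{boxtimes-Mcal_p}. Iterating the natural isomorphism \eqref{rho-Mcal_p}, both functors become identified under $\iota$ with the two iterated external tensor products on perverse sheaves. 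The classical external associativity constraint $a^{\Perv}$ on $\Perv(S_1 \times S_2 \times S_3)$ is a natural isomorphism between these two iterated compositions; applying \cite[Prop.~3.4]{Ter23UAF} to lift it through the universal property then produces a unique natural isomorphism
\begin{equation*}
	a^{\Mcal}_{S_1,S_2,S_3}: (M_1 \boxtimes M_2) \boxtimes M_3 \xrightarrow{\sim} M_1 \boxtimes (M_2 \boxtimes M_3)
\end{equation*}
in $\Mcal^0(S_1 \times S_2 \times S_3)$ whose image under $\iota_{S_1 \times S_2 \times S_3}$, combined with iterated $\tilde{\rho}$'s, recovers $a^{\Perv}$. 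By \Cref{lem:multiex-nat} this extends to a multi-triangulated natural isomorphism on $D^b(\Mcal(\cdot))$, and the above intertwining relation extends automatically to the bounded derived categories, which takes care of the associativity of the morphism $\iota$ in the sense of \cite[Defn.~8.13]{Ter23Fib}.

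It then remains to verify, on the one hand, the pentagon axiom for $a^{\Mcal}$ and, on the other hand, its compatibility with the external monoidality isomorphisms $\tilde{m}$ along arbitrary morphisms in $\Var_k$. For the pentagon axiom: both sides are multi-triangulated natural transformations between dg-enhanced $t$-exact multi-quadrilinear functors on $D^b(\Mcal(S_1)) \times \cdots \times D^b(\Mcal(S_4))$, so by \Cref{prop:VoloDG}(2) and \Cref{rem:utility-VoloDG} the identity holds on $D^b(\Mcal)$ iff it holds on the hearts $\Mcal^0$; since $\iota$ is faithful and the image under $\iota$ (after transport through the $\tilde{\rho}$'s) is precisely the classical pentagon identity on perverse sheaves, we are done. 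For compatibility with external monoidality along a general morphism: by \cite[\S~5]{Ter23Fib} and the fact that the external tensor structure on $D^b(\Mcal(\cdot))$ is determined by its core, it suffices to check the compatibility when the morphism is either a smooth inverse image $p^\dagger$ or a closed-immersion direct image $z_*$. After the usual shifts, these compatibility diagrams involve only $t$-exact multi-triangulated functors with dg-enhancements, so once again \Cref{prop:VoloDG}(2) reduces us to checking commutativity at the abelian level, and faithfulness of $\iota$ reduces this further to the known compatibility for $D^b(\Perv(\cdot))$.

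The main obstacle, as in the proof of \Cref{prop-extcore-Mp}, is the bookkeeping involved in reducing the motivic compatibility diagrams to their perverse-sheaf analogues: one has to insert and collapse several instances of $\tilde{\rho}$ and $\tilde{\theta}$ (or $\bar{\tilde{\theta}}$ in the closed-immersion case) around the central associativity square, exactly as in the diagram chase carried out for axiom (T-ETC). No new conceptual input is required beyond the combination of the universal property of $\Mcal^0(S)$, the $t$-exactness of $\boxtimes$ in each variable, and the dg-enhancement hypotheses underlying \Cref{prop:VoloDG}.
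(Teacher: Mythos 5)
Your approach is essentially the same as the paper's, but there is one imprecision worth flagging. You describe the construction of $a^{\Mcal}$ as \emph{lifting $a^{\Perv}$ alone} through the universal property. The lifting principle for universal abelian factorizations (the relevant result in \cite{Ter23UAF} is Prop.~3.6, for multi-exact functors, not Prop.~3.4) requires as input a \emph{compatible pair} of natural isomorphisms: one on the source quiver $\Dcal^0(S_1)\times\Dcal^0(S_2)\times\Dcal^0(S_3) \to \Dcal^0(S_1\times S_2\times S_3)$ (i.e.\ the external associativity constraint on $\DA_{ct}^{\et}$), and one on $\Perv(\cdot)$, together with the compatibility between them under $\beta^0$ --- which is precisely the associativity of the monoidal morphism $\Bti^*$. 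Without exhibiting the $\DA$-side constraint and this compatibility you have nothing to feed into the lifting machinery; the faithfulness of $\iota_S$ only pins down the lift once its existence is established, it does not replace the required input. The paper writes out both sides of the pair explicitly.

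On the verification of the axioms, your route via \Cref{prop:VoloDG}(2) plus faithfulness would work, but it is more labour than the paper actually does. Because the natural isomorphisms \eqref{asso:Mp} are produced by the lifting machinery, their pentagon-type coherence (axiom ($a$ETS-1) of \cite[Defn.~3.2]{Ter23Fib}) and their compatibility with the external monoidality isomorphisms follow directly from the validity of the corresponding axioms on the quiver $\DA_{ct}^{\et}(\cdot,\Q)$ via \cite[Lemma~6.8(1)]{Ter23UAF}, once one has reduced to the $\Var_k^{sm}$- and $\Var_k^{cl,op}$-fibered pieces via \cite[Lemma~5.5(1)]{Ter23Fact}. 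That packaging avoids the explicit diagram chases you sketch; the argument in the paper is therefore shorter than your estimate of the bookkeeping suggests. These are differences in emphasis and economy rather than substance.
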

\begin{proof}
	To begin with, fix three $k$-varieties $S_1$, $S_2$ and $S_3$, and consider the diagram
	\begin{equation*}
		\begin{tikzcd}[font=\small]
			\Dcal^0(S_1) \times \Dcal^0(S_2) \times \Dcal^0(S_3) \arrow{rr}{(-\boxtimes-) \times \id} \arrow{d}{\beta_{S_1}^0 \times \beta_{S_2}^0 \times \beta^0_{S_3}} && \Dcal^0(S_1 \times S_2) \times \Dcal^0(S_3) \arrow{rr}{-\boxtimes-} \arrow{d}{\beta_{S_1 \times S_2}^0 \times \beta_{S_3}^0} && \Dcal^0(S_1 \times S_2 \times S_3) \arrow{d}{\beta_{S_1 \times S_2 \times S_3}^0} \\
			\Perv(S_1) \times \Perv(S_2) \times \Perv(S_3) \arrow{rr}{(-\boxtimes-) \times \id} && \Perv(S_1 \times S_2) \times \Perv(S_3) \arrow{rr}{-\boxtimes-} && \Perv(S_1 \times S_2 \times S_3) \\
			&& \Downarrow \\
			\Dcal^0(S_1) \times \Dcal^0(S_2) \times \Dcal^0(S_3) \arrow{rr}{\id \times (-\boxtimes-)} \arrow{d}{\beta_{S_1}^0 \times \beta_{S_2}^0 \times \beta_{S_3}^0} && \Dcal^0(S_1) \times \Dcal^0(S_2 \times S_3) \arrow{rr}{-\boxtimes-} \arrow{d}{\beta_{S_1}^0 \times \beta_{S_2 \times S_3}^0} && \Dcal^0(S_1 \times S_2 \times S_3) \arrow{d}{\beta_{S_1 \times S_2 \times S_3}^0} \\
			\Perv(S_1) \times \Perv(S_2) \times \Perv(S_3) \arrow{rr}{\id \times (-\boxtimes-)} && \Perv(S_1) \times \Perv(S_2 \times S_3) \arrow{rr}{-\boxtimes-} && \Perv(S_1 \times S_2 \times S_3).
		\end{tikzcd}
	\end{equation*}
    The associativity isomorphism of functors $\Dcal^0(S_1) \times \Dcal^0(S_2) \times \Dcal^0(S_3) \rightarrow \Dcal^0(S_1 \times S_2 \times S_3)$
    \begin{equation*}
    	(A_1 \boxtimes A_2) \boxtimes A_3 \xrightarrow{\sim} A_1 \boxtimes (A_2 \boxtimes A_3)
    \end{equation*}
    and the associativity isomorphisms of functors $\Perv(S_1) \times \Perv(S_2) \times \Perv(S_3) \rightarrow \Perv(S_1 \times S_2 \times S_3)$
    \begin{equation*}
    	(K_1 \boxtimes K_2) \boxtimes K_3 \xrightarrow{\sim} K_1 \boxtimes (K_2 \boxtimes K_3)
    \end{equation*}
    satisfy the compatibility condition of \cite[Defn.~6.7(1)]{Ter23UAF}, because the same holds for the corresponding natural isomorphisms on $\DA_{ct}^{\et}(\cdot,\Q)$ and on $D^b(\cdot,\Q)$ - this is just a way of rephrasing the associativity of the Betti realization. Applying \cite[Prop.~3.6]{Ter23UAF}, we get a natural isomorphism between multi-exact functors $\Mcal^0(S_1) \times \Mcal^0(S_2) \times \Mcal^0(S_3) \rightarrow \Mcal^0(S_1 \times S_2 \times S_3)$
    \begin{equation}\label{asso:Mp}
    	(M_1 \boxtimes M_2) \boxtimes M_3 \xrightarrow{\sim} M_1 \boxtimes (M_2 \boxtimes M_3),
    \end{equation}
    compatible with the corresponding associativity isomorphism on $\Perv(\cdot)$. By \Cref{lem:multiex-nat}, this extends to a multi-triangulated natural isomorphism between functors $D^b(\Mcal^0(S_1)) \times D^b(\Mcal^0(S_2)) \times D^b(\Mcal^0(S_3)) \rightarrow D^b(\Mcal^0(S_1 \times S_2 \times S_3))$ 
    \begin{equation}\label{asso:DMp}
    	(M_1^{\bullet} \boxtimes M_2^{\bullet}) \boxtimes M_3^{\bullet} \xrightarrow{\sim} M_1^{\bullet} \boxtimes (M_2^{\bullet} \boxtimes M_3^{\bullet}),
    \end{equation}
    again compatible with the corresponding associativity isomorphism on $D^b(\Perv(\cdot))$. 
    
    As $S_1$, $S_2$ and $S_3$ vary, the natural isomorphisms \eqref{asso:DMp} satisfy condition ($a$ETS-1) of \cite[Defn.~3.2]{Ter23Fib}: to see this, it suffices to check that the natural isomorphisms \eqref{asso:Mp} satisfy the same condition, which in turn follows from the validity of axiom ($a$ETS-1) on $\DA_{ct}^{\et}(\cdot,\Q)$.
    
    By \cite[Lemma~5.5(1)]{Ter23Fact}, in order to conclude that the natural isomorphisms \eqref{asso:DMp} define an associativity constraint for the $\Var_k$-fibered category $D^b(\Mcal(\cdot))$, it suffices to show that they define associativity constraints for the underlying $\Var_k^{sm}$-fibered and $\Var_k^{cl,op}$-fibered categories. But this reduces to checking that the natural isomorphisms \eqref{asso:Mp} define associativity constraints for the abelian $\Var_k^{sm}$-fibered and $\Var_k^{cl,op}$-fibered category $\Mcal(\cdot)$, which is indeed the case by \cite[Lemma~6.8(1)]{Ter23UAF}. 
    
    Lastly, the associativity of $\iota$ amounts to the compatibility between the natural isomorphisms \eqref{asso:DMp} and the corresponding natural isomorphisms on $D^b(\Perv(\cdot))$ that we have already mentioned above.
\end{proof}

\begin{lem}\label{lem-comm-Mp}
	The external tensor structure on $D^b(\Mcal(\cdot))$ carries a canonical external commutativity constraint in the sense of \cite[Defn.~4.2]{Ter23Fib}, with respect to which the morphism of $\Var_k$-fibered categories $\iota: D^b(\Mcal(\cdot)) \rightarrow D^b(\Perv(\cdot))$ is symmetric in the sense of \cite[Defn.~8.17]{Ter23Fib}.
\end{lem}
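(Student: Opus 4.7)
The plan is to mirror the structure of the proof of \Cref{lem-asso-Mp} step by step. For fixed varieties $S_1, S_2 \in \Var_k$, let $\tau: S_1 \times S_2 \to S_2 \times S_1$ denote the swap isomorphism. The underlying input is the commutativity isomorphism of bi-triangulated functors on $\DA_{ct}^{\et}(\cdot,\Q)$
\begin{equation*}
	A_1 \boxtimes A_2 \xrightarrow{\sim} \tau^*(A_2 \boxtimes A_1)
\end{equation*}
together with its counterpart on $D^b_c(\cdot,\Q)$; these are intertwined through the external tensor structure on $\Bti^*$ and restrict to a natural isomorphism between the two relevant composite functors $\Dcal^0(S_1) \times \Dcal^0(S_2) \rightarrow \Perv(S_1 \times S_2)$ analogous to the one used in the proof of \Cref{lem-asso-Mp}.

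First, I would apply \cite[Prop.~3.6]{Ter23UAF} (in its commutative form, as in \cite[Defn.~6.7(2)]{Ter23UAF}) to obtain a canonical natural isomorphism between bi-exact functors $\Mcal^0(S_1) \times \Mcal^0(S_2) \rightarrow \Mcal^0(S_1 \times S_2)$
\begin{equation*}
	M_1 \boxtimes M_2 \xrightarrow{\sim} \tau^*(M_2 \boxtimes M_1),
\end{equation*}
compatible with the corresponding commutativity isomorphism on $\Perv(\cdot)$ under $\iota$. Then, applying \Cref{lem:multiex-nat}, I would extend this to a bi-triangulated natural isomorphism between the corresponding bi-functors on the bounded derived categories, again compatible with the one on $D^b(\Perv(\cdot))$.

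Next, I would verify that, as $S_1, S_2$ vary in $\Var_k$, the resulting natural isomorphisms satisfy axioms ($c$ETS-1) and ($c$ETS-2) of \cite[Defn.~4.2]{Ter23Fib}. By \Cref{lem:multiex-nat} (together with \Cref{rem:utility-VoloDG}) and the faithfulness of $\iota_{S_1 \times S_2}$, this reduces to checking the analogous identities between bi-exact functors on the abelian level, which in turn follows from the validity of the same axioms on $\DA_{ct}^{\et}(\cdot,\Q)$ via \cite[Rem.~4.7]{Ter23UAF}. Invoking then \cite[Lemma~5.5(1)]{Ter23Fact}, the task of promoting these natural isomorphisms to an external commutativity constraint on the $\Var_k$-fibered category $D^b(\Mcal(\cdot))$ reduces to checking the commutativity constraint axioms separately on the underlying $\Var_k^{sm}$- and $\Var_k^{cl,op}$-fibered categories; by the construction of the external tensor core in the proof of \Cref{prop-extcore-Mp}, this in turn reduces to the same verification on the abelian $\Var_k^{sm}$- and $\Var_k^{cl,op}$-fibered categories $\Mcal^0(\cdot)$, which follows from \cite[Lemma~6.8]{Ter23UAF}. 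The symmetry of the morphism $\iota$ is built into the construction via the compatibility with the commutativity isomorphism on $D^b(\Perv(\cdot))$.

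I expect the main obstacle to be bookkeeping rather than conceptual: unlike the associativity case, where all the functors involved live over the same triple product $S_1 \times S_2 \times S_3$, the commutativity isomorphism mixes the external tensor product with the inverse image along the swap isomorphism $\tau: S_1 \times S_2 \to S_2 \times S_1$. This means that the compatibility with the fibered structure — in particular, the interplay between $\tau^*$ and the inverse images under smooth morphisms and closed immersions appearing in the external tensor core — must be tracked carefully through both the universal lifting step and the verification of the axioms. Apart from this additional geometric wrinkle, the argument is formally parallel to that of \Cref{lem-asso-Mp}.
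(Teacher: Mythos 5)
Your proposal is correct and follows essentially the same route as the paper: lift the commutativity isomorphism to the abelian level via [Prop.~3.6]{Ter23UAF}, extend to derived categories via \Cref{lem:multiex-nat}, and promote to a full external commutativity constraint by reducing to the $\Var_k^{sm}$- and $\Var_k^{cl,op}$-fibered pieces. One small slip: the paper cites [Lemma~5.5(2)]{Ter23Fact} and [Lemma~6.8(2)]{Ter23UAF} for the commutative case (you wrote 5.5(1), which is the associativity clause), but this does not affect the substance of the argument.
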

\begin{proof}
	To begin with, fix two $k$-varieties $S_1$ and $S_2$, and consider the diagram
	\begin{equation*}
		\begin{tikzcd}[font=\small]
			\Dcal^0(S_1) \times \Dcal^0(S_2) \arrow{rrrr}{-\boxtimes-} \arrow{d}{\beta_{S_1}^0 \times \beta_{S_2}^0} &&&& \Dcal^0(S_1 \times S_2) \arrow{d}{\beta_{S_1 \times S_2}^0} \\
			\Perv(S_1) \times \Perv(S_2) \arrow{rrrr}{-\boxtimes-} &&&& \Perv(S_1 \times S_2) \\
			&& \Downarrow \\
			\Dcal^0(S_1) \times \Dcal^0(S_2) \arrow{r}{\leftrightarrow} \arrow{d}{\beta_{S_1}^0 \times \beta_{S_2}^0} & \Dcal^0(S_2) \times \Dcal^0(S_1) \arrow{rr}{-\boxtimes-} \arrow{d}{\beta_{S_2}^0 \times \beta_{S_1}^0} && \Dcal^0(S_2 \times S_1) \arrow{r}{\tau^*} \arrow{d}{\beta_{S_2 \times S_1}^0} & \Dcal^0(S_1 \times S_2) \arrow{d}{\beta_{S_1 \times S_2}^0} \\
			\Perv(S_1) \times \Perv(S_2) \arrow{r}{\leftrightarrow} & \Perv(S_2) \times \Perv(S_1) \arrow{rr}{-\boxtimes-} && \Perv(S_2 \times S_1) \arrow{r}{\tau^*} & \Perv(S_1 \times S_2).
		\end{tikzcd}
	\end{equation*}
    The commutativity isomorphisms of functors $\Dcal^0(S_1) \times \Dcal^0(S_2) \rightarrow \Dcal^0(S_2 \times S_1)$
    \begin{equation*}
    	A_1 \boxtimes A_2 \xrightarrow{\sim} \tau^* (A_2 \boxtimes A_1)
    \end{equation*}
    and the commutativity isomorphism of functors $\Perv(S_1) \times \Perv(S_2) \rightarrow\Perv(S_1 \times S_2)$
    \begin{equation*}
    	K_1 \boxtimes K_2 \xrightarrow{\sim} \tau^* (K_2 \boxtimes K_1)
    \end{equation*}
    satisfy the compatibility condition of \cite[Defn.~6.7(2)]{Ter23UAF}, because the same holds for the corresponding natural isomorphisms on $\DA_{ct}^{\et}(\cdot,\Q)$ and on $D^b(\cdot,\Q)$ - this is just a way of rephrasing the symmetry of the Betti realization. Therefore, applying \cite[Prop.~3.6]{Ter23UAF}, we get a natural isomorphism between multi-exact functors $\Mcal^0(S_1) \times \Mcal^0(S_2) \rightarrow \Mcal^0(S_1 \times S_2)$
    \begin{equation}\label{comm:Mp}
    	M_1 \boxtimes M_2 \xrightarrow{\sim} \tau^* (M_2 \boxtimes M_1),
    \end{equation}
    compatible with the corresponding commutativity isomorphism on $\Perv(\cdot)$. By \Cref{lem:multiex-nat}, this extends to a multi-triangulated natural isomorphism between functors $D^b(\Mcal^0(S_1)) \times D^b(\Mcal^0(S_2)) \rightarrow D^b(\Mcal^0(S_1 \times S_2))$ 
    \begin{equation}\label{comm:DMp}
    	M_1^{\bullet} \boxtimes M_2^{\bullet} \xrightarrow{\sim} \tau^* (M_2^{\bullet} \boxtimes M_1^{\bullet}),
    \end{equation}
    again compatible with the corresponding commutativity isomorphism on $D^b(\Perv(\cdot))$. 
    
    As $S_1$ and $S_2$ vary, the natural isomorphisms \eqref{comm:DMp} satisfy condition ($c$ETS-1) of \cite[Defn.~4.2]{Ter23Fib}: to see this it suffices to check that the natural isomorphisms \eqref{comm:Mp} satisfy the same condition, which in turn follows from the validity of axiom ($c$ETS-1) on $\DA_{ct}^{\et}(\cdot,\Q)$.
    
    By \cite[Lemma~5.5(2)]{Ter23Fact}, in order to conclude that the natural isomorphisms \eqref{comm:DMp} define a commutativity constraint for the $\Var_k$-fibered category $D^b(\Mcal(\cdot))$, it suffices to show that they define commutativity constraints for the underlying $\Var_k^{sm}$-fibered and $\Var_k^{cl,op}$-fibered categories. By construction, the latter task reduces to checking that the natural isomorphisms \eqref{comm:Mp} define commutativity constraints for the abelian $\Var_k^{sm}$-fibered and $\Var_k^{cl,op}$-fibered category $\Mcal(\cdot)$, which is indeed the case by \cite[Lemma~6.8(2)]{Ter23UAF}. 
    
    Lastly, the symmetry of $\iota$ amounts to the compatibility between the natural isomorphisms \eqref{comm:DMp} and the corresponding natural isomorphisms on $D^b(\Perv(\cdot))$ that we have already mentioned above.
\end{proof}

\begin{lem}\label{lem-ac-Mp}
	The external associativity constraint of \Cref{lem-asso-Mp} and the external commutativity constraint of \Cref{lem-comm-Mp} are compatible in the sense of \cite[Defn.~6.3]{Ter23Fib}.
\end{lem}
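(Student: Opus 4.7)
The plan is to establish the compatibility by a threefold reduction: first from the triangulated category $D^b(\Mcal(\cdot))$ down to the abelian category $\Mcal^0(\cdot)$, then from the fibered structure over $\Var_k$ to its restrictions over $\Var_k^{sm}$ and $\Var_k^{cl,op}$ separately, and finally from $\Mcal^0(\cdot)$ back to $\DA_{ct}^{\et}(\cdot,\Q)$ via the universal property of universal abelian factorizations.

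First I would unwind the compatibility axiom of \cite[Defn.~6.3]{Ter23Fib}: it is the commutativity of a hexagon-type diagram whose arrows are suitable composites of the external associativity isomorphisms of \Cref{lem-asso-Mp}, the external commutativity isomorphisms of \Cref{lem-comm-Mp}, and inverse image functors along products of swap and projection morphisms between products of varieties. Since the external tensor product functors on perverse motives are $t$-exact in each variable (by \cite[\S~4.2.7]{BBD82} together with \Cref{thm_Mp0=Mp}), and since the associativity and commutativity constraints on $D^b(\Mcal(\cdot))$ were obtained in the proofs of \Cref{lem-asso-Mp} and \Cref{lem-comm-Mp} by applying \Cref{lem:multiex-nat} to constraints already defined at the abelian level, the compatibility of these constructions with composition of natural transformations (as recorded in \Cref{lem:multiex-nat}, and used in the spirit of \Cref{rem:utility-VoloDG}) reduces the commutativity of the hexagon on $D^b(\Mcal(\cdot))$ to the commutativity of the corresponding hexagon of multi-exact natural transformations on $\Mcal^0(\cdot)$.

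Next, by the same kind of argument used in the proofs of \Cref{lem-asso-Mp} and \Cref{lem-comm-Mp} (based on \cite[Lemma~5.5]{Ter23Fact}), the compatibility on the abelian $\Var_k$-fibered category $\Mcal^0(\cdot)$ can be checked separately on the underlying $\Var_k^{sm}$-fibered and $\Var_k^{cl,op}$-fibered categories, after further shifting inverse images under smooth morphisms as in \Cref{nota-diamond1} so that all relevant functors become exact. On each of these two subcategories, both the associativity and the commutativity constraints on $\Mcal^0(\cdot)$ were built via \cite[Prop.~6.4]{Ter23UAF} (equivalently, via \cite[Prop.~3.6]{Ter23UAF}) by lifting the corresponding constraints on $\DA_{ct}^{\et}(\cdot,\Q)$. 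By the uniqueness part of \cite[Prop.~3.6]{Ter23UAF}, which is the content of \cite[Rem.~4.7]{Ter23UAF}, the commutativity of the hexagon on the restricted $\Var_k^{sm}$-fibered and $\Var_k^{cl,op}$-fibered abelian categories $\Mcal^0(\cdot)$ is therefore equivalent to the commutativity of the analogous hexagon at the level of $\DA_{ct}^{\et}(\cdot,\Q)$.

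The hardest part — but not a genuine obstacle — consists in carefully matching the lifting situation of \cite[Prop.~3.6]{Ter23UAF} with the shape of the hexagon diagram, so as to ensure that the uniqueness argument applies. This is essentially the same verification carried out in the treatment of axioms (C'-ETC) and (T-ETC) in the proof of \Cref{prop-extcore-Mp}, and it does not involve any new idea. Once this bookkeeping is in place, the final hexagon on $\DA_{ct}^{\et}(\cdot,\Q)$ holds because this stable homotopy $2$-functor is canonically unitary symmetric monoidal by \cite{Ayo07a,Ayo07b}, and the compatibility between its associativity and commutativity constraints is exactly the axiom recorded in \cite[Defn.~6.3]{Ter23Fib} for the source. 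This completes the reduction and hence the proof.
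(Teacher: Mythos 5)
Your proposal is correct and follows essentially the same route as the paper: reduce the compatibility on $D^b(\Mcal(\cdot))$ to the abelian level, restrict to the $\Var_k^{sm}$ and $\Var_k^{cl,op}$ fibered structures, and then push the verification down to $\DA_{ct}^{\et}(\cdot,\Q)$ via the functoriality of universal abelian factorizations. The only difference is that the ``bookkeeping'' step you flag as requiring care is precisely what the dedicated compatibility result \cite[Lemma~6.10]{Ter23UAF} packages, so the paper's proof is a one-line citation of that lemma rather than a re-derivation from \cite[Prop.~3.6]{Ter23UAF}.
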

\begin{proof}
	It suffices to show that the external associativity and commutativity constraints on the abelian $\Var_k^{sm}$-fibered and $\Var_k^{cl,op}$-fibered category $\Mcal(\cdot)$, defined by the natural isomorphisms \eqref{asso:Mp} and \eqref{comm:Mp}, are compatible. By \cite[Lemma~6.10]{Ter23UAF}, this follows from the compatibility between the external associativity and commutativity constraints of $\DA_{ct}^{\et}(\cdot,\Q)$.
\end{proof}

\section{Tensoring with distinguished motivic local systems}\label{sect_otimes-dist}

Before completing the construction of the tensor structure on perverse motives, we need to analyze the tensor product functors defined in the previous section more closely:
our aim is to get more information about the motivic internal tensor product.

Recall that, using the dictionary of \cite{Ter23Fib}, the symmetric associative external tensor structure on $D^b(\Mcal(\cdot))$ obtained in \Cref{sect_ETS} can be translated canonically into an internal tensor structure: 
explicitly, for every $k$-variety $S$, the internal tensor product on $D^b(\Mcal(S))$ is defined via the classical formula
\begin{equation}\label{M_1-otimes-M_2}
	M_1^{\bullet} \otimes M_2^{\bullet} := \Delta_S^*(M_1^{\bullet} \boxtimes M_2^{\bullet}),
\end{equation}
where $\Delta_S: S \hookrightarrow S \times S$ denotes the diagonal embedding. 
Hence we can regard $D^b(\Mcal(\cdot))$ as a symmetric monoidal $\Var_k$-fibered category.
The two parts of the promised monoidal structure that it remains to obtain are the unit constraint and the internal homomorphisms. For both of them, we want to use the lifting principles of universal abelian factorizations in relation to the internal tensor product functors.
The problem is that the internal tensor product is not itself defined from these lifting principles. 

The goal of this section is to provide a partial solution to this technical problem:
we give a more direct description of the internal tensor product functor \eqref{M_1-otimes-M_2} in the case where the $k$-variety $S$ is smooth and one of the two complexes involved has a certain distinguished form: 
this is \Cref{prop-distinguished-otimes}, the proof of which occupies most part of the present section. 

\subsection{Distinguished motivic local systems}

In order to formalize our idea, we need to introduce another abelian subcategory $\Mcal^{0,loc}(S)$ of $\Mcal^0(S)$, obtained as the universal abelian factorization of a further additive subquiver $\Dcal^{0,loc}(S)$ of $\Dcal^0(S)$. 
Recall that we let $\Loc(S)$ denote the usual $\Q$-linear abelian category of local systems over $S$.

\begin{nota}\label{nota:Mp0}
	Let $S$ be a smooth $k$-variety.
	\begin{itemize}
		\item We define a full abelian subcategory $\Loc_p(S)$ of $\Perv(S)$ by setting
		\begin{equation*}
			\Loc_p(S) = \left\{K \in \Perv(S) \; | \; K[-\dim(S)] \in \Loc(S) \right\}
		\end{equation*}
		if $S$ is connected, and extending the definition in the obvious way for general smooth $S$. 
		\item We introduce the full additive subcategory
		\begin{equation*}
			\Dcal^{0,loc}(S) := \left\{A \in \Dcal^0(S) \; | \; \Bti_S^*(A) \in \Loc_p(S) \right\} \subset \Dcal^0(S),
		\end{equation*}
		and we let 
		\begin{equation*}
			\beta_S^{0,loc}: \Dcal^{0,loc}(S) \rightarrow \Loc_p(S)
		\end{equation*}
		denote the restriction of $\beta_S^0$ to $\Dcal^{0,loc}(S)$.
		\item We let $\Mcal^{0,loc}(S)$ denote the universal abelian factorization of the additive functor 
		\begin{equation*}
			\beta_S^{0,loc} := \beta_S|_{\Dcal^{0,loc}(S)}: \Dcal^0(S) \rightarrow \Perv(S).
		\end{equation*}
		We implicitly regard it as a (not necessarily full) abelian subcategory of $\Mcal^0(S)$ via the canonical faithful exact functor
		\begin{equation*}
			\Mcal^{0,loc}(S) \hookrightarrow \Mcal^0(S)
		\end{equation*}
		obtained by applying \cite[Prop.~2.5]{Ter23UAF} to the diagram
		\begin{equation*}
			\begin{tikzcd}
				\Dcal^{0,loc}(S) \arrow[hook]{r} \arrow{d}{\beta_S^{0,loc}} & \Dcal^0(S) \arrow{d}{\beta_S^{0}} \\
				\Loc_p(S) \arrow[hook]{r} & \Perv(S).
			\end{tikzcd}
		\end{equation*}
		\item For simplicity, we still write
		\begin{itemize}
			\item $\pi_S$ for the quotient functor $\A(\Dcal^{0,loc}(S)) \rightarrow \Mcal^{0,loc}(S)$, and also for the composite functor $\Dcal^{0,loc}(S) \rightarrow \A(\Dcal^{0,loc}(S)) \rightarrow \Mcal^{0,loc}(S)$,
			\item $\iota_S$ for the induced faithful exact functor $\Mcal^{0,loc}(S) \hookrightarrow \Loc_p(S) \subset \Perv(S)$.
		\end{itemize}
		This convention will not create any ambiguity.
	\end{itemize}
\end{nota}

For every smooth morphism of constant relative dimension $p: P \rightarrow S$ between smooth $k$-varieties, the functor $p^{\dagger}: \Dcal^0(S) \rightarrow \Dcal^0(P)$ introduced in \Cref{nota-diamond1} sends $\Dcal^{0,loc}(S)$ to $\Dcal^{0,loc}(P)$, and therefore the corresponding exact functor $p^{\dagger}: \Mcal^0(S) \rightarrow \Mcal^0(P)$ sends $\Mcal^{0,loc}(S)$ to $\Mcal^{0,loc}(P)$.
For every $k$-variety $S$, we can specialize this observation to the cofiltered category $\Sm\Op(S)$ of its smooth Zariski-dense open subvarieties: as $U$ varies in $\Sm\Op(S)$, the categories $\Mcal^{0,loc}(U)$ canonically assemble into an abelian $\Sm\Op(S)$-fibered subcategory $\Mcal^{0,loc}(\cdot)$ of $\Mcal^0(\cdot)$. 
For applications to \Cref{sect_Homcal}, it is important to note the following fact:

\begin{lem}\label{lem:Mcal^0loc-wcof}
	For every $k$-variety $S$, the $\Sm\Op(S)$-fibered category $\Mcal^{0,loc}$ is cofinal in $\Mcal^0$ in the sense of \Cref{defn:w-local}.
\end{lem}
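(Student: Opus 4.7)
The plan is to reduce the claim to a comparison statement over the function field of $S$, and then invoke the generic local constancy of perverse sheaves on smooth varieties.

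As a preliminary step, I would reduce to the case where $S$ is irreducible. Since both $\Sm\Op(S)$-fibered categories $\Mcal^0$ and $\Mcal^{0,loc}$ are stable under restriction to smaller smooth dense opens, and since the inclusions $\Mcal^{0,loc}(U) \hookrightarrow \Mcal^0(U)$ are compatible with such restrictions, a straightforward component-by-component argument handles the general case. So assume $S$ irreducible with function field $L := k(S)$, and adapt \Cref{constr:M(L)} verbatim to define
\begin{equation*}
	\Mcal^{0,loc}(\Spec(L)) := 2-\varinjlim_{U \in \Sm\Op(S)^{op}} \Mcal^{0,loc}(U).
\end{equation*}
By \cite[Lemma~5.12(2)]{Ter23UAF}, both colimits $\Mcal^0(\Spec(L))$ and $\Mcal^{0,loc}(\Spec(L))$ are then canonically identified with the universal abelian factorizations of the colimit additive functors
\begin{equation*}
	\beta_L^0: 2-\varinjlim_U \Dcal^0(U) \rightarrow 2-\varinjlim_U \Perv(U), \qquad \beta_L^{0,loc}: 2-\varinjlim_U \Dcal^{0,loc}(U) \rightarrow 2-\varinjlim_U \Loc_p(U).
\end{equation*}
This adaptation is purely formal, as the argument of \Cref{constr:M(L)} only exploits compatibility of universal abelian factorizations with filtered $2$-colimits.

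The central geometric input will be the following well-known fact: for every $U \in \Sm\Op(S)$ and every $K \in \Perv(U)$, there exists a Zariski-dense open $V \subset U$ (which, being smooth and dense in $S$, also belongs to $\Sm\Op(S)$) such that $K|_V$ is a shifted local system, and hence belongs to $\Loc_p(V)$. Applying this both to arbitrary $K \in \Perv(U)$ and to perverse sheaves of the form $\Bti_U^*(A)$ with $A \in \Dcal^0(U)$, I would conclude that the canonical inclusions
\begin{equation*}
	2-\varinjlim_U \Dcal^{0,loc}(U) \hookrightarrow 2-\varinjlim_U \Dcal^0(U), \qquad 2-\varinjlim_U \Loc_p(U) \hookrightarrow 2-\varinjlim_U \Perv(U)
\end{equation*}
are both equivalences of categories: fullness is automatic, because the inclusions at finite level are already full, and essential surjectivity in the colimit is precisely the content of the generic local constancy.

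To finish, these two equivalences identify the source of $\beta_L^{0,loc}$ with the source of $\beta_L^0$ and exhibit the target of the former as a full abelian subcategory of the target of the latter, through which $\beta_L^0$ factors. The functoriality of universal abelian factorizations with respect to faithful exact functors on the target then yields a canonical equivalence $\Mcal^{0,loc}(\Spec(L)) \xrightarrow{\sim} \Mcal^0(\Spec(L))$, which is exactly the weak cofinality statement. The only step requiring care is the adaptation of \Cref{constr:M(L)} to the ``local'' setting; but as already noted, this is purely formal, and I do not expect it to present any real obstacle.
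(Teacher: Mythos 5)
Your proof is correct, and it rests on the same key geometric fact as the paper's — generic local constancy of perverse sheaves on smooth varieties, which supplies essential surjectivity of the inclusion in the $2$-colimit, while fullness is automatic because $\Dcal^{0,loc}(U) \subset \Dcal^0(U)$ and $\Loc_p(U) \subset \Perv(U)$ are full subcategories at every level. The difference is in the abstract packaging: the paper invokes \cite[Cor.~5.15]{Ter23UAF} to reduce weak cofinality of $\Mcal^{0,loc} \subset \Mcal^0$ immediately to weak cofinality of the quiver-level inclusion $\Dcal^{0,loc} \subset \Dcal^0$, and then observes that the latter is pulled back along $\Bti^*$ from $\Loc_p(\cdot) \subset \Perv(\cdot)$, which is weakly cofinal by generic local constancy. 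You instead re-derive this reduction by hand, via \cite[Lemma~5.12]{Ter23UAF} and an explicit commutative square of colimit-level UAFs. Your argument buys nothing extra and is somewhat longer, but it is sound; the main redundancies are (i) the reduction to irreducible $S$, which is unnecessary since generic local constancy applies componentwise on any smooth $U$, and (ii) the detour through the function-field interpretation $\Mcal^{0,loc}(\Spec(L))$, which is not needed — one can work directly with the $2$-colimits and never mention $L$. If you replace your Lemma~5.12 reduction with the citation of Cor.~5.15, the whole argument collapses to the three sentences of the paper's proof.
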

\begin{proof}
	By \cite[Cor.~5.15]{Ter23UAF}, it suffices to show that the $\Sm\Op(S)$-fibered subcategory $\Dcal^{0,loc}$ of $\Dcal^0$ is cofinal. 
	Since $\Dcal^{0,loc}$ is a full $\Sm\Op(S)$-fibered subcategory of $\Dcal^0$, it suffices to check the cofinality condition on objects. 
	To this end, using the definition of $\Dcal^{0,loc}$, it suffices to show that the inclusion of $\Sm\Op(S)$-fibered categories $\Loc_p(\cdot) \subset \Perv(\cdot)$ is cofinal, which is obvious.
\end{proof}

\begin{rem}
	The existence of a perverse motivic $t$-structure on $\DA_{ct}^{\et}(S,\Q)$ would imply the fullness of the inclusion $\Mcal^{0,loc}(S) \subset \Mcal(S)$. Unfortunately, we have no idea how to show this unconditionally; in any case, \Cref{lem:Mcal^0loc-wcof} suffices for our purposes.
\end{rem}

The main results of the present section are based on the following auxiliary construction: 

\begin{constr}\label{constr:otimes-diamond}
	Fix a smooth $k$-variety $S$. We define a shifted tensor product functor
	\begin{equation}\label{tildeot-DA}
		- \otimes^{\dagger} -: \DA_{ct}^{\et}(S,\Q) \times \DA_{ct}^{\et}(S,\Q) \rightarrow \DA_{ct}^{\et}(S,\Q)
	\end{equation}
	by the formula
	\begin{equation*}
		A_1 \otimes^{\dagger} A_2 := (A_1 \otimes A_2)[-\dim(S)]
	\end{equation*}
	in the case where $S$ is connected, and extending it in the obvious way for general $S$.
	By the same procedure, we define a shifted tensor product functor
	\begin{equation}\label{tildeot-Perv}
		- \otimes^{\dagger} -: D^b(\Perv(S)) \times D^b(\Perv(S)) \rightarrow D^b(\Perv(S)).
	\end{equation}
	Note that the functor \eqref{tildeot-Perv} sends $\Perv(S) \times \Loc_p(S)$ to $\Perv(S)$, and so the functor \eqref{tildeot-DA} sends $\Dcal^0(S) \times \Dcal^{0,loc}(S)$ to $\Dcal^0(S)$. Applying \cite[Prop.~4.3]{Ter23UAF} to the diagram 
	\begin{equation*}
		\begin{tikzcd}
			\Dcal^0(S) \times \Dcal^{0,loc}(S) \arrow{rr}{- \otimes^{\dagger} -} \arrow{d}{\beta_S^0 \times \beta_S^{0,loc}} && \Dcal^0(S) \arrow{d}{\beta_S^0} \\
			\Perv(S) \times \Loc_p(S) \arrow{rr}{- \otimes^{\dagger} -} && \Perv(S),
		\end{tikzcd}
	\end{equation*}
	we obtain a bi-exact functor
	\begin{equation}\label{tildeotimes-Mp}
		- \otimes^{\dagger} -: \Mcal^0(S) \times \Mcal^{0,loc}(S) \rightarrow \Mcal^0(S)
	\end{equation}
	together with a canonical natural isomorphism of functors $\Mcal^0(S) \times \Mcal^{0,loc}(S) \rightarrow \Perv(S)$
	\begin{equation*}
		\tilde{\rho}^{\dagger} = \tilde{\rho}^{\dagger}_S: \iota_S(M) \otimes^{\dagger} \iota_S(L) \xrightarrow{\sim} \iota_S(M \otimes^{\dagger} L).
	\end{equation*}
\end{constr}

A priori it is unclear whether the functor \eqref{tildeotimes-Mp} is related to (the restriction of) the internal tensor product on perverse motives, defined via the formula \eqref{M_1-otimes-M_2}. It is a pleasant surprise that the two functors are indeed related in the way one would expect:
\begin{prop}\label{prop-distinguished-otimes}
	Let $S$ be a connected $k$-variety of dimension $d$. 
	Then there exists a canonical isomorphism of functors $\Mcal^0(S) \times \Mcal^{0,loc}(S) \rightarrow \Mcal^0(S)$
	\begin{equation}\label{iso-otimes-distinguished}
		M \otimes^{\dagger} L \xrightarrow{\sim} (M \otimes L)[-d]
	\end{equation}
	making the diagram of functors $\Mcal^0(S) \times \Mcal^{0,loc}(S) \rightarrow \Perv(S)$
	\begin{equation}\label{rho-otimes-distinguished}
		\begin{tikzcd}
			\iota_S(M \otimes^{\dagger} L) \arrow{r}{\sim} & \iota_S((M \otimes L)[-d]) \arrow[equal]{r} & \iota_S(M \otimes L)[-d] \\
			\iota_S(M) \otimes^{\dagger} \iota_S(L) \arrow{u}{\tilde{\rho}^{\dagger}} \arrow[equal]{rr} && (\iota_S(M) \otimes \iota_S(L))[-d] \arrow{u}{\tilde{\rho}}
		\end{tikzcd}
	\end{equation}
	commute.
\end{prop}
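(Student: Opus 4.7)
The goal is to identify both bi-exact functors $\Mcal^0(S) \times \Mcal^{0,loc}(S) \to \Mcal^0(S)$ as the unique bi-exact lift, via the universal property of universal abelian factorizations, of a common bi-additive functor $\Dcal^0(S) \times \Dcal^{0,loc}(S) \to \Dcal^0(S)$. On the motivic quiver level this common functor is just $(A, B) \mapsto \Delta_S^*(A \boxtimes B)[-d]$, which tautologically equals $A \otimes^{\dagger} B := (A \otimes B)[-d]$ by the very definition of the internal tensor product in terms of the external one on $\DA_{ct}^{\et}(S,\Q)$. As a preliminary, I check that $(M \otimes L)[-d] := \Delta_S^*(M \boxtimes L)[-d]$ actually lands in the heart $\Mcal^0(S) \subset D^b(\Mcal^0(S))$: by the conservativity and $t$-exactness of $\iota_S$ together with the external monoidality isomorphism $\tilde{\rho}$ of \Cref{prop-extcore-Mp}, its realization is $\Delta_S^*(\iota_S(M) \boxtimes \iota_S(L))[-d] = (\iota_S(M) \otimes \iota_S(L))[-d]$, which is perverse because $\iota_S(L)[-d]$ is an ordinary local system and tensoring a perverse sheaf by a local system is $t$-exact up to the intrinsic shift by $d$ built into $\Loc_p(S)$.

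I then compare the two functors on the image of $\pi_S \times \pi_S$. For the functor $- \otimes^{\dagger} -$, \Cref{constr:otimes-diamond} gives $\pi_S(A) \otimes^{\dagger} \pi_S(B) = \pi_S((A \otimes B)[-d])$, compatibly with $\iota_S$ through $\tilde{\rho}^{\dagger}$. For the functor $(- \otimes -)[-d]$, \Cref{prop-extcore-Mp} provides a canonical identification $\pi_S(A) \boxtimes \pi_S(B) = \pi_{S \times S}(A \boxtimes B)$, and this combined with the compatibility of the motivic pullback $\Delta_S^*$ from \cite[\S~4.1]{IM19} with the quotient $\pi$ yields $(\pi_S(A) \otimes \pi_S(B))[-d] \simeq \pi_S(\Delta_S^*(A \boxtimes B)[-d]) = \pi_S((A \otimes B)[-d])$, again compatibly with $\iota_S$. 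Extending to all of $\Mcal^0(S) \times \Mcal^{0,loc}(S)$ via the uniqueness of bi-exact lifts through universal abelian factorizations (cf.\ \cite[Prop.~3.6, Prop.~4.3]{Ter23UAF}) then produces the sought-after canonical natural isomorphism \eqref{iso-otimes-distinguished}; the commutativity of \eqref{rho-otimes-distinguished} is preserved by construction at each step, since every identification above has been built compatibly with $\iota_S$ using $\tilde{\rho}$, $\tilde{\rho}^{\dagger}$, and the transition isomorphism $\tilde{\theta}_{\Delta_S}$.

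The main obstacle is the compatibility $\Delta_S^* \circ \pi_{S \times S} \simeq \pi_S \circ \Delta_S^*$ invoked in the comparison step, restricted to objects of the form $A \boxtimes B$ with $(A, B) \in \Dcal^0(S) \times \Dcal^{0,loc}(S)$. Since $\Delta_S^*$ on perverse motives is constructed in \cite[\S~4.1]{IM19} indirectly via Beilinson's gluing functors from \Cref{sect:glue-Mp} rather than by a direct lift through the universal property, this identification is not formal. I would establish it by constructing a canonical natural transformation using the universal property of $\Mcal^0(S)$ together with the compatibility $\iota_S \circ \Delta_S^* \cong \Delta_S^* \circ \iota_{S \times S}$ recorded in \cite{IM19}, and then invoking the faithfulness and conservativity of $\iota_S$ to promote it to an isomorphism; the key geometric input is that one of the two factors lies in $\Dcal^{0,loc}$, so the relevant sheafy pullback along the regular codimension-$d$ immersion $\Delta_S$ can be controlled through the gluing formalism and the compatibility of motivic smooth pullbacks with $\pi$.
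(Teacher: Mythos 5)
Your overall strategy coincides with the paper's: identify both bi-exact functors on the level of the quiver, and reduce the whole statement to the existence of a natural isomorphism of the form $\pi_S(\Delta_S^{\dagger}(A \boxtimes L)) \xrightarrow{\sim} \Delta_S^{\dagger} \pi_{S \times S}(A \boxtimes L)$, compatible with the realizations. You also correctly single out this comparison as the genuine obstacle, and you correctly observe that it is not formal because $\Delta_S^*$ on perverse motives is not obtained by the lifting principles of universal abelian factorizations. Up to that point the proposal matches the paper.

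The gap is in how you propose to close that obstacle. You write that the required natural transformation is obtained ``using the universal property of $\Mcal^0(S)$ together with the compatibility $\iota_S \circ \Delta_S^* \cong \Delta_S^* \circ \iota_{S\times S}$, and then invoking the faithfulness and conservativity of $\iota_S$.'' But the universal property of $\Mcal^0(S)$ produces, and compares, functors which are lifts of specified pairs $(\Dcal^0 \to \Dcal^0, \Perv \to \Perv)$; the target-side composite $\Delta_S^{\dagger} \circ \pi_{S\times S}$ involves the motivic pullback $\Delta_S^{\dagger}: \Mcal^0(S\times S) \to D^b(\Mcal^0(S))$, which is constructed in \cite[\S 4.1]{IM19} as an abstract adjoint and is not such a lift, nor even exact on all of $\Mcal^0(S\times S)$. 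So the universal property gives you no natural transformation between the two composites to begin with; faithfulness of $\iota_S$ can only be used once a candidate morphism has been exhibited. Your appeal to ``the gluing formalism'' also points in the wrong direction: the gluing functors $\Psi_f, \Phi_f, \dots$ are not what the paper uses here. What is actually needed, and what the paper supplies in \Cref{constr:cech-perv}, \Cref{constr:cech-DA}, \Cref{lem-cech-dist} and \Cref{cor:s^diamond-pi}, is a \v{C}ech-complex model of $\Delta_{S,*}\Delta_S^{\dagger}$ built in parallel inside $C^b(\Mcal^0(S\times S))$, inside $D^b_c(S\times S,\Q)$, and inside $\DA_{ct}^{\et}((S\times S, [-1,0]^d),\Q)$ using the algebraic-derivator structure, together with a comparison of the truncation morphism $\check{C}^{\square} \to \sigma_{\leq(-1,\dots,-1)}\check{C}^{\square}$ across these models. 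The exactness assumption $\Bti^*_S(\Delta_S^{\dagger}(A\boxtimes L)) \in \Perv(S)$, guaranteed by $L \in \Dcal^{0,loc}(S)$, is exactly what makes this \v{C}ech comparison an isomorphism. This whole \v{C}ech-complex comparison is the technical heart of the proof and is absent from your outline; without it, the natural transformation you need is merely asserted, not constructed.
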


Before discussing the details of the proof, let us explain what is the general strategy and why implementing it correctly is quite subtle.
Consider the diagram
\begin{equation*}
	\begin{tikzcd}
		\Dcal^0(S) \times \Dcal^{0,loc}(S) \arrow{rr}{- \boxtimes -} \arrow{d}{\beta_S^0 \times \beta_S^{0,loc}} && \Dcal^0(S \times S) \arrow{rr}{\Delta_S^*[-d]} \arrow{d}{\beta_{S \times S}^0} && \Dcal(S) \arrow{d}{\Bti_S^*} \\
		\Perv(S) \times \Loc_p(S) \arrow{rr}{- \boxtimes -} && \Perv(S \times S) \arrow{rr}{\Delta_S^*[-d]} && D^b(\Perv(S)),
	\end{tikzcd}
\end{equation*}
where both squares are commutative up to canonical natural isomorphism. By construction, the vertical and horizontal composites recover (the restrictions of) the shifted tensor product functors \eqref{tildeot-DA} and \eqref{tildeot-Perv}, respectively.
In particular, they take values in $\Dcal^0(S)$ and $\Perv(S)$, respectively. 
If the functor
\begin{equation}\label{Delta^*-Perv}
	\Delta_S^*[-d]: D^b(\Perv(S \times S)) \rightarrow D^b(\Perv(S))
\end{equation}  
was $t$-exact, the corresponding functor $\Delta_S^*[-d]: \Dcal(S \times S) \rightarrow \Dcal(S)$ would send $\Dcal^0(S \times S)$ to $\Dcal^0(S)$. Hence one could refine the previous diagram to
\begin{equation*}
	\begin{tikzcd}
		\Dcal^0(S) \times \Dcal^{0,loc}(S) \arrow{rr}{- \boxtimes -} \arrow{d}{\beta_S^0 \times \beta_S^{0,loc}} && \Dcal^0(S \times S) \arrow{rr}{\Delta_S^*[-d]} \arrow{d}{\beta_{S \times S}^0} && \Dcal^0(S) \arrow{d}{\beta^0_S} \\
		\Perv(S) \times \Loc_p(S) \arrow{rr}{- \boxtimes -} && \Perv(S \times S) \arrow{rr}{\Delta_S^*[-d]} && \Perv(S),
	\end{tikzcd}
\end{equation*} 
and one could try to deduce that the functor \eqref{tildeotimes-Mp} is naturally isomorphic to the composite
\begin{equation*}
	\Mcal^0(S) \times \Mcal^{0,loc}(S) \xrightarrow{- \boxtimes -} \Mcal^0(S \times S) \xrightarrow{\Delta_S^*[-d]} \Mcal^0(S) 
\end{equation*}
by using the general principles of \cite{Ter23UAF}: 
the conclusion of \Cref{prop-distinguished-otimes} would follow from the fact that lifting exact functors to universal abelian factorizations is compatible with composition. 

Needless to say, the functor \eqref{Delta^*-Perv} is not $t$-exact, hence the argument sketched here simply does not work. On the other hand, it is well-known that the composite functor 
\begin{equation*}
	\Delta_{S,*} \circ \Delta_S^*[-d]: D^b(\Perv(S \times S)) \rightarrow D^b(\Perv(S \times S))
\end{equation*}
can be modeled as an exact functor $C^b(\Perv(S \times S)) \rightarrow C^b(\Perv(S \times S))$ by means of a suitable \v{C}ech complex construction. For instance, for $S = \A^1_k$ one can use the total complex associated to the exact functor $C^b(\Perv(\A^2_k)) \rightarrow C^{[0,1]}(C^b(\Perv(\A^2_k)))$
\begin{equation}\label{cech:A1}
	K^{\bullet} \rightsquigarrow [j_! j^* K^{\bullet} \rightarrow K^{\bullet}],
\end{equation}  
where $j$ denotes the affine open immersion of the complement of the diagonal in $\A^2_k$. The key advantage is that the complex in \eqref{cech:A1} lifts canonically to the level of perverse motives.
In general, even if the model for $\Delta_{S,*} \circ \Delta_S^*[-d]$ depends on the choice of a finite affine covering of $S \times S \setminus \Delta_S(S)$, the derived functor is essentially independent of such a choice. This idea is exploited in the proof of \cite[Prop.~4.2]{IM19} to construct the inverse image functor $\Delta_S^*$ - or, more generally, inverse image functors along closed immersions - in the setting of perverse motives. We are going to prove \Cref{prop-distinguished-otimes} by a similar method.

\subsection{Some \v{C}ech complex computations}

For the reader's convenience, we start by recalling the \v{C}ech complex construction just mentioned in the setting of perverse sheaves and perverse motives.

\begin{constr}\label{constr:cech-perv}
	Let $s: S \hookrightarrow Y$ be a closed immersion of $k$-varieties, and let $u: U \hookrightarrow Y$ denote the complementary open immersion.
	Suppose that we are given a finite affine open covering $\Ucal = \left\{U_i \subset U \right\}_{i = 1}^{d}$ of $U$. For every subset $I \subset \left\{1,\dots,d\right\}$, set
	\begin{equation*}
		U_I := \bigcap_{i \in I} U_i
	\end{equation*} 
	and let $j_I: U_I \hookrightarrow Y$ denote the corresponding open immersion (with the agreement that $U_{\emptyset} = Y$ and $j_{\emptyset} = \id_Y$). Note that each $j_I$ is affine, because $Y$ is separated over $k$ by definition. For every choice of subsets $I, I' \subset \left\{1,\dots,d\right\}$ satisfying $I' \subset I$, there exists a canonical natural transformation of functors $\Perv(Y) \rightarrow \Perv(Y)$
	\begin{equation*}
		\alpha^{\Ucal}_{I,I'}: j_{I,!} j_I^* K \rightarrow j_{I',!} j_{I'}^* K
	\end{equation*}
	induced by the co-unit along the inclusion of $U_I$ into $U_{I'}$. For each $r = 0,\dots,d$, set
	\begin{equation*}
		U^{(r)}_{\Ucal} := \coprod_{\substack{I \subset \left\{1,\dots,d \right\} \\ \# I = r}} U_I
	\end{equation*}
	and let $J_r: U^{(r)} \rightarrow Y$ denote the canonical morphism. We introduce the \v{C}ech complex functor
	\begin{equation*}
		\check{C}^{\bullet}_{Y,\Ucal}: \Perv(Y) \rightarrow C^{[-d,0]}(\Perv(Y)), \quad K \rightsquigarrow [J_{d,!} J_d^* K \to \dots \to J_{1,!} J_1^* K \to K],
	\end{equation*}
	where, for each $r = 1,\dots,d$, the arrow
	\begin{equation*}
		J_{r,!} J_r^* K = \bigoplus_{\substack{I \subset \left\{1,\dots,d\right\} \\ \# I = r}} j_{I,!} j_I^* K \rightarrow \bigoplus_{\substack{I' \subset \left\{1,\dots,d\right\} \\ \# I' = r-1}} j_{I',!} j_{I'}^* K = J_{r-1,!} J_{r-1}^* K
	\end{equation*}  
	is induced by the usual alternating sum of the arrows $\alpha^{\Ucal}_{I,I'}$. We extend the \v{C}ech complex functor to an exact functor
	\begin{equation*}
		\check{C}_{Y,\Ucal}: C^b(\Perv(Y)) \xrightarrow{\check{C}_{Y,\Ucal}^{\bullet}} C^{[-d,0]}(C^b(\Perv(Y))) \xrightarrow{\Tot^{\bullet}} C^b(\Perv(Y))
	\end{equation*}
	and we consider its trivial derived functor
	\begin{equation}\label{cech-D^bPcal}
		\check{C}_{Y,\Ucal}: D^b(\Perv(Y)) \rightarrow D^b(\Perv(Y)).
	\end{equation}
	The argument used in the proof of \cite[Prop.~4.2]{IM19} implies that the functor \eqref{cech-D^bPcal} is canonically left adjoint to the inclusion $D^b_{S}(\Perv(Y)) \hookrightarrow D^b(\Perv(Y))$, hence canonically isomorphic to (the co-restriction of) the functor $s_* s^*: D^b(\Perv(Y)) \rightarrow D^b(\Perv(Y))$. 
	
	The construction carries over to the setting of perverse motives: one can define a \v{C}ech complex functor
	\begin{equation*}
		\check{C}^{\bullet}_{Y,\Ucal}: \Mcal^0(Y) \rightarrow C^{[-d,0]}(\Mcal^0(Y)), \quad M \rightsquigarrow [J_{d,!} J_d^* M \to \dots \to J_{1,!} J_1^* M \to M]
	\end{equation*}
	and use it to obtain a triangulated functor
	\begin{equation*}
		\check{C}_{Y,\Ucal}: D^b(\Mcal^0(Y)) \rightarrow D^b(\Mcal^0(Y))
	\end{equation*}
	which is canonically left adjoint to the inclusion $D^b_{S}(\Mcal^0(Y)) \hookrightarrow D^b(\Mcal^0(Y))$, hence canonically isomorphic to (the co-restriction of) the functor $s_* s^*: D^b(\Mcal^0(Y)) \rightarrow D^b(\Mcal^0(Y))$.
	Note that we have a canonical natural isomorphism between exact functors $C^b(\Mcal^0(Y)) \rightarrow C^b(\Perv(Y))$
	\begin{equation*}
		\check{C}_{Y,\Ucal} (\iota_Y(M^{\bullet})) \xrightarrow{\sim} \iota_Y(\check{C}_{Y,\Ucal}(M^{\bullet}))
	\end{equation*}
	defined as the composite
	\begin{equation*}
		\begin{tikzcd}
			\check{C}_{Y,\Ucal} (\iota_Y(M^{\bullet})) \arrow[equal]{d} && \iota_Y(\check{C}_{Y,\Ucal}(M^{\bullet})) \arrow[equal]{d} \\
			\Tot^{\bullet} \check{C}^{\bullet}_{Y,\Ucal}(\iota_Y(M^{\bullet})) \arrow{r}{\sim} & \Tot^{\bullet} \iota_Y(\check{C}^{\bullet}_{Y,\Ucal}(M^{\bullet})) \arrow[equal]{r} & \iota_Y(\Tot^{\bullet} \check{C}^{\bullet}_{Y,\Ucal}(M^{\bullet}))
		\end{tikzcd}
	\end{equation*}
	where the left-most horizontal arrow is induced by the family of natural isomorphisms of functors $C^b(\Mcal^0(Y)) \rightarrow C^b(\Perv(Y))$
	\begin{equation*}
		\check{C}^r_{Y,\Ucal}(\iota_Y(M^{\bullet})) := J_{r,!} J_r^* \iota_Y(M^{\bullet}) \xrightarrow{\sim} J_{r,!} \iota_{U_{\Ucal}^{(r)}}(J_r^* M^{\bullet}) \xrightarrow{\sim} \iota_Y (J_{r,!} J_r^* M^{\bullet}) =: \iota_Y(\check{C}^r_{Y,\Ucal}(M^{\bullet}))
	\end{equation*}
	as $r = 0,\dots,d$ varies. In the following, we consider the induced natural isomorphism of triangulated functors $D^b(\Mcal^0(Y)) \rightarrow D^b(\Perv(Y))$
	\begin{equation*}
		\check{C}_{Y,\Ucal} (\iota_Y(M^{\bullet})) \xrightarrow{\sim} \iota_Y(\check{C}_{Y,\Ucal}(M^{\bullet})).
	\end{equation*}
\end{constr}

We need to provide an analogous \v{C}ech complex construction on the level of triangulated étale motives. This is possible since the stable homotopy $2$-functor $\DA_{ct}^{\et}(\cdot,\Q)$ canonically underlies a stable homotopy algebraic derivator in the sense of \cite[\S~2.4.2]{Ayo07a}.

\begin{constr}\label{constr:cech-DA}
	Keep the notation and assumptions of \Cref{constr:cech-perv}. 
	For every choice of subsets $I, I' \subset \left\{1,\dots,d\right\}$ with $I' \subset I$, consider the natural transformation of functors $\DA_{ct}^{\et}(Y,\Q) \rightarrow \DA_{ct}^{\et}(Y,\Q)$
	\begin{equation*}
		\alpha^{\Ucal}_{I,I'}: j_{I,!} j_I^* A \rightarrow j_{I',!} j_{I'}^* A
	\end{equation*}
	induced by the co-unit along the inclusion of $U_I$ into $U_{I'}$. By construction, the diagram of functors $\DA_{ct}^{\et}(Y,\Q) \rightarrow D^b_c(Y,\Q)$
	\begin{equation*}
		\begin{tikzcd}
			j_{I,!} j_I^* \Bti_Y^*(A) \arrow{r}{\alpha^{\Ucal}_{I,I'}} \isoarrow{d} & j_{I',!} j_{I'}^* \Bti_Y^*(A) \isoarrow{d} \\
			j_{I,!} \Bti_{U_I}^* (j_I^* A) \isoarrow{d} &  j_{I',!} \Bti_{U_{I'}}^* (j_{I'}^* A) \isoarrow{d} \\
			\Bti_Y^* (j_{I,!} j_I^* A) \arrow{r}{\alpha^{\Ucal}_{I,I'}} & \Bti_Y^* (j_{I',!} j_{I'}^* A)
		\end{tikzcd}
	\end{equation*}
	is commutative under Beilinson's equivalence $D^b(\Perv(Y)) \xrightarrow{\sim} D^b_c(Y,\Q)$. 
	Let $[-1,0]$ denote the partially ordered set $\left\{-1 < 0\right\}$; for every integer $n \geq 0$ we regard the product category $[-1,0]^n$ as a small diagram (with the convention that $[-1,0]^0$ is just the singleton category). We are particularly interested in the diagram $[-1,0]^d$: in this case, for every subset $I \subset \left\{0,\dots,d\right\}$, we let $\underline{I}$ denote the object $(\epsilon_1,\dots,\epsilon_d) \in [-1,0]^d$ whose $i$-th coordinate $\epsilon_i$ equals $-1$ if $i \in I$ and $0$ otherwise. The assignment $I \rightsquigarrow \underline{I}$ defines an anti-isomorphism between the powerset of $\left\{1,\dots,d\right\}$ (regarded as a poset with the usual inclusion ordering) and $[-1,0]^d$. 
	Given two subsets $I, I' \subset \left\{1,\dots,d\right\}$ satisfying $I' \subset I$, we let $e_{I,I'}: \underline{I} \rightarrow \underline{I}'$ denote the corresponding morphism in $[-1,0]^d$.
	Similarly to \cite[Lem.~3.4, Lem.~3.6]{IM19}, we can construct a triangulated functor
	\begin{equation*}
		\check{C}^{\square}_{Y,\Ucal}: \DA_{ct}^{\et}(Y,\Q) \rightarrow \DA_{ct}^{\et}((Y,[-1,0]^{d}),\Q)
	\end{equation*}
	providing a coherent lifting of the $[-1,0]^d$-indexed diagram of functors $\DA_{ct}^{\et}(Y,\Q) \rightarrow \DA_{ct}^{\et}(Y,\Q)$
	\begin{equation*}
		\underline{I}(A) := j_{I,!} j_I^* A, \qquad e_{I,I'}(A) := \alpha_{I,I'}^{\Ucal}(A): j_{I,!} j_I^* A \rightarrow j_{I',!} j_{I'}^* A.
	\end{equation*}
	Concretely, one can consider the two obvious morphisms of diagram of $k$-varieties
	\begin{equation*}
		\begin{tikzcd}
			(\underline{I} \mapsto U_I)_{\underline{I} \in [-1,0]^d} \arrow{r}{\xi} \arrow{d}{\nu} & (\underline{I} \mapsto Y)_{\underline{I} \in [-1,0]^d} \\
			Y
		\end{tikzcd}
	\end{equation*} 
	and define the functor $\check{C}^{\square}_{Y,\Ucal}$ by the formula
	\begin{equation*}
		\check{C}^{\square}_{Y,\Ucal} := \xi_! \circ \nu^*.
	\end{equation*}
	Now, for every integer $n \geq 1$, let
	\begin{equation*}
		\Cof_n: \DA_{ct}^{\et}((Y,[-1,0]^n),\Q) \rightarrow \DA_{ct}^{\et}((Y,[-1,0]^{n-1}),\Q)
	\end{equation*}
	denote the co-fiber with respect to the last coordinate of $[-1,0]^n = [-1,0]^{n-1} \times [-1,0]$. We let 
	\begin{equation*}
		\Tot^{\square} := \Cof_1 \circ \dots \circ \Cof_d: \DA_{ct}^{\et}((Y,[-1,0]^d),\Q) \rightarrow \DA_{ct}^{\et}(Y,\Q)
	\end{equation*} 
	denote the associated derived total complex functor, and we set
	\begin{equation*}
		\check{C}_{Y,\Ucal} := \Tot^{\square} \circ \check{C}^{\square}_{Y,\Ucal}: \DA_{ct}^{\et}(Y,\Q) \rightarrow \DA_{ct}^{\et}(Y,\Q).
	\end{equation*}
	Similarly, we can define a functor
	\begin{equation*}
		\sigma_{\leq (-1,\dots,-1)} \check{C}^{\square}_{Y,\Ucal}: \DA_{ct}^{\et}(Y,\Q) \rightarrow \DA_{ct}^{\et}((Y,[-1,0]^{d}),\Q)
	\end{equation*}
	providing a coherent lifting of the $[-1,0]^d$-indexed diagram of functors $\DA_{ct}^{\et}(Y,\Q) \rightarrow \DA_{ct}^{\et}(Y,\Q)$
	\begin{equation*}
		\underline{I}(A) := 
		\begin{cases}
			J_{d,!} J_d^* A, & I = \left\{1,\dots,d\right\} \\
			0, & I \neq \left\{1,\dots,d\right\}.
		\end{cases} 
	\end{equation*}
	Note that there is an obvious natural transformation of functors $\DA_{ct}^{\et}(Y,\Q) \rightarrow \DA_{ct}^{\et}((Y,[-1,0]^d),\Q)$
	\begin{equation}\label{nat:cech-sigmacech}
		\check{C}^{\square}_{Y,\Ucal}(A) \rightarrow \sigma_{\leq (-1,\dots,-1)} \check{C}^{\square}_{Y,\Ucal}(A).
	\end{equation}
	One can repeat the entire construction replacing $\DA_{ct}^{\et}(\cdot,\Q)$ with any stable homotopy algebraic derivator in the sense of \cite[\S~2.4.2]{Ayo07a}. As a consequence of \cite[Thm.~1.8, Lem.~1.10]{Ayo10}, the stable homotopy $2$-functor $D^b_c(\cdot,\Q)$ canonically underlies a stable homotopy algebraic derivator, and the Betti realization
	\begin{equation*}
		\Bti^*: \DA_{ct}^{\et}(\cdot,\Q) \rightarrow D^b_c(\cdot,\Q)
	\end{equation*}
	can be canonically extended over diagrams of $k$-varieties. Thus we can define a functor
	\begin{equation*}
		\check{C}^{\square}_{Y,\Ucal}: D^b_c(Y,\Q) \rightarrow D^b_c((Y,[-1,0]^{d}),\Q)
	\end{equation*}
	providing a coherent lifting of the $[-1,0]^d$-indexed diagram of functors $D^b_c(Y,\Q) \rightarrow D^b_c(Y,\Q)$
	\begin{equation*}
		\underline{I}(K^{\bullet}) := j_{I,!} j_I^* K^{\bullet}, \qquad e_{I,I'}(A) := \alpha_{I,I'}^{\Ucal}(K^{\bullet}): j_{I,!} j_I^* K^{\bullet} \rightarrow j_{I',!} j_{I'}^* K^{\bullet},
	\end{equation*}
	as well as a functor
	\begin{equation*}
		\sigma_{\leq (-1,\dots,-1)} \check{C}^{\square}_{Y,\Ucal}: D^b_c(Y,\Q) \rightarrow D^b_c((Y,[-1,0]^{d}),\Q)
	\end{equation*}
	providing a coherent lifting of the $[-1,0]^d$-indexed diagram of functors $D^b_c(Y,\Q) \rightarrow D^b_c(Y,\Q)$
	\begin{equation*}
		\underline{I}(K^{\bullet}) :=
		\begin{cases}
			J_{d,!} J_d^* K^{\bullet}, & I = \left\{1,\dots,d\right\} \\
			0, & I \neq \left\{1,\dots,d\right\}.
		\end{cases} 
	\end{equation*}
	As above, there is a canonical natural transformation of functors $D^b_c(Y,\Q) \rightarrow D^b_c((Y,[-1,0]^d),\Q)$
	\begin{equation*}
		\check{C}^{\square}_{Y,\Ucal}(K^{\bullet}) \rightarrow \sigma_{\leq (-1,\dots,-1)} \check{C}^{\square}_{Y,\Ucal}(K^{\bullet}).
	\end{equation*}
	Moreover, there are canonical natural isomorphisms of functors $\DA_{ct}^{\et}(Y,\Q) \rightarrow D^b_c((Y,[-1,0]^d),\Q)$
	\begin{equation*}
		\check{C}^{\square}_{Y,\Ucal} \Bti_Y^*(A) \xrightarrow{\sim} \Bti_{(Y,[-1,0]^d)}^*(\check{C}^{\square}_{Y,\Ucal}(A))
	\end{equation*}
	and
	\begin{equation*}
		\sigma_{\leq (-1,\dots,-1)} \check{C}^{\square}_{Y,\Ucal} \Bti_Y^*(A) \xrightarrow{\sim} \Bti_{(Y,[-1,0]^d)}^*(\sigma_{\leq (-1,\dots,-1)} \check{C}^{\square}_{Y,\Ucal}(A))
	\end{equation*}
	rendering the diagram of functors $\DA_{ct}^{\et}(Y,\Q) \rightarrow D^b_c((Y,[-1,0]^d),\Q)$
	\begin{equation*}
		\begin{tikzcd}
			\check{C}^{\square}_{Y,\Ucal}(\Bti_Y^*(A)) \arrow{r}{\sim} \arrow{d} & \Bti_{(Y,[-1,0]^d)}^* (\check{C}^{\square}_{Y,\Ucal}(A)) \arrow{d} \\
			\sigma_{\leq (-1,\dots,-1)} \check{C}^{\square}_{Y,\Ucal}(\Bti_Y^*(A)) \arrow{r}{\sim} & \Bti_{(Y,[-1,0]^d)}^* (\sigma_{\leq (-1,\dots,-1)} \check{C}^{\square}_{Y,\Ucal}(A))
		\end{tikzcd}
	\end{equation*}
	commutative. We can also define a total complex functor 
	\begin{equation*}
		\Tot^{\square} := \Cof_1 \circ \dots \circ \Cof_d: D^b_c((Y,[-1,0]^d),\Q) \rightarrow D^b_c(Y,\Q).
	\end{equation*} 
	By construction, the composite
	\begin{equation*}
		\check{C}_{Y,\Ucal} := \Tot^{\square} \circ \check{C}^{\square}_{Y,\Ucal}: D^b_c(Y,\Q) \rightarrow D^b_c(Y,\Q)
	\end{equation*}
	recovers the functor \eqref{cech-D^bPcal} under Beilinson's equivalence $D^b(\Perv(Y)) \xrightarrow{\sim} D^b_c(Y,\Q)$. Therefore we obtain a canonical natural isomorphism of functors $\DA_{ct}^{\et}(Y,\Q) \rightarrow D^b_c(Y,\Q)$
	\begin{equation*}
		\check{C}_{Y,\Ucal} \Bti_Y(A) \xrightarrow{\sim} \Bti_Y^* (\check{C}_{Y,\Ucal}(A))
	\end{equation*}
	by taking the composite
	\begin{equation*}
		\begin{tikzcd}
			\check{C}_{Y,\Ucal} \Bti_Y(A) \arrow[equal]{d} && \Bti_Y^* (\check{C}_{Y,\Ucal}(A)). \arrow[equal]{d} \\
			\Tot^{\square} \check{C}^{\square}_{Y,\Ucal} (\Bti_Y^*(A)) \arrow{r}{\sim} & \Tot^{\square} \Bti_{Y,[-1,0]^d}^* (\check{C}^{\square}_{Y,\Ucal}(A)) \arrow[equal]{r} & \Bti_Y^*(\Tot^{\square} \check{C}^{\square}_{Y,\Ucal}(A)) 
		\end{tikzcd}
	\end{equation*}
	By construction, the diagram of functors $\DA_{ct}^{\et}(Y,\Q) \rightarrow D^b_c(Y,\Q)$
	\begin{equation}\label{dia:cech-s_*s^*-Bti}
		\begin{tikzcd}
			\check{C}_{Y,\Ucal} \Bti_Y(A) \arrow{rr}{\sim} \arrow[equal]{d} && \Bti_Y^* (\check{C}_{Y,\Ucal}(A)) \arrow[equal]{d} \\
			s_* s^* \Bti_Y^*(A) \arrow{r}{\theta} & s_* \Bti_S^* (s^* A) & \Bti_Y^*(s_* s^* A) \arrow{l}{\bar{\theta}}
		\end{tikzcd}
	\end{equation}
	commutes.
\end{constr}

We need to compare the total \v{C}ech complex functor on the level of perverse motives with the analogous functor on triangulated motives. This turns out to be quite subtle, since the two functors are constructed in different ways. In order to proceed, it is convenient to introduce yet a further subdiagram of $\Dcal^0(S)$: 

\begin{nota}
	Given a closed immersion $s: S \hookrightarrow Y$ between smooth $k$-varieties, we introduce the full additive subcategory
	\begin{equation*}
		\Dcal^{0,s}(Y) := \left\{B \in \Dcal^0(Y) \; | \; s^{\dagger} B \in \Dcal^0(S) \right\} \subset \Dcal^0(Y).
	\end{equation*}
\end{nota}

We can state the promised result about \v{C}ech complex functors as follows:
\begin{lem}\label{lem-cech-dist}
	Let $s: S \hookrightarrow Y$ be a closed immersion of constant codimension $d$ between smooth $k$-varieties. 
	Suppose that the open complement $U$ of $S$ in $Y$ admits a finite affine open covering $\Ucal$ with exactly $d$ elements. Then there exist a canonical natural isomorphism of functors $\Dcal^{0,s}(Y) \rightarrow \Mcal^0(Y)$
	\begin{equation}\label{cech-Dcal-Mp}
		\pi_Y(\check{C}_{Y,\Ucal}[-d](B)) \xrightarrow{\sim} \check{C}_{Y,\Ucal}[-d] (\pi_Y(B))
	\end{equation}
	making the diagram of functors $\Dcal^{0,s}(Y) \rightarrow \Perv(Y)$
	\begin{equation}\label{dia:cech-lem}
		\begin{tikzcd}
			\Bti_Y^* (\check{C}_{Y,\Ucal}[-d] B) \arrow[equal]{d} & \check{C}_{Y,\Ucal}[-d] \Bti_Y^*(B) \arrow[equal]{r} \arrow{l}{\sim} & \check{C}_{Y,\Ucal}[-d] \iota_Y \pi_Y(B) \isoarrow{d} \\
			\iota_Y \pi_Y (\check{C}_{Y,\Ucal}[-d] B) \arrow{rr}{\sim} && \iota_Y (\check{C}_{Y,\Ucal}[-d] \pi_Y(B))
		\end{tikzcd}
	\end{equation}
	commute. 
\end{lem}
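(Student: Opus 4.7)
The plan is to model the shifted Čech complex inside the additive subcategory $\Dcal^0(Y)$, where $\pi_Y$ applies unambiguously, thereby bypassing the obstruction that the derivator-based construction on $\DA$ and the explicit total-complex construction on $\Mcal^0$ proceed by different means. The key observation is that every intermediate term $J_{r,!} J_r^* B$ already lies in $\Dcal^0(Y)$: indeed, each open immersion $j_I: U_I \hookrightarrow Y$ is affine (since $Y$ is separated and $U_I$ is an intersection of affine opens of $Y$), so both $j_I^*$ and $j_{I,!}$ are $t$-exact for the perverse $t$-structure and preserve $\Dcal^0$. Moreover, the iterated cofiber $\check{C}_{Y,\Ucal}[-d](B)$ itself lies in $\Dcal^0(Y)$: its perverse Betti realization is $s_* s^{\dagger} \Bti_Y^*(B) \in \Perv(Y)$, using the identification $\check{C}_{Y,\Ucal} \simeq s_* s^*$ together with the $\Dcal^{0,s}$-hypothesis on $B$. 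Hence the left-hand side $\pi_Y(\check{C}_{Y,\Ucal}[-d](B))$ is a well-defined object of $\Mcal^0(Y)$.

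To construct the isomorphism, I would assemble the bounded complex
\[
C^{\bullet}(B) := [J_{d,!} J_d^* B \to \dots \to J_{1,!} J_1^* B \to B] \in C^{[-d,0]}(\Dcal^0(Y))
\]
and identify its total object, computed via iterated cofibers in $\DA_{ct}^{\et}(Y,\Q)$ along the cube $[-1,0]^d$, with $\check{C}_{Y,\Ucal}(B)$ from \Cref{constr:cech-DA}. Applying $\pi_Y$ termwise to $C^{\bullet}(B)[-d]$ yields the complex $\check{C}^{\bullet}_{Y,\Ucal}(\pi_Y(B))[-d]$, whose total complex is by definition the right-hand side $\check{C}_{Y,\Ucal}[-d](\pi_Y(B))$; on the other hand, by the exactness of $\pi_Y$ together with its compatibility with the derivator construction on $\Dcal^0(Y)$-valued cubical diagrams, this same total complex also computes $\pi_Y(\check{C}_{Y,\Ucal}[-d](B))$. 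The $t$-exactness and conservativity of $\iota_Y$ then guarantee that the resulting object is concentrated in degree zero in $D^b(\Mcal^0(Y))$, whence the sought natural isomorphism \eqref{cech-Dcal-Mp}.

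The commutativity of \eqref{dia:cech-lem} will follow by a diagram chase: since \eqref{cech-Dcal-Mp} is defined termwise from $C^{\bullet}(B)$, and the auxiliary isomorphisms $\Bti_Y^* = \iota_Y \pi_Y$ on $\Dcal^0(Y)$, the $\Bti^*$-compatibility of $\check{C}^{\square}_{Y,\Ucal}$ from \Cref{constr:cech-DA}, and the $\iota_Y$-compatibility of $\check{C}^{\bullet}_{Y,\Ucal}$ from \Cref{constr:cech-perv} are themselves termwise, both routes around the square arise from the termwise image of the same canonical isomorphism. I expect the hardest step to be the identification of the derivator-based total homotopy cofiber on $\DA_{ct}^{\et}(Y,\Q)$ with the classical total complex of the cubical diagram $C^{\bullet}(B)$ whose vertices all lie in the abelian subcategory $\Dcal^0(Y)$; this should be provable by induction on $d$, using at $d = 1$ the standard localization cofiber triangle $j_! j^* B \to B \to s_* s^* B \to [+1]$ and, for the inductive step, a Fubini-type compatibility between iterated cofibers along distinct coordinates of the cube $[-1,0]^d$.
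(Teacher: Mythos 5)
Your preliminary observations are correct and match what the paper establishes: the intermediate terms $J_{r,!}J_r^*B$ stay in $\Dcal^0(Y)$ since each $j_I$ is an affine open immersion, and the iterated cofiber $\check{C}_{Y,\Ucal}[-d](B)$ lies in $\Dcal^0(Y)$ because $B\in\Dcal^{0,s}(Y)$; the paper makes precisely this computation when checking that $\check{C}_{Y,\Ucal}[-d]\pi_Y(B)$ is concentrated in degree $0$ in $D^b(\Mcal^0(Y))$. However, the central step -- which you flag as hardest but then wave away -- contains a genuine gap. You appeal to ``the exactness of $\pi_Y$ together with its compatibility with the derivator construction on $\Dcal^0(Y)$-valued cubical diagrams,'' but that statement \emph{is} the content of the lemma, not a usable fact. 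First, $\Dcal^0(Y)$ is only an \emph{additive} (not abelian) subcategory of the triangulated category $\DA_{ct}^{\et}(Y,\Q)$, so $\pi_Y$ is merely additive and ``exactness of $\pi_Y$'' has no meaning. Second -- and this is the crux -- $\pi_Y$ is defined on objects and morphisms of $\Dcal^0(Y)$, not on coherent diagrams in the derivator: termwise application of $\pi_Y$ to the complex $C^\bullet(B)$ discards exactly the homotopy-coherence data that makes the iterated cofiber $\check{C}_{Y,\Ucal}(B)$ in $\DA$ well-defined, and there is no machinery on the $\Mcal^0$ side to reconstruct it. The Fubini compatibility you invoke holds \emph{internally} to the stable derivator $\DA$; it builds no bridge between $\DA$ and $D^b(\Mcal^0(Y))$, and even at $d=1$ your scheme never produces a natural \emph{morphism} from $\pi_Y(\mathrm{cofib}(j_!j^*B\to B)[-1])$ to the complex $[\pi_Y(j_!j^*B)\to\pi_Y(B)][-1]$ in $D^b(\Mcal^0(Y))$: you can verify that both have isomorphic images under $\iota_Y$, but that does not exhibit a comparison map.

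The paper's proof constructs the comparison by a device absent from your proposal: the stupid-truncation natural transformation $\check{C}^{\square}_{Y,\Ucal}(B)\to\sigma_{\leq(-1,\dots,-1)}\check{C}^{\square}_{Y,\Ucal}(B)$ of coherent cubical diagrams in $\DA$, which after $\Tot^{\square}[-d]$ yields a genuine morphism $\check{C}_{Y,\Ucal}[-d](B)\to J_{d,!}J_d^*B$ inside $\Dcal^0(Y)$. Applying $\pi_Y$ to this arrow gives a map into the top-degree term of the Čech complex in $C^{[0,d]}(\Mcal^0(Y))$; one then checks that the further composite with the next differential vanishes, so the map factors through the kernel -- which, once the degree-$0$ concentration is known, is precisely $\check{C}_{Y,\Ucal}[-d]\pi_Y(B)$. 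Finally the factored map is shown to be an isomorphism exactly on $\Dcal^{0,s}(Y)$ by identifying its $\iota_Y$-image with the canonical arrow $s_*{}^pH^0(s^\dagger\Bti_Y^*(B))\to s_*s^\dagger\Bti_Y^*(B)$, invertible iff $\Bti_S^*(s^\dagger B)\in\Perv(S)$. Without this edge-map construction there is no natural transformation to exhibit and nothing whose $\iota_Y$-image could be chased around the diagram \eqref{dia:cech-lem}.
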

\begin{proof}
	First of all, let us check that the composite functor
	\begin{equation*}
		\check{C}_{Y,\Ucal}[-d] \circ \pi_Y: \Dcal^{0,s}(Y) \rightarrow \Mcal^0(Y) \rightarrow D^b(\Mcal^0(Y))
	\end{equation*} 
	really takes values in $\Mcal^0(Y)$ - or, equivalently, that its composition with the conservative $t$-exact functor $\iota_Y: D^b(\Mcal^0(Y)) \rightarrow D^b(\Perv(Y))$ takes values in $\Perv(Y)$. To this end, for every object $B \in \Dcal^{0,s}(Y)$, we compute
	\begin{align*}
		\iota_Y (\check{C}_{Y,\Ucal}[-d](\pi_Y(B))) &= \check{C}_{Y,\Ucal}[-d] (\iota_Y \pi_Y(B)) \\
		&= \check{C}_{Y,\Ucal}[-d](\beta_Y(B)) \\ 
		&= s_* s^*[-d] \beta_Y(B) \\
		&= s_* s^*[-d] \Bti_Y^*(B) && \textup{as $B \in \Dcal^0(Y)$}\\
		&= s_* \Bti_S^*(s^*[-d] B) \\
		&= s_* \beta_S(s^*[-d] B)  && \textup{as $B \in \Dcal^{0,s}(Y)$}.
	\end{align*}
	The latter object belongs indeed to $\Perv(Y)$, in view of the definition of $\beta_S$ and by the $t$-exactness of $s_*: D^b_c(S,\Q) \rightarrow D^b_c(Y,\Q)$.
	
	In order to define the sought-after natural isomorphism \eqref{cech-Dcal-Mp}, we start by constructing a natural transformation between exact functors $\Dcal^0(Y) \rightarrow C^{[-d,0]}(\Mcal^0(Y))$
	\begin{equation*}
		\pi_Y(\check{C}_{Y,\Ucal}[-d](B))[d] \rightarrow \check{C}^{\bullet}_{Y,\Ucal}(\pi_Y(B)).
	\end{equation*}
	With the notation of \Cref{constr:cech-perv} and \Cref{constr:cech-DA}, this amounts to giving a natural transformation of functors $\Dcal^0(Y) \rightarrow \Mcal^0(Y)$
	\begin{equation}\label{nat:cech-tr-complex-d}
		\pi_Y(\check{C}_{Y,\Ucal}[-d](B)) \rightarrow J_{d,!} J_d^* \pi_Y(B)
	\end{equation}
	such that the composite
	\begin{equation}\label{nat:cech-tr-composite-d}
		\pi_Y(\check{C}_{Y,\Ucal}[-d](B)) \rightarrow J_{d,!} J_d^* \pi_Y(B) \rightarrow J_{d-1,!} J_{d-1}^* \pi_Y(B) = \bigoplus_{\substack{I \subset \left\{1,\dots,d\right\} \\ \# I = d-1}} j_{I,!} j_I^* \pi_Y(B)
	\end{equation}
	vanishes. We define \eqref{nat:cech-tr-complex-d} by taking the composite
	\begin{equation*}
		\begin{tikzcd}
			\pi_Y(\check{C}_{Y,\Ucal}[-d](B)) \arrow[equal]{d} && J_{d,!} J_d^* \pi_Y(B), \arrow[equal]{d} \\
			\pi_Y(\Tot^{\square}[-d] \check{C}^{\square}_{Y,\Ucal}(B)) \arrow{r} & \pi_Y(\Tot^{\square}[-d] \sigma_{\leq (-1,\dots,-1)} \check{C}^{\square}_{Y,\Ucal}(B)) \arrow[equal]{r} & \pi_Y(J_{d,!} J_d^* B)
		\end{tikzcd}
	\end{equation*}
	where the first horizontal arrow is induced by \eqref{nat:cech-sigmacech}.
	The vanishing of \eqref{nat:cech-tr-composite-d} follows from the existence of a commutative diagram of the form
	\begin{equation*}
		\begin{tikzcd}
			\pi_Y(\check{C}_{Y,\Ucal}[-d] B) \arrow{r} \arrow{dr} & \pi_Y(J_{d,!} J_d^* B) \arrow{r} \arrow[equal]{d} & \bigoplus_{\substack{I \subset \left\{1,\dots,d\right\} \\ \# I = d-1}} \pi_Y(j_{I,!} j_I^* B) \arrow[equal]{d} \\
			& J_{d,!} J_d^* \pi_Y(B) \arrow{r} & \bigoplus_{\substack{I \subset \left\{1,\dots,d\right\} \\ \# I = d-1}} j_{I,!} j_I^* \pi_Y(B),
		\end{tikzcd}
	\end{equation*}
	where the upper horizontal composite vanishes by construction.
	We deduce a natural transformation of functors $\Dcal^0(Y) \rightarrow D^{[0,d]}(\Mcal^0(Y))$
	\begin{equation}\label{cech-D^0Y}
		\begin{tikzcd}
			\pi_Y(\check{C}_{Y,\Ucal}[-d] B) \arrow[equal]{d} & \check{C}_{Y,\Ucal}[-d](\pi_Y(B)). \arrow[equal]{d} \\ 
			\Tot^{\bullet} \pi_Y(\check{C}_{Y,\Ucal}[-d](B)) \arrow{r} & \Tot^{\bullet} \check{C}^{\bullet}_{Y,\Ucal}[-d](\pi_Y(B))
		\end{tikzcd}
	\end{equation} 
	Similarly, we get a natural transformation of functors $\Dcal^0(Y) \rightarrow D^{[-d,0]}(\Perv(Y))$
	\begin{equation}\label{cech-D^0PervY}
		\beta_Y(\check{C}_{Y,\Ucal}[-d] (B)) \rightarrow \check{C}_{Y,\Ucal}[-d] (\beta_Y(B))
	\end{equation}
	making the diagram of functors $\Dcal^0(Y) \rightarrow D^{[0,d]}(\Perv(Y))$
	\begin{equation}\label{dia:cech-iota-pi}
		\begin{tikzcd}[font=\small]
			\beta_Y(\check{C}_{Y,\Ucal}[-d](B)) \arrow[equal]{dd} \arrow{r} & \check{C}_{Y,\Ucal}[-d] (\beta_Y(B)) \arrow[equal]{d} \\
			& \check{C}_{Y,\Ucal}[-d] (\iota_Y \pi_Y(B)) \isoarrow{d} \\
			\iota_Y \pi_Y(\check{C}_{Y,\Ucal}[-d](B)) \arrow{r} & \iota_Y(\check{C}_{Y,\Ucal}[-d](\pi_Y(B))) 
		\end{tikzcd}
	\end{equation}
	commutative. We claim that the value of \eqref{cech-D^0Y} on a given object $B \in \Dcal^0(Y)$ is an isomorphism if and only if $B \in \Dcal^{0,s}(Y)$. Using the commutativity of \eqref{dia:cech-iota-pi} and the conservativity of $\iota_Y: D^{[0,d]}(\Mcal^0(Y)) \rightarrow D^{[0,d]}(\Perv(Y))$, we see that this is equivalent to showing that the value of \eqref{cech-D^0PervY} on a given object $B \in \Dcal^0(Y)$ is an isomorphism if and only if $B \in \Dcal^{0,s}(Y)$. To this end, taking the image under ${^p H^0}$ of the commutative diagram \eqref{dia:cech-s_*s^*-Bti}, the arrow \eqref{cech-D^0PervY} can be identified with the canonical arrow in $\Perv(Y)$
	\begin{equation}\label{arrow-cech-s_*s^*}
		{^p H^0} \Bti_Y^*(s_* s^{\dagger} B) = {^p H^0} s_* \Bti_S^*(s^{\dagger} B) = s_* {^p H^0} \Bti_S^* (s^{\dagger} B) \rightarrow s_* \Bti_S^* (s^{\dagger} B).
	\end{equation}
	Here, the latter arrow in the composite can be also written as 
	\begin{equation*}
		s_* {^p H^0} s^{\dagger} \Bti_Y^*(B) \rightarrow s_* s^{\dagger} \Bti_Y^*(B)
	\end{equation*}
	and is induced by the left-$t$-exactness of the functor $s^{\dagger} = s^*[-d]: D^b_c(Y,\Q) \rightarrow D^b_c(S,\Q)$ (which follows precisely from the fact that $U$ admits an affine open covering with $d$ elements). Using the conservativity of $s_*: D^b_c(S,\Q) \rightarrow D^b_c(Y,\Q)$, we see that the arrow \eqref{arrow-cech-s_*s^*} is invertible if and only if $ \Bti_S^*(s^{\dagger} B) \in \Perv(S)$. 
	
	We define the natural isomorphism \eqref{cech-Dcal-Mp} by taking the restriction of \eqref{cech-D^0Y} to $\Dcal^{0,s}(Y)$. By construction, the diagram \eqref{dia:cech-lem} coincides with the outer part of the diagram
	\begin{equation*}
		\begin{tikzcd}[font=\small]
			\Bti_Y^* (\check{C}_{Y,\Ucal}[-d] B) \arrow[equal]{dr} \arrow[equal]{dd} &&& \check{C}_{Y,\Ucal}[-d] \Bti_Y^*(B) \arrow[equal]{d} \arrow{lll}{\sim} \\
			& \beta_Y (\check{C}_{Y,\Ucal}[-d] B) \arrow{r}{\sim} \arrow[equal]{dl} & \check{C}_{Y,\Ucal}[-d] \beta_Y(B) \arrow[equal]{ur} \arrow[equal]{r} & \check{C}_{Y,\Ucal}[-d] \iota_Y \pi_Y(B) \isoarrow{d} \\
			\iota_Y \pi_Y (\check{C}_{Y,\Ucal}[-d] B) \arrow{rrr}{\sim} &&& \iota_Y (\check{C}_{Y,\Ucal}[-d] \pi_Y(B)),
		\end{tikzcd}
	\end{equation*}
	where the two triangles are commutative by definition and the bottom piece is already known to be commutative. Hence, in order to conclude the proof, it suffices to show that the top piece is commutative as well. Unwinding the various definitions, the commutativity of the upper piece follows from the commutativity of the outer part of the diagram 
	\begin{widepage}
		\begin{equation*}
			\begin{tikzcd}[font=\tiny]
				\Bti_Y^*(\Tot^{\square}[-d] \check{C}^{\square}_{Y,\Ucal}(B)) \arrow[equal]{r} \arrow{d} & \Tot^{\square}[-d] \Bti_{(Y,[-1,0]^d)}^* (\check{C}^{\square}_{Y,\Ucal}(B)) \arrow{d} & \Tot^{\square}[-d] \check{C}^{\square}_{Y,\Ucal}(\Bti_Y^*(B)) \arrow{l}{\sim} \arrow{d} \\
				\Bti_Y^* (\Tot^{\square}[-d] \sigma_{\leq (-1,\dots,-1)} \check{C}^{\square}_{Y,\Ucal}(B)) \arrow[equal]{d} \arrow[equal]{r} & \Tot^{\square}[-d] \Bti_{(Y,[-1,0]^d)}^* (\sigma_{\leq (-1,\dots,-1)} \check{C}^{\square}_{Y,\Ucal}(B)) & \Tot^{\square}[-d] \sigma_{\leq (-1,\dots,-1)} \check{C}^{\square}_{Y,\Ucal}(\Bti_Y^*(B)) \arrow[equal]{d} \arrow{l}{\sim} \\
				\Bti_Y^* (J_{d,!} J_d^* B) \arrow[equal]{d} && J_{d,!} J_d^* \Bti_Y^*(B) \arrow{ll}{\sim} \arrow[equal]{d} \\
				\beta_Y (J_{d,!} J_d^* B) && J_{d,!} J_d^* \beta_Y(B), \arrow{ll}{\sim}
			\end{tikzcd}
		\end{equation*}	
	\end{widepage}
	which in turn follows from the commutativity of the four inner pieces, which holds by construction. This concludes the proof.
\end{proof}

\begin{cor}\label{cor:s^diamond-pi}
	Let $s: S \hookrightarrow Y$ be a closed immersion of constant codimension $d$ between smooth $k$-varieties. 
	Then there exists a natural isomorphism of functors $\Dcal^{0,s}(Y) \rightarrow \Mcal^0(S)$
	\begin{equation}\label{formula:zeta_S}
		\zeta_s: \pi_S(s^{\dagger} B) \xrightarrow{\sim} s^{\dagger} \pi_Y(B)
	\end{equation} 
	making the diagram of functors $\Dcal^{0,s}(Y) \rightarrow \Perv(S)$
	\begin{equation}\label{dia:zeta_S}
		\begin{tikzcd}
			\Bti_S^* (s^{\dagger} B) \arrow[equal]{dd} & s^{\dagger} \Bti_Y^*(B) \arrow{l}{\theta} \arrow[equal]{d} \\
			& s^{\dagger} \iota_Y \pi_Y(B) \arrow{d}{\tilde{\theta}} \\
			\iota_S \pi_S (s^{\dagger} B) \arrow{r}{\zeta_s} & \iota_S(s^{\dagger} \pi_V(B)) 
		\end{tikzcd}
	\end{equation}
	commute.
\end{cor}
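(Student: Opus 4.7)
The strategy is to extract $\zeta_s$ from Lemma \ref{lem-cech-dist} using the full faithfulness of $s_*$ on perverse motives, and then verify compatibility with the Betti realization by diagram chasing.

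\textbf{Step 1 (Zariski localization).} Since $s: S \hookrightarrow Y$ is a regular closed immersion of codimension $d$ between smooth $k$-varieties, for each point of $S$ we may find a Zariski open neighborhood $V \subset Y$ such that $S \cap V$ is the vanishing locus of a regular sequence $f_1,\dots,f_d \in \Ocal(V)$, in which case the complement of $S \cap V$ in $V$ admits the $d$-element affine open covering $\{D(f_i)\}_{i=1}^d$. Since both the construction of $\pi_S \circ s^{\dagger}$ and the construction of $s^{\dagger} \circ \pi_Y$ are local on $Y$ for the Zariski topology, and the $\Sm\Op(Y)$-fibered category $\Mcal^0$ is a stack (as recalled in the proof of \Cref{thm_Mp0=Mp}), we may assume that $Y$ itself admits such a covering $\Ucal = \{D(f_1),\dots,D(f_d)\}$ of the complement of $S$. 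Independence of the final isomorphism $\zeta_s$ from the choice of $\Ucal$ will follow from the uniqueness forced by the compatibility \eqref{dia:zeta_S} together with the faithfulness of $\iota_S$.

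\textbf{Step 2 (Extraction via $s_*$).} By the construction of motivic inverse images along closed immersions in \cite[\S~4.1]{IM19}, the functor $s^{\dagger}: D^b(\Mcal^0(Y)) \rightarrow D^b(\Mcal^0(S))$ fits into a canonical natural isomorphism $\check{C}_{Y,\Ucal}[-d] \simeq s_* \circ s^{\dagger}$, reflecting the fact that $\check{C}_{Y,\Ucal}[-d]$ is the left adjoint to the inclusion of the subcategory supported on $S$ (see the end of \Cref{constr:cech-perv}), and that $s_*$ is fully faithful with this essential image. Combining \Cref{lem-cech-dist} with this identification and with the $s_*$-transition isomorphism $\bar{\tilde{\theta}}$, we obtain for every $B \in \Dcal^{0,s}(Y)$ a chain of natural isomorphisms in $\Mcal^0(Y)$
\begin{equation*}
s_*\pi_S(s^{\dagger} B) \xleftarrow{\bar{\tilde{\theta}}} \pi_Y(s_* s^{\dagger} B) \xleftarrow{\sim} \pi_Y(\check{C}_{Y,\Ucal}[-d] B) \xrightarrow{\sim} \check{C}_{Y,\Ucal}[-d](\pi_Y B) \xrightarrow{\sim} s_* s^{\dagger} \pi_Y(B).
\end{equation*}
By the full faithfulness of $s_*: \Mcal^0(S) \rightarrow \Mcal^0(Y)$, this composite descends to a unique natural isomorphism $\zeta_s: \pi_S(s^{\dagger} B) \xrightarrow{\sim} s^{\dagger} \pi_Y(B)$ of functors $\Dcal^{0,s}(Y) \rightarrow \Mcal^0(S)$.

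\textbf{Step 3 (Compatibility with Betti).} To verify that the diagram \eqref{dia:zeta_S} commutes, we apply the faithful exact functor $s_*: \Perv(S) \rightarrow \Perv(Y)$ to both paths, yielding (after insertion of the natural isomorphisms $\bar{\tilde{\theta}}$ on the motivic side and $\bar{\theta}$ on the perverse side) a larger diagram in $\Perv(Y)$. This enlarged diagram decomposes into: (i) the commutative square of \Cref{lem-cech-dist}; (ii) the commutative square \eqref{dia:cech-s_*s^*-Bti} comparing $\check{C}_{Y,\Ucal}$ with $s_* s^*$ after Betti realization; (iii) standard naturality squares for $\bar{\theta}$, $\bar{\tilde{\theta}}$, and $\theta$; and (iv) the commutative square expressing the compatibility $s_*\tilde{\theta}_s \simeq \bar{\tilde{\theta}}_s$ between the transition isomorphism of $\iota$ along $s$ and along $s_*$. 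Each piece is commutative either by hypothesis already established or by general functoriality, so the outer diagram commutes, which is precisely \eqref{dia:zeta_S} after using injectivity of $s_*$ on morphisms.

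The main obstacle is the bookkeeping in Step 3, where one must coherently track the interplay between the many transition isomorphisms ($\theta$, $\bar{\theta}$, $\tilde{\theta}$, $\bar{\tilde{\theta}}$), the canonical identifications $\check{C}_{Y,\Ucal}[-d] \simeq s_* s^{\dagger}$ on both the motivic and perverse sides, and the \v{C}ech comparison from \Cref{lem-cech-dist}; however, no substantive new input is needed beyond these building blocks, and independence from the auxiliary covering $\Ucal$ is automatic by the uniqueness inherent in \eqref{dia:zeta_S}.
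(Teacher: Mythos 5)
Your proposal is correct and follows essentially the same route as the paper: reduce by Zariski descent (using the stack property and the uniqueness of $\zeta_s$ forced by \eqref{dia:zeta_S} and the faithfulness of $\iota_S$) to the case where the open complement of $S$ admits a $d$-element affine covering $\Ucal$, then define $\zeta_s$ through the fully faithful functor $s_*$ by combining \Cref{lem-cech-dist} with the identification $s_* s^{\dagger} \simeq \check{C}_{Y,\Ucal}[-d]$, and finally verify \eqref{dia:zeta_S} by a diagram chase. The paper spells out two things you compress: it gives the explicit compatibility of $\zeta_s$ with $\zeta_{s_U}$ under restriction along opens $V \hookrightarrow Y$ (which is what makes the descent airtight, though as you note it is forced by uniqueness), and it carries out the final diagram chase in Step 3 in full rather than just listing the constituent commutative squares.
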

\begin{proof}
	Note that, for every closed immersion $s: S \hookrightarrow Y$ as in the statement, the faithfulness and conservativity of $\iota_S: \Mcal^0(S) \hookrightarrow \Perv(S)$ implies that a natural transformation as in \eqref{formula:zeta_S} rendering the diagram \eqref{dia:zeta_S} commutative is uniquely determined and necessarily invertible, provided it exists. 
	
	To show the existence, we start with a preliminary observation. Let $s: S \hookrightarrow Y$ be a closed immersion as in the statement; let $v: V \hookrightarrow Y$ be an open immersion, and form the Cartesian square
	\begin{equation*}
		\begin{tikzcd}
			U \arrow{r}{s_U} \arrow{d}{u} & V \arrow{d}{v} \\
			S \arrow{r}{s} & Y.
		\end{tikzcd}
	\end{equation*}
	Note that the $k$-variety $U$ is smooth by construction.
	Suppose for a moment that the thesis is known to hold both for $s$ and for $s_U$. We claim that, in this situation, the diagram of functors $\Dcal^{0,s}(Y) \rightarrow \Mcal^0(U)$
	\begin{equation*}
		\begin{tikzcd}[font=\small]
			u^* \pi_S(s^{\dagger} B) \arrow{r}{\zeta_s} & u^* s^{\dagger} \pi_Y(B) \\
			\pi_U(u^* s^{\dagger} B) \arrow[equal]{u} \arrow[equal]{d} & s^{\dagger} v^* \pi_Y(B) \arrow[equal]{u} \arrow[equal]{d} \\
			\pi_U(s_U^{\dagger} v^* B) \arrow{r}{\zeta_{s_U}} & s_U^{\dagger} \pi_V(v^* B)
		\end{tikzcd}
	\end{equation*}
	commutes. To check this, it suffices to consider its image under the faithful functor $\iota_U: \Mcal^0(U) \hookrightarrow \Perv(U)$
	\begin{equation*}
		\begin{tikzcd}[font=\small]
			\iota_U (u^* \pi_S(s^{\dagger} B)) \arrow{r}{\zeta_s} & \iota_U (u^* s^{\dagger} \pi_Y(B)) \\
			\iota_U \pi_U(u^* s^{\dagger} B) \arrow[equal]{u} \arrow[equal]{d} & \iota_U (s_U^{\dagger} v^* \pi_Y(B)) \arrow[equal]{u} \arrow[equal]{d} \\
			\iota_U \pi_U(s_U^{\dagger} v^* B) \arrow{r}{\zeta_{s_U}} & \iota_U (s_U^{\dagger} \pi_V(v^* B)),
		\end{tikzcd}
	\end{equation*}
    which we can decompose as
	\begin{equation*}
		\begin{tikzcd}[font=\small]
			\iota_U (u^* \pi_S(s^{\dagger} B)) \arrow{rrrr}{\zeta_s} &&&& \iota_U (u^* s^{\dagger} \pi_Y(B)) \\
			& u^* \iota_S \pi_S (s^{\dagger} B) \arrow{ul}{\tilde{\theta}} \arrow{rr}{\zeta_s} && u^* \iota_S(s^{\dagger} \pi_Y(B)) \arrow{ur}{\tilde{\theta}} \\
			& u^* \Bti_S^* (s^{\dagger} B) \arrow[equal]{u} \arrow{d}{\theta} & u^* s^{\dagger} \Bti_Y^*(B) \arrow[equal]{d} \arrow{l}{\theta} \arrow[equal]{r} & u^* s^{\dagger} \iota_Y \pi_Y(B) \arrow{u}{\tilde{\theta}} \\
			\iota_U \pi_U (u^* s^{\dagger} B) \arrow[equal]{uuu} \arrow[equal]{ddd} \arrow[equal]{r} & \Bti_U^* (u^* s^{\dagger} B) \arrow[equal]{d} & s_U^{\dagger} v^* \Bti_Y^*(B) \arrow{d}{\theta} \arrow[equal]{r} & s_U^{\dagger} v^* \iota_V \pi_V(B) \arrow[equal]{u} \arrow{d}{\tilde{\theta}} & \iota_U(s_U^{\dagger} v^* \pi_Y(B)) \arrow[equal]{uuu} \arrow[equal]{ddd} \\
			& \Bti_U^* (s_U^{\dagger} v^* B) & s_U^{\dagger} \Bti_V^*(v^* B) \arrow{l}{\theta} \arrow[equal]{dr} & s_U^{\dagger} \iota_V(v^* \pi_Y(B)) \arrow[equal]{d} \arrow{ur}{\tilde{\theta}} \\
			&&& s_U^{\dagger} \iota_V \pi_V(v^* B) \arrow{dr}{\tilde{\theta}} \\
			\iota_U \pi_U(s_U^{\dagger} v^* B) \arrow{rrrr}{\zeta_{s_U}} \arrow[equal]{uur} &&&& \iota_U (s_U^{\dagger} \pi_V(v^* B)).
		\end{tikzcd}
	\end{equation*}
    Here, the two five-term pieces involving $\zeta_s$ and $\zeta_{s_U}$ are commutative by assumption, the six-term piece in the upper-right part as well as the two five-term pieces in the upper-left part and in the central part are commutative by \cite[Prop.~2.5(2)]{Ter23UAF}, the central six-term piece is commutative by construction, and the remaining three pieces are commutative by naturality.
	
	As a consequence of this observation we deduce that, in order to show the existence of the natural isomorphism $\zeta_s$ along a given closed immersion $s: S \hookrightarrow Y$, it suffices to show that, for some finite open covering $\left\{v_i: V_i \hookrightarrow Y\right\}_{i \in I}$, the isomorphisms $\zeta_{s_i}$ along all restrictions $s_i: s^{-1}(V_i) \hookrightarrow V_i$ exist: this follows from the fact that both $\Mcal^0(\cdot)$ and $\Perv(\cdot)$ are stacks for the Zariski topology. Therefore, since both $Y$ and $S$ are smooth by assumption (hence, in particular, the closed immersion $s$ is regular of codimension $d$), we may reduce to the case where $Y$ is affine and connected and the open complement $U$ of $S$ in $Y$ admits a finite affine open covering $\Ucal$ with exactly $d$ elements: for example, for each closed point of $S$ one can take the distinguished open subsets associated to the elements of some regular sequence of length $d$ defining $S$ inside $Y$ locally around the chosen point. 
	
	In this situation, fix one such open covering $\Ucal$; we define the sought-after natural isomorphism $\zeta_s$ by declaring that its image under the fully faithful functor $s_*: \Mcal^0(S) \hookrightarrow \Mcal^0(Y)$ be the composite
	\begin{equation*}
		\begin{tikzcd}
			s_* \pi_S(s^{\dagger} B) \arrow[equal]{d} && s_* s^{\dagger} \pi_Y(B), \arrow[equal]{d} \\
			\pi_Y(s_* s^{\dagger} B) \arrow[equal]{r} & \pi_Y(\check{C}_{Y,\Ucal}[-d](B)) \arrow{r}{\sim} & \check{C}_{Y,\Ucal}[-d](\pi_Y(B)) 
		\end{tikzcd}
	\end{equation*}
	where the bottom-left horizontal arrow is given by \Cref{lem-cech-dist}. The commutativity of \eqref{dia:zeta_S}is equivalent to the commutativity of its image under the fully faithful functor $s_*: \Mcal^0(S) \hookrightarrow \Mcal^0(Y)$
	\begin{equation*}
		\begin{tikzcd}
			s_* \Bti_S^* (s^{\dagger} B) \arrow[equal]{dd} & s_* s^{\dagger} \Bti_Y^*(B) \arrow{l}{\theta} \arrow[equal]{d} \\
			& s_* s^{\dagger} \iota_Y \pi_Y(B) \arrow{d}{\tilde{\theta}} \\
			s_* \iota_S \pi_S (s^{\dagger} B) \arrow{r}{\zeta_s} & s_* \iota_S (s^{\dagger} \pi_Y(B)),
		\end{tikzcd}
	\end{equation*} 
    which in turn is equivalent to the commutativity of the outer rectangle of the diagram
	\begin{equation*}
		\begin{tikzcd}[font=\small]
			s_* \Bti_S^* (s^{\dagger} B) \arrow[equal]{dd} & s_* s^{\dagger} \Bti_Y^*(B) \arrow{l}{\theta} \arrow[equal]{d} \\
			& s_* s^{\dagger} \iota_Y \pi_Y(B) \arrow{d}{\tilde{\theta}} \\
			s_* \iota_S \pi_S (s^{\dagger} B) \arrow{r}{\zeta_s} & s_* \iota_S (s^{\dagger} \pi_Y(B)) \\
			\iota_Y(s_* \pi_S (s^{\dagger} B)) \arrow{r}{\zeta_s} \arrow{u}{\bar{\tilde{\theta}}} & \iota_Y (s_* s^{\dagger} \pi_Y(B)) \arrow{u}{\bar{\tilde{\theta}}}.
		\end{tikzcd}
	\end{equation*}
	Expanding the definition of the bottom horizontal arrow in the latter diagram, we obtain the outer part of the diagram
	\begin{equation*}
		\begin{tikzcd}[font=\small]
			& s_* \Bti_S^* (s^{\dagger} B) \arrow[equal]{dl} & s_* s^{\dagger} \Bti_Y^*(B) \arrow{l}{\theta} \arrow[equal]{dr} \arrow[equal]{d} \\
			s_* \iota_S \pi_S (s^{\dagger} B) && \check{C}_{Y,\Ucal}[-d] \Bti_Y^*(B) \arrow[equal]{d} \arrow{ddl}{\sim} & s_* s^{\dagger} \iota_Y (\pi_Y(B)) \arrow{d}{\tilde{\theta}} \\
			\iota_Y(s_* \pi_S(s^{\dagger} B)) \arrow{u}{\bar{\tilde{\theta}}} \arrow[equal]{d} & \Bti_Y^* (s_* s^{\dagger} B) \arrow[equal]{dl} \arrow[equal]{d} \arrow{uu}{\bar{\theta}} & \check{C}_{Y,\Ucal}[-d] \iota_Y \pi_Y(B) \isoarrow{dd} \arrow[equal]{ur} & s_* \iota_S(s^{\dagger} \pi_Y(B)) \\
			\iota_Y \pi_Y (s_* s^{\dagger} B) \arrow[equal]{dr} & \Bti_Y^* (\check{C}_{Y,\Ucal}[-d] B) \arrow[equal]{d} && \iota_Y(s_* s^{\dagger} \pi_Y(B)) \arrow{u}{\bar{\tilde{\theta}}} \\
			& \iota_Y \pi_Y (\check{C}_{Y,\Ucal}[-d] B) \arrow{r}{\sim} & \iota_Y(\check{C}_{Y,\Ucal}[-d] \pi_Y(B)). \arrow[equal]{ur}
		\end{tikzcd}
	\end{equation*}
    Here, the top-left piece is commutative by \cite[Prop.~2.5(2)]{Ter23UAF}, the top-central piece and the bottom-right piece are commutative by construction, the bottom-central piece is commutative by \Cref{lem-cech-dist}, while the other two pieces are commutative by naturality.
    This concludes the proof.
\end{proof}

\subsection{Proof of the main result and applications}

We can finally compare the shifted tensor product of \Cref{constr:otimes-diamond} with the one obtained from the external tensor product, as announced at the beginning of this section:

\begin{proof}[Proof of \Cref{prop-distinguished-otimes}]
	We want to apply the abstract criterion of \cite[Cor.~4.5]{Ter23UAF}. To this end, we have to exhibit the existence of a natural isomorphism of functors $\Dcal^0(S) \times \Dcal^{0,loc}(S) \rightarrow \Mcal^0(S)$
	\begin{equation}\label{iso:prop-dist-otimes}
		\chi_S: (\pi_S(A) \otimes \pi_S(L))[-d] \xrightarrow{\sim} \pi_S((A \otimes L)[-d])
	\end{equation} 
	rendering the diagram of functors $\Dcal^0(S) \times \Dcal^{0,loc}(S) \rightarrow \Perv(S)$
	\begin{equation}\label{dia:prop-dist-otimes}
		\begin{tikzcd}[font=\small]
			(\iota_S \pi_S(A) \otimes \iota_S \pi_S(L))[-d] \arrow{dd}{\tilde{\rho}} \arrow[equal]{r} & (\Bti_S^*(A) \otimes \Bti_S^*(L))[-d] \arrow{r}{\rho} & \Bti_S^*(A \otimes L)[-d] \\
			&& \Bti_S^*((A \otimes L)[-d]) \arrow[equal]{u} \arrow[equal]{d} \\
			\iota_S(\pi_S(A) \otimes \pi_S(L))[-d] \arrow{rr}{\chi_S} && \iota_S \pi_S((A \otimes L)[-d])
		\end{tikzcd}
	\end{equation}
	commutative. We define the natural isomorphism \eqref{iso:prop-dist-otimes} as the composite
	\begin{equation*}
		\begin{tikzcd}
			(\pi_S(A) \otimes \pi_S(L))[-d] \arrow[equal]{d} && \pi((A \otimes L)[-d]), \arrow[equal]{d} \\
			\Delta_S^{\dagger}(\pi_S(A) \boxtimes \pi_S(L)) \arrow[equal]{r} & \Delta_S^{\dagger} \pi_{S \times S}(A \boxtimes L) & \pi_S(\Delta_S^{\dagger}(A \boxtimes L)) \arrow{l}{\zeta_{\Delta_S}}
		\end{tikzcd}
	\end{equation*}
	where the isomorphism
	\begin{equation}\label{zeta:AL}
		\zeta_{\Delta_S}: \pi_S(\Delta_S^{\dagger}(A \boxtimes L)) \rightarrow \Delta_S^{\dagger} \pi_{S \times S}(A \boxtimes L)
	\end{equation}  
	is provided by specializing the natural transformation \eqref{formula:zeta_S} to the object $A \boxtimes L$. The fact that \eqref{zeta:AL} is actually invertible follows from \Cref{cor:s^diamond-pi}, since the object $A \boxtimes L$ belongs to $\Dcal^{0,\Delta_S}(S \times S)$ by construction: this is just a rephrasing of the discussion in \Cref{constr:otimes-diamond}. 
	In order to check the commutativity of \eqref{dia:prop-dist-otimes}, it suffices to express all the arrows in terms of the external tensor product: in this way, we obtain the outer part of the diagram
	\begin{equation*}
		\begin{tikzcd}[font=\small]
			\Delta_S^*[-d] (\iota_S \pi_S(A) \boxtimes \iota_S \pi_S(L)) \arrow[equal]{r} \arrow{d}{\tilde{\rho}} & \Delta_S^*[-d] (\Bti_S^*(A) \boxtimes \Bti_S^*(L)) \arrow{r}{\rho} & \Delta_S^*[-d] \Bti_{S \times S}^* (A \boxtimes L) \arrow{d}{\tilde{\theta}} \arrow[equal]{dl} \\
			\Delta_S^*[-d] \iota_{S \times S}(\pi_S(A) \boxtimes \pi_S(L)) \arrow{d}{\tilde{\theta}} \arrow[equal]{r} & \Delta_S^*[-d] \iota_{S \times S} \pi_{S \times S}(A \boxtimes L) \arrow{d}{\tilde{\theta}} & \Bti_S^*(\Delta_S^*(A \boxtimes L)[-d]) \arrow[equal]{d} \\
			\iota_S(\Delta_S^*(\pi_S(A) \boxtimes \pi_S(L)))[-d] \arrow[equal]{d} \arrow[equal]{r} & \iota_S(\Delta_S^* \pi_{S \times S}(A \boxtimes L))[-d] \arrow[equal]{d} & \Bti_S^* (\Delta_S^*[-d](A \boxtimes L)) \arrow[equal]{d} \\
			\iota_S(\Delta_S^*[-d] (\pi_S(A) \boxtimes \pi_S(L))) \arrow[equal]{r} & \iota_S(\Delta_S^*[-d] \pi_{S \times S}(A \boxtimes L)) & \iota_S \pi_S (\Delta_S^*[-d](A \boxtimes L)), \arrow{l}{\zeta_{\Delta_S}}
		\end{tikzcd}
	\end{equation*}
	where the right-most piece is commutative by \Cref{cor:s^diamond-pi}, the top piece is commutative by \cite[Prop.~4.3(2)]{Ter23UAF}, while the other pieces are commutative by naturality. This concludes the proof.
\end{proof}

\begin{rem}\label{rem:sym-prop-dist-otimes}
	Of course, the result of \Cref{prop-distinguished-otimes} admits a symmetric version where the roles of the two variables are exchanged: there exists a canonical isomorphism of functors $\Mcal^{0,loc}(S) \times \Mcal^0(S) \rightarrow \Mcal^0(S)$
	\begin{equation*}
		L \otimes^{\dagger} M \xrightarrow{\sim} (L \otimes M)[-d]
	\end{equation*}
	such that the diagram of functors $\Mcal^{0,loc}(S) \times \Mcal^0(S) \rightarrow \Perv(S)$
	\begin{equation*}
		\begin{tikzcd}
			\iota_S(L \otimes^{\dagger} M) \arrow{r}{\sim} & \iota_S((L \otimes M)[-d]) \arrow[equal]{r} & \iota_S(L \otimes M)[-d] \\
			\iota_S(L) \otimes^{\dagger} \iota_S(M) \arrow{u}{\tilde{\rho}^{\dagger}} \arrow[equal]{rr} && (\iota_S(L) \otimes \iota_S(M))[-d] \arrow{u}{\rho}
		\end{tikzcd}
	\end{equation*}
	is commutative.
\end{rem}

Recall that both stable homotopy $2$-functors $\DA_{ct}^{\et}(\cdot,\Q)$ and $D^b_c(\cdot,\Q)$ are endowed with Verdier duality functors defined, over a given $k$-variety $S$, by the formula 
\begin{equation*}
	\Dbb_S(A) := \Homcal_S(A, a_S^! \unit_{\Spec(k)}),
\end{equation*} 
where $a_S: S \rightarrow \Spec(k)$ denotes the structural projection. Since the Betti realization functors $\Bti_S^*: \DA_{ct}^{\et}(S,\Q) \rightarrow D^b(S,\Q)$ commute with the six operations on the two sides, they also commute with Verdier duality in a canonical way. The fact that $\Dbb_S$ defines a duality means that, in both cases, the canonical arrow
\begin{equation*}
	A \rightarrow \Dbb_S(\Dbb_S(A))
\end{equation*}
is invertible. Using this, one can deduce a canonical identification
\begin{equation}\label{formula:homcal-Verdier}
	\Homcal_S(A,B) \xrightarrow{\sim} \Dbb_S(A \otimes \Dbb_S(B))
\end{equation}
in both cases; again, these are compatible with one another under the Betti realization.
Since the equivalence 
\begin{equation*}
	\Dbb_S: D^b_c(S,\Q) \xrightarrow{\sim} D^b_c(S,\Q)^{op}
\end{equation*}
is $t$-exact for the perverse $t$-structures, it restricts to an equivalence
\begin{equation*}
	\Dbb_S: \Perv(S) \xrightarrow{\sim} \Perv(S)^{op}
\end{equation*}
and so, by \cite[\S~2.8]{IM19}, it canonically lifts to an equivalence
\begin{equation*}
	\Dbb_S: \Mcal(S) \xrightarrow{\sim} \Mcal(S)^{op},
\end{equation*}
which one might still call Verdier duality. Note that these functors are obtained for free without even the need to know that internal homomorphisms of perverse motives actually exist! In fact, in the setting of perverse motives one might try to use the formula \eqref{formula:homcal-Verdier} as a way to define internal homomorphisms. In this way one would obtain, over every $k$-variety $S$, a bi-triangulated functor
\begin{equation}\label{formula:homcal-VerdierM}
	\Homcal_S(-,-): D^b(\Mcal(S))^{op} \times D^b(\Mcal(S)) \rightarrow D^b(\Mcal(S))
\end{equation} 
endowed with a canonical natural isomorphism of functors $D^b(\Mcal(S))^{op} \times D^b(\Mcal(S)) \rightarrow D^b(\Perv(S))$ 
\begin{equation*}
	\iota_S \Homcal_S(M_1^{\bullet},M_2^{\bullet}) \xrightarrow{\sim} \Homcal_S(\iota_S(M_1^{\bullet}),\iota_S(M_2^{\bullet})).
\end{equation*}
However, with this approach it would remain unclear whether the functor \eqref{formula:homcal-VerdierM} defines a right adjoint to the internal tensor product: the issue is that, for a general object $M^{\bullet} \in D^b(\Mcal(S))$, none of the two functors
\begin{equation*}
	- \otimes M^{\bullet}, \; \Homcal_S(M^{\bullet},-): D^b(\Mcal(S)) \rightarrow D^b(\Mcal(S))
\end{equation*}
is obtained directly from the lifting principles of universal abelian factorizations. However, the result of \Cref{prop-distinguished-otimes} puts us in the desired situation at least for some choices of $M^{\bullet}$. This is what we want to prove now. 

Recall that, for any additive category $\Dcal$, we let $\Rbf(\Dcal)$ denote the category of finitely presented abelian presheaves on $\Dcal$ (i.e. cokernels of representable presheaves), and we let $\A(\Dcal)$ denote Freyd's abelian hull of $\Dcal$. 
By construction, every object of $\A(\Dcal)$ can be written as the kernel of an arrow in $\Rdf(\Dcal)$. 
Moreover, for every additive functor $\beta: \Dcal \rightarrow \Acal$ into an abelian category $\Acal$, the corresponding universal abelian factorization $\A(\beta)$ is defined as a Serre quotient of $\A(\Dcal)$, and so the image of $\Rbf(\Dcal)$ in $\A(\beta)$ generates the whole of $\A(\beta)$ under kernels. 

\begin{cor}\label{cor:adj-Mloc}
	Let $S$ be a smooth, connected $k$-variety of dimension $d$. Let $L \in \Mcal^{0,loc}(S)$, and suppose that $L$ lies in the image of $\mathbf{R}(\Dcal^{0,loc}(S))$ under the quotient functor $\pi_S: \A(\Dcal^{0,loc}(S)) \rightarrow \Mcal^{0,loc}(S)$. Then there is a canonical adjunction
	\begin{equation*}
		- \otimes L[-d] : \Mcal^0(S) \rightleftarrows \Mcal^0(S): \Dbb_S(L[-d] \otimes \Dbb_S(-))
	\end{equation*}
	compatible with the corresponding adjunction on the underlying categories of perverse sheaves (in the sense of \cite[Defn.~3.6]{Ter23UAF}).
\end{cor}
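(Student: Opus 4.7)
The plan is to assemble the two endofunctors of $\Mcal^0(S)$ appearing in the statement out of results already established, and to transfer the classical tensor--hom adjunction on perverse sheaves to $\Mcal^0(S)$ via the universal property of perverse Nori motives.

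First, by \Cref{prop-distinguished-otimes} and its symmetric version in \Cref{rem:sym-prop-dist-otimes}, the functors $-\otimes L[-d]$ and $L[-d]\otimes -$ restricted to $\Mcal^0(S)$ identify canonically with the shifted tensor products $-\otimes^{\dagger} L$ and $L\otimes^{\dagger} -$ of \Cref{constr:otimes-diamond}, so both are exact endofunctors of $\Mcal^0(S)$ that become the usual tensor product with $\iota_S(L)[-d]$ on $\Perv(S)$ after composition with $\iota_S$. Combining this with the motivic Verdier duality $\Dbb_S:\Mcal(S)\xrightarrow{\sim}\Mcal(S)^{op}$ of \cite[\S~2.8]{IM19}, the functor $G:=\Dbb_S(L[-d]\otimes\Dbb_S(-))$ is then an exact endofunctor of $\Mcal^0(S)$ compatible, through $\iota_S$, with its counterpart $\Dbb_S(\iota_S(L)[-d]\otimes\Dbb_S(-))$ on $\Perv(S)$.

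On the side of perverse sheaves the desired adjunction is classical: since $\iota_S(L)[-d]$ is an ordinary $\Q$-local system on the smooth connected variety $S$, it is dualisable with dual canonically computed by $\Dbb_S(\iota_S(L)[-d]\otimes\Dbb_S(-))\simeq\Homcal_S(\iota_S(L)[-d],-)$, and the corresponding unit $\eta^{\Perv}$ and counit $\epsilon^{\Perv}$ are the usual coevaluation and evaluation maps attached to this dualisable object. The main step is then to lift $\eta^{\Perv}$ and $\epsilon^{\Perv}$ to natural transformations $\eta:\id\to GF$ and $\epsilon:FG\to\id$ in $\Mcal^0(S)$, where $F:=-\otimes L[-d]$. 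Here the hypothesis that $L$ comes from $\Rbf(\Dcal^{0,loc}(S))$ enters decisively: it allows us to write $L$ as the cokernel of some $\pi_S(\phi):\pi_S(A_1)\to\pi_S(A_2)$ with $\phi$ a morphism in $\Dcal^{0,loc}(S)$, which in turn presents both $F$ and $G$ as functors obtained from explicit additive data $\Dcal^0(S)\to\Mcal^0(S)$ via the lifting principles of \cite{Ter23UAF}. One then produces natural transformations at the $\Dcal^0(S)$-level mirroring $\eta^{\Perv}$ and $\epsilon^{\Perv}$, and invokes a criterion such as \cite[Prop.~3.4]{Ter23UAF} to obtain $\eta$ and $\epsilon$ on $\Mcal^0(S)$. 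The triangle identities then transfer from $\Perv(S)$ to $\Mcal^0(S)$ because $\iota_S$ is faithful and exact, and compatibility in the sense of \cite[Defn.~3.6]{Ter23UAF} holds by construction.

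The main obstacle I expect will lie precisely in this last step: the functors $F$, $G$, the Verdier duality $\Dbb_S$ and the motivic internal tensor product have each been constructed through different universal-property arguments, and reconciling them at the level of $\Dcal^0(S)$-indexed natural-transformation data---so that the evaluation and coevaluation morphisms for $\iota_S(L)[-d]$ genuinely come from $\Mcal^0(S)$-morphisms between representable objects---is where all the real work will go. The hypothesis that $L$ lies in the image of $\Rbf(\Dcal^{0,loc}(S))$ is precisely the finite-presentation input that makes this reconciliation possible, and it is the reason why the statement has to be restricted to distinguished motivic local systems rather than to arbitrary objects of $\Mcal^{0,loc}(S)$.
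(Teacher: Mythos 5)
Your overall plan — treat the representable case first, then handle $L$ in the image of $\Rbf(\Dcal^{0,loc}(S))$ via a cokernel presentation — matches the architecture of the paper's proof, and your identification of $-\otimes L[-d]$ with $-\otimes^{\dagger}L$ via \Cref{prop-distinguished-otimes} and \Cref{rem:sym-prop-dist-otimes} is correct. But the execution of the second step contains a genuine gap. You propose to produce the unit $\eta:\id\to GF$ and counit $\epsilon:FG\to\id$ as lifted natural transformations via something like \cite[Prop.~3.4]{Ter23UAF}. That lifting principle applies to natural transformations between functors that are themselves obtained by the lifting machinery from data over $\Dcal^0(S)$. When $L$ is a cokernel $\coker(\pi_S(A_1)\to\pi_S(A_2))$, the functor $F=-\otimes^{\dagger}L$ is a cokernel of lifted functors, but $GF$ is $\Dbb_S(L[-d]\otimes^{\dagger}\Dbb_S(-\otimes^{\dagger}L))$ — a composite involving a cokernel twice and $\Dbb_S$ in between — and the transformation $\eta$ is not the cokernel (or kernel) of the individual units $\eta_i:\id\to G_iF_i$ from the representable case, nor is it visibly a transformation of lifted functors. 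Your own last paragraph flags this reconciliation as "where all the real work will go," but the proposal does not actually supply that work, and as stated there is no clear route to it.

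The paper sidesteps this entirely: it never lifts $\eta$ or $\epsilon$. In Step~1 (the representable case $L=\pi_S(A)$) it invokes a dedicated criterion for lifting adjunctions (\cite[Cor.~3.9]{Ter23UAF}) rather than lifting natural transformations individually, and records that the resulting $\Hom$-bijections are functorial in $A$ (\cite[Rem.~3.5(2)]{Ter23UAF}). In Step~2 it then constructs the adjunction isomorphism directly at the level of $\Hom$-sets: right-exactness of $-\otimes^{\dagger}(-)$ gives $M\otimes^{\dagger}L=\coker\{M\otimes^{\dagger}\pi_S(A)\to M\otimes^{\dagger}\pi_S(B)\}$ and hence $\Hom(M\otimes^{\dagger}L,N)$ is a kernel of $\Hom$-groups from the representable case; exactness of $\Dbb_S$ and right-exactness of $\otimes^{\dagger}$ give the matching kernel presentation of $\Hom(M,\Dbb_S(L\otimes^{\dagger}\Dbb_S(N)))$; and the functoriality recorded in Step~1 identifies the two kernels. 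Compatibility with the $\Perv(S)$-side adjunction is then automatic. If you want to salvage your approach, replace the attempt to lift $\eta$ and $\epsilon$ by this $\Hom$-set argument, which is where the finite-presentation hypothesis on $L$ is actually exploited.
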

\begin{proof}
	As a consequence of \Cref{prop-distinguished-otimes} and \Cref{rem:sym-prop-dist-otimes}, it suffices to construct, for every $L$ as in the statement, an adjunction
	\begin{equation}\label{adj:L-otimesdiamond}
		- \otimes^{\dagger} L: \Mcal^0(S) \leftrightarrows \Mcal^0(S): \Dbb_S(L \otimes^{\dagger} \Dbb_S(-))
	\end{equation}
	compatible with the corresponding adjunction on the underlying categories of perverse sheaves. Using the faithfulness of $\iota_S: \Mcal^0(S) \hookrightarrow \Perv(S)$, we see that such an adjunction (or, more precisely, its associated unit and co-unit transformations) is uniquely determined, provided it exists.
 
	To construct the adjoint pair in the statement, we divide the argument into two steps of increasing generality.
	Suppose first that $L = \pi_S(A)$ for some $A \in \Dcal^{0,loc}(S)$. In this case, we obtain the sought-after adjunction by applying \cite[Cor.~3.9]{Ter23UAF} to the diagrams
	\begin{equation*}
		\begin{tikzcd}[font=\small]
			\Dcal^0(S) \arrow{rr}{- \otimes^{\dagger} A} \arrow{d}{\beta^0_S} && \Dcal^0(S) \arrow{d}{\beta^0_S} \\
			\Perv(S) \arrow{rr}{- \otimes^{\dagger} \beta_S^0(A)} && \Perv(S)
		\end{tikzcd}
	\end{equation*}
	and
	\begin{equation*}
		\begin{tikzcd}[font=\small]
			\Dcal^0(S) \arrow{rr}{\Dbb_S} \arrow{d}{\beta^0_S} && \Dcal^0(S) \arrow{rr}{A \otimes^{\dagger} -} \arrow{d}{\beta^0_S} && \Dcal^0(S) \arrow{rr}{\Dbb_S} \arrow{d}{\beta^0_S} && \Dcal^0(S) \arrow{d}{\beta^0_S} \\
			\Perv(S) \arrow{rr}{\Dbb_S} && \Perv(S) \arrow{rr}{\beta_S^0(A) \otimes^{\dagger} -} && \Perv(S) \arrow{rr}{\Dbb_S} && \Perv(S).
		\end{tikzcd}
	\end{equation*}
	Note that, as a consequence of \cite[Rem.~3.5(2)]{Ter23UAF}, the natural bijections
	\begin{equation*}
		\Hom_{\Mcal^0(S)}(M \otimes^{\dagger} \pi_S(A), N) = \Hom_{\Mcal^0(S)}(M, \Dbb_S(\pi_S(A) \otimes^{\dagger} \Dbb_S(N)))
	\end{equation*}
	are functorial with respect to $A \in \Dcal^{0,loc}(S)$.
	
	In general, write $L = \pi_S(C)$ for some $C \in \mathbf{R}(\Dcal^{0,loc}(S))$, and choose a presentation of the form $C = \coker\left\{\pi_S(A) \rightarrow \pi_S(B)\right\}$ for some morphism $A \rightarrow B$ in $\Dcal^{0,loc}(S)$. Then we obtain a natural bijection
	\begin{equation*}
		\Hom_{\Mcal^0(S)}(M \otimes^{\dagger} L, N) = \Hom_{\Mcal^0(S)}(M, \Dbb_S(L \otimes^{\dagger} \Dbb_S(N)))
	\end{equation*}
	by combining the two chains of bijections
	\begin{equation*}
		\begin{aligned}
			&\;\quad \Hom_{\Mcal^0(S)}(M \otimes^{\dagger} \pi_S(C), N) \\
			&= \Hom_{\Mcal^0(S)}(M \otimes^{\dagger} \coker\left\{\pi_S(A) \rightarrow \pi_S(B) \right\},N) \\
			&= \Hom_{\Mcal^0(S)}(\coker\left\{M \otimes^{\dagger} \pi_S(A) \rightarrow M \otimes^{\dagger} \pi_S(B) \right\},N) \\
			&= \ker\left\{\Hom_{\Mcal^0(S)}(M \otimes^{\dagger} \pi_S(B),N) \rightarrow \Hom_{\Mcal^0(S)}(M \otimes^{\dagger} \pi_S(A),N) \right\} 
		\end{aligned}
	\end{equation*}
	and
	\begin{equation*}
		\begin{aligned}
			&\;\quad \Hom_{\Mcal^0(S)}(M, \Dbb_S(\pi_S(C) \otimes^{\dagger} \Dbb_S(N))) \\
			&= \Hom_{\Mcal^0(S)}(M,\Dbb_S(\coker\left\{\pi_S(A) \rightarrow \pi_S(B) \right\} \otimes^{\dagger} \Dbb_S(N))) \\
			&= \Hom_{\Mcal^0(S)}(M,\Dbb_S(\coker\left\{\pi_S(A) \otimes^{\dagger} \Dbb_S(N) \rightarrow \pi_S(B) \otimes^{\dagger} \Dbb_S(N) \right\})) \\
			&= \Hom_{\Mcal^0(S)}(M,\ker\left\{\Dbb_S(\pi_S(B) \otimes^{\dagger} \Dbb_S(N)) \rightarrow \Dbb_S(\pi_S(A) \otimes^{\dagger} \Dbb_S(N)) \right\}) \\
			&= \ker\left\{\Hom_{\Mcal^0(S)}(M,\Dbb_S(\pi_S(B) \otimes^{\dagger}\Dbb_S(N))) \rightarrow \Hom_{\Mcal^0(S)}(M,\Dbb_S(\pi_S(A) \otimes^{\dagger} \Dbb_S(N)))\right\},
		\end{aligned}
	\end{equation*}
	both of which use the adjunctions obtained in the special case above. It is clear from the construction that the adjunction obtained in this way is compatible with the corresponding adjunction on perverse sheaves.
\end{proof}

In the final part of \Cref{sect_Homcal}, we explain how to prove the same result in full generality:
the precise statement is \Cref{prop:MVerdier}, which is based on the construction of internal homomorphisms to come.
In turn, the crucial ingredient in this construction is precisely the partial result of \Cref{cor:adj-Mloc}.

\section{Construction of the unit constraint}\label{sect_unit-Mp}

In \Cref{prop-extcore-Mp}, we have constructed a monoidal structure on perverse motives endowed with compatible associativity and commutativity constraints. 
In order to complete the proof of \Cref{thm-boxtimesM}, we have to construct a unit constraint compatible with the rest of the structure:
this is achieved in the present section, building on the auxiliary results of \Cref{sect_otimes-dist}. 
Using the extension techniques developed in \cite{Ter23Emb}, we can complete our construction by working with smooth varieties only: this allows us to avoid annoying technical issues related to unit objects over singular varieties. 
Over smooth varieties, we can use the shifted inverse image functors and shifted tensor products, defined in \Cref{nota-diamond1} and \Cref{constr:otimes-diamond}, respectively.

As usual, we use the notation "$\Mcal^0$" in place of "$\Mcal$" in those passages that depend closely on the construction of the external tensor product, but not in the statement of the main result:
\begin{prop}\label{thm:unit-constr-Mp}
	The internal tensor structure on $D^b(\Mcal(\cdot))$ constructed in  \Cref{prop-extcore-Mp} carries a canonical internal unit constraint in the sense of \cite[Defn.~5.1]{Ter23Fib}, with respect to which the morphism of $\Var_k$-fibered categories $\iota: D^b(\Mcal(\cdot)) \rightarrow D^b(\Perv(\cdot))$ is canonically unitary in the sense of \cite[Defn.~8.20]{Ter23Fib}.
\end{prop}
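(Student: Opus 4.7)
The strategy is to first construct a coherent motivic unit section over $\Sm_k$, then to extend it to $\Var_k$ by the formalism of \cite{Ter23Emb}, and finally to define the unit constraint by exploiting the direct description of the shifted tensor product by a distinguished motivic local system obtained in \Cref{prop-distinguished-otimes}. For each connected $S \in \Sm_k$ of dimension $d$, let $\unit_S^{\DA}$ denote the tensor unit in $\DA_{ct}^{\et}(S,\Q)$; since $\Bti_S^*(\unit_S^{\DA}[d]) = \Q_S[d]$ lies in $\Loc_p(S)$, the object $\unit_S^{\DA}[d]$ belongs to $\Dcal^{0,loc}(S)$, and we may set
\begin{equation*}
    \unit_S^{\Mcal} := \pi_S(\unit_S^{\DA}[d]) \in \Mcal^{0,loc}(S),
\end{equation*}
extending the definition to disconnected smooth $S$ componentwise. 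By construction $\iota_S(\unit_S^{\Mcal}) = \Q_S[d]$, and the connection isomorphisms associated to smooth morphisms in $\DA_{ct}^{\et}(\cdot,\Q)$ together with the lifting principle \cite[Prop.~2.5]{Ter23UAF} turn $S \mapsto \unit_S^{\Mcal}$ into a cartesian section of the $\Sm_k$-fibered category $\Mcal^{0,loc}(\cdot)$ compatible via $\iota$ with the canonical unit section of $\Loc_p(\cdot)$.

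To extend the unit section over all of $\Var_k$, I would apply the extension results of \cite{Ter23Emb} for unit sections of symmetric monoidal fibered categories: since $D^b(\Perv(\cdot))$ already carries a unit section over $\Var_k$, and since every variety admits locally a closed immersion into a smooth one, the datum of a cartesian unit section on $\Sm_k$ together with compatibility with $\iota$ uniquely extends to a unit section over $\Var_k$; the abstract criterion of \cite{Ter23Emb} reduces the verification of the cocycle conditions to diagrams of exact functors between abelian categories which can be handled by the universal property of perverse Nori motives and the faithfulness of $\iota$.

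For the unit constraint itself, I would proceed as follows. By \Cref{prop-distinguished-otimes} and \Cref{rem:sym-prop-dist-otimes}, over a smooth connected $S$ of dimension $d$ the tensor product $M \otimes \unit_S^{\Mcal}$ is canonically isomorphic (after shifting by $-d$) to $M \otimes^{\dagger} \unit_S^{\Mcal}$, which is itself obtained by lifting the bi-exact functor $\Dcal^0(S) \times \Dcal^{0,loc}(S) \to \Dcal^0(S)$, $(A, A') \mapsto (A \otimes A')[-d]$. Specializing the second variable to $\unit_S^{\DA}[d]$ and applying the natural isomorphism $A \otimes \unit_S^{\DA} \xrightarrow{\sim} A$ in $\DA_{ct}^{\et}(S,\Q)$, we obtain via \cite[Prop.~3.3]{Ter23UAF} a natural isomorphism of exact functors $\Mcal^0(S) \to \Mcal^0(S)$
\begin{equation*}
    u_S : M \otimes \unit_S^{\Mcal} \xrightarrow{\sim} M,
\end{equation*}
which by \Cref{lem:multiex-nat} promotes to a triangulated natural isomorphism on $D^b(\Mcal^0(S))$ that is compatible with $\iota_S$ and with the classical unit constraint on $D^b(\Perv(S))$ by the commutativity of \eqref{rho-otimes-distinguished}. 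The right unit isomorphism is obtained symmetrically, and both assemble into a $\Sm_k$-fibered datum by naturality with respect to inverse image functors, then extend to $\Var_k$ by a second application of \cite{Ter23Emb}.

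The main obstacle is the compatibility of $u$ with the associativity and commutativity constraints of \Cref{lem-asso-Mp} and \Cref{lem-comm-Mp} (the two pentagon/triangle-type axioms of \cite[Defn.~6.3]{Ter23Fib}). Here the external tensor product rather than the internal one is the natural object of study, so I would reformulate the axioms as identities between exact functors $\Mcal^0(S_1) \times \dots \times \Mcal^0(S_n) \to \Mcal^0(S_1 \times \dots \times S_n)$ (possibly involving one factor equal to the external unit $\unit_{\Spec(k)}^{\Mcal}$), then invoke the rigidity property \Cref{prop:VoloDG} together with \cite[Rem.~4.7]{Ter23UAF} to reduce each identity to the analogous identity in $\DA_{ct}^{\et}(\cdot,\Q)$, where it is known. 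The delicate point is that the identities to check involve both the lifting of multi-exact functors and the connection isomorphisms along closed immersions; as in the proof of \Cref{prop-extcore-Mp}, one handles this by checking the diagrams separately on the $\Var_k^{sm}$- and $\Var_k^{cl,op}$-fibered levels via \cite[Lemma~5.5]{Ter23Fact} and using the conservativity and faithfulness of $\iota$ to conclude. The unitarity of $\iota$ in the sense of \cite[Defn.~8.20]{Ter23Fib} is then automatic from the compatibility of $u_S$ with $\iota_S$ built into the construction.
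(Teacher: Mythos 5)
Your high-level strategy—reduce to $\Sm_k$ via \cite{Ter23Emb}, exploit \Cref{prop-distinguished-otimes} to express tensoring with the shifted unit as a lifted exact functor, and transport the unit isomorphism from $\DA_{ct}^{\et}(\cdot,\Q)$—agrees with the paper's. However, there are two concrete gaps, and the final paragraph addresses the wrong axioms.

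First, the unit section. You claim that the connection isomorphisms along smooth morphisms together with the lifting principle turn $S \mapsto \unit_S^{\Mcal}$ into a cartesian section over $\Sm_k$. But morphisms in $\Sm_k$ include closed immersions between smooth varieties, and inverse images of perverse motives along closed immersions are \emph{not} produced by the lifting principle for universal abelian factorizations (the paper stresses this explicitly in the proof of \Cref{prop-extcore-Mp}). The paper's \Cref{prop-unitMcal} addresses this via \Cref{lem:sect-perv}: one reduces to closed immersions of pure codimension one between connected smooth varieties and then constructs $\unit_z^{\Mcal,*}$ by adjunction from the localization triangle in $\DA_{ct}^{\et}(S,\Q)$, after observing that all its terms lie in $\Dcal^0(S)$. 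Your sketch skips this entirely.

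Second, the axioms of \cite[Defn.~5.1]{Ter23Fib}. These are \emph{not} the compatibility with the associativity and commutativity constraints (that is \cite[Defn.~6.7, 6.11]{Ter23Fib}, and it is proved separately in \Cref{lem:au-Mp} and \Cref{lem:cu-Mp}); they consist of ($u$ITS-1), the agreement of the two unit isomorphisms evaluated on the unit object, and ($u$ITS-2), the compatibility of $u_r, u_l$ with the monoidality isomorphisms under inverse image. You do not mention ($u$ITS-1), and ($u$ITS-2) is waved away as "naturality with respect to inverse image functors." That is precisely where the difficulty sits: for a closed immersion $z$ the inverse image $z^*$ is not $t$-exact, so \Cref{prop:VoloDG} cannot be applied directly to the diagram $D_z$. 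The paper's \Cref{lem:red-ITS}(2) rewrites $D_z$ as an equivalent diagram $D'_z$ built from $z_*$ rather than $z^*$, and then the auxiliary \Cref{rem:dist-otimes-gener-closed} shows that $(z_*(-) \otimes z_*(-))[-\dim Z]$ is itself a lifted exact functor; only then does the reduction to $\DA_{ct}^{\et}(\cdot,\Q)$ go through. The $\Var_k^{sm}/\Var_k^{cl,op}$ factorization from \cite[Lemma~5.5]{Ter23Fact} that you invoke was used for associativity and commutativity, not for the unit constraint, which needed a dedicated device.
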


\begin{nota}
	We adopt the following notation:
	\begin{itemize}
		\item We use the symbol $\unit = (\unit_S)_{S \in \Var_k}$ to denote the canonical unit sections of the monoidal $\Var_k$-fibered categories $\DA_{ct}^{\et}(\cdot,\Q)$ and $D^b_c(\cdot,\Q)$.
		\item We let $w: \Bti^*(\unit) \xrightarrow{\sim} \unit$ denote the canonical isomorphism of unit sections defined in \cite{Ayo10}.
		\item We use the symbol $u$ to denote the internal unit constraints of $\DA_{ct}^{\et}(\cdot,\Q)$ and $D^b_c(\cdot,\Q)$.
	\end{itemize}
\end{nota}

\subsection{Construction of the unit section}

In the first place, we need to define the unit section of the $\Var_k$-fibered category $D^b(\Mcal(\cdot))$. 

\begin{prop}\label{prop-unitMcal}
	The $\Var_k$-fibered category $D^b(\Mcal(\cdot))$ possesses a canonical unit section $\unit^{\Mcal} = (\unit^{\Mcal}_S)_{S \in \Var_k}$, and the morphism of $\Var_k$-fibered categories $\iota: D^b(\Mcal(\cdot)) \rightarrow D^b(\Perv(\cdot))$ induces a canonical isomorphism of unit sections
	\begin{equation*}
		\tilde{w}: \iota(\unit^{\Mcal}) \xrightarrow{\sim} \unit.
	\end{equation*}
\end{prop}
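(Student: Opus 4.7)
The plan is to construct the motivic unit section $\unit^{\Mcal}$ first on smooth varieties by direct appeal to the universal property of perverse motives, and then extend to arbitrary $S \in \Var_k$ via the reduction results of \cite{Ter23Emb}. The key observation is that, although the triangulated unit $\unit_S = \Q_S$ is not perverse in general, after a suitable shift it does become a perverse local system, hence lifts canonically to the subcategory $\Mcal^{0,loc}(S)$ introduced in \Cref{nota:Mp0}.

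First, for every connected $S \in \Sm_k$ of dimension $d$, the object $\unit_S[d] \in \DA_{ct}^{\et}(S,\Q)$ satisfies $\Bti_S^*(\unit_S[d]) = \Q_S[d] \in \Loc_p(S)$, so it lies in $\Dcal^{0,loc}(S)$. I will define
\begin{equation*}
\unit_S^{\Mcal} := \pi_S(\unit_S[d])[-d] \in D^b(\Mcal^{0,loc}(S)) \subset D^b(\Mcal(S)),
\end{equation*}
and extend to arbitrary $S \in \Sm_k$ by working on connected components. The canonical isomorphism $\tilde{w}_S : \iota_S(\unit_S^{\Mcal}) \xrightarrow{\sim} \unit_S$ will come from the natural identification $\iota_S \pi_S(\unit_S[d]) = \beta_S^{0,loc}(\unit_S[d]) = \Q_S[d]$ together with the unit isomorphism $w_S$ on $D^b_c(S,\Q)$.

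The second step is to endow this collection with transition isomorphisms along all morphisms between smooth varieties. For a smooth morphism $p: T \to S$ in $\Sm_k$, the shifted inverse image $p^{\dagger}$ of \Cref{nota-diamond1} commutes with $\pi$ by the general lifting principle (\cite[Prop.~2.5]{Ter23UAF}), so the transition isomorphism $p^*(\unit_S^{\Mcal}) \xrightarrow{\sim} \unit_T^{\Mcal}$ lifts directly from the analogous isomorphism $\theta_p$ on triangulated étale motives. For an arbitrary morphism $f: T \to S$ in $\Sm_k$, I will use the graph factorization $f = p \circ s_f$ where $s_f: T \hookrightarrow T \times S$ is the graph immersion and $p: T \times S \to S$ is the projection; the closed immersion part is handled via \Cref{cor:s^diamond-pi}, which provides the required compatibility isomorphism $\zeta_{s_f}$ between $s_f^{\dagger}$ and the quotient functor $\pi$. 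The compatibility with composition, as well as the compatibility with $\tilde{w}$, will follow by checking the analogous relations on $D^b(\Perv(\cdot))$ after applying the conservative faithful functor $\iota$, invoking \Cref{prop:VoloDG}(2) as in \Cref{rem:utility-VoloDG} wherever a multi-triangulated diagram needs to be compared to a diagram of $t$-exact functors on the hearts.

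The third and main step is the extension to arbitrary $S \in \Var_k$. Here I will apply the reduction machinery of \cite{Ter23Emb}: the structure of a unit section on the abelian $\Var_k^{cl,op}$-fibered category $\Mcal^0(\cdot)$, combined with its values and transition isomorphisms over the subcategory $\Sm_k$, determines a unit section on the full $\Var_k$-fibered category $D^b(\Mcal(\cdot))$, provided the relevant compatibility axioms are satisfied on the overlap. Concretely, for each $S \in \Var_k$, a closed embedding $s: S \hookrightarrow Y$ into a smooth $k$-variety $Y$ induces the candidate $s^*(\unit_Y^{\Mcal})$, and independence of the embedding reduces to a diagram chase that can be checked after applying $\iota$, where it holds because $\unit$ is a unit section on $D^b(\Perv(\cdot))$. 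The isomorphism $\tilde{w}$ is transported along and its coherence follows by the same faithfulness argument.

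The main obstacle will be Step 3, and within it the verification that the candidate $\unit_S^{\Mcal}$ for singular $S$ is independent of the chosen smooth embedding and that the resulting transition isomorphisms are compatible with composition: this is precisely the point where a naive definition would fail, since the inverse image functors along closed immersions on perverse motives from \cite[\S~4.1]{IM19} are not constructed via the lifting principles of universal abelian factorizations, forcing us to rely on the compatibility isomorphism $\zeta$ of \Cref{cor:s^diamond-pi} and to decompose arbitrary morphisms carefully. The reduction results of \cite{Ter23Emb} are designed precisely to package these verifications into a small, checkable list of axioms on smooth varieties.
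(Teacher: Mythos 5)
Your proposal is correct and follows the same high-level skeleton as the paper's proof (define $\unit^{\Mcal}_S := \pi_S(\unit_S[\dim S])[-\dim S]$ over $\Sm_k$, glue transition isomorphisms using faithfulness and conservativity of $\iota$, then extend to all of $\Var_k$ via \cite{Ter23Emb}). The genuinely different ingredient is how you handle closed immersions between smooth varieties: you propose to obtain the transition isomorphism directly from $\zeta_s$ of \Cref{cor:s^diamond-pi}, after checking that $\unit_Y[\dim Y]\in\Dcal^{0,s}(Y)$ (which holds because $s^{\dagger}\unit_Y[\dim Y]=\unit_S[\dim S]$ is in $\Dcal^0(S)$ for smooth $S$). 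The paper instead proves an auxiliary reduction lemma (\Cref{lem:sect-perv}(3)) showing that it suffices to produce the transition for closed immersions of codimension $1$, and then constructs that single case by hand from the localization triangle $u_!u^*\unit_S[\dim S]\to\unit_S[\dim S]\to z_*z^*\unit_S[\dim S]\to{}$, lifted to $\Mcal^0(S)$ and rotated. Your route is shorter because it re-uses the \v{C}ech-complex machinery of \Cref{sect_otimes-dist}, whereas the paper's route is more self-contained and avoids invoking \Cref{cor:s^diamond-pi} for this particular point; both are valid since \Cref{cor:s^diamond-pi} is established before \Cref{prop-unitMcal}. Two small remarks: the invocation of \Cref{prop:VoloDG}(2) in your Step 2 is not needed for the coherence of a section (there are no multi-triangulated functors here — the faithfulness of $\iota$ on diagrams in $\Mcal(T)$ already does the job, as in \Cref{lem:sect-perv}(1),(2)); and your concern that Step 3 is "the main obstacle" is misplaced — the extension to singular varieties is fully automated by \cite[Prop.~1.7, Lemma~2.7]{Ter23Emb}, and the real work is in Step 2, which you do address correctly.
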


The basic technical issue is that, over a singular $k$-variety $S$, the unit sheaf $\unit_S \in D^b_c(S,\Q)$ is not a perverse sheaf (up to shiftings); in fact, a priori it is not even obvious that the unit sheaf belongs to the essential image of the triangulated functor $\iota: D^b(\Mcal(S)) \rightarrow D^b(\Perv(S))$. 
To overcome this issue, we use the embedding method of \cite[\S~1]{Ter23Emb} for fibered categories over $\Var_k$, with respect to the full subcategory $\Sm_k$ of smooth varieties. 

The most difficult part in the proof of \Cref{prop-unitMcal} is to construct the isomorphisms witnessing the compatibility of the motivic unit objects with inverse images. Using the factorization method of \cite{Ter23Fact}, we easily reduce to the cases of smooth morphisms and closed immersions, which we treat separately. While the case of smooth morphisms is straightforward, the case of closed immersions is quite tricky.
Using an induction argument, we reduce to the case of closed immersions of codimension $1$, which can be treated directly via localization triangles; it is plausible that the \v{C}ech complex constructions described in \Cref{sect_otimes-dist} could lead to an alternative approach, but we have not tried to follow this direction. 

Our preliminary results can be summarized as follows:
\begin{lem}\label{lem:sect-perv}
	Regard $D^b(\Mcal(\cdot))$ and $D^b(\cdot,\Q)$ as triangulated $\Sm_k$-fibered categories with inverse image functors $f^{\dagger}$ as in \Cref{nota-diamond1}.
	
	Let $K$ be a section of the $\Sm_k$-fibered category $D^b(\cdot,\Q)$ such that, for every smooth, connected $k$-variety $S$, the object $K_S \in D^b(S,\Q)$ lies in $\Perv(S)$. 
	Suppose that we are given, for every smooth $k$-variety $S$, an object $K^{\Mcal}_S \in \Mcal(S)$ together with an isomorphism in $\Perv(S)$
	\begin{equation}\label{w_S:lem}
		w_S: \iota_S(K_S^{\Mcal}) \xrightarrow{\sim} K_S.
	\end{equation}
    Then the following statements hold:
    \begin{enumerate}
    	\item Fix a morphism $f: T \rightarrow S$ between smooth $k$-varieties. Then there is at most one morphism in $D^b(\Mcal(T))$
    	\begin{equation}\label{KM*}
    		K^{\Mcal,\dagger}_f: f^{\dagger} K^{\Mcal}_S \rightarrow K^{\Mcal}_T
    	\end{equation}
    	making the diagram in $\Perv(T)$
    	\begin{equation*}
    		\begin{tikzcd}
    			f^{\dagger} \iota_S(K^{\Mcal}_S) \arrow{r}{\tilde{\theta}_f} \arrow{d}{\tilde{w}_S} & \iota_T(f^{\dagger} K^{\Mcal}_S) \arrow{r}{K_f^{\Mcal,\dagger}} & \iota_T(K^{\Mcal}_T) \arrow{d}{\tilde{w}_T} \\
    			f^{\dagger} K_S \arrow{rr}{K_f^{\dagger}} && K_T
    		\end{tikzcd}
    	\end{equation*}
    	commute; if this is the case, it is automatically an isomorphism.
    	\item Suppose that, for every morphism $f: T \rightarrow S$ between smooth $k$-varieties, the isomorphism \eqref{KM*} exists. 
    	Then the isomorphisms \eqref{KM*} turn the collection of objects $\left\{K^{\Mcal}_S\right\}_{S \in \Sm_k}$ into a section $K$ of the $\Sm_k$-fibered category $D^b(\Mcal(\cdot))$, and the morphisms \eqref{w_S:lem} define an isomorphism between sections of $D^b(\Perv(\cdot))$
    	\begin{equation}\label{w:lem}
    		w: \iota(K^{\Mcal}) \xrightarrow{\sim} K.
    	\end{equation}
        \item Suppose that the isomorphism \eqref{KM*} is known to exist whenever $f$ is either a smooth morphism or a closed immersion of codimension $1$ between smooth connected $k$-varieties. Then the natural isomorphism \eqref{KM*} exists for every morphism $f: T \rightarrow S$ between smooth $k$-varieties.
    \end{enumerate}
\end{lem}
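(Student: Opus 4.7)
The plan rests on the faithfulness and conservativity of each $\iota_S : \Mcal(S) \hookrightarrow \Perv(S)$, together with a Zariski-local factorization technique for part (3).

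For part (1), I would observe that both $K_S^{\Mcal}$ and $f^{\dagger} K_S^{\Mcal}$ lie in the heart $\Mcal(T) \subset D^b(\Mcal(T))$: indeed $\iota_T$ is $t$-exact and conservative, and $\iota_T(f^{\dagger} K_S^{\Mcal}) \simeq f^{\dagger} K_S \simeq K_T$ sits in perverse degree $0$ by hypothesis. Hence any candidate $K_f^{\Mcal,\dagger}$ is actually a morphism in the abelian category $\Mcal(T)$, uniquely determined by its image under the faithful functor $\iota_T$; invertibility is automatic by conservativity, since the other three sides of the square are isomorphisms.

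Part (2) follows from (1): uniqueness reduces checking the cocycle condition to producing, for composable $U \xrightarrow{g} T \xrightarrow{f} S$, a morphism whose image under $\iota_U$ fits into the defining diagram for $fg$. The composite $K_g^{\Mcal,\dagger} \circ g^{\dagger}(K_f^{\Mcal,\dagger}) \circ c_{f,g}$, where $c_{f,g}: (fg)^{\dagger} \simeq g^{\dagger} f^{\dagger}$ is the connection isomorphism, does the job: pasting the two defining diagrams for $f$ and $g$ and invoking the cocycle compatibility of $\tilde{\theta}$ together with the section structure of $K$ in $D^b(\Perv(\cdot))$ produces the required square. The normalization $K_{\id_S}^{\Mcal,\dagger} = \id$ is immediate. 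The collection $(\tilde{w}_S)_{S}$ is then tautologically an isomorphism of sections $\iota(K^{\Mcal}) \simeq K$ by the very definition of the transitions $K_f^{\Mcal,\dagger}$.

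For part (3), I reduce a general $f: T \to S$ in $\Sm_k$ to the hypothesized cases via the graph factorization $T \xrightarrow{\Gamma_f} T \times_k S \xrightarrow{\mathrm{pr}_S} S$, where $\mathrm{pr}_S$ is smooth and $\Gamma_f$ is a regular closed immersion between smooth $k$-varieties; this reduces the problem to a regular closed immersion $i: Z \hookrightarrow Y$ in $\Sm_k$ of constant codimension $c$ (after splitting into connected components). Zariski-locally on $Y$, the smoothness of $Z$ inside $Y$ allows one to pick a regular sequence $(t_1, \dots, t_c)$ cutting out $Z$ that extends to a system of local parameters on $Y$, producing a tower of smooth $k$-varieties $Z = Y_c \hookrightarrow Y_{c-1} \hookrightarrow \cdots \hookrightarrow Y_0 = Y$ in which each step is a codimension $1$ regular closed immersion. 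Composing the given transitions along this tower (and along the smooth open inclusions of the covering) yields, over each open of a suitable Zariski cover, a candidate for $K_i^{\Mcal,\dagger}$; these candidates agree on overlaps by the uniqueness from (1), and glue to a global transition via the Zariski-stack property of $\Mcal$ and $\Perv$ (cf.\ \cite[Prop.~2.7]{IM19}).

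The main obstacle is precisely the local factorization of $i$ through \emph{smooth} intermediate varieties, which is what makes the induction work: the extendability of a regular sequence defining $Z$ to a system of local parameters on $Y$ is the genuine geometric input, and it is exactly at this step that smoothness of both $Z$ and $Y$ is used. Once this is in place, everything reduces to formal manipulations justified by (1), the stack property, and the cocycle compatibility already established in (2).
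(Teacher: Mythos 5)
Your proposal is correct and follows essentially the same route as the paper's proof: parts (1) and (2) use faithfulness and conservativity of $\iota$ exactly as the paper does, and for part (3) you reduce via a closed-immersion-followed-by-smooth factorization (your explicit graph factorization is a specific instance of the one the paper implicitly invokes), then locally factor a closed immersion of smooth varieties into codimension-one steps through smooth intermediate loci, and glue using the Zariski-stack property together with the uniqueness from (1). The only differences are presentational; the geometric input (extending local defining equations to a regular system of parameters so the intermediate loci stay smooth) is identified correctly in both.
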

\begin{proof}
	\begin{enumerate}
		\item The uniqueness of $K^{\Mcal,\dagger}_f$ follows from the fact that $\iota_T: \Mcal(T) \rightarrow \Perv(T)$ is faithful, while its invertibility follows from the fact that $\iota_T$ is also conservative (being exact).
		\item We only need to check that the isomorphisms \eqref{KM*} are compatible with composition: given two composable morphisms $f: T \rightarrow S$ and $g: S \rightarrow V$ of smooth $k$-varieties, we have to show that the diagram in $\Mcal(T)$
	    \begin{equation*}
	    	\begin{tikzcd}
	    		f^{\dagger} g^{\dagger} K^{\Mcal}_V \arrow[equal]{d} \arrow{r}{K_g^{\Mcal,\dagger}} & f^{\dagger} K^{\Mcal}_S \arrow{d}{K_f^{\Mcal,\dagger}} \\
	    		(gf)^{\dagger} K^{\Mcal}_V \arrow{r}{K_{gf}^{\Mcal,\dagger}} & K^{\Mcal}_T
	    	\end{tikzcd}
	    \end{equation*}
        is commutative or, equivalently, that its image under the faithful functor $\iota_T: \Mcal(T) \rightarrow \Perv(T)$
	    \begin{equation*}
	    	\begin{tikzcd}[font=\small]
	    		\iota_T(f^{\dagger} g^{\dagger} K^{\Mcal}_V) \arrow[equal]{d} \arrow{r}{K_g^{\Mcal,\dagger}} & \iota_T(f^{\dagger} K^{\Mcal}_S) \arrow{d}{K_f^{\Mcal,\dagger}} \\
	    		\iota_T((gf)^{\dagger} K^{\Mcal}_V) \arrow{r}{K_{gf}^{\Mcal,\dagger}} & \iota_T(K^{\Mcal}_T).
	    	\end{tikzcd}
	    \end{equation*}
        is commutative. But the latter coincides with the outer rectangle of the diagram
	    \begin{equation*}
	    	\begin{tikzcd}[font=\small]
	    		\iota_T(f^{\dagger} g^{\dagger} K_V^{\Mcal}) \arrow{rrrr}{K_g^{\Mcal,\dagger}} \arrow[equal]{dddd} &&&& \iota_T(f^{\dagger} K_S^{\Mcal}) \arrow{dddd}{K_f^{\Mcal,\dagger}} \\
	    		& f^{\dagger} \iota_S(g^{\dagger} K_V^{\Mcal}) \arrow{ul}{\tilde{\theta}_f} \arrow{rr}{K_g^{\Mcal,\dagger}} && f^{\dagger} \iota_S(K^{\Mcal}_S) \arrow{ur}{\tilde{\theta}_f} \arrow{d}{w_S} \\
	    		& f^{\dagger} g^{\dagger} \iota_V(K^{\Mcal}_V) \arrow{u}{\tilde{\theta}_g} \arrow{r}{\tilde{w}_V} \arrow[equal]{d} & f^{\dagger} g^{\dagger} K_V \arrow{r}{K_g^{\dagger}} \arrow[equal]{d} & f^{\dagger} K_S \arrow{d}{K_f^{\dagger}} \\
	    		& (gf)^{\dagger} \iota_V(K^{\Mcal}_V) \arrow{r}{\tilde{w}_V} \arrow{dl}{\tilde{\theta}_{gf}} & (gf)^{\dagger} K_V \arrow{r}{K_{gf}^{\dagger}} & K_T \\
	    		\iota_T((gf)^{\dagger} K^{\Mcal}_V) \arrow{rrrr}{K_{gf}^{\Mcal,\dagger}} &&&& \iota_Q(K^{\Mcal}_T), \arrow{ul}{w_T}
	    	\end{tikzcd}
	    \end{equation*}
	    where the three five-term pieces are commutative by construction while all the other inner pieces are commutative by naturality.
		\item As one can easily deduce from the proof of the previous point, given two composable morphisms $f: T \rightarrow S$ and $g: S \rightarrow V$ such that the isomorphisms $K^{\Mcal,\dagger}_f$ and $K^{\Mcal,\dagger}_g$ are already known to exist, the natural isomorphism $K^{\Mcal,\dagger}_{gf}$ exists as well and equals the composite
		\begin{equation*}
			(gf)^{\dagger} K^{\Mcal}_V = f^{\dagger} g^{\dagger} K^{\Mcal}_V \xrightarrow{K^{\Mcal,\dagger}_g} f^{\dagger} K^{\Mcal}_S \xrightarrow{K^{\Mcal,\dagger}_f} K^{\Mcal}_T.		
		\end{equation*}
	    Hence, it suffices to know that the isomorphism $K^{\Mcal,\dagger}_f$ exists whenever $f$ is a smooth morphism or a closed immersion. Even better, once one knows that $K^{\Mcal,\dagger}_p$ exists for every smooth morphism $p$ (in particular, for every open immersion), it suffices to show that the isomorphism $K^{\Mcal,\dagger}_z$ exists for every closed immersion $z$ between smooth, connected $k$-varieties.
	    
	    We are left to show that, in order to know the existence of $K^{\Mcal,\dagger}_z$ for every closed immersion $z: Z \hookrightarrow S$ between smooth connected $k$-varieties, it suffices to know that it exists whenever $z$ is a closed immersion of pure codimension $1$. Starting from a closed immersion $z: Z \hookrightarrow S$, it is always possible to find a finite open covering $\left\{S_i \subset S \right\}_{i \in I}$ such that each closed immersion $z_i: z^{-1}(S_i) \hookrightarrow S_i$ can be written as a composite of closed immersions of pure codimension $1$. In this situation, for each index $i \in I$ form the Cartesian square 
	    \begin{equation*}
	    	\begin{tikzcd}
	    		Z_i \arrow{r}{z_i} \arrow{d}{u_{Z,i}} & S_i \arrow{d}{u_i} \\
	    		Z \arrow{r}{z} & S.
	    	\end{tikzcd}
	    \end{equation*}
        Arguing as in the proof of the previous point, one easily sees that, if the four isomorphism $K^{\Mcal,\dagger}$, $K^{\Mcal,\dagger}$, $K^{\Mcal,\dagger}$, $K^{\Mcal,\dagger}$ all exist, then they make the diagram in $\Mcal(Z_i)$
        \begin{equation*}
        	\begin{tikzcd}
        		z_i^{\dagger} u_i^{\dagger} K_S \arrow{rr}{K^{\Mcal,\dagger}_{u_i}} \arrow[equal]{dd} && z_i^{\dagger} K_{S_i} \arrow{d}{K^{\Mcal,\dagger}_{z_i}} \\
        		&& K_{Z_i} \\
        		u_{Z,i}^{\dagger} z^{\dagger} K_S \arrow{rr}{K^{\Mcal,\dagger}_z} && u_{Z,i}^{\dagger} K_Z \arrow{u}{K^{\Mcal,\dagger}_{u_{Z,i}}}
        	\end{tikzcd}
        \end{equation*}
        commute. Conversely, since the $\Sm\Op(S)$-fibered categories $\Mcal(\cdot)$ and $\Perv(\cdot)$ are stacks for the Zariski topology, the existence of the isomorphisms $K^{\Mcal,\dagger}_{z_i}$, $K^{\Mcal,\dagger}_{u_i}$ and $K^{\Mcal,\dagger}_{u_{Z,i}}$ for all $i \in I$ implies the existence of $K^{\Mcal,\dagger}_z$. This proves the claim and concludes the proof.
	\end{enumerate}
\end{proof}

\begin{proof}[Proof of \Cref{prop-unitMcal}]
	By \cite[Prop.~1.7, Lemma~2.7]{Ter23Emb}, it suffices to construct the motivic unit section $\unit^{\Mcal}$ and the isomorphism $w: \iota(\unit^{\Mcal}) \xrightarrow{\sim} \unit$ over $\Sm_k$. To this end, we apply the criterion of \Cref{lem:sect-perv} taking $K$ to be the shifted unit section of the $\Sm_k$-fibered category $D^b(\Perv(\cdot))$: for every connected $S \in \Sm_k$ we set $K_S := \unit_S[\dim(S)]$, and we extend the definition to general $S \in \Sm_k$ in the obvious way. In a similar manner, we define the shifted motivic unit objects: if $S$ is connected we set $K^{\Mcal}_S := \pi_S(\unit_S[\dim(S)])$, and we extend the definition to general $S$ in the obvious way. Note that we have a canonical isomorphism in $\Perv(S)$
	\begin{equation}\label{w_S}
		\tilde{w}_S: \iota_S(K^{\Mcal}_S) \xrightarrow{\sim} K_S,
	\end{equation}
	which is defined as the composite
	\begin{equation*}
		\iota_S(K^{\Mcal}_S) := \iota_S \pi_S(\unit_S[\dim(S)]) = \Bti_S^*(\unit_S[\dim(S)]) = \unit_S[\dim(S)] =: K_S
	\end{equation*}
    in the case when $S$ is connected and by the obvious extension in the general case. Finally, we define the unshifted motivic unit objects by setting $\unit^{\Mcal}_S := K^{\Mcal}_S[-\dim(S)]$ when $S$ is connected, and extending the definition to general $S$ in the obvious way. As above, we have a canonical isomorphism in $D^b(\Perv(S))$
    \begin{equation}\label{w_S:unit}
    	\iota_S(\unit^{\Mcal}_S) \xrightarrow{\sim} \unit_S.
    \end{equation}
    Clearly, turning the collection $(\unit^{\Mcal}_S)_{S \in \Sm_k}$ into a section of $D^b(\Mcal(\cdot))$ with respect to the usual inverse image functors $f^*$ in such a way that the isomorphisms \eqref{w_S:unit} define an isomorphism of sections is the same as turning the collection $(K^{\Mcal}_S)_{S \in \Sm_k}$ into a section of $D^b(\Mcal(\cdot))$ with respect to the shifted inverse image functors $f^{\dagger}$ in such a way that the isomorphisms \eqref{w_S} define an isomorphism of sections.
    
    By \Cref{lem:sect-perv}(3), in order to achieve the latter task, it suffices to construct the isomorphisms $K^{\Mcal,\dagger}_f$ in the cases where $f$ is either a smooth morphism or a closed immersion of codimension $1$. 
    
    In the case of a smooth morphism $p: P \rightarrow S$, we define the isomorphism in $\Mcal(P)$
    \begin{equation*}
    	K^{\Mcal,\dagger}_p: p^{\dagger} K^{\Mcal}_S \xrightarrow{\sim} K^{\Mcal}_P
    \end{equation*}  
    by taking the composite
    \begin{equation*}
    	p^{\dagger} K^{\Mcal}_S := p^{\dagger} \pi_S(\unit_S[\dim(S)]) = \pi_P(p^{\dagger} \unit_S[\dim(S)]) \xrightarrow{\unit_p^{\dagger}} \pi_P(\unit_P[\dim(P)]) =: K^{\Mcal}_P.
    \end{equation*}
    The associated diagram in $\Perv(P)$
    \begin{equation*}
    	\begin{tikzcd}[font=\small]
    		p^{\dagger} \iota_S(K^{\Mcal}_S) \arrow{r}{\tilde{\theta}_p} \arrow{d}{\tilde{w}_S} & \iota_P(p^{\dagger} K^{\Mcal}_S) \arrow{r}{K_p^{\Mcal,\dagger}} & \iota_P(K^{\Mcal}_P) \arrow{d}{\tilde{w}_P} \\
    		p^{\dagger} K_S \arrow{rr}{K_p^{\dagger}} && K_P,
    	\end{tikzcd}
    \end{equation*}
    which can be written in the more explicit form
    \begin{equation*}
    	\begin{tikzcd}[font=\small]
    		p^{\dagger} \Bti_S^*(\unit_S[\dim(S)]) \arrow{r}{\theta_p} \arrow{d}{w_S} & \Bti_P^*(p^{\dagger} \unit_S[\dim(S)]) \arrow{r}{\unit_p^{\dagger}} & \Bti_P^*(\unit_P[\dim(P)]) \arrow{d}{w_P} \\
    		p^{\dagger} \unit_S[\dim(S)] \arrow{rr}{\unit_p^{\dagger}} && \unit_P[\dim(P)],
    	\end{tikzcd}
    \end{equation*}
    is commutative since $w: \Bti^*(\unit) \xrightarrow{\sim} \unit$ is a morphism of sections.
    
    In the case of a closed immersion $z: Z \hookrightarrow S$ of pure codimension $1$ between smooth $k$-varieties, in order to define the isomorphism in $\Mcal(Z)$
    \begin{equation*}
    	\unit_z^{\Mcal,*}: z^{\dagger} K_S^{\Mcal} \xrightarrow{\sim} K^{\Mcal}_Z,
    \end{equation*}
    we describe the morphism in $D^b(\Mcal(S))$
    \begin{equation}\label{unit_z-adj}
    	K^{\Mcal}_S \rightarrow z_* K^{\Mcal}_Z[1]
    \end{equation}
    corresponding to it under adjunction. To this end, let $u: U \hookrightarrow S$ denote the open immersion complementary to $z$. Consider the distinguished triangle in $\DA_{ct}^{\et}(S,\Q)$
    \begin{equation}\label{loc-rot-DA}
    	z_* \unit_Z[\dim(Z)] \rightarrow u_! u^* \unit_S[\dim(S)] \rightarrow \unit_S[\dim(S)] \xrightarrow{+}
    \end{equation}
    obtained from the usual localization triangle
    \begin{equation*}
    	u_! u^* \unit_S[\dim(S)] \rightarrow \unit_S[\dim(S)] \rightarrow z_* z^* \unit_S[\dim(S)] \xrightarrow{+}
    \end{equation*}
    after inserting the isomorphism $\unit_z^*: z^* \unit_S \xrightarrow{\sim} \unit_Z$ and rotating backwards. Note that, by construction, all the terms in \eqref{loc-rot-DA} belong to the subcategory $\Dcal^0(S)$ of \Cref{nota:Mp0}. Hence, applying the functor $\pi_S: \Dcal^0(S) \rightarrow \Mcal(S)$ and using the canonical isomorphisms $\beta_S \circ z_* = z_* \circ \beta_Z$ and $\beta_S \circ u_! u^* = u_! \circ \beta_U \circ u^* = u_! u^* \circ \beta_S$, we obtain a short exact sequence in $\Mcal(S)$
    \begin{equation*}
    	0 \rightarrow z_* K^{\Mcal}_Z \rightarrow u_! u^* K^{\Mcal}_S \rightarrow K^{\Mcal}_S \rightarrow 0.
    \end{equation*}
    We define \eqref{unit_z-adj} to be the second arrow of the distinguished triangle in $D^b(\Mcal^0(S))$
    \begin{equation*}
    	u_! u^* K^{\Mcal}_S \rightarrow K^{\Mcal}_S \rightarrow z_* K^{\Mcal}_Z[1] \xrightarrow{+}
    \end{equation*}
    obtained by taking the triangle associated to this short exact sequence and rotating forwards.
    By construction, showing the commutativity of the associated diagram in $\Perv(Z)$
    \begin{equation*}
    	\begin{tikzcd}[font=\small]
    		z^{\dagger} \iota_S(K^{\Mcal}_S) \arrow{d}{\tilde{w}_S} \arrow{r}{\tilde{\theta}_z} & \iota_Z(z^{\dagger} K^{\Mcal}_S) \arrow{r}{K^{\Mcal,\dagger}_z} & \iota_Z(K^{\Mcal}_Z) \arrow{d}{\tilde{w}_Z} \\
    		z^{\dagger} K_S \arrow{rr}{K_z^{\dagger}} && K_Z
    	\end{tikzcd}
    \end{equation*}
    is equivalent to showing the commutativity of the diagram
    \begin{equation*}
    	\begin{tikzcd}[font=\small]
    		\iota_S(K^{\Mcal}_S) \arrow{dd}{\tilde{w}_S} \arrow{r} & \iota_S(z_* K^{\Mcal}_Z)[1] \arrow{d}{\bar{\tilde{\theta}}_z} \\
    		& z_* \iota_Z(K^{\Mcal}_Z)[1] \arrow{d}{\tilde{w}_Z} \\
    		K_S \arrow{r} & z_* K_Z[1].
    	\end{tikzcd}
    \end{equation*}
    Since the latter can be written equivalently in the form 
    \begin{equation*}
    	\begin{tikzcd}[font=\small]
    		\Bti_S^*(\unit_S[\dim(S)]) \arrow{d}{\tilde{w}_S} \arrow{r} & z_* \Bti_Z(\unit_Z[\dim(Z)])[1] \arrow{d}{w_z} \\
    		\unit_S[\dim(S)] \arrow{r} & (z_* \unit_Z[\dim(Z)])[1],
    	\end{tikzcd}
    \end{equation*}
    its commutativity is in the end equivalent to that of the diagram
    \begin{equation*}
    	\begin{tikzcd}[font=\small]
    		z^* \Bti_S^*(\unit_S) \arrow{d}{w_S} \arrow{r}{\theta_z} & \Bti_Z^* (z^* \unit_S) \arrow{r}{\unit_z^*} & \Bti^*_Z(\unit_Z) \arrow{d}{w_z} \\
    		z^* \unit_S \arrow{rr}{\unit_z^*} && \unit_Z,
    	\end{tikzcd}
    \end{equation*}
    which indeed holds since $w: \Bti^*(\unit) \xrightarrow{\sim} \unit$ is a morphism of sections.
    This concludes the proof.
\end{proof}

\subsection{Construction of the unit constraint}

To complete the proof of \Cref{thm:unit-constr-Mp}, we have to show that the $\Var_k$-fibered category $D^b(\Mcal(\cdot))$ admits an internal unit constraint with respect to the motivic unit section constructed in \Cref{prop-unitMcal}.
As done in the previous subsection, we apply \cite[\S\S~3, 4]{Ter23Emb} to reduce the construction to smooth varieties, so that we can apply \Cref{prop-distinguished-otimes}.

In order to check that the motivic unit objects act compatibly with inverse image functors, we use the method of \cite{Ter23Fact} to reduce to the cases of smooth morphisms and closed immersions, which we treat separately. 
Again, the case of smooth morphisms is straightforward, while the case of closed immersions requires some preliminary work:
we need to rephrase the compatibility condition for inverse images under closed immersions in terms of the corresponding direct images.

For sake of brevity, we formulate our preliminary results only for the right unit action; of course, the same conclusions apply to the left unit action as well:
\begin{lem}\label{lem:red-ITS}
	Regard $D^b(\Mcal(\cdot))$ as a $\Sm_k$-fibered category, and consider the section $\unit^{\Mcal} = (\unit^{\Mcal}_S)_{S \in \Sm_k}$ constructed in \Cref{prop-unitMcal}. 
	Suppose that we are given, for every smooth $k$-variety $S$, a natural isomorphism of functors $D^b(\Mcal(S)) \rightarrow D^b(\Mcal(S))$
	\begin{equation*}
		u_r = u_{r,S}: M^{\bullet} \otimes \unit^{\Mcal}_S \xrightarrow{\sim} M^{\bullet}.
	\end{equation*}
    Then the following statements hold:
    \begin{enumerate}
    	\item For every morphism $f: T \rightarrow S$ between smooth $k$-varieties, let $D_f$ denote the diagram of functors $D^b(\Mcal(S)) \rightarrow D^b(\Mcal(T))$
    	\begin{equation*}
    		\begin{tikzcd}
    			f^* M^{\bullet} \otimes f^* \unit^{\Mcal}_S \arrow{r}{m} \arrow{d}{\unit_f^*} & f^* (M^{\bullet} \otimes \unit^{\Mcal}_S) \arrow{d}{u_r} \\
    			f^* M^{\bullet} \otimes \unit^{\Mcal}_T \arrow{r}{u_r} & f^* M^{\bullet}.
    		\end{tikzcd}
    	\end{equation*}
    	Then, in order for $D_f$ to be commutative for every morphism $f$ of smooth $k$-varieties, it suffices that it be commutative for every smooth morphism and for every closed immersion.
    	\item Let $z: Z \hookrightarrow S$ be a closed immersion between smooth $k$-varieties, and let $D'_z$ denote the diagram of functors $D^b(\Mcal(Z)) \rightarrow D^b(\Mcal(S))$
    	\begin{equation*}
    		\begin{tikzcd}
    			z_* N^{\bullet} \otimes z_* z^* \unit^{\Mcal}_S \arrow{d}{\unit_z^*} && z_* N^{\bullet} \otimes \unit^{\Mcal}_S \arrow{ll}{\eta} \arrow{d}{u_r} \\
    			z_* N^{\bullet} \otimes z_* \unit^{\Mcal}_Z \arrow{r}{\bar{m}} & z_* (N^{\bullet} \otimes \unit^{\Mcal}_Z) \arrow{r}{u_r} & z_* N^{\bullet}.
    		\end{tikzcd}
    	\end{equation*}
        Then $D_z$ is commutative if and only if $D'_z$ is commutative.
    \end{enumerate}
\end{lem}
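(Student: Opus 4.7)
The plan for Part 1 is to exploit the graph factorization. Any morphism $f: T \to S$ in $\Sm_k$ decomposes as $f = p_S \circ \Gamma_f$, where $\Gamma_f: T \hookrightarrow T \times S$ is the graph (a closed immersion, since $S$ is separated over $k$) and $p_S: T \times S \to S$ is the second projection (smooth, since $T \in \Sm_k$). As $T, S \in \Sm_k$, the product $T \times S$ lies in $\Sm_k$ as well, so this expresses $f$ as a composite of the two special types of morphisms within $\Sm_k$. What remains is a standard pasting argument: for any composable pair $f = g \circ h$ in $\Sm_k$, the diagram $D_f$ is obtained by pasting the image of $D_g$ under $h^*$ against the diagram $D_h$, using the compatibility of external monoidality $m$ with composition, the cocycle relation $\unit_{gh}^* = h^*(\unit_g^*) \circ \unit_h^*$, and the naturality of $u_r$. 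Hence if both $D_g$ and $D_h$ commute, so does $D_f$.

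For Part 2, the plan is to exhibit $D_z$ and $D'_z$ as two equivalent formulations of the same compatibility, related via the adjunction $(z^*, z_*)$. The key structural inputs are: the direct image $z_*: \Mcal(Z) \hookrightarrow \Mcal(S)$ is fully faithful, so that $z^* \circ z_* \simeq \id$ and commutativity of a diagram in $D^b(\Mcal(Z))$ can be detected after applying $z_*$; the unit $\eta: \id \to z_* z^*$ intertwines with the internal tensor product in the expected way; and the projection formula $z_*(N^{\bullet} \otimes z^* K^{\bullet}) \simeq z_* N^{\bullet} \otimes K^{\bullet}$ holds, which is precisely the content of the isomorphism $\bar{m}$ appearing in $D'_z$. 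For the forward direction, one specializes $D_z$ to $M^{\bullet} = z_* N^{\bullet}$, applies $z_*$ (which preserves and detects commutativity by fully faithfulness), and rewrites each vertex via the projection formula together with $z^* z_* \simeq \id$; an arrow-by-arrow inspection then identifies the resulting commutative diagram with $D'_z$ at $N^{\bullet}$, up to the isomorphism $\unit_z^*$ in the lower-left corner. For the reverse direction, given arbitrary $M^{\bullet} \in D^b(\Mcal(S))$, the commutativity of $D_z$ at $M^{\bullet}$ is equivalent by fully faithfulness of $z_*$ to that of $z_*(D_z)$ at $M^{\bullet}$; the same rewriting via the projection formula identifies $z_*(D_z)$ at $M^{\bullet}$ with $D'_z$ at $N^{\bullet} := z^* M^{\bullet}$, which commutes by hypothesis.

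The main obstacle will be the diagrammatic bookkeeping in Part 2, since both $D_z$ and $D'_z$ involve several interacting structural isomorphisms ($m$, $\bar{m}$, $\unit_z^*$, the projection formula, the unit $\eta$, and the fully-faithfulness identification $z^* z_* \simeq \id$). Matching them arrow-by-arrow requires invoking the coherence of each of these in the context of the monoidal $\Var_k$-fibered category $D^b(\Mcal(\cdot))$, which ultimately reduces, via the conservativity and $t$-exactness of $\iota$, to the corresponding already-known coherences on $\DA_{ct}^{\et}(\cdot,\Q)$ and $D^b_c(\cdot,\Q)$. Once this bookkeeping is carried out carefully, both directions of the equivalence in Part 2 follow from a single transport argument.
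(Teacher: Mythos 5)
Your Part 1 is correct and follows the same route as the paper: the paper picks an arbitrary closed-immersion/smooth factorization $f = p \circ t$ (noting it stays inside $\Sm_k$), while you pick the graph factorization $f = p_S \circ \Gamma_f$; these are instances of the same idea and both work. The pasting argument you sketch is exactly what the paper carries out.

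Part 2 has a gap in the reverse direction. The forward direction is fine — specializing $M^\bullet := z_* N^\bullet$, applying $z_*$, and rewriting via the projection formula does identify $z_*(D_z\text{ at }z_* N^\bullet)$ with $D'_z$ at $N^\bullet$, precisely because the vertices then carry $z^*z_*$-patterns that the projection formula and the counit $\epsilon$ can simplify. But for arbitrary $M^\bullet \in D^b(\Mcal(S))$, the vertices of $z_*(D_z\text{ at }M^\bullet)$ do \emph{not} contain these patterns, and "the same rewriting" does not apply. Concretely, after a projection formula, the top-left vertex of $z_*(D_z\text{ at }M^\bullet)$ becomes $M^\bullet \otimes z_* z^* \unit^{\Mcal}_S$ (or, using the other projection formula, $z_* z^* M^\bullet \otimes \unit^{\Mcal}_S$), whereas the top-left vertex of $D'_z$ at $z^* M^\bullet$ is $z_* z^* M^\bullet \otimes z_* z^* \unit^{\Mcal}_S$; these differ by tensoring with the unit $\eta: M^\bullet \to z_* z^* M^\bullet$, which is \emph{not} an isomorphism in $D^b(\Mcal(S))$. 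So the two diagrams are not identified by structural isomorphisms, and the proposed transport fails as stated.

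The missing ingredient — which the paper supplies by introducing an auxiliary diagram $D''_z$ of functors $D^b(\Mcal(Z)) \to D^b(\Mcal(Z))$, namely $D_z$ evaluated at $M^\bullet = z_* N^\bullet$ — is the observation that although $\eta_{M^\bullet}: M^\bullet \to z_* z^* M^\bullet$ is not invertible, its image $z^*(\eta_{M^\bullet}): z^* M^\bullet \to z^* z_* z^* M^\bullet$ \emph{is} invertible (by the triangular identity and full faithfulness of $z_*$, which makes $\epsilon$ an isomorphism). One then sandwiches $D_z$ at $M^\bullet$ and $D''_z$ at $z^* M^\bullet$ in a naturality cube whose connecting arrows are all instances of $z^*(\eta)$, which are isomorphisms, to get $D_z \Leftrightarrow D''_z$; separately, applying $z_*$ and the projection formula identifies $D''_z$ with $D'_z$. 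Your argument correctly supplies the second step but silently assumes the first. To repair it, you need to introduce the auxiliary diagram (or an equivalent device) and use the invertibility of $z^*(\eta)$ rather than trying to match $z_*(D_z\text{ at }M^\bullet)$ directly against $D'_z$ at $z^* M^\bullet$.
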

\begin{proof}
	Both assertions follow by a standard application of the factorization method of \cite{Ter23Fact}. Since unit constraints are not discussed in \cite{Ter23Fact}, we include the argument for the reader's convenience:
	\begin{enumerate}
		\item  Given a morphism $f: T \rightarrow S$ between smooth $k$-varieties, choose a factorization of the form
		\begin{equation*}
			f: T \xrightarrow{t} P \xrightarrow{p} S
		\end{equation*} 
	    with $t$ a closed immersion and $p$ a smooth morphism; note that the $k$-variety $P$ is smooth, since so is $S$. By construction, the diagram $D_f$ coincides with the outer part of the diagram
		\begin{equation*}
			\begin{tikzcd}[font=\tiny]
				f^* M^{\bullet} \otimes f^* \unit^{\Mcal}_S \arrow{rrrr}{m} \arrow{dddd}{\unit^{\Mcal,*}} \arrow[equal]{dr} &&&& f^* (M^{\bullet} \otimes \unit^{\Mcal}_S) \arrow[equal]{dl} \arrow{dddd}{u_r} \\
				& t^* p^* M^{\bullet} \otimes t^* p^* \unit^{\Mcal}_S \arrow{r}{m} \arrow{d}{\unit^{\Mcal,*}} & t^* (p^* M^{\bullet} \otimes p^* \unit^{\Mcal}_S) \arrow{r}{m} \arrow{d}{\unit^{\Mcal,*}} & t^* p^* (M^{\bullet} \otimes \unit^{\Mcal}_S) \arrow{dd}{u_r} \\
				& t^* p^* M^{\bullet} \otimes t^* \unit_P \arrow{r}{m} \arrow{d}{\unit^{\Mcal,*}} & t^* (p^* M^{\bullet} \otimes \unit_P) \arrow{dr}{u_r} \\
				& t^* p^* M^{\bullet} \otimes \unit_T \arrow{rr}{u_r} && t^* p^* M^{\bullet} \\
				f^* M^{\bullet} \otimes \unit^{\Mcal}_T \arrow[equal]{ur} \arrow{rrrr}{m} &&&& f^* M^{\bullet} \arrow[equal]{ul}
			\end{tikzcd}
		\end{equation*}
	    where the two central trapezoids are essentially the diagrams $D_p$ and $D_z$. Since all the remaining pieces of the diagram are already known to be commutative, the thesis follows.
		\item Let $D''_z$ denote the diagram of functors $D^b(\Mcal(Z)) \rightarrow D^b(\Mcal(Z))$
		\begin{equation*}
			\begin{tikzcd}
				z^* z_* N^{\bullet} \otimes z^* \unit^{\Mcal}_S \arrow{r}{m} \arrow{d}{\unit_z^{\Mcal,*}} & z^* (z_* N^{\bullet} \otimes \unit^{\Mcal}_S) \arrow{d}{u_r} \\
				z^* z_* N^{\bullet} \otimes \unit^{\Mcal}_Z \arrow{r}{u_r} & z^* z_* N^{\bullet}.
			\end{tikzcd}
		\end{equation*}
	    We claim that both the commutativity of $D_z$ and that of $D'_z$ are equivalent to the commutativity of $D''_z$; this will imply the thesis.
	    
	    It is clear that the commutativity of $D_z$ implies that of $D''_z$. Conversely, $D_z$ can be written as the outer part of the diagram
	    \begin{equation*}
	    	\begin{tikzcd}[font=\small]
	    		z^* M^{\bullet} \otimes z^* \unit^{\Mcal}_S \arrow{rrr}{m} \arrow{dr}{\eta} \arrow{ddd}{\unit^*} &&& z^* (M^{\bullet} \otimes \unit^{\Mcal}_S) \arrow{ddd}{u_r} \arrow{dl}{\eta} \\
	    		& z^* z_* z^* M^{\bullet} \otimes z^* \unit^{\Mcal}_S \arrow{r}{m} \arrow{d}{\unit^*} & z^* (z_* z^* M^{\bullet} \otimes \unit^{\Mcal}_S) \arrow{d}{u_r} \\
	    		& z^* z_* z^* M^{\bullet} \otimes \unit^{\Mcal}_Z  \arrow{r}{u_r} & z^* z_* z^* M^{\bullet} \\
	    		z^* M^{\bullet} \otimes \unit^{\Mcal}_Z \arrow{rrr}{u_r} \arrow{ur}{\eta} &&& z^* M^{\bullet} \arrow{ul}{\eta}
	    	\end{tikzcd}
	    \end{equation*}
	    where the central square is essentially $D''_z$ and the lateral pieces are already known to be commutative. This shows that $D_z$ is commutative if and only if so is $D''_z$.
	    
	    The diagram $D'_z$ can be written as the outer part of the diagram
	    \begin{equation*}
	    	\begin{tikzcd}[font=\tiny]
	    		z_* N^{\bullet} \otimes z_* z^* \unit^{\Mcal}_S \arrow{dr}{\eta} \arrow{dd}{\unit^{\Mcal,*}} &&&& z_* N^{\bullet} \otimes \unit^{\Mcal}_S \arrow{llll}{\eta} \arrow{dd}{u_r} \arrow{dll}{\eta} \\
	    		& z_* z^* (z_* N^{\bullet} \otimes z_* z^* \unit^{\Mcal}_S) \arrow{dd}{\unit^{\Mcal,*}} & z_* z^* (z_* N^{\bullet} \otimes \unit^{\Mcal}_S) \arrow{l}{\eta} \arrow{dr}{u_r} \\
	    		z_* N^{\bullet} \otimes z_* \unit^{\Mcal}_Z \arrow{dr}{\eta} \arrow{dd}{\eta} && z_*(z^* z_* N^{\bullet} \otimes z^* \unit^{\bullet}_S) \arrow{u}{m} \arrow{dr}{\unit^{\Mcal,*}} & z_* z^* z_* N^{\bullet} \arrow{r}{\epsilon} & z_* B \\
	    		& z_* z^* (z_* N^{\bullet} \otimes z_* \unit^{\Mcal}_Z) & z_* (z^* z_* N^{\bullet} \otimes z^* z_* z^* \unit^{\Mcal}_S) \arrow{d}{\unit^{\Mcal,*}} \arrow{u}{\epsilon} \arrow{uul}{m} & z_* (z^* z_* N^{\bullet} \otimes \unit^{\bullet}_Z) \arrow{u}{u_r} \arrow{dr}{\epsilon} \\
	    		z_* z^* (z_* N^{\bullet} \otimes z_* \unit^{\bullet}_Z) && z_*(z^* z_* N^{\bullet} \otimes z^* z_* \unit^{\bullet}_Z) \arrow{ll}{m} \arrow{rr}{\epsilon} \arrow{ul}{m} \arrow{ur}{\epsilon} && z_* (N^{\bullet} \otimes \unit^{\bullet}_Z) \arrow{uu}{u_r}
	    	\end{tikzcd}
	    \end{equation*}
	    where all arrows are invertible by construction and all pieces except possibly the square
	    \begin{equation*}
	    	\begin{tikzcd}[font=\small]
	    		z_*(z^* z_* N^{\bullet} \otimes z^* \unit^{\Mcal}_S) \arrow{r}{m} \arrow{d}{\unit_z^{\Mcal,*}} & z_* z^* (z_* N^{\bullet} \otimes \unit^{\Mcal}_S) \arrow{d}{u_r} \\
	    		z_*(z^* z_* N^{\bullet} \otimes \unit^{\Mcal}_Z) \arrow{r}{u_r} & z_* z^* z_* N^{\bullet}
	    	\end{tikzcd}
	    \end{equation*} 
	    are commutative by naturality. Hence $D'_z$ is commutative if and only if the above square is commutative.
	    Since the functor $z_*: D^b(\Mcal(Z)) \rightarrow D^b(\Mcal(S))$ is fully faithful, this is the case if and only if $D''_z$ is commutative.
	    This proves the claim and concludes the proof.
    \end{enumerate}
\end{proof}

In order to be able to apply \Cref{lem:red-ITS}(2), we need to use the following variant of \Cref{prop-distinguished-otimes}:

\begin{prop}\label{rem:dist-otimes-gener-closed}
	Let $z: Z \hookrightarrow S$ be a closed immersion between smooth $k$-varieties. Then the bi-exact functor
	\begin{equation*}
		z_*(-) \otimes z_*(-[-\dim(Z)]) = (z_*(-) \otimes z_*(-))[-\dim(Z)]: \Mcal^0(Z) \times \Mcal^{0,loc}(Z) \rightarrow \Mcal^0(S)
	\end{equation*}
	is canonically isomorphic to the functor obtained by applying \cite[Prop.~4.3]{Ter23UAF} to the diagram
	\begin{equation*}
		\begin{tikzcd}[font=\small]
			\Dcal^0(Z) \times \Dcal^{0,loc}(Z) \arrow{rrrr}{(z_*(-) \otimes z_*(-))[-\dim(Z)]} \arrow{d}{\beta^0_Z \times \beta^{0,loc}_Z} &&&& \Dcal^0(S) \arrow{d}{\beta^0_S} \\
			\Perv(Z) \times \Perv(Z) \arrow{rrrr}{(z_*(-) \otimes z_*(-))[-\dim(Z)]} &&&& \Perv(S).
		\end{tikzcd}
	\end{equation*}
\end{prop}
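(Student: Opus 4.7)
The plan is to identify the two bi-exact functors by constructing a canonical chain of isomorphisms lifted from the analogous chain on the $\Dcal^0$ and $\Perv$ levels, and then to invoke the uniqueness clause in the universal property of perverse motives. Consider the Cartesian square in $\Sm_k$
\begin{equation*}
\begin{tikzcd}
Z \arrow{r}{\Delta_Z} \arrow{d}{z} & Z \times Z \arrow{d}{z \times z} \\
S \arrow{r}{\Delta_S} & S \times S.
\end{tikzcd}
\end{equation*}
For $M \in \Mcal^0(Z)$ and $L \in \Mcal^{0,loc}(Z)$, I would combine the external monoidality isomorphism $z_*(M) \boxtimes z_*(L) \simeq (z \times z)_*(M \boxtimes L)$ provided by the external tensor core of \Cref{prop-extcore-Mp} with the proper base change isomorphism $\Delta_S^{\ast}(z \times z)_* \simeq z_* \Delta_Z^{\ast}$ coming from the stable homotopy $2$-functor structure on $D^b(\Mcal(\cdot))$ of \cite{IM19}, so as to obtain a canonical chain of isomorphisms in $D^b(\Mcal^0(S))$
\begin{align*}
(z_* M \otimes z_* L)[-\dim(Z)] &= \Delta_S^{\ast}(z_* M \boxtimes z_* L)[-\dim(Z)] \\
&\xrightarrow{\sim} \Delta_S^{\ast}(z \times z)_*(M \boxtimes L)[-\dim(Z)] \\
&\xrightarrow{\sim} z_* \Delta_Z^{\ast}(M \boxtimes L)[-\dim(Z)] \\
&= z_*\bigl((M \otimes L)[-\dim(Z)]\bigr),
\end{align*}
together with the entirely analogous chains on the $\Dcal^0$ and $\Perv$ levels, compatibly with $\iota$ at each step.

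By \Cref{prop-distinguished-otimes}, the last term is canonically isomorphic to $z_*(M \otimes^{\dagger} L)$, with the matching identifications on the $\Dcal^0$ level (which is the definition of $\otimes^{\dagger}$ postcomposed with $z_*$) and on the $\Perv$ level (a direct inspection). In particular, $z_*(M \otimes^{\dagger} L)$ lies in $\Mcal^0(S)$, since $\otimes^{\dagger}$ already takes values in $\Mcal^0(Z)$ and $z_*$ is $t$-exact on perverse motives; this verifies that the externally defined bi-exact functor $(z_*(-) \otimes z_*(-))[-\dim(Z)]$ is genuinely valued in $\Mcal^0(S)$ (as implicitly claimed in the statement), and yields a canonical natural isomorphism between it and $z_*(- \otimes^{\dagger} -)$, compatibly with $\iota$.

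It remains to observe that $z_*(- \otimes^{\dagger} -): \Mcal^0(Z) \times \Mcal^{0,loc}(Z) \rightarrow \Mcal^0(S)$ is a composition of two functors each obtained via the UAF-lifting principle, namely $\otimes^{\dagger}$ from \Cref{constr:otimes-diamond} and $z_*$ from \cite[Thm.~4.1]{IM19}. Since UAF-lifting is compatible with composition (by the functoriality properties of \cite[Prop.~2.5, Prop.~4.3]{Ter23UAF}), this composite is canonically the UAF lift of $z_*(- \otimes^{\dagger} -)$ at the $\Dcal^0$/$\Perv$ level, which via the chain above (read on the $\Dcal^0$/$\Perv$ side) coincides with the UAF lift of the top horizontal functor of the diagram in the statement. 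Combining this with the isomorphism of the previous paragraph yields the claim. The only real bookkeeping concern is to verify that the base change and monoidality isomorphisms used above are genuinely compatible with $\iota$ throughout, but this is built into the morphism of stable homotopy $2$-functors $\iota$ from \cite[Thm.~5.1]{IM19} and into the external tensor structure on $\iota$ from \Cref{prop-extcore-Mp}, so no new diagrams need to be chased.
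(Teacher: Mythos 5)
Your proposal is correct and is precisely the adaptation of the proof of \Cref{prop-distinguished-otimes} that the paper alludes to: replacing the single application of $\zeta_{\Delta_S}$ there by the chain running through external monoidality of $z_*$, proper base change along the Cartesian square $(\Delta_Z, z, z\times z, \Delta_S)$, and $\zeta_{\Delta_Z}$ (equivalently \Cref{prop-distinguished-otimes} over $Z$), combined with the compatibility of UAF lifting with composition, is exactly the kind of ``obvious change'' the paper omits. The only small overstatement is ``no new diagrams need to be chased'': the analogue of \eqref{dia:prop-dist-otimes} still has to be written down and its commutativity traced through each step of your chain, though as you note this is routine since every ingredient ($\bar{\tilde m}$, base change, $\zeta$, and UAF-composition) is already known to be $\iota$-compatible.
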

\begin{proof}
	As the reader can check, the argument used for the proof of \Cref{prop-distinguished-otimes} goes through, mutatis mutandis; we leave the details to the interested reader.
\end{proof}

\begin{proof}[Proof of \Cref{thm:unit-constr-Mp}]
	By \cite[Prop.~3.9, Lemma~4.3(3)]{Ter23Emb}, it suffices to construct the motivic unit constraint and check the unitarity of $\iota: D^b(\Mcal(\cdot)) \rightarrow D^b(\cdot,\Q)$ (via the isomorphism $w: \iota(\unit^{\Mcal}) \xrightarrow{\sim} \unit$ constructed in \Cref{prop-unitMcal}) over $\Sm_k$. To this end, we apply the criterion of \Cref{lem:red-ITS}. 
	
	To begin with, fix a smooth $k$-variety $S$, and consider the two diagrams
	\begin{equation*}
		\begin{tikzcd}[font=\small]
			\Dcal^0(S) \arrow{rr}{- \otimes^{\dagger} \unit_S[\dim(S)]} \arrow{d}{\beta_S^0} && \Dcal^0(S) \arrow{d}{\beta_S^0} \\
			\Perv(S) \arrow{rr}{- \otimes^{\dagger} \unit_S[\dim(S)]} && \Perv(S) \\
			& \Downarrow \\
			\Dcal^0(S) \arrow{rr}{\id} \arrow{d}{\beta_S^0} && \Dcal^0(S) \arrow{d}{\beta_S^0} \\
			\Perv(S) \arrow{rr}{\id} && \Perv(S)
		\end{tikzcd}
		\qquad \qquad \qquad
		\begin{tikzcd}[font=\small]
			\Dcal^0(S) \arrow{rr}{\unit_S[\dim(S)] \otimes^{\dagger} -} \arrow{d}{\beta_S^0} && \Dcal^0(S) \arrow{d}{\beta_S^0} \\
			\Perv(S) \arrow{rr}{\unit_S[\dim(S)] \otimes^{\dagger} -} && \Perv(S) \\
			& \Downarrow \\
			\Dcal^0(S) \arrow{rr}{\id} \arrow{d}{\beta_S^0} && \Dcal^0(S) \arrow{d}{\beta_S^0} \\
			\Perv(S) \arrow{rr}{\id} && \Perv(S).
		\end{tikzcd}
	\end{equation*}
    The shifted right (resp. left) unit isomorphisms of functors $\Dcal^0(S) \rightarrow \Dcal^0(S)$
    \begin{equation*}
    	u_r = u_{r,S}: A \otimes^{\dagger} \unit_S[\dim(S)] \xrightarrow{\sim} A, \qquad u_l = u_{l,S}: \unit_S[\dim(S)] \otimes^{\dagger} A \xrightarrow{\sim} A
    \end{equation*}
    and the shifted right (resp. left) unit isomorphism of functors $\Perv(S) \rightarrow \Perv(S)$
    \begin{equation*}
    	u_r = u_{r,S}: K \otimes^{\dagger} \unit_S[\dim(S)] \xrightarrow{\sim} K, \qquad u_l = u_{l,S}: \unit_S[\dim(S)] \otimes^{\dagger} K \xrightarrow{\sim} K
    \end{equation*}
    satisfy the compatibility condition of \cite[Defn.~3.1]{Ter23UAF}, because the same holds for the corresponding natural isomorphisms on $\DA_{ct}^{\et}(\cdot,\Q)$ and on $D^b_c(\cdot,\Q)$ - this is just a way of rephrasing the unitarity of the Betti realization. 
    Moreover, the shifted left and right unit isomorphisms coincide (up to a sign $(-1)^{\dim(S)}$) when evaluated on the shifted unit object $\unit_S[\dim(S)]$. 
    Therefore, applying \cite[Prop.~3.4]{Ter23UAF}, we get two natural isomorphisms between functors $\Mcal(S) \rightarrow \Mcal^0(S)$
	\begin{equation}\label{utilde:Mcal}
		\tilde{u}_r = \tilde{u}_{r,S}: M \otimes^{\dagger} \unit_S^{\Mcal}[\dim(S)] \xrightarrow{\sim} M, \qquad \tilde{u}_l = \tilde{u}_{l,S}: \unit_S^{\Mcal}[\dim(S)] \otimes^{\dagger} M \xrightarrow{\sim} M,
	\end{equation}
    compatible with the corresponding shifted unit isomorphisms on $\Perv(\cdot)$; they coincide (up to a sign $(-1)^{\dim(S)}$) when evaluated on the shifted unit object $\unit_S^{\Mcal}[\dim(S)]$. Applying \Cref{lem:multiex-nat}, unshifting the unit objects and the tensor product functors, and applying \Cref{prop-distinguished-otimes}, we obtain two unit natural isomorphisms of functors $D^b(\Mcal^0(S)) \rightarrow D^b(\Mcal^0(S))$
	\begin{equation}\label{unit-Mp}
		\tilde{u}_r = \tilde{u}_{r,S}: M^{\bullet} \otimes \unit_S^{\Mcal} \xrightarrow{\sim} M^{\bullet}, \qquad \tilde{u}_l = \tilde{u}_{l,S}: \unit_S^{\Mcal} \otimes M^{\bullet} \xrightarrow{\sim} M^{\bullet}
	\end{equation}
    again compatible with the corresponding unit isomorphisms on $D^b(\cdot,\Q)$; 
	they coincide (with no sign) when evaluated on $\unit_S^{\Mcal}$, thereby satisfying condition ($u$ITS-1) of \cite[Defn.~5.1]{Ter23Fib}.
	
	It remains to show that the natural isomorphisms \eqref{unit-Mp} satisfy condition ($u$ITS-2) of \cite[Defn.~5.1]{Ter23Fib}. For sake of simplicity, we only write the proof for the right unit action; of course, the same argument will apply to the left unit action. What we have to show in the case of the right unit action is that, for every morphism $f: T \rightarrow S$ between smooth $k$-varieties, the diagram $D_f$ introduced in \Cref{lem:red-ITS} is commutative. Applying \Cref{lem:red-ITS}(1), we reduce to considering the cases where $f$ is either a smooth morphism or a closed immersion and, by \Cref{lem:red-ITS}(2), in the case of a closed immersion $z$ it suffices to check that the alternative diagram $D'_z$ is commutative. 
	
	In the case of a smooth morphism $p: P \rightarrow S$, the diagram of functors $D^b(\Mcal^0(S)) \rightarrow D^b(\Mcal^0(P))$
	\begin{equation*}
		\begin{tikzcd}[font=\small]
			p^* M^{\bullet} \otimes p^* \unit^{\Mcal}_S \arrow{r}{\tilde{m}} \arrow{d}{\unit_p^{\Mcal,*}} & p^*(M^{\bullet} \otimes \unit^{\Mcal}_S) \arrow{d}{\tilde{u}_r} \\
			p^* M^{\bullet} \otimes \unit^{\Mcal}_P \arrow{r}{\tilde{u}_r} & p^* M^{\bullet}
		\end{tikzcd}
	\end{equation*}
	is commutative if and only if so is the diagram
	\begin{equation*}
		\begin{tikzcd}[font=\small]
			p^{\dagger} M^{\bullet} \otimes^{\dagger} p^{\dagger} K^{\Mcal}_S \arrow{r}{\tilde{m}} \arrow{d}{\unit_p^{\Mcal,*}} & p^{\dagger}(M^{\bullet} \otimes^{\dagger} K^{\Mcal}_S) \arrow{d}{\tilde{u}_r} \\
			p^{\dagger} M^{\bullet} \otimes K^{\Mcal}_P \arrow{r}{\tilde{u}_r} & p^{\dagger} M^{\bullet}.
		\end{tikzcd}
	\end{equation*}
	Since all the functors involved in the latter diagram are both dg-enhanced and $t$-exact for the obvious $t$-structures, by \Cref{prop:VoloDG}(1) they coincide with the trivial derived functors of the corresponding exact functors $\Mcal^0(S) \rightarrow \Mcal^0(P)$. Hence (see \Cref{rem:utility-VoloDG}) the above diagram is commutative if and only if the corresponding diagram of functors $\Mcal^0(S) \rightarrow \Mcal^0(P)$
	\begin{equation*}
		\begin{tikzcd}[font=\small]
			p^{\dagger} M \otimes p^{\dagger} \unit^{\Mcal}_S \arrow{r}{\tilde{m}} \arrow{d}{\unit_p^{\Mcal,*}} & p^{\dagger}(M \otimes \unit^{\Mcal}_S) \arrow{d}{\tilde{u}_r} \\
			p^{\dagger} M \otimes \unit^{\Mcal}_P \arrow{r}{\tilde{u}_r} & p^{\dagger} M
		\end{tikzcd}
	\end{equation*}
	commutes. Since all the functors and natural transformations involved in the latter diagram come from the lifting principles of universal abelian factorizations, it suffices to show that the corresponding diagram of functors in $\DA_{ct}^{\et}(\cdot,\Q)$ is commutative, which follows from the validity of axiom ($u$ITS-2) there.
	
	In the case of a closed immersion $z: Z \hookrightarrow S$, we have to show that the diagram of triangulated functors $D^b(\Mcal^0(Z)) \rightarrow D^b(\Mcal^0(S))$
	\begin{equation*}
		\begin{tikzcd}[font=\small]
			z_* M^{\bullet} \otimes z_* z^* \unit^{\Mcal}_S \arrow{d}{\unit_z^{\Mcal,*}} && z_* M^{\bullet} \otimes \unit^{\Mcal}_S \arrow{ll}{\eta} \arrow{d}{\tilde{u}_r} \\
			z_* M^{\bullet} \otimes z_* \unit^{\Mcal}_Z \arrow{r}{\bar{\tilde{m}}} & z_* (M^{\bullet} \otimes \unit^{\Mcal}_Z) \arrow{r}{\tilde{u}_r} & z_* M^{\bullet}
		\end{tikzcd}
	\end{equation*}
	commutes. Since all the functors involved in this diagram are dg-enhanced and $t$-exact for the obvious $t$-structures, by Proposition \ref{prop:VoloDG}(1) they coincide with the trivial derived functors of the corresponding exact functors $\Mcal^0(Z) \rightarrow \Mcal^0(S)$. Hence, applying Proposition~\ref{prop:VoloDG}(2), we see that the above diagram commutes if and only if the diagram of exact functors $\Mcal^0(Z) \rightarrow \Mcal^0(S)$
	\begin{equation*}
		\begin{tikzcd}[font=\small]
			z_* M \otimes z_* z^* \unit^{\Mcal}_S \arrow{d}{\unit_z^{\Mcal,*}} && z_* M \otimes \unit^{\Mcal}_S \arrow{ll}{\eta} \arrow{d}{\tilde{u}_r} \\
			z_* M \otimes z_* \unit^{\Mcal}_Z \arrow{r}{\bar{\tilde{m}}} & z_* (M \otimes \unit^{\Mcal}_Z) \arrow{r}{\tilde{u}_r} & z_* M
		\end{tikzcd}
	\end{equation*}
	commutes. As a consequence of \Cref{prop-distinguished-otimes} and \Cref{rem:dist-otimes-gener-closed}, all the functors and natural transformations appearing in the latter diagram come from the lifting principles of universal abelian factorizations. Thus it suffices to show that the corresponding diagram of functors in $\DA_{ct}^{\et}(\cdot,\Q)$ is commutative, which follows from the validity of axiom ($u$ITS-2) there.
	This concludes the proof.
\end{proof}

\subsection{Compatibility with the associativity and commutativity constraints}

In order to make the picture complete, it remains to study the compatibility between the unit constraint of \Cref{thm:unit-constr-Mp} and the associativity and commutativity constraints of \Cref{lem-asso-Mp} and \Cref{lem-comm-Mp}. 
Since the unit constraint has been defined in the language of internal tensor structures, whereas the associativity and commutativity constraints have been defined in the language of external tensor structures, we need to decide whether to check the two compatibility conditions in the internal or in the external setting. 
Working in the external setting turns out to be easier, since everything can be related to the lifting principles of universal abelian factorizations more easily. To explain why this is the case, we start with the following observation:

\begin{constr}\label{constr:runit-ext}
	For every choice of smooth $k$-varieties $S_1$ and $S_2$, consider the diagram
	\begin{equation*}
		\begin{tikzcd}
			\Dcal^0(S_1) \arrow{rr}{- \boxtimes K_{S_2}} \arrow{d}{\beta_{S_1}^0} && \Dcal^0(S_1 \times S_2) \arrow{d}{\beta_{S_1 \times S_2}^0} \\
			\Perv(S_1) \arrow{rr}{- \boxtimes K_{S_2}} && \Perv(S_1 \times S_2) \\
			& \Downarrow \\
			\Dcal^0(S_1) \arrow{rr}{pr^{12,\dagger}_1} \arrow{d}{\beta_{S_1}^0} && \Dcal^0(S_1 \times S_2) \arrow{d}{\beta_{S_1 \times S_2}^0} \\
			\Perv(S_1) \arrow{rr}{pr^{12,\dagger}_1} && \Perv(S_1 \times S_2).
		\end{tikzcd}
	\end{equation*} 
    Applying \cite[Prop.~3.4]{Ter23UAF}, we get a natural isomorphism between functors $\Mcal^0(S_1) \rightarrow \Mcal^0(S_1 \times S_2)$ 
    \begin{equation*}
    	M_1 \boxtimes K^{\Mcal}_{S_2} \xrightarrow{\sim} pr^{12,\dagger}_1 M_1.
    \end{equation*}
    As a consequence of \Cref{prop-distinguished-otimes} we see that, for every smooth $k$-variety $S$, the motivic right unit isomorphism in \eqref{utilde:Mcal} coincides with the composite isomorphism
    \begin{equation*}
    	M_1 \otimes^{\dagger} K^{\Mcal}_S = M_1 \otimes \unit^{\Mcal}_S := \Delta_S^* (M_1 \boxtimes \unit^{\Mcal}_S) = \Delta_S^{\dagger} (M_1 \boxtimes K^{\Mcal}_S) \xrightarrow{\sim} \Delta_S^{\dagger} pr^{12,\dagger}_1 M_1 = M_1. 
    \end{equation*}
    Of course, the analogue result holds for the motivic left unit isomorphism.
\end{constr}

We can now easily deduce the sought-after compatibility properties as follows:

\begin{lem}\label{lem:au-Mp}
	The external unit constraint obtained from \Cref{thm:unit-constr-Mp} and the external associativity constraint of \Cref{lem-asso-Mp} are compatible in the sense of \cite[Defn.~6.7]{Ter23Fib}.
\end{lem}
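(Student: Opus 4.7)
The plan is to reduce the compatibility of the external unit and associativity constraints to the corresponding compatibility on $\DA_{ct}^{\et}(\cdot,\Q)$ by descending through the universal abelian factorizations, as done systematically throughout \Cref{sect_ETS}.

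Unwinding the definition of \cite[Defn.~6.7]{Ter23Fib}, what we have to prove is that, for every pair of varieties $S_1, S_2$ (which we may assume smooth by \cite[Prop.~3.9]{Ter23Emb} thanks to the compatibility of all the structures at hand under inverse images) and every pair of objects $M_1^{\bullet} \in D^b(\Mcal(S_1))$, $M_2^{\bullet} \in D^b(\Mcal(S_2))$, a suitable triangle-shaped diagram expressing the interaction between the associativity isomorphism $(M_1^{\bullet} \boxtimes \unit^{\Mcal}_S) \boxtimes M_2^{\bullet} \simeq M_1^{\bullet} \boxtimes (\unit^{\Mcal}_S \boxtimes M_2^{\bullet})$, the left and right external unit actions, and the external monoidality isomorphisms along the diagonal, is commutative. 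Following the pattern of the proofs of \Cref{lem-asso-Mp}, \Cref{lem-comm-Mp}, \Cref{lem-ac-Mp} and \Cref{thm:unit-constr-Mp}, the first step is to apply \Cref{prop:VoloDG}(2) (keeping \Cref{rem:utility-VoloDG} in mind) to reduce the commutativity of the diagram at hand to that of the corresponding diagram of bi-exact functors $\Mcal^0(S_1) \times \Mcal^0(S_2) \rightarrow \Mcal^0(S_1 \times S_2)$ obtained by replacing $D^b(\Mcal^0(S_r))$ with $\Mcal^0(S_r)$ and each tensor product with its shifted version $\otimes^{\dagger}$, respectively $\boxtimes^{\dagger}$, as in \Cref{constr:otimes-diamond} and \Cref{rem:runit-ext}. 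This reduction is legitimate because the associativity isomorphism, the monoidality isomorphism, and (after unshifting units back to $K^{\Mcal}_{S}=\unit^{\Mcal}_S[\dim(S)]$ using \Cref{rem:runit-ext}) the shifted unit isomorphism are each $t$-exact in every argument and admit the required dg-enhancement coming from their construction through $\boxtimes$, $\Delta_S^{\dagger}$ and $pr^{12,\dagger}_1$.

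Once reduced to the level of the abelian subcategories $\Mcal^0(S_r)$, all the natural transformations in play have been constructed via the lifting principle \cite[Prop.~3.6]{Ter23UAF} from the corresponding data on $\Dcal^0$ and $\Perv$. Therefore, invoking \cite[Rem.~4.7]{Ter23UAF}, the commutativity of the diagram of exact functors $\Mcal^0(S_1) \times \Mcal^0(S_2) \rightarrow \Mcal^0(S_1 \times S_2)$ follows from the commutativity of the analogous diagram of triangulated functors $\Dcal^0(S_1) \times \Dcal^0(S_2) \rightarrow \Dcal^0(S_1 \times S_2)$, that is, of the compatibility between the external associativity and the external unit constraints of the external tensor structure on $\DA_{ct}^{\et}(\cdot,\Q)$ itself. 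The latter is a direct consequence of the closed unitary symmetric monoidal stable homotopy $2$-functor structure of $\DA_{ct}^{\et}(\cdot,\Q)$ (transported through the dictionary of \cite[\S~6]{Ter23Fib} between internal and external tensor structures), and is already known from \cite{Ayo07a,Ayo07b}.

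The main technical obstacle in this argument is ensuring that, after the reductions, the motivic unit object really enters the diagram in the form suitable for applying the lifting principle. This is where \Cref{rem:runit-ext} becomes essential: it identifies the motivic external unit isomorphism with the composite of the external monoidality isomorphism and the connection isomorphism $pr^{12,\dagger}_1 \circ \Delta_S^{\dagger} \simeq \id$ (and similarly on the other side), all of which are, in turn, obtained by lifting analogous structure from $\Dcal^0$. Thanks to this description, the entire diagram in $\Mcal^0$ becomes a diagram of natural transformations coming from the lifting principles, and its commutativity reduces, as indicated, to the already known compatibility on $\DA_{ct}^{\et}(\cdot,\Q)$. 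The compatibility claim of the lemma then follows.
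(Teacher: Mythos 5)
Your strategy is essentially the paper's: reduce to smooth bases via the extension results of \cite{Ter23Emb}, descend from $D^b(\Mcal^0(\cdot))$ to $\Mcal^0(\cdot)$ via \Cref{prop:VoloDG}(2) / \Cref{rem:utility-VoloDG}, use \Cref{rem:runit-ext} to rewrite the shifted unit action as a lifting-principle natural isomorphism $- \boxtimes K^{\Mcal}_{S} \simeq pr^{\dagger}(-)$ composed with the external monoidality isomorphism, and then descend the whole diagram to $\DA_{ct}^{\et}(\cdot,\Q)$ where the compatibility is already known. The paper executes exactly this plan, explicitly checking only one of the three families of diagrams in \cite[Defn.~6.7]{Ter23Fib} and leaving the rest to an analogous argument.

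Two small inaccuracies in the write-up that do not affect the substance: the relevant diagram in the \emph{external} framework is a square, not a triangle, since the external unit action lands in a projection pullback $pr^* M$ rather than in $M$ itself and the square is closed by the external monoidality isomorphism $m$; and that isomorphism is taken along a \emph{projection}, not along the diagonal (the diagonal only enters when passing to the internal picture). The paper also reduces to smooth varieties via the analogue of \cite[Lemma~3.11(2)]{Ter23Emb} rather than \cite[Prop.~3.9]{Ter23Emb}, but both serve the same purpose of restricting the verification to $\Sm_k$.
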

\begin{proof}
	We have to show that condition ($au$ETS) from \cite[Defn.~6.7]{Ter23Fib} is satisfied. For sake of simplicity, we only check the commutativity of the third family of diagrams mentioned there; the commutativity of the other two families follows by a similar argument. Using the analogue of \cite[Lemma~3.11(2)]{Ter23Emb} for external tensor structures, we see that it suffices to consider smooth varieties only. In the end, what we have to show is that, given three smooth $k$-varieties $S_1$, $S_2$ and $S_3$, the diagram of functors $D^b(\Mcal^0(S_1)) \times D^b(\Mcal^0(S_2)) \rightarrow D^b(\Mcal^0(S_1 \times S_2 \times S_3))$
	\begin{equation*}
		\begin{tikzcd}
			(M_1^{\bullet} \boxtimes M_2^{\bullet}) \boxtimes \unit^{\Mcal}_{S_3} \arrow{r}{a} \arrow{d}{u_r} & M_1^{\bullet} \boxtimes (M_2^{\bullet} \boxtimes \unit^{\Mcal}_{S_3}) \arrow{d}{u_r} \\
			pr^{123,*}_{23} (M_1^{\bullet} \boxtimes M_2^{\bullet}) & M_1^{\bullet} \boxtimes pr^{12,*}_{2} M_2^{\bullet} \arrow{l}{m}
		\end{tikzcd}
	\end{equation*}
    commutes. After shifting the unit objects, using \Cref{constr:runit-ext}, we reduce to showing that the diagram of functors $\Mcal^0(S_1) \times \Mcal^0(S_2) \rightarrow \Mcal^0(S_1 \times S_2 \times S_3)$
	\begin{equation*}
		\begin{tikzcd}
			(M_1 \boxtimes M_2) \boxtimes K^{\Mcal}_{S_3} \arrow{r}{a} \arrow{d}{u_r} & M_1 \boxtimes (M_2 \boxtimes K^{\Mcal}_{S_3}) \arrow{d}{u_r} \\
			pr^{123,\dagger}_{23} (M_1 \boxtimes M_2) & M_1 \boxtimes pr^{12,\dagger}_{2} M_2 \arrow{l}{m}
		\end{tikzcd}
	\end{equation*}
    commutes. Since all the functors and natural transformations in the latter diagram come from the lifting principles of universal abelian factorizations, it suffices to check that the corresponding diagram on $\DA_{ct}^{\et}(\cdot,\Q)$ commutes, which (after unshifting the unit objects) follows from the validity of axiom ($au$ETS) there.
\end{proof}

\begin{lem}\label{lem:cu-Mp}
	The internal unit constraint of \Cref{thm:unit-constr-Mp} and the internal commutativity constraint obtained from \Cref{lem-comm-Mp} are compatible in the sense of \cite[Defn.~6.11]{Ter23Fib}.
\end{lem}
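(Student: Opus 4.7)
The plan is to follow exactly the same strategy as in the proof of \Cref{lem:au-Mp}, transferring the problem step by step from the internal to the external setting, from general varieties to smooth ones, and from the derived categories of perverse motives to the level of the abelian categories $\Mcal^0$, where the universal property can be exploited. Concretely, the compatibility between commutativity and unit constraints in the sense of \cite[Defn.~6.11]{Ter23Fib} amounts to showing that, for every pair $S_1,S_2 \in \Var_k$ and every $M_1^{\bullet} \in D^b(\Mcal(S_1))$, a certain diagram built out of the external commutativity isomorphism and the unit actions commutes. Using the analogue of \cite[Lemma~3.11(2)]{Ter23Emb} for external tensor structures, one immediately reduces to the case where $S_1$ and $S_2$ are smooth.

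Next I would unshift the unit objects via \Cref{rem:runit-ext}, so that $\unit^{\Mcal}_{S_i} = K^{\Mcal}_{S_i}[-\dim(S_i)]$ and the right/left unit isomorphisms get replaced by their shifted counterparts from \eqref{utilde:Mcal}. After this replacement, the diagram to check becomes a diagram of functors $\Mcal^0(S_2) \rightarrow \Mcal^0(S_1 \times S_2)$ (once the shifts are absorbed) of the schematic form
\begin{equation*}
	\begin{tikzcd}
		K^{\Mcal}_{S_1} \boxtimes M_2 \arrow{r}{c} \arrow{d}{u_l} & \tau^{*}(M_2 \boxtimes K^{\Mcal}_{S_1}) \arrow{d}{u_r} \\
		pr^{12,\dagger}_2 M_2 \arrow[equal]{r} & \tau^* pr^{21,\dagger}_1 M_2,
	\end{tikzcd}
\end{equation*}
whose commutativity expresses axiom ($cu$ETS) at the level of the abelian categories $\Mcal^0$. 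All the functors and natural transformations appearing here are constructed via the lifting principles for universal abelian factorizations (\cite[Prop.~3.4, Prop.~4.3]{Ter23UAF}): this is the case for the external tensor product by \Cref{prop-extcore-Mp}, for the external commutativity isomorphism by \Cref{lem-comm-Mp}, and for the unit action by the combination of \Cref{rem:runit-ext} with the construction given in the proof of \Cref{thm:unit-constr-Mp}.

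Consequently, by the compatibility of the universal factorization construction with the formation of diagrams (\cite[Rem.~4.7]{Ter23UAF}), the commutativity of the above square can be checked after replacing $\Mcal^0$ by $\DA_{ct}^{\et}(\cdot,\Q)$, where it follows from the validity of axiom ($cu$ETS) on the constructible étale motives - this is just the well-known compatibility between the classical commutativity and unit constraints on $\DA_{ct}^{\et}(\cdot,\Q)$. I do not expect any real obstacle here: since \Cref{thm:unit-constr-Mp} and \Cref{lem-comm-Mp} have already been proven, and since their constructions are transparently traceable to the corresponding constructions on $\DA_{ct}^{\et}(\cdot,\Q)$ through the universal property, the argument is entirely formal once the correct reductions are set up. The only mildly delicate point is bookkeeping the sign conventions coming from the shifts by $\dim(S_1)$ and $\dim(S_2)$ when passing between unshifted and shifted unit objects, but this is handled in the same way as in the proof of \Cref{thm:unit-constr-Mp}, where the two shifted unit isomorphisms were seen to coincide on the shifted unit up to a sign $(-1)^{\dim(S)}$ that cancels out when the ($cu$) diagram is assembled symmetrically.
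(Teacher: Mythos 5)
Your proof is correct and follows essentially the same route as the paper's: reduce to smooth varieties via the embedding lemma of \cite{Ter23Emb}, shift the unit objects via \Cref{rem:runit-ext} to descend to a diagram of exact functors between the abelian categories $\Mcal^0$, and then invoke the lifting principles for universal abelian factorizations to reduce to the corresponding ($cu$ETS) identity on $\DA_{ct}^{\et}(\cdot,\Q)$. The only slips are cosmetic: the reduction to $\Sm_k$ should cite the analogue of \cite[Lemma~3.11(3)]{Ter23Emb} (the commutativity case) rather than part (2) (the associativity case used in \Cref{lem:au-Mp}), and the projection in the bottom-right corner of your diagram should read $pr^{21,\dagger}_2 M_2$ so that it projects $S_2 \times S_1$ onto $S_2$.
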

\begin{proof}
	We have to show that condition ($cu$ETS) from \cite[Defn.~6.11]{Ter23Fib} is satisfied. Using the analogue of \cite[Lemma~3.11(3)]{Ter23Emb} for external tensor structure, we see that it suffices to consider smooth varieties only. In the end, what we have to show is that, given two smooth $k$-varieties $S_1$ and $S_2$, the diagram of functors $D^b(\Mcal(S_1)) \rightarrow D^b(\Mcal(S_1 \times S_2))$
	\begin{equation*}
		\begin{tikzcd}
			M_1^{\bullet} \boxtimes \unit^{\Mcal}_{S_2} \arrow{r}{c} \arrow{d}{u_r} & \tau^* (\unit^{\Mcal}_{S_2} \boxtimes M_1^{\bullet}) \arrow{d}{u_l} \\
			pr^{12,*}_1 M_1^{\bullet} \arrow[equal]{r} & \tau^* pr^{21,*}_1 M_1^{\bullet}
		\end{tikzcd}
	\end{equation*}
    commutes. After shifting the unit object, using \Cref{constr:runit-ext}, we reduce to showing that the diagram of functors $\Mcal^0(S_1) \rightarrow \Mcal^0(S_1 \times S_2)$
    \begin{equation*}
    	\begin{tikzcd}
    		M_1 \boxtimes K^{\Mcal}_{S_2} \arrow{r}{c} \arrow{d}{u_r} & \tau^* (K^{\Mcal}_{S_2} \boxtimes M_1) \arrow{d}{u_l} \\
    		pr^{12,\dagger}_1 M_1 \arrow[equal]{r} & \tau^* pr^{21,\dagger}_1 M_1
    	\end{tikzcd}
    \end{equation*} 
    commutes. Since all the functors and natural transformations in the latter diagram come from the lifting principles of universal abelian factorizations, it suffices to check that the corresponding diagram on $\DA_{ct}^{\et}(\cdot,\Q)$ commutes, which (after unshifting the unit object) follows from the validity of axiom ($cu$ETS) there.
\end{proof}

\begin{proof}[Proof of \Cref{thm-boxtimesM}]
	Combine \Cref{prop-extcore-Mp}, \Cref{lem-asso-Mp}, \Cref{lem-comm-Mp} and \Cref{lem-ac-Mp} with \Cref{thm:unit-constr-Mp}, \Cref{lem:au-Mp} and \Cref{lem:cu-Mp}.
\end{proof}

\section{Internal homomorphisms}\label{sect_Homcal}

In this section, we complete the construction of the six functor formalism on perverse motives, by showing the existence of internal homomorphisms. As usual, we state the main result in the language of stable homotopy $2$-functors:

\begin{thm}\label{thm_homcal}
	The monoidal stable homotopy $2$-functor $D^b(\Mcal(\cdot))$ is closed in the sense of \cite[Defn.~2.3.50]{Ayo07a}. 
	Moreover, for every $k$-variety $S$, the canonical natural transformation between functors $D^b(\Mcal(S))^{op} \times D^b(\Mcal(S)) \rightarrow D^b(\Perv(S))$
	\begin{equation*}
		\iota_S\Homcal_S(M_1^{\bullet},M_2^{\bullet}) \rightarrow \Homcal_S(\iota_S(M_1^{\bullet}),\iota_S(M_2^{\bullet}))
	\end{equation*}
	is invertible.
\end{thm}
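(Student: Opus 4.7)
The plan is to follow the two-step strategy anticipated in the introduction. \textbf{Step 1 (abstract adjoint):} enlarge $D^b(\Mcal(S))$ to the unbounded derived category $D(\Ind \Mcal(S))$ of the Grothendieck abelian category of ind-perverse motives. Extending the external tensor product by continuity in each variable along ind-completion and applying $\Delta_S^*$, one obtains an internal tensor product on $D(\Ind \Mcal(S))$ which, for each fixed $M_1^{\bullet} \in D^b(\Mcal(S))$, preserves small homotopy colimits; Brown representability (or the presentable adjoint functor theorem, after the standard check that the relevant generators are compact) then yields a formal right adjoint
\begin{equation*}
	\Homcal_S^{\textup{ind}}(M_1^{\bullet},-): D(\Ind \Mcal(S)) \rightarrow D(\Ind \Mcal(S)).
\end{equation*}
By construction it is unique up to canonical isomorphism and enjoys all the categorical formal properties one expects of an internal hom (projection formulae, base change against inverse images, etc.), purely by adjunction from the properties of the tensor product established in \Cref{thm-boxtimesM}.

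\textbf{Step 2 (boundedness and compatibility):} the heart of the proof is to show that, when both arguments lie in $D^b(\Mcal(S))$, the complex $\Homcal_S^{\textup{ind}}(M_1^{\bullet},M_2^{\bullet})$ is again bounded and lies in the essential image of the canonical functor $D^b(\Mcal(S)) \rightarrow D(\Ind \Mcal(S))$, and moreover that the natural comparison map with $\Homcal_S(\iota_S M_1^{\bullet}, \iota_S M_2^{\bullet})$ is an isomorphism in $D^b(\Perv(S))$. Since $\iota_S$ is conservative and $t$-exact, both assertions can be tested after applying $\iota_S$; and since both sides send distinguished triangles in the first variable to distinguished triangles, one is free to argue by dévissage in $M_1^{\bullet}$. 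The precise abstract criterion one applies reduces the task to exhibiting a class $\Scal(S) \subset D^b(\Mcal(S))$, stable under shifts, extensions and containing enough objects to generate $D^b(\Mcal(S))$ via localization triangles, on which the comparison is already known to hold.

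\textbf{Reduction via distinguished local systems:} the class $\Scal(S)$ is supplied by \Cref{cor:adj-Mloc}. By Noetherian induction and by the motivic gluing formalism of \Cref{prop-Bei-glue}, one reduces checking that $M_1^{\bullet}$ lies in $\Scal(S)$ to the case where $S$ is smooth and $M_1^{\bullet}$ is (a shift of) an object of $\Mcal^{0,loc}(S)$ of the restricted form considered in \Cref{cor:adj-Mloc}: indeed, given any $M_1 \in \Mcal(S)$, one chooses a smooth dense open subset $j: U \hookrightarrow S$ with closed complement $i: Z \hookrightarrow S$ such that $j^* M_1$ lies in $\Mcal^{0,loc}(U)$ (weak cofinality, \Cref{lem:Mcal^0loc-wcof}); the localization triangle $i_* i^* M_1 \to M_1 \to Rj_* j^* M_1$ then reduces the problem to $U$ and to $Z$, and the second piece is handled by the inductive hypothesis together with the compatibility of $\Homcal$ with $i_*$. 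The resulting adjunction on $U$ is exactly the one constructed in \Cref{cor:adj-Mloc} (extended from $\Mcal^0$ to $D^b(\Mcal)$ by \Cref{prop-distinguished-otimes} and \Cref{lem:multiex-nat}), and the compatibility with internal homomorphisms on $\Perv(U)$ is built into its construction.

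\textbf{Main obstacle:} the delicate point, and the reason distinguished motivic local systems are introduced in \Cref{sect_otimes-dist}, is that the tensor product functors on $D^b(\Mcal(S))$ are not $t$-exact (in either variable, once $\dim S > 0$), so one cannot lift the exact internal hom of perverse sheaves via universal abelian factorizations in any direct way; the argument must go through the external tensor product and the shifted tensor product $\otimes^{\dagger}$ of \Cref{constr:otimes-diamond}, and through \Cref{prop-distinguished-otimes} to identify the two. Controlling the behaviour of $\Homcal_S^{\textup{ind}}$ along the localization triangles used in the dévissage is where the repeated use of the localization/gluing formalism of \Cref{sect:glue-Mp} becomes essential.
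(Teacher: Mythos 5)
Your two-step skeleton --- abstract ind-adjoint via Brown representability, then boundedness and $\iota$-compatibility via d\'evissage down to distinguished motivic local systems --- is exactly the strategy the paper follows (Construction \ref{constr_homcal}, Propositions \ref{prop-crit-homcal} and \ref{prop-crit-homcal-R}, and the final proof of Theorem \ref{thm_homcal}). Within that skeleton, though, there are a few points where your account drifts away from what actually makes the argument close.

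The most serious one is your claim that ``since $\iota_S$ is conservative and $t$-exact, both assertions can be tested after applying $\iota_S$.'' The object $\Homcal_S^{\textup{ind}}(M_1^{\bullet},M_2^{\bullet})$ lives in $D(\Ind\Mcal(S))$, where the functor $\iota_S$ of the theorem is not even defined; to apply it you would first have to extend to $\Ind\iota_S: D(\Ind\Mcal(S)) \rightarrow D(\Ind\Perv(S))$ and then prove a compatibility isomorphism
$\Ind\iota_S\,\Homcal_S^{\textup{ind}}(M_1^{\bullet},M_2^{\bullet}) \simeq \Homcal_S^{\textup{ind}}(\iota M_1^{\bullet},\iota M_2^{\bullet})$
at the ind level, which is essentially the statement you are trying to establish, so the argument as phrased is circular. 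The paper avoids this: it never extends $\iota$ to the ind-derived categories, and instead proves boundedness directly by Noetherian induction (Proposition \ref{prop-crit-homcal}), with the base case coming from the explicit \emph{exact} right adjoint $G = \Dbb_U(j^*M \otimes^{\dagger} \Dbb_U(-))$ of Corollary \ref{cor:adj-Mloc}, whose $\Ind$-extension visibly preserves $D^b(\Mcal^0(U))$ because it is a trivial derived functor of an exact functor. The crucial (and non-trivial) technical step you do not mention is identifying this $\Ind(G)$ with the abstractly represented right adjoint of Construction \ref{constr_homcal}: the paper does this via the uniqueness clause of Proposition \ref{prop-IndF}(1) combined with the \v{C}ech-complex description of $\Delta_{U,*}\Delta_U^*$ and Proposition \ref{prop:VoloDG}.

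Two smaller issues. First, your localization triangle ``$i_*i^*M_1 \to M_1 \to Rj_*j^*M_1$'' is not a triangle; the paper applies $\Homcal_S(-,B^{\bullet})$ to $j_!j^*M \to M \to i_*i^*M$, producing $\Homcal_S(i_*i^*M,B^{\bullet}) \to \Homcal_S(M,B^{\bullet}) \to \Homcal_S(j_!j^*M,B^{\bullet})$, and then the $Z$-piece is handled by Noetherian induction while the $U$-piece is handled by the hypothesis on the dense smooth affine open --- you have the assignment of pieces backwards. Second, the parenthetical offering ``the presentable adjoint functor theorem after checking that the relevant generators are compact'' is a red herring: derived categories of Grothendieck abelian categories are in general not compactly generated, which is precisely why the paper invokes Franke's form of Brown representability.
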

\noindent
The proof is based on two ingredients: an abstract construction of internal homomorphisms as an ind-adjoint of the internal tensor product, and the adjunction for distinguished motivic local systems obtained in \Cref{cor:adj-Mloc}. 
Following the conventions of \Cref{sect_ETS}, we write the categories of perverse motives as "$\Mcal^0$" in all constructions and proofs specifically involving the tensor product, and simply as "$\Mcal$" otherwise.

The abstract construction of internal homomorphism is inspired by a construction of Arapura, described in \cite[\S~5.1]{Ara-mot}, which based on an adjoint functor theorem for derived categories of Grothendieck abelian categories from \cite{Franke}. 
Compared to Arapura's case, our construction requires a longer preparation, since we need to extend the internal tensor product to derived ind-categories.

\subsection{Recollections on ind-categories}

We start by reviewing ind-categories of Noetherian abelian categories and their derived categories. 
Recall from \cite[Exp.~1, \S~8.2]{SGA4} that, for every small category $\Ccal$, the ind-category $\Ind \Ccal$ is the full subcategory of $\Fun(\Ccal^{op},\Sets)$ consisting of all those presheaves which are filtered colimits of representables. The Yoneda embedding $\Ccal \hookrightarrow \Fun(\Ccal^{op},\Sets)$ can be refined to a fully faithful functor $\Ccal \hookrightarrow \Ind \Ccal$.

\begin{prop}\label{prop-IndAcal}
	Let $\Acal$ be an (essentially small) abelian category. Then:
	\begin{enumerate}
		\item The category $\Ind \Acal$ is a Grothendieck abelian category, and the embedding $\Acal \hookrightarrow \Ind \Acal$ is exact.
		\item If $\Acal$ is Noetherian, then its essential image in $\Ind \Acal$ is precisely the subcategory of Noetherian objects of $\Ind \Acal$.
		\item The triangulated functor $D^b(\Acal) \rightarrow D(\Ind \Acal)$ is fully faithful and induces an equivalence
		\begin{equation*}
			D^b(\Acal) \xrightarrow{\sim} D^b_{\Acal}(\Ind \Acal) := \left\{A^{\bullet} \in D(\Ind \Acal) \; | \; H^n(A^{\bullet}) = 0 \; \forall |n| \gg 0, \; H^n(A^{\bullet}) \in \Acal \; \forall n \in \Z \right\}.
		\end{equation*}
	\end{enumerate}
\end{prop}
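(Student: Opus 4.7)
The three parts of the proposition are essentially classical facts about ind-categories of (essentially small) abelian categories, so the plan is largely to sketch the standard proofs and point to the relevant references (e.g. \cite[Exp.~1]{SGA4} or Kashiwara--Schapira's \emph{Categories and Sheaves}).

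For part (1), the idea is to verify the axioms of a Grothendieck abelian category one by one. Filtered colimits exist in $\Ind \Acal$ by the very construction of the ind-category, and they are exact because filtered colimits of exact sequences of representables remain exact in $\Fun(\Acal^{op},\Sets)$. The abelian structure is obtained by extending kernels and cokernels from $\Acal$ along filtered colimits; this works precisely because finite limits commute with filtered colimits of sets. A generating family is provided by the (essentially small) set of isomorphism classes of objects of $\Acal$, which sits inside $\Ind \Acal$ through the fully faithful canonical embedding. Exactness of $\Acal \hookrightarrow \Ind \Acal$ is automatic, since this functor preserves both finite limits and finite colimits by construction.

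For part (2), I would first show that every $A \in \Acal$ is Noetherian when viewed in $\Ind \Acal$: any subobject $A' \hookrightarrow A$ inside $\Ind \Acal$ can be written as a filtered union $A' = \varinjlim_\alpha A'_\alpha$ with each $A'_\alpha \hookrightarrow A$ belonging to $\Acal$ (using compactness of $A$), and since $\Acal$ is Noetherian this filtered system stabilises; hence so does any ascending chain of subobjects of $A$ in $\Ind \Acal$. Conversely, if $X \in \Ind \Acal$ is Noetherian, then writing $X = \varinjlim_\alpha A_\alpha$ as a filtered colimit of objects of $\Acal$ and considering the ascending chain of images $\im(A_\alpha \to X)$, Noetherianity forces the chain to stabilise, which means $X$ is already isomorphic to the image of some $A_\alpha$, hence lies in the essential image of $\Acal$.

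For part (3), the key step is to show that for any $A, B \in \Acal$ and any $n \geq 0$ the canonical map
\begin{equation*}
	\Ext^n_{\Acal}(A,B) \rightarrow \Ext^n_{\Ind \Acal}(A,B)
\end{equation*}
is an isomorphism. I would argue by induction on $n$ using Yoneda-style Ext groups: the case $n = 0$ is the full faithfulness of $\Acal \hookrightarrow \Ind \Acal$, the case $n = 1$ reduces to the fact that every short exact sequence $0 \to B \to E \to A \to 0$ in $\Ind \Acal$ is isomorphic to one obtained from $\Acal$ by writing $E$ as a filtered colimit and replacing it with a term of the colimit which surjects onto $A$, and the inductive step proceeds by embedding $B$ into some $B' \in \Acal$ and invoking the induced long exact sequences on both sides. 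Given the Ext-comparison, fully faithfulness of $D^b(\Acal) \to D(\Ind \Acal)$ follows from standard hypercohomology spectral sequences. The identification with $D^b_{\Acal}(\Ind \Acal)$ is then obtained by dévissage: a bounded complex in $\Ind \Acal$ with cohomology objects in $\Acal$ can be truncated and successively lifted, cohomology object by cohomology object, to a quasi-isomorphic complex in $C^b(\Acal)$.

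The only genuinely delicate point is the Ext-comparison in part (3), because Ext groups in $\Ind \Acal$ are defined via injective resolutions in the ambient Grothendieck category, whereas Ext groups in $\Acal$ are defined only through Yoneda extensions; verifying that the two constructions agree requires some care with the filtered-colimit presentations of ind-objects. All the necessary arguments are, however, standard and documented in the literature, so the proof will consist largely in referring to them and recalling the main steps.
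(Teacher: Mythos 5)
Your proposal is correct in substance and takes essentially the same route as the paper, which simply cites the relevant results (Kashiwara--Schapira, Thm.~8.6.5 and Thm.~15.3.1, and Huber's lemma on Noetherian ind-objects); what you have done is reconstruct the proofs of those cited results. Parts (1) and (2) are handled accurately, and in particular your two-sided argument for (2) — subobjects of $A \in \Acal$ stabilise because $\Acal$ is Noetherian, and conversely the images $\im(A_\alpha \to X)$ of a Noetherian ind-object $X$ must stabilise — is exactly the standard proof. One remark on part (3): the inductive step you sketch, ``embedding $B$ into some $B' \in \Acal$ and invoking the long exact sequences,'' does not by itself close the induction, since an arbitrary $B' \in \Acal$ carries no $\Ext$-acyclicity and the four-lemma does not then give surjectivity of the comparison map in degree $n$ from the isomorphism in degree $n-1$. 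The argument actually used in the references is different and cleaner: one verifies the lifting property that every epimorphism $B \twoheadrightarrow A$ in $\Ind\Acal$ with $A \in \Acal$ admits an $\Acal$-cover, which follows immediately from the compactness of $A$ in $\Ind\Acal$ (the identity of $A$ factors through some $\im(B_\alpha \to A)$), and then invokes the standard criterion for a thick exact embedding of abelian categories to induce a fully faithful functor on bounded derived categories with the expected essential image. You correctly flag this as the delicate point, but the mechanism you propose for it would need to be replaced by the compactness/lifting argument.
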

\begin{proof}
	The first statement is proved in \cite[Thm.~8.6.5]{KS}; the second one is an easy consequence of \cite[Lemma~1.3]{HubInd}; the third one follows from \cite[Prop.~13.1.12(i), Thm.~15.3.1(i)]{KS}.
\end{proof}

In the following, we need to extend additive functors of abelian categories to ind-categories. 

\begin{prop}\label{prop-IndF}
	Let $F: \Acal_1 \rightarrow \Acal_2$ be a functor between abelian categories. Then:
	\begin{enumerate}
		\item Up to canonical natural isomorphism, there exists a unique functor
		\begin{equation*}
			\Ind(F): \Ind \Acal_1 \rightarrow \Ind \Acal_2
		\end{equation*}
		extending $F$ and commuting with arbitrary small filtered colimits.
		\item If $F$ is additive, then so it $\Ind(F)$.
		\item If $F$ is faithful (resp. full) then so is $\Ind(F)$.
		\item If $F$ is left-exact (resp. right-exact), then so is $\Ind(F)$.
	\end{enumerate}
	Moreover, the assignment $F \mapsto \Ind(F)$ is canonically compatible with composition of functors.
\end{prop}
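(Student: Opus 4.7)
The plan is to construct $\Ind(F)$ directly using the canonical filtered-colimit presentation of ind-objects, and then deduce each functoriality property from general properties of filtered colimits in Grothendieck abelian categories.

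First I would construct the functor in (1). Every object $X \in \Ind \Acal_1$ can be canonically written as a small filtered colimit $X = \varinjlim_{i \in I} A_i$ with $A_i \in \Acal_1$ (for instance by taking $I$ to be the slice category $\Acal_1 / X$ and using the canonical diagram into $X$). One sets
\begin{equation*}
	\Ind(F)(X) := \varinjlim_{i \in I} F(A_i) \in \Ind \Acal_2,
\end{equation*}
the colimit being computed in $\Ind \Acal_2$; this is well-defined thanks to the universal property of $\Ind \Acal_1$ as the free cocompletion of $\Acal_1$ under filtered colimits (see \cite[Exp.~1, \S~8.7]{SGA4}). The same universal property yields both the uniqueness up to canonical natural isomorphism and the compatibility with composition: any filtered-colimit-preserving extension $G: \Ind \Acal_1 \to \Ind \Acal_2$ of $F$ is determined by its restriction to $\Acal_1$, and the formation of such extensions is visibly functorial in $F$.

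For (2), additivity is preserved because finite direct sums commute with filtered colimits in both $\Ind \Acal_1$ and $\Ind \Acal_2$, so if $F$ respects finite direct sums on $\Acal_1$, the same holds for $\Ind(F)$ after passing to the colimit presentation. For (3), the key tool is the classical formula
\begin{equation*}
	\Hom_{\Ind \Acal}\bigl(\varinjlim_i A_i, \varinjlim_j B_j\bigr) = \varprojlim_i \varinjlim_j \Hom_{\Acal}(A_i, B_j)
\end{equation*}
from \cite[Exp.~1, \S~8.2]{SGA4}, valid on both sides. Faithfulness of $F$ implies that each map $\Hom_{\Acal_1}(A_i, B_j) \to \Hom_{\Acal_2}(F(A_i), F(B_j))$ is injective, and injectivity is preserved by the filtered colimit (over $j$) and then by the limit (over $i$); fullness is handled symmetrically using that for essentially small $\Acal_1$ one may arrange the indexing categories on the source and target sides to be compatible via cofinality, then invoke the surjectivity of the maps on $\Hom$-groups before passing to the colimit/limit.

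For (4), left-exactness (resp. right-exactness) of $\Ind(F)$ follows from two facts: filtered colimits are exact in any Grothendieck abelian category (hence in $\Ind \Acal_2$ by \Cref{prop-IndAcal}(1)), and any finite limit (resp. colimit) in $\Ind \Acal_1$ of a diagram may be presented as a filtered colimit of the same-shaped finite limits (resp. colimits) computed within $\Acal_1$; then $\Ind(F)$ commutes with that outer filtered colimit by construction and with the inner finite limits (resp. colimits) by the left-exactness (resp. right-exactness) of $F$ on $\Acal_1$. The step I expect to be most delicate is not any of these individually but rather making (3) completely rigorous: one must carefully set up indexing categories so that the double formula for $\Hom$-sets transports faithfulness/fullness without loss, which is routine but requires some bookkeeping with cofinal subcategories of $\Acal_1/X$ and $\Acal_1/Y$. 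All other steps are essentially formal once the universal property of the ind-completion is in place.
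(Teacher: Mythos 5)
Your proposal diverges from the paper's proof, which does no work itself and simply cites \cite[Exp.~1, \S~8.6]{SGA4} for (1), notes that (2) is immediate from the construction, and invokes \cite[Prop.~6.1.10]{KS} for (3) and \cite[Cor.~8.6.8]{KS} for (4). Your self-contained approach is perfectly reasonable, and for (1), (2) and (4) it is essentially correct; for (4) you should make explicit that the commutation of filtered colimits with finite limits in $\Ind\Acal_2$ relies on \Cref{prop-IndAcal}(1) (hence on $\Acal_2$ being essentially small), and that the presentation of a finite diagram in $\Ind\Acal_1$ as a filtered colimit of finite diagrams in $\Acal_1$ is itself a nontrivial fact from \cite{KS} that deserves a citation rather than being asserted.

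There is, however, a genuine gap in your treatment of fullness in (3). After applying the Hom-formula you reduce to showing that the map
\begin{equation*}
	\varprojlim_i \varinjlim_j \Hom_{\Acal_1}(A_i, B_j) \longrightarrow \varprojlim_i \varinjlim_j \Hom_{\Acal_2}(F(A_i), F(B_j))
\end{equation*}
is surjective. Fullness of $F$ together with exactness of filtered colimits in $\mathbf{Sets}$ does give a surjection for each fixed $i$ after taking $\varinjlim_j$; but cofiltered limits of sets do \emph{not} in general preserve surjections --- this is precisely the obstruction controlled by Mittag--Leffler-type conditions. Your appeal to ``cofinality'' does not fix this: replacing the indexing category by a cofinal subcategory leaves the limit unchanged, so it cannot turn a non-surjection into a surjection. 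The faithful case works because monomorphisms \emph{are} preserved by arbitrary limits, and the fully faithful case works because faithfulness forces the levelwise lifts (obtained from fullness and compactness) to be mutually compatible; but fullness alone does not descend levelwise in the way you claim. You should either strengthen the hypothesis to full faithfulness --- which in fact is all the paper ever uses, since $i_*$, $j_!$ and $j_*$ are fully faithful --- or supply an actual argument that compatible choices of lifts exist.
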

\begin{proof}
	The first statement is proved in \cite[Exp.~1, \S~8.6]{SGA4} and does not require the assumption that $\Acal_1$ and $\Acal_2$ are abelian; the second statement follows directly from the construction of $\Ind(F)$; the third statement is \cite[Prop.~6.1.10]{KS}, and the fourth statement is \cite[Cor.~8.6.8]{KS}. Finally, the compatibility with composition follows directly from the construction.
\end{proof}

We also need to extend natural transformations to the level of ind-categories.

\begin{prop}\label{prop:indalpha}
	Let $\Acal_1$ and $\Acal_2$ be two abelian categories. Then:
	\begin{enumerate}
		\item Given two functors $F,G: \Acal_1 \rightarrow \Acal_2$, and a natural transformation $\alpha: F \rightarrow G$, the following hold:
		\begin{enumerate}
			\item[(i)] There exists a unique natural transformation of functors $\Ind \Acal_1 \rightarrow \Ind \Acal_2$
			\begin{equation*}
				\Ind(\alpha): \Ind(F) \rightarrow \Ind(G)
			\end{equation*}
			such that, for every object $A = \varinjlim_{i \in I} A_i$ of $\Ind \Acal_1$, the diagram in $\Ind \Acal_2$
			\begin{equation}\label{dia:indalpha}
				\begin{tikzcd}
					\varinjlim_{i \in I} F(A_i) \arrow{rr}{\varinjlim_{i \in I} \alpha(A_i)} \isoarrow{d} && \varinjlim_{i \in I} G(A_i) \isoarrow{d} \\
					\Ind(F)(\varinjlim_{i \in I} A_i) \arrow{rr}{\Ind(\alpha)(A)} && \Ind(G)(\varinjlim_{i \in I} A_i)
				\end{tikzcd}
			\end{equation}
			is commutative.
			\item[(ii)] The natural transformation $\Ind(\alpha)$ is invertible if and only if $\alpha$ is invertible.
		\end{enumerate} 
		\item Given three functors $F,G,H: \Acal_1 \rightarrow \Acal_2$, for every choice of natural transformations $\alpha: F \rightarrow G$ and $\beta: G \rightarrow H$, we have the equality
		\begin{equation*}
			\Ind(\beta \circ \alpha) = \Ind(\beta) \circ \Ind(\alpha).
		\end{equation*}
	\end{enumerate}
\end{prop}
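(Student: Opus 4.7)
The plan is to construct $\Ind(\alpha)$ by the universal property of filtered colimits, exploiting that $\Ind(F)$ and $\Ind(G)$ are the unique filtered-colimit-preserving extensions of $F$ and $G$ provided by \Cref{prop-IndF}. The strategy runs parallel to the construction of $\Ind(F)$ itself, with natural transformations replacing functors.

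First I would construct $\Ind(\alpha)$ as follows. Fix an object $A \in \Ind \Acal_1$, and choose a presentation $A = \varinjlim_{i \in I} A_i$ with $A_i \in \Acal_1$ and $I$ filtered. I define a morphism $\Ind(\alpha)(A): \Ind(F)(A) \rightarrow \Ind(G)(A)$ as the unique arrow rendering the diagram \eqref{dia:indalpha} commutative, where the vertical isomorphisms are the canonical ones coming from the construction of $\Ind(F)$ and $\Ind(G)$. The routine verification here is that this definition is independent of the chosen presentation of $A$: given any cofinal functor between two filtered diagrams presenting $A$, the induced comparison map between the colimits of $F(A_i)$ and of $G(A_i)$ intertwines the colimits of $\alpha(A_i)$ by ordinary naturality of $\alpha$. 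One then checks naturality of $\Ind(\alpha)$ with respect to morphisms $A \rightarrow A'$ in $\Ind \Acal_1$, which reduces, again by the universal property of filtered colimits, to the naturality of $\alpha$ on the level of $\Acal_1$. Uniqueness in (i) is immediate: any natural transformation between filtered-colimit-preserving functors is determined by its restriction to $\Acal_1$, and that restriction is forced to be $\alpha$ because the diagram \eqref{dia:indalpha} specializes to the identity on $\alpha(A_i)$ when $A = A_i \in \Acal_1$.

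For statement (2), it suffices to check the equality $\Ind(\beta \circ \alpha) = \Ind(\beta) \circ \Ind(\alpha)$ after restriction to each representable $A_i \in \Acal_1$: both sides are filtered-colimit-preserving natural transformations between $\Ind(F)$ and $\Ind(H)$, hence they are uniquely determined by this restriction; but on $\Acal_1$ both agree with $\beta \circ \alpha$ by construction. The same argument proves that $\Ind(\id_F) = \id_{\Ind(F)}$.

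For statement (ii), the non-trivial direction uses (2). If $\alpha$ is invertible with inverse $\alpha^{-1}$, then $\Ind(\alpha^{-1})$ is inverse to $\Ind(\alpha)$ by (2) applied to the equalities $\alpha^{-1} \circ \alpha = \id_F$ and $\alpha \circ \alpha^{-1} = \id_G$. Conversely, if $\Ind(\alpha)$ is invertible, then its restriction to the full subcategory $\Acal_1 \hookrightarrow \Ind \Acal_1$ is invertible, and this restriction coincides with $\alpha$ under the fully faithful embedding $\Acal_2 \hookrightarrow \Ind \Acal_2$.

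The main (mild) obstacle is really only bookkeeping: ensuring that $\Ind(\alpha)(A)$ is well-defined independently of the filtered presentation of $A$, and that naturality on $\Ind \Acal_1$ reduces to naturality on $\Acal_1$. Both points are handled by the standard density-and-cofinality arguments available for ind-categories, and no abelian structure is used at this stage — the statement is purely a formal consequence of the universal property of $\Ind(-)$.
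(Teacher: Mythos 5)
Your proposal is correct and takes essentially the same approach as the paper: the paper's (very terse) proof likewise defines $\Ind(\alpha)$ objectwise via the diagrams \eqref{dia:indalpha}, derives uniqueness from the fact that such diagrams determine the transformation, and obtains (2) and (ii) from the explicit construction. Your write-up is more detailed — in particular you correctly supply the ``only if'' direction of (ii) by restricting along the fully faithful embeddings $\Acal_j \hookrightarrow \Ind\Acal_j$, a step the paper leaves implicit — but the route is the same.
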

\begin{proof}
	In the first statement, (i) follows from the fact that the commutativity of the diagram \eqref{dia:indalpha} uniquely determines $\Ind(\alpha)$, while (ii) is a consequence of the second statement of the proposition. The second statement follows from the explicit definition of $\Ind(\alpha)$ and $\Ind(\beta)$ via commutative diagrams of the form \eqref{dia:indalpha}. 
\end{proof}

The main case of interest for us concerns the unit and co-unit transformations associated to an adjoint pair, in which case we can apply the following criterion:

\begin{cor}\label{cor:ind-adj}
	Let $\Acal_1$ and $\Acal_2$ be abelian categories, and let
	\begin{equation*}
		F: \Acal_1 \leftrightarrows \Acal_2 :G
	\end{equation*}
	be an adjoint pair of functors. Then the functors $\Ind(F)$ and $\Ind(G)$ fit into an adjunction
	\begin{equation*}
		\Ind(F): \Ind \Acal_1 \leftrightarrows \Ind \Acal_2 :\Ind(G)
	\end{equation*}
	compatible with the original one.
\end{cor}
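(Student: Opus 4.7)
The strategy is to propagate the unit and co-unit of the original adjunction to the level of ind-categories via the functoriality of the construction $\Ind(-)$ described in the preceding propositions, and then to verify the triangle identities in $\Ind \Acal_1$ and $\Ind \Acal_2$.

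First, I would let $\eta: \id_{\Acal_1} \rightarrow G \circ F$ and $\epsilon: F \circ G \rightarrow \id_{\Acal_2}$ denote the unit and co-unit of the given adjunction. Applying \Cref{prop:indalpha}(1) to these natural transformations (and using the obvious fact that $\Ind(\id_{\Acal_i}) = \id_{\Ind \Acal_i}$ together with the compatibility of $\Ind$ with composition from \Cref{prop-IndF}), I obtain natural transformations
\begin{equation*}
\tilde{\eta} := \Ind(\eta): \id_{\Ind \Acal_1} \rightarrow \Ind(G) \circ \Ind(F), \qquad \tilde{\epsilon} := \Ind(\epsilon): \Ind(F) \circ \Ind(G) \rightarrow \id_{\Ind \Acal_2}.
\end{equation*}
The compatibility claim in the statement of the corollary, namely that $\tilde{\eta}$ and $\tilde{\epsilon}$ restrict to the original $\eta$ and $\epsilon$ along the canonical embeddings $\Acal_i \hookrightarrow \Ind \Acal_i$, is automatic from the characterizing property of $\Ind(\alpha)$ (specifically, from the commutative diagram \eqref{dia:indalpha} applied to objects of $\Acal_i$ regarded as constant ind-objects).

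Next, I would verify the two triangle identities, namely that the composites
\begin{equation*}
\Ind(F) \xrightarrow{\Ind(F) \star \tilde{\eta}} \Ind(F) \circ \Ind(G) \circ \Ind(F) \xrightarrow{\tilde{\epsilon} \star \Ind(F)} \Ind(F)
\end{equation*}
and
\begin{equation*}
\Ind(G) \xrightarrow{\tilde{\eta} \star \Ind(G)} \Ind(G) \circ \Ind(F) \circ \Ind(G) \xrightarrow{\Ind(G) \star \tilde{\epsilon}} \Ind(G)
\end{equation*}
are the respective identities. Using \Cref{prop:indalpha}(2) together with the fact that horizontal composition (whiskering) by $\Ind(F)$ and $\Ind(G)$ is compatible with the $\Ind$-construction, these two composites can be identified with $\Ind$ applied to the corresponding composites of the original $\eta$ and $\epsilon$ whiskered by $F$ and $G$. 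Since the latter composites are identities by the triangle identities of the original adjunction, and since $\Ind(\id) = \id$, the claim follows.

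The argument is essentially formal and I do not expect a genuine obstacle; the only delicate point is to check carefully that whiskering commutes with $\Ind$, i.e. that $\Ind(F \star \eta) = \Ind(F) \star \Ind(\eta)$ and similarly for right whiskering. This can be extracted directly from the uniqueness clause in \Cref{prop:indalpha}(1)(i): both sides are natural transformations making the characterizing diagram \eqref{dia:indalpha} commute for each filtered colimit presentation, and so they must agree. Once this compatibility is recorded, the triangle identities transfer verbatim, completing the proof.
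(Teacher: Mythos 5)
Your proof is correct and takes essentially the same route as the paper: extend the unit and co-unit via \Cref{prop:indalpha}, then check the triangle identities, using \Cref{prop:indalpha}(2) to reduce to the original ones. The only extra content in your writeup — the careful verification that whiskering commutes with $\Ind$ — is a point the paper leaves implicit, and your treatment of it via the uniqueness clause in \Cref{prop:indalpha}(1)(i) is the right way to justify it.
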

\begin{proof}
	Indeed, one can use \Cref{prop:indalpha} to extend the unit and co-unit transformations to the level of ind-categories, as well as to check that the extended natural transformations still satisfy the triangular identities of adjunctions.
\end{proof}

We also need to extend some short exact sequences of exact functors to the level of ind-categories.

\begin{lem}\label{lem-ses-Ind}
	Let $\Acal_1$ and $\Acal_2$ be essentially small abelian categories. Then, for every short exact sequence of functors $\Acal_1 \rightarrow \Acal_2$
	\begin{equation*}
		0 \rightarrow F' \rightarrow F \rightarrow F'' \rightarrow 0,
	\end{equation*}
	the induced sequence of functors $\Ind \Acal_1 \rightarrow \Ind \Acal_2$
	\begin{equation*}
		0 \rightarrow \Ind(F') \rightarrow \Ind(F) \rightarrow \Ind(F'') \rightarrow 0
	\end{equation*}
	is exact as well.
\end{lem}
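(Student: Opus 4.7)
The plan is to reduce exactness of the induced sequence in $\Ind \Acal_2$ to a pointwise check, and then exploit the fact that filtered colimits are exact in Grothendieck abelian categories. Exactness of a sequence of additive functors $\Ind\Acal_1 \to \Ind\Acal_2$ is tested objectwise; since $\Ind\Acal_2$ is Grothendieck abelian by \Cref{prop-IndAcal}(1), we are allowed to freely manipulate filtered colimits of short exact sequences.

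First I would use \Cref{prop:indalpha} to produce the natural transformations $\Ind(F') \to \Ind(F)$ and $\Ind(F) \to \Ind(F'')$ that extend the original transformations $F' \to F$ and $F \to F''$. Next, fix an object $A \in \Ind\Acal_1$ and write it as a filtered colimit $A = \varinjlim_{i \in I} A_i$ with $A_i \in \Acal_1$. By the defining compatibility \eqref{dia:indalpha}, the sequence in $\Ind\Acal_2$
\begin{equation*}
0 \to \Ind(F')(A) \to \Ind(F)(A) \to \Ind(F'')(A) \to 0
\end{equation*}
is canonically identified with the filtered colimit, taken in $\Ind\Acal_2$, of the sequences
\begin{equation*}
0 \to F'(A_i) \to F(A_i) \to F''(A_i) \to 0,
\end{equation*}
which are short exact in $\Acal_2$ by the hypothesis on $F', F, F''$, and hence remain short exact when viewed in $\Ind\Acal_2$ via the exact embedding of \Cref{prop-IndAcal}(1).

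To conclude, I would invoke axiom (AB5) for the Grothendieck abelian category $\Ind\Acal_2$: filtered colimits are exact, hence the colimit of the above short exact sequences is still short exact. Since this holds for every $A \in \Ind\Acal_1$, the sequence of functors $0 \to \Ind(F') \to \Ind(F) \to \Ind(F'') \to 0$ is exact as claimed. There is no real obstacle in this argument beyond verifying that the pointwise identification with a filtered colimit is the one induced by \Cref{prop:indalpha}, which is immediate from the commutative diagram \eqref{dia:indalpha} defining $\Ind(\alpha)$ on each component.
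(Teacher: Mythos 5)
Your argument is correct and is essentially the paper's own proof, just with the pointwise reduction and the appeal to \Cref{prop:indalpha} spelled out explicitly; the paper compresses all of this into a one-line appeal to the fact that filtered colimits of exact sequences in the Grothendieck abelian category $\Ind\Acal_2$ are exact.
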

\begin{proof}
	Since $\Ind \Acal_2$ is a Grothendieck abelian category by \Cref{prop-IndAcal}(1), the thesis follows from the fact that filtered colimits of exact sequences in Grothendieck abelian categories are exact by definition.
\end{proof}

\subsection{Abstract internal homomorphisms}

We are ready to extend part of the functoriality of perverse motives to derived ind-categories.
Specifically, we are interested in direct and inverse images under open and closed immersions. By \Cref{prop-IndF}, for every closed immersion of $k$-varieties $i: Z \hookrightarrow S$, the fully faithful exact functor $i_*: \Mcal(Z) \rightarrow \Mcal(S)$ extends to a fully faithful exact functor
\begin{equation*}
	i_*: \Ind \Mcal(Z) \rightarrow \Ind \Mcal(S).
\end{equation*}
Similarly, for every open immersion $j: U \hookrightarrow S$, the exact functor $j^*: \Mcal(S) \rightarrow \Mcal(U)$ extends to an exact functor
\begin{equation*}
	j^*: \Ind \Mcal(S) \rightarrow \Ind \Mcal(U).
\end{equation*}
Moreover, if $j$ is affine, the fully faithful exact functors $j_!, \; j_*: \Mcal(U) \rightarrow \Mcal(S)$ extend to fully faithful exact functors
\begin{equation*}
	j_!, \; j_*: \Ind \Mcal(U) \rightarrow \Ind \Mcal(S)
\end{equation*}
and, using \Cref{cor:ind-adj}, we obtain canonical adjunctions
\begin{equation*}
	j_!: \Ind \Mcal(U) \leftrightarrows \Ind \Mcal(S): j^*, \qquad \qquad j^*: \Ind \Mcal(S) \leftrightarrows \Ind \Mcal(U): j_*.
\end{equation*}
The result of \cite[Prop.~4.2]{IM19} can be generalized to the setting of ind-categories as follows:

\begin{prop}\label{prop-i_*D^+Pcal}
	For every closed immersion of $k$-varieties $i: Z \hookrightarrow S$, the triangulated functor $i_*: D(\Ind \Mcal(Z)) \rightarrow D(\Ind \Mcal(S))$ admits a left adjoint
	\begin{equation*}
		i^*: D(\Ind \Mcal(S)) \rightarrow D(\Ind \Mcal(Z))
	\end{equation*}
	sending $D^b(\Mcal(S))$ to $D^b(\Mcal(Z))$.
\end{prop}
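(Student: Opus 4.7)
Following the strategy of \cite[Prop.~4.2]{IM19}, the plan is to extend the \v{C}ech complex construction of \Cref{constr:cech-perv} to the ind-setting, and then use it to exhibit the sought-after left adjoint. By uniqueness of adjoints we shall be able to glue over $S$ from local pieces.

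First I would reduce by a standard Zariski-local argument to the situation where $S$ is affine and $Z$ is cut out by a regular sequence of length $d$, so that the complement $U := S \setminus Z$ admits a distinguished finite affine open covering $\Ucal$ with exactly $d$ elements. In this setup, the \v{C}ech complex functor $\check{C}^\bullet_{S,\Ucal}: \Mcal(S) \to C^{[-d,0]}(\Mcal(S))$ extends, via \Cref{prop-IndF} and \Cref{prop:indalpha}, to a functor on $\Ind \Mcal(S)$, and this extension is exact by \Cref{lem-ses-Ind}. Taking the total complex and passing to derived categories yields a triangulated functor
\begin{equation*}
\check{C}_{S,\Ucal}: D(\Ind \Mcal(S)) \to D(\Ind \Mcal(S)).
\end{equation*}

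Next I would observe that the recollement $(i_*,j^*,j_!,j_*)$ on perverse Nori motives lifts via \Cref{prop-IndF} and \Cref{cor:ind-adj} to a recollement on $\Ind \Mcal(\cdot)$, so that the fully faithful exact functor $i_*: \Ind \Mcal(Z) \hookrightarrow \Ind \Mcal(S)$ identifies $\Ind \Mcal(Z)$ with the full Serre subcategory of objects killed by $j^*$. The argument of \cite[Prop.~4.2]{IM19} then goes through verbatim to show that $\check{C}_{S,\Ucal}$ takes values in the full triangulated subcategory $D_Z(\Ind \Mcal(S)) \subset D(\Ind \Mcal(S))$ of complexes with $j^*$-trivial cohomology, and is canonically left adjoint to its inclusion; composing with the inverse of the resulting equivalence $i_*: D(\Ind \Mcal(Z)) \xrightarrow{\sim} D_Z(\Ind \Mcal(S))$ gives the desired left adjoint $i^*$. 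Independence of $i^*$ from the choice of $\Ucal$, and hence globalization to arbitrary $S$, follow from the uniqueness of left adjoints exactly as in loc.~cit. Finally, the stability under bounded complexes is automatic: restricted to $D^b(\Mcal(S))$, our $\check{C}_{S,\Ucal}$ coincides with the \v{C}ech functor of \Cref{constr:cech-perv}, which by \cite[Prop.~4.2]{IM19} has bounded cohomology supported on $Z$ and thus lies in $D^b(\Mcal(Z))$ under the equivalence of \Cref{prop-IndAcal}(3).

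The main obstacle I anticipate is verifying that the localization-type formalism underlying the gluing functors of \Cref{sect:glue-Mp} survives passage to the \emph{unbounded} derived ind-category in a usable form; concretely, that the equivalence $D(\Ind \Mcal(Z)) \xrightarrow{\sim} D_Z(\Ind \Mcal(S))$ remains valid beyond the bounded setting. This should follow from the exactness properties of the recollement at the level of $\Ind \Mcal(\cdot)$ together with a standard d\'evissage for unbounded complexes in Grothendieck abelian categories, but the bookkeeping requires care, in particular around the compatibility of the adjunction morphisms with filtered colimits.
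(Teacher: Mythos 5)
Your outline matches the paper's strategy at a high level — reduce to the local \v{C}ech picture, extend the \v{C}ech functor via indization, identify $D_Z(\Ind\Mcal(S))$ with $D(\Ind\Mcal(Z))$, and conclude by uniqueness of adjoints. You have also correctly isolated where the real work is: proving the equivalence $i_*\colon D(\Ind\Mcal(Z)) \xrightarrow{\sim} D_Z(\Ind\Mcal(S))$ beyond the bounded setting. However, your proposed remedy for that step is not what works, and I think it would not go through as stated.

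The problem is that you appeal to ``the recollement $(i_*,j^*,j_!,j_*)$ lifting to $\Ind\Mcal(\cdot)$'' and then to ``exactness of the recollement plus standard d\'evissage.'' But there is no recollement on $\Ind\Mcal(\cdot)$ at the abelian level: the functors $i^*$ and $i^!$ are precisely what we are trying to construct, they are not exact on perverse motives, and their indizations do not exist a priori. What you do have is the abelian-level identification of $\Ind\Mcal(Z)$ with the Serre subcategory $\ker(j^*)\subset\Ind\Mcal(S)$ and the adjunctions $j_!\dashv j^*\dashv j_*$; this by itself does \emph{not} formally imply that $i_*$ is fully faithful on unbounded derived categories, nor that its essential image is all of $D_Z(\Ind\Mcal(S))$. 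The paper's argument for the crucial Lemma~\ref{lem-i_*D^+Pcal} is different: after reducing to $Z=V(f)$, it uses the gluing formalism of Section~\ref{sect:glue-Mp}. Concretely, the exact functor $\Phi_f$ (extended to $\Ind\Mcal(\cdot)$ via Proposition~\ref{prop-IndF}) gives an explicit candidate quasi-inverse; the identity $\Phi_f\circ i_*=\id$ is the indization of \eqref{iso:Phi_f-i_*=id}, and the identity $i_*\Phi_f=\id$ on $D_Z(\Ind\Mcal(S))$ comes from the distinguished triangles obtained by indizing the short exact sequences \eqref{ses-deduced-gluey1} and \eqref{ses-deduced-gluey2} via Lemma~\ref{lem-ses-Ind}. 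This is an honest computation with the maximal-extension functor $\Omega_f$, not a d\'evissage. You would need to import that machinery to close the gap; without it, the identification of $D_Z(\Ind\Mcal(S))$ with $D(\Ind\Mcal(Z))$ — and therefore the whole construction of $i^*$ — is not established.
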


Following the argument for the proof of \cite[Prop.~4.2]{IM19}, we start by establishing an auxiliary result:

\begin{lem}\label{lem-i_*D^+Pcal}
	Given a closed immersion $i: Z \hookrightarrow S$ with complementary open immersion $j: U \hookrightarrow S$, the triangulated functor $i_*: D(\Ind \Mcal(Z)) \rightarrow D(\Ind \Mcal(S))$
	is fully faithful and induces an equivalence
	\begin{equation*}
		D(\Ind \Mcal(Z)) \xrightarrow{\sim} D_Z(\Ind \Mcal(S)) := \ker \left\{j^*: D(\Ind \Mcal(S)) \rightarrow D(\Ind \Mcal(U)) \right\}.
	\end{equation*}
\end{lem}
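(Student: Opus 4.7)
The plan is to prove the lemma in two stages: first establish the statement at the Ind-abelian level, then upgrade to unbounded derived categories via the general theory of localizing subcategories of Grothendieck abelian categories. The overall strategy parallels the proof of \cite[Prop.~4.2]{IM19}, but replaces arguments involving bounded derived categories and \v{C}ech resolutions by adjunctions existing at the unbounded derived level of Ind-categories.

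For the first stage, I would apply \Cref{prop-IndF} to lift the result of \cite[Thm.~4.1]{IM19} to Ind-categories. This produces a fully faithful exact functor $i_*: \Ind \Mcal(Z) \to \Ind \Mcal(S)$ satisfying $j^* \circ i_* = 0$, where $j^*: \Ind \Mcal(S) \to \Ind \Mcal(U)$ denotes the Ind-extension of the exact inverse image functor on $\Mcal$. To identify the essential image of $i_*$ with $\Ind \Mcal_Z(S) := \ker\{j^* \text{ on } \Ind \Mcal(S)\}$, I would argue that any $M = \varinjlim_i M_i \in \Ind \Mcal(S)$ with $j^* M = 0$ admits a cofinal presentation by objects of $\Mcal_Z(S) := i_* \Mcal(Z)$: the vanishing of $\varinjlim_i j^* M_i$ in $\Ind \Mcal(U)$ forces each $j^* M_i$ to be killed by a later transition, so replacing each $M_i$ by the kernel of a suitable transition map yields a cofinal system whose terms lie in $\Mcal_Z(S)$.

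For the second stage, write $\Acal := \Ind \Mcal(S)$ and $\Bcal := i_*(\Ind \Mcal(Z)) = \ker\{j^*\} \subset \Acal$. Both are Grothendieck abelian categories by \Cref{prop-IndAcal}(1), and $\Bcal$ is a Serre subcategory closed under all small colimits (since $j^*$ is exact and commutes with colimits). The exactness of $j^*$, together with its preservation of all small colimits between Grothendieck categories, ensures via the special adjoint functor theorem that $j^*$ admits a right adjoint $j^{\Ind}_*$. The derived adjunction $j^* \dashv R j^{\Ind}_*$ then yields a semi-orthogonal decomposition of $D(\Acal)$ whose left component is naturally identified with $D_Z(\Acal) := \ker\{j^*: D(\Acal) \to D(\Ind \Mcal(U))\}$, and the canonical triangulated functor $D(\Bcal) \to D_Z(\Acal)$ becomes an equivalence; composing with the Ind-abelian equivalence $i_*: \Ind \Mcal(Z) \xrightarrow{\sim} \Bcal$ of the first stage yields the statement.

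The main obstacle is establishing this last equivalence in the unbounded setting. In bounded-below situations (as in \cite[Prop.~4.2]{IM19}) one could conclude nearly formally by transporting injective resolutions from $\Bcal$ to $\Acal$; in the unbounded case one must instead work with K-injective resolutions and verify that the derived right adjoint to the localization $\Acal \to \Acal/\Bcal$ correctly exhibits $D_Z(\Acal)$ as its kernel. I would handle this by invoking the existence of K-injective resolutions in both $\Acal$ and $\Bcal$ (guaranteed by their Grothendieck property) and exploiting the exactness of $j^*$ to obtain a Bousfield-style localization on $D(\Acal)$, from which the desired decomposition $D(\Acal) = \langle D(\Bcal),\, \textup{im}(R j^{\Ind}_*) \rangle$ and the equivalence $D(\Bcal) \xrightarrow{\sim} D_Z(\Acal)$ follow.
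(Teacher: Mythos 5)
Your proposal is correct in spirit for Stage 1, but it contains a sign error and a more serious gap in Stage 2, and overall it takes a genuinely different and harder route than the paper's.

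On Stage 1: the replacement you describe should use \emph{images}, not kernels. If $j^{*}(M_{i}\to M_{j})=0$, then by exactness of $j^{*}$ we have $j^{*}\ker(M_{i}\to M_{j})=\ker(j^{*}M_{i}\to j^{*}M_{j})=j^{*}M_{i}$, which is not zero in general; whereas $j^{*}\mathrm{im}(M_{i}\to M_{j})=\mathrm{im}(j^{*}M_{i}\to j^{*}M_{j})=0$. Taking the cofinal system of images of transition maps that become zero after applying $j^{*}$ gives the desired presentation by objects of $\Mcal_{Z}(S)$.

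On Stage 2: here there is a genuine gap. From the derived adjunction $j^{*}\dashv Rj^{\Ind}_{*}$ you do obtain a Bousfield localization of $D(\Ind\Mcal(S))$ whose kernel is $D_{Z}(\Ind\Mcal(S))$, but this by itself does \emph{not} identify that kernel with $D(\Bcal)$ where $\Bcal=i_{*}(\Ind\Mcal(Z))$. For a localizing Serre subcategory $\Bcal$ of a Grothendieck abelian category $\Acal$, the canonical functor $D(\Bcal)\to D_{\Bcal}(\Acal)$ need not be an equivalence (it is not even fully faithful in general); sufficient conditions include that the inclusion $\Bcal\hookrightarrow\Acal$ preserve injective objects, or equivalently that the associated hereditary torsion theory be stable — none of which you verify or cite. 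The paper avoids this issue entirely by a formal reduction (following \cite[Thm.~4.1]{IM19}) to the case $S\in\Aff_{k}$, $Z=V(f)$, where the gluing functor $\Phi_{f}:\Ind\Mcal(S)\to\Ind\Mcal(Z)$ is an \emph{exact} functor whose trivial derived functor is checked directly to be a two-sided quasi-inverse to $i_{*}$ on $D_{Z}(\Ind\Mcal(S))$: the identity $\Phi_{f}\circ i_{*}\cong\id$ comes from \eqref{iso:Phi_f-i_*=id}, and $i_{*}\Phi_{f}(M^{\bullet})\cong M^{\bullet}$ on $D_{Z}$ follows by chaining the distinguished triangles arising from the Ind-extensions of the short exact sequences \eqref{ses-deduced-gluey1} and \eqref{ses-deduced-gluey2}, which degenerate because $j^{*}M^{\bullet}=0$. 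This explicit exact quasi-inverse is what makes the derived statement immediate; an abstract localization argument would need to supply the missing hypothesis before it can conclude.
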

\begin{proof}
	As in the proof of \cite[Thm.~4.1]{IM19}, one reduces formally to the case where $S$ is affine and $Z = V(f)$ for some $f \in \Ocal(S)$. In this case, consider the exact functor
	\begin{equation*}
		\Phi_f: \Ind \Mcal(S) \rightarrow \Ind \Mcal(Z).
	\end{equation*}
	We claim that the restriction to $D_Z(\Ind \Mcal(S))$ of its trivial derived functor
	\begin{equation*}
		\Phi_f: D(\Ind \Mcal(S)) \rightarrow D(\Ind \Mcal(Z))
	\end{equation*}
	provides a quasi-inverse to $i_*$.
	The natural isomorphism of functors $D(\Ind \Mcal(Z)) \rightarrow D(\Ind \Mcal(Z))$
	\begin{equation*}
		\Phi_f(i_* M^{\bullet}) = M^{\bullet}
	\end{equation*}
	is induced by the natural isomorphism \eqref{iso:Phi_f-i_*=id} after passing to the ind-completions and deriving. The natural isomorphisms of functors $D_Z(\Ind \Mcal(S)) \rightarrow D_Z(\Ind \Mcal(S))$
	\begin{equation*}
		i_* \Phi_f(M^{\bullet}) = M^{\bullet}
	\end{equation*}
	is obtained from the distinguished triangles of functors $D(\Ind \Mcal(S)) \rightarrow D(\Ind \Mcal(S))$
	\begin{equation*}
		i_* \Psi_f(j^* M^{\bullet}) \rightarrow \Omega_f(M^{\bullet}) \rightarrow M^{\bullet} \xrightarrow{+}, \qquad j_! j^* M^{\bullet} \rightarrow \Omega_f(M^{\bullet}) \rightarrow i_* \Phi_f(M^{\bullet}) \xrightarrow{+}
	\end{equation*}
	induced from the short exact sequences of exact functors $\Ind \Mcal(S) \rightarrow \Ind \Mcal(S)$
	extending \eqref{ses-deduced-gluey1} and \eqref{ses-deduced-gluey2}, taking into account \Cref{lem-ses-Ind} above.
\end{proof}

\begin{proof}[Proof of \Cref{prop-i_*D^+Pcal}]
	We have to show that the functor $i_*: D(\Ind \Mcal(Z)) \rightarrow D(\Ind \Mcal(S))$ admits a left adjoint sending $D^b(\Mcal(S))$ to $D^b(\Mcal(Z))$. As a consequence of \Cref{lem-i_*D^+Pcal}, this is equivalent to showing that the inclusion functor $D_Z(\Ind \Mcal(S)) \subset D(\Ind \Mcal(S))$ admits a left adjoint sending $D^b(\Mcal(S))$ to $D^b_Z(\Mcal(S))$. The latter can be constructed using \v{C}ech complexes in the same way as in the proof of \cite[Prop.~4.2]{IM19}; see also \Cref{constr:cech-perv}.
\end{proof}

\begin{rem}\label{rem-pbc-DInd}
	\begin{enumerate}
		\item For every pair of composable closed immersions $z': Z' \hookrightarrow Z$ and $z: Z \hookrightarrow S$, the natural isomorphism of exact functors $\Mcal(Z')) \rightarrow \Mcal(Z)$
		\begin{equation*}
			\overline{\conn}_{z',z}: (z z')_* M \xrightarrow{\sim} z_* z'_* M
		\end{equation*}
		extends to the induced exact functors $\Ind \Mcal(Z') \rightarrow \Ind \Mcal(S)$ via \Cref{prop:indalpha}, and then to the derived functors $D(\Ind \Mcal(Z')) \rightarrow D(\Ind \Mcal(S))$. By adjunction, we obtain a natural isomorphism of functors $D(\Ind \Mcal(S)) \rightarrow D(\Ind \Mcal(Z'))$
		\begin{equation*}
			\conn_{z',z}: (z z')^* M^{\bullet} \xrightarrow{\sim} {z'}^* z^* M^{\bullet}.
		\end{equation*}
		\item Consider a Cartesian square of $k$-varieties
		\begin{equation*}
			\begin{tikzcd}
				V_Z \arrow{r}{z_V} \arrow{d}{i_Z} & V \arrow{d}{i} \\
				Z \arrow{r}{z} & S
			\end{tikzcd}
		\end{equation*}
		where both $z$ and $i$ are closed immersions. Then the natural transformation between functors $D(\Ind \Mcal(V)) \rightarrow D(\Ind \Mcal(Z))$
		\begin{equation*}
			z^* i_* M^{\bullet} \xrightarrow{\eta} z^* i_* z_{V,*} z_V^* M^{\bullet} = z^* z_* i_{Z,*} z_V^* M^{\bullet} \xrightarrow{\epsilon} i_{Z,*} z_V^* M^{\bullet}  
		\end{equation*}
		is invertible:
		indeed, this is equivalent to the invertibility of the unit arrow, which can be shown via the associated localization triangle; 
		the existence of the latter follows directly from the construction of \cite[Prop.~4.2]{IM19}.
		\item Consider a Cartesian square of $k$-varieties
		\begin{equation*}
			\begin{tikzcd}
				V_Z \arrow{r}{z_V} \arrow{d}{v_Z} & V \arrow{d}{v} \\
				Z \arrow{r}{z} & S
			\end{tikzcd}
		\end{equation*}
		where $z$ is a closed immersion and $v$ is an affine open immersion. Since the natural transformation of exact functors $\Mcal(Z) \rightarrow \Mcal(V)$
		\begin{equation*}
			v^* z_* M \xrightarrow{\eta} z_{V,*} z_V^* v^* z_* M = z_{V,*} v_Z^* z^* z_* M \xrightarrow{\epsilon} z_{V,*} v_Z^* M
		\end{equation*}
		is invertible by proper base-change, by \Cref{prop:indalpha} the same is true for the induced natural transformation of exact functors $\Ind \Mcal(Z) \rightarrow \Ind \Mcal(V)$, and hence also for the natural transformation of their trivial derived functors $D(\Ind \Mcal(Z)) \rightarrow D(\Ind \Mcal(V))$. We deduce that the corresponding natural transformation between left adjoints $D(\Ind \Mcal(V)) \rightarrow D(\Ind \Mcal(Z))$
		\begin{equation*}
			v_{Z,!} z_V^* M^{\bullet} \xrightarrow{\eta} v_{Z,!} z_V^* v^* v_! M^{\bullet} = v_{Z,!} v_Z^* z^* v_! M^{\bullet} \xrightarrow{\epsilon} z^* v_! M^{\bullet}
		\end{equation*}
		is invertible as well. Moreover, it is clear from the construction that the natural transformation of functors $D(\Ind \Mcal(S)) \rightarrow D(\Ind \Mcal(V_Z))$
		\begin{equation*}
			z_V^* v^* M^{\bullet} \xrightarrow{\eta} z_V^* v^* z_* z^* M^{\bullet} \xrightarrow{\sim} z_V^* z_{V,*} v_Z^* z^* M^{\bullet} \xrightarrow{\epsilon} v_Z^* z^* M^{\bullet}
		\end{equation*}
		is invertible.
		\item In fact, following \cite[\S~4.2]{IM19}, one could turn the assignment $S \mapsto D(\Ind \Mcal(S))$ into a $\Var_k$-fibered category; we omit the details.
	\end{enumerate}
\end{rem}

After the above preliminaries, we are ready to describe our abstract construction of internal homomorphisms on the derived ind-categories of perverse motives. Here are the details:

\begin{constr}\label{constr_homcal}
	Fix a $k$-variety $S$. For every object $M^{\bullet} \in C^b(\Mcal^0(S))$, we want to construct a canonical triangulated functor
	\begin{equation}\label{ext-otimesK}
		- \otimes M^{\bullet}: D(\Ind \Mcal^0(S)) \rightarrow D(\Ind \Mcal^0(S))
	\end{equation}
	extending the functor $- \otimes M^{\bullet}: D^b(\Mcal^0(S)) \rightarrow D^b(\Mcal^0(S))$.
	Since the latter is, by definition, the composite
	\begin{equation*}
		D^b(\Mcal^0(S)) \xrightarrow{- \boxtimes M^{\bullet}} D^b(\Mcal^0(S \times S)) \xrightarrow{\Delta_S^*} D^b(\Mcal^0(S)),
	\end{equation*}
	the task reduces to constructing the two single extensions
	\begin{equation}\label{ext-boxK}
		- \boxtimes M^{\bullet}: D(\Ind \Mcal^0(S)) \rightarrow D(\Ind \Mcal^0(S \times S))
	\end{equation}
	and
	\begin{equation}\label{ext-DeltaS}
		\Delta_S^*: D(\Ind \Mcal^0(S \times S)) \rightarrow D(\Ind \Mcal^0(S))
	\end{equation}
	and, since the second extension has already been defined in \Cref{prop-i_*D^+Pcal}, it remains to construct the first one. To this end, choose $a,b \in \Z$ such that $M^{\bullet} \in C^{[a,b]}(\Mcal^0(S))$. Recall from \cite[Lemma~15.4.1]{KS} that we have a canonical identification
	\begin{equation*}
		\Ind C^{[a,b]}(\Mcal^0(S \times S)) = C^{[a,b]}(\Ind \Mcal^0(S \times S)).
	\end{equation*}
	Consider the exact functor
	\begin{equation*}
		- \boxtimes M^{\bullet}: \Ind \Mcal^0(S) \rightarrow \Ind C^{[a,b]}(\Mcal^0(S \times S)) = C^{[a,b]}(\Ind \Mcal^0(S \times S))
	\end{equation*}
	extending the functor $- \boxtimes M^{\bullet}: \Mcal^0(S) \rightarrow C^{[a,b]}(\Mcal^0(S \times S))$. By construction, the induced functor
	\begin{equation*}
		- \boxtimes M^{\bullet}: D(\Ind \Mcal^0(S)) \rightarrow D(C^{[a,b]}(\Ind \Mcal^0(S \times S))) \xrightarrow{\Tot^{\bullet}} D(\Ind \Mcal^0(S \times S))
	\end{equation*}
	does not depend on the choice of the integers $a,b$ and defines the sought-after extension \eqref{ext-boxK}.
	It follows that the construction of the functor \eqref{ext-otimesK} is covariantly functorial with respect to $M^{\bullet} \in D^b(\Mcal^0(S))$.
	
	Note that both functors \eqref{ext-boxK} and \eqref{ext-DeltaS} commute with small direct sums: for the first one, this follows from the construction (as the functor $\Tot^{\bullet}$ commutes with small direct sums); for the second one, this follows from the fact that it is a left adjoint. Therefore the functor \eqref{ext-otimesK} commutes with small direct sums as well.
	Since, by \cite[Thm.~3.1]{Franke} the category $D(\Ind \Mcal^0(S))$ satisfies Brown representability, we deduce that, for every fixed $B^{\bullet} \in D(\Ind \Mcal^0(S))$, the functor
	\begin{equation*}
		D(\Ind \Mcal^0(S)) \rightarrow D(\Ind \Mcal^0(S)), \quad A^{\bullet} \rightsquigarrow \Hom_{D(\Ind \Mcal^0(S))}(A^{\bullet} \otimes M^{\bullet},B^{\bullet})
	\end{equation*}
	is representable by an object $\Homcal_S(M^{\bullet},B^{\bullet}) \in D(\Ind \Mcal^0(S))$. The assignment $B^{\bullet} \rightsquigarrow \Homcal_S(M^{\bullet},B^{\bullet})$ is functorial in $B^{\bullet} \in D(\Ind \Mcal^0(S))$ by construction, and so defines a functor
	\begin{equation}\label{HomcalK}
		\Homcal_S(M^{\bullet},-): D(\Ind \Mcal^0(S)) \rightarrow D(\Ind \Mcal^0(S))
	\end{equation}
	which is a triangulated right adjoint to \eqref{ext-otimesK}. Since, as already noted above, the construction of \eqref{ext-otimesK} is covariantly functorial in $M^{\bullet} \in D^b(\Mcal^0(S))$, we deduce that the construction of \eqref{HomcalK} is contravariantly functorial in $M^{\bullet} \in D^b(\Mcal^0(S))$.
\end{constr}

It is not clear a priori whether \eqref{HomcalK} also sends $D^b(\Mcal^0(S))$ to itself: this is shown in the proof of \Cref{thm_homcal} below. 

In order to understand the abstract internal homomorphisms better, we study their compatibility with open and closed immersions.

\begin{constr}\label{constr:otimes-DIndM}
	For every open immersion $j: U \hookrightarrow S$, we have a canonical natural isomorphism of functors $D(\Ind \Mcal^0(S)) \times D^b(\Mcal^0(S)) \rightarrow D(\Ind \Mcal^0(U))$
	\begin{equation*}
		\tilde{m} = \tilde{m}_j: j^* A^{\bullet} \otimes j^* M^{\bullet} \xrightarrow{\sim} j^*(A^{\bullet} \otimes M^{\bullet})
	\end{equation*}
	extending the given isomorphism of functors $D^b(\Mcal^0(S)) \times D^b(\Mcal^0(S)) \rightarrow D^b(\Mcal^0(S))$. It is obtained as the composite
	\begin{equation*}
		j^* A^{\bullet} \otimes j^* M^{\bullet} := \Delta_U^*(j^* A^{\bullet} \boxtimes j^* M^{\bullet}) \xrightarrow{\tilde{m}} \Delta_U^* (j \times j)^* (A^{\bullet} \boxtimes M^{\bullet}) = j^* \Delta_U^* (A^{\bullet} \boxtimes M^{\bullet}) =: j^*(A^{\bullet} \otimes M^{\bullet})
	\end{equation*}
	where the first arrow is obtained from the natural isomorphism of functors $\Mcal^0(S) \times \Mcal^0(S) \rightarrow \Mcal^0(U \times U)$
	\begin{equation*}
		\tilde{m}_{j,j}: j^* A^{\bullet} \boxtimes j^* M^{\bullet} \xrightarrow{\sim} (j \times j)^*(A^{\bullet} \boxtimes M^{\bullet})
	\end{equation*}
	after passing to the ind-completion in the first variable and then to the derived categories, while the second arrow is described in \Cref{rem-pbc-DInd}(3).
	
	Similarly, for every closed immersion $i: Z \hookrightarrow S$, we have a canonical natural isomorphism of functors $D(\Ind \Mcal^0(Z)) \times D^b(\Mcal^0(Z)) \rightarrow D(\Ind \Mcal^0(S))$
	\begin{equation*}
		\bar{\tilde{m}} = \bar{\tilde{m}}_i: i_* A^{\bullet} \otimes i_* M^{\bullet} \xrightarrow{\sim} i_* (A^{\bullet} \otimes M^{\bullet})
	\end{equation*}
	extending the given isomorphism of functors $D^b(\Mcal^0(Z)) \times D^b(\Mcal^0(Z)) \rightarrow D^b(\Mcal^0(S))$. It is obtained as the composite
	\begin{equation*}
		i_* A^{\bullet} \otimes i_* M^{\bullet} = \Delta_S^*(i_* A^{\bullet} \boxtimes i_* M^{\bullet}) \xrightarrow{\bar{\tilde{m}}} \Delta_S^* (i \times i)_* (A^{\bullet} \boxtimes M^{\bullet}) \xrightarrow{\sim} i_* \Delta_Z^*(A^{\bullet} \boxtimes M^{\bullet}) =: i_* (A^{\bullet} \otimes M^{\bullet}).
	\end{equation*}
	where the first arrow is obtained from the natural isomorphism of functors $\Mcal^0(Z) \times \Mcal^0(Z) \rightarrow \Mcal^0(S \times S)$
	\begin{equation*}
		\bar{\tilde{m}}_{i,i}: i_* A \boxtimes i_* M \xrightarrow{\sim} (i \times i)_* (A \boxtimes M)
	\end{equation*}
	after passing to the ind-completion in the first variable and then to the derived categories, while the second arrow is described in \Cref{rem-pbc-DInd}(2).
\end{constr}

\begin{lem}\label{lem-projPcal}
	Let $j: U \hookrightarrow S$ be an affine open immersion of $k$-varieties, with complementary closed immersion $i: Z \hookrightarrow S$. The natural transformations of functors $D(\Ind \Mcal^0(S)) \times D^b(\Mcal^0(S)) \rightarrow D(\Ind \Mcal^0(S))$
	\begin{equation}\label{aux-proj-DInd}
		j_! (j^* A^{\bullet} \otimes j^* M^{\bullet}) \xrightarrow{\eta} j_!(j^* A \otimes j^* j_! j^* M) \xrightarrow{\tilde{m}} j_! j^*(A \otimes j_! j^* M) \xrightarrow{\epsilon} A^{\bullet} \otimes j_! j^* M^{\bullet}
	\end{equation}
	and
	\begin{equation}\label{aux-sup-DInd}
		A^{\bullet} \otimes i_* i^* M^{\bullet} \xrightarrow{\eta} i_* i^* A^{\bullet} \otimes i_* i^* M^{\bullet} \xrightarrow{\bar{\tilde{m}}} i_* (i^* A^{\bullet} \otimes i^* M^{\bullet})
	\end{equation}
	are invertible.
\end{lem}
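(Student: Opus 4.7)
The plan is to argue by devissage in the $A^{\bullet}$-variable, reducing both invertibility statements to the classical projection formulas already established on $D^b(\Mcal^0(\cdot))$.

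First, for a fixed $M^{\bullet} \in D^b(\Mcal^0(S))$, I will verify that both sides of each of the two natural transformations define triangulated functors $D(\Ind \Mcal^0(S)) \rightarrow D(\Ind \Mcal^0(S))$ commuting with arbitrary small direct sums. This reduces to checking the corresponding property for the basic constituent functors: $j^{*}$, $j_{!}$ and $i^{*}$ are triangulated left adjoints (see \Cref{cor:ind-adj} and \Cref{prop-i_*D^+Pcal}), hence preserve small direct sums; $i_*$ is the trivial derived functor of an exact functor $\Ind \Mcal^0(Z) \rightarrow \Ind \Mcal^0(S)$ which commutes with arbitrary small filtered colimits by \Cref{prop-IndF}, and therefore commutes with small direct sums at the derived level as well; finally, the functor $- \otimes N^{\bullet}$ attached to any bounded complex $N^{\bullet}$ commutes with small direct sums as was noted explicitly in \Cref{constr_homcal}.

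Second, I will invoke the fact that the image of $\Mcal^0(S)$ is a set of generators of the Grothendieck abelian category $\Ind \Mcal^0(S)$ (by \Cref{prop-IndAcal}), and therefore generates $D(\Ind \Mcal^0(S))$ as a triangulated category closed under small direct sums. By a standard devissage, the two natural transformations will then be invertible on all of $D(\Ind \Mcal^0(S))$ provided they are invertible whenever $A^{\bullet} = A \in \Mcal^0(S)$. In this situation, all the objects and arrows appearing in the two natural transformations in fact live in the essential image of the fully faithful embedding $D^b(\Mcal^0(\cdot)) \hookrightarrow D(\Ind \Mcal^0(\cdot))$ of \Cref{prop-IndAcal}(3).

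Third, I will check by direct inspection of \Cref{constr:otimes-DIndM} that, upon restriction to $D^b(\Mcal^0(\cdot))$, the two natural transformations in the statement recover respectively the classical projection formula for the affine open immersion $j$ and the support formula for the complementary closed immersion $i$ of the closed symmetric monoidal $\Var_k$-fibered category $D^b(\Mcal(\cdot))$ produced by \Cref{thm-boxtimesM}. Their invertibility is then part of the six functor formalism on perverse motives: as observed already in the proof of \Cref{prop-extcore-Mp}, the validity of the projection formulas on $D^b(\Mcal(\cdot))$ follows by conservativity of $\iota$ from their validity on the underlying constructible derived categories $D^b(\Perv(\cdot))$. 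The main obstacle I anticipate is this third step: carefully tracking the chain of base change isomorphisms of \Cref{rem-pbc-DInd}(2)(3) that enter the definitions of $\tilde{m}$ and $\bar{\tilde{m}}$, and checking that on bounded complexes the resulting natural transformations genuinely match the classical projection and support formulas. Once this bookkeeping is in place, the two reductions above conclude the proof.
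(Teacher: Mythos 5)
Your dévissage strategy is structurally sound, but it takes a genuinely different route from the paper's proof, and the step you flag as the main obstacle is a real gap, not bookkeeping: it amounts to work roughly as hard as the lemma itself. The paper never compares with the bounded derived category. For \eqref{aux-proj-DInd}, it reduces the statement to the counit $\epsilon: j_!j^*(A^\bullet \otimes j_!j^*M^\bullet) \to A^\bullet \otimes j_!j^*M^\bullet$, rewrites this through the base-change isomorphism of \Cref{rem-pbc-DInd}(3) entirely in terms of the external tensor product $\boxtimes$, and deduces invertibility from the corresponding transformation at the bi-exact functor level $\Mcal^0(S) \times \Mcal^0(S) \to \Mcal^0(S\times S)$. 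For \eqref{aux-sup-DInd}, the paper does not use any projection formula for $i_*$ at all: by the same argument it shows $\epsilon: j_!j^*A^\bullet \otimes j_!j^*M^\bullet \to j_!j^*A^\bullet \otimes M^\bullet$ is invertible; the localization triangle in $M^\bullet$ then forces $j_!j^*A^\bullet \otimes i_*i^*M^\bullet = 0$; and the localization triangle in $A^\bullet$ then yields the invertibility of $\eta: A^\bullet \otimes i_*i^*M^\bullet \to i_*i^*A^\bullet \otimes i_*i^*M^\bullet$, which is exactly \eqref{aux-sup-DInd}.

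Your reductions are correct as far as they go: the list of functors commuting with small direct sums is right, $\Mcal^0(S)$ does generate $D(\Ind\Mcal^0(S))$ under triangles and small direct sums, and for $A \in \Mcal^0(S)$ all the objects land in $D^b(\Mcal^0(\cdot))$. The missing ingredient is the claim that, on $D^b(\Mcal^0(\cdot))$, the transformations in the statement \emph{are} the classical projection and support formulas. The data $\tilde m_j$ and $\bar{\tilde m}_i$ of \Cref{constr:otimes-DIndM} are assembled from the ind-extension of $\boxtimes$ in \Cref{constr_homcal} and from the \v{C}ech-complex model of $\Delta_S^*$ in \Cref{prop-i_*D^+Pcal} together with the pseudo-functoriality isomorphisms of \Cref{rem-pbc-DInd}; the projection formulas you want to cite on $D^b(\Mcal(\cdot))$ come instead from the external tensor core of \Cref{prop-extcore-Mp} through the formalism of \cite{Ter23Fib}. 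That these two independently built packages of coherence isomorphisms agree upon restriction is plausible, but nothing in the paper establishes it, and it is not a formal consequence of any universal property used so far. Until that compatibility is proved, the reduction to $A \in \Mcal^0(S)$ does not deliver the statement. The paper's approach is designed precisely to avoid this comparison, which is why it stays entirely inside the $\Ind$-level constructions.
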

\begin{proof}
	Showing that the natural transformation \eqref{aux-proj-DInd} is invertible is equivalent to showing the invertibility of the co-unit transformation
	\begin{equation*}
		\epsilon: j_! j^*(A^{\bullet} \otimes j_! j^* M^{\bullet}) \rightarrow A^{\bullet} \otimes j_! j^* M^{\bullet}.
	\end{equation*}
	The latter can be written as the composite
	\begin{equation*}
		\begin{tikzcd}[font=\small]
			j_! j^*(A^{\bullet} \otimes j_! j^* M^{\bullet}) \arrow[equal]{d} & A^{\bullet} \otimes j_! j^* M^{\bullet} \arrow[equal]{d} \\
			j_! j^* \Delta_S^*(A^{\bullet} \boxtimes j_! j_* M^{\bullet}) \arrow[equal]{d} & \Delta_S^* (A^{\bullet} \boxtimes j_! j^* M^{\bullet}) \\
			j_! \Delta_U^* (j \times j)^* (A^{\bullet} \boxtimes j_! j^* M^{\bullet}) \arrow{r}{\sim} & \Delta_S^* (j \times j)_! (j \times j)^* (A^{\bullet} \boxtimes j_! j^* M^{\bullet}) \arrow{u}{\epsilon}
		\end{tikzcd}
	\end{equation*}
	where the central arrow is invertible by \Cref{rem-pbc-DInd}(3). Thus it suffices to show the invertibility of the co-unit
	\begin{equation*}
		\epsilon: (j \times j)_! (j \times j)^* (A^{\bullet} \boxtimes j_! j^* M^{\bullet}) \rightarrow A^{\bullet} \boxtimes j_! j^* M^{\bullet},
	\end{equation*}
	which follows from the invertibility of the analogous natural transformation between functors $\Mcal^0(S) \times \Mcal^0(S) \rightarrow \Mcal^0(S \times S)$.
	The same argument allows us to conclude that the co-unit transformation of functors $D(\Ind \Mcal^0(S)) \times D^b(\Mcal^0(S)) \rightarrow D(\Ind \Mcal^0(S))$
	\begin{equation*}
		\epsilon: j_! j^* A^{\bullet} \otimes j_! j^* M^{\bullet} \rightarrow j_! j^* A^{\bullet} \otimes M^{\bullet}
	\end{equation*}
	is invertible. By the localization triangle in the second variable, this implies the vanishing of the functor $D(\Ind \Mcal^0(S)) \times D^b(\Mcal^0(S)) \rightarrow D(\Ind \Mcal^0(S))$
	\begin{equation*}
		j_! j^* A^{\bullet} \otimes i_* i^* M^{\bullet} = 0.
	\end{equation*}
	By the localization triangle in the first variable, this in turn implies that the unit transformation of functors $D(\Ind \Mcal^0(S)) \times D^b(\Mcal^0(S)) \rightarrow D(\Ind \Mcal^0(S))$
	\begin{equation*}
		\eta: A^{\bullet} \otimes i_* i^* M^{\bullet} \rightarrow i_* i^* A^{\bullet} \otimes i_* i^* M^{\bullet}
	\end{equation*}
	is invertible, which is equivalent to the invertibility of the natural transformation \eqref{aux-sup-DInd}.
\end{proof}

\begin{cor}\label{cor-projPcal}
	Keep the notation and assumptions of \Cref{lem-projPcal}. Then there exist canonical natural isomorphisms of functors $D^b(\Mcal^0(S))^{op} \times D(\Ind \Mcal^0(S)) \rightarrow D(\Ind \Mcal^0(S))$
	\begin{equation*}
		\Homcal_S(j_! j^* M^{\bullet}, B^{\bullet}) \xrightarrow{\sim} j_* \Homcal_U (j^* M^{\bullet}, j^* B^{\bullet}), \quad \Homcal_S(i_* i^* M^{\bullet},B^{\bullet}) \xrightarrow{\sim} i_* \Homcal_Z(i^* M^{\bullet}, i^! B^{\bullet}).
	\end{equation*}
\end{cor}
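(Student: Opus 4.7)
The plan is to prove both isomorphisms by the Yoneda lemma, identifying the functors $D(\Ind \Mcal^0(S))^{op} \rightarrow \Ab$ represented by each side. The inputs are exactly the two natural isomorphisms of \Cref{lem-projPcal} (the projection formula \eqref{aux-proj-DInd} and the support formula \eqref{aux-sup-DInd}), together with the standard chains of adjunctions $j_! \dashv j^* \dashv j_*$ and $i^* \dashv i_* \dashv i^!$ on the triangulated ind-categories $D(\Ind \Mcal^0(\cdot))$.

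For the first isomorphism I would argue as follows. Given any $A^{\bullet} \in D(\Ind \Mcal^0(S))$, the adjunction that defines $\Homcal_S(j_!j^*M^{\bullet},-)$ in \Cref{constr_homcal} gives
\begin{equation*}
	\Hom(A^{\bullet}, \Homcal_S(j_! j^* M^{\bullet}, B^{\bullet})) = \Hom(A^{\bullet} \otimes j_! j^* M^{\bullet}, B^{\bullet}),
\end{equation*}
and the projection formula \eqref{aux-proj-DInd} rewrites the right-hand side as $\Hom(j_!(j^*A^{\bullet} \otimes j^*M^{\bullet}), B^{\bullet})$. Passing through $j_! \dashv j^*$ identifies this with $\Hom(j^*A^{\bullet} \otimes j^*M^{\bullet}, j^*B^{\bullet})$, hence with $\Hom(j^*A^{\bullet}, \Homcal_U(j^*M^{\bullet}, j^*B^{\bullet}))$, and finally $j^* \dashv j_*$ turns this into $\Hom(A^{\bullet}, j_*\Homcal_U(j^*M^{\bullet}, j^*B^{\bullet}))$. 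Naturality in $A^{\bullet}$ and $B^{\bullet}$ is automatic since every step is an adjunction bijection or the natural isomorphism of \Cref{lem-projPcal}; Yoneda then yields the first canonical isomorphism. One needs $j^*M^{\bullet} \in D^b(\Mcal^0(U))$ in order for $\Homcal_U(j^*M^{\bullet},-)$ to make sense, but this is clear since $j^*$ is $t$-exact.

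For the second isomorphism, the same Yoneda strategy applies verbatim, now chaining the adjunction $-\otimes i_*i^*M^{\bullet} \dashv \Homcal_S(i_*i^*M^{\bullet},-)$, the support formula \eqref{aux-sup-DInd} rewriting $A^{\bullet} \otimes i_* i^* M^{\bullet}$ as $i_*(i^*A^{\bullet} \otimes i^*M^{\bullet})$, and then the adjunctions $i_* \dashv i^!$, the defining adjunction for $\Homcal_Z(i^*M^{\bullet},-)$, and $i^* \dashv i_*$. The fact that $i^*M^{\bullet}$ lies in $D^b(\Mcal^0(Z))$, which is needed for $\Homcal_Z(i^*M^{\bullet},-)$ to be defined, is part of \Cref{prop-i_*D^+Pcal}. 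The only nontrivial gap is the existence of the right adjoint $i^!: D(\Ind \Mcal^0(S)) \rightarrow D(\Ind \Mcal^0(Z))$ to $i_*$, which is not constructed explicitly in the excerpt; I would obtain it by exactly the Brown representability argument used for $\Homcal$ in \Cref{constr_homcal}, since $i_*: D(\Ind \Mcal^0(Z)) \rightarrow D(\Ind \Mcal^0(S))$ is an exact functor between triangulated categories satisfying Brown representability (by \cite{Franke}) and preserves arbitrary small direct sums (being the trivial derivation of an exact functor between Grothendieck abelian categories that commutes with filtered colimits by construction). This verification is the only genuine obstacle; everything else is a formal chain of adjunctions and applications of \Cref{lem-projPcal}.
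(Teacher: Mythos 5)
Your proposal is correct and matches the paper's own (one-line) proof, which simply says that the two isomorphisms are obtained "by adjunction" from \Cref{lem-projPcal} and are functorial in $M^{\bullet}$; the explicit Yoneda chains you write out are exactly the content of that remark. You are also right that a right adjoint $i^! \colon D(\Ind\Mcal^0(S)) \to D(\Ind\Mcal^0(Z))$ to $i_*$ is not explicitly constructed in the preceding subsection (the paper only gestures at the full fibered structure in \Cref{rem-pbc-DInd}(4)), and the Brown-representability construction you propose — using that $i_*$ is the trivial derived functor of $\Ind(i_*)$, which commutes with filtered colimits and hence preserves small coproducts, together with \cite[Thm.~3.1]{Franke} for the source category — is a correct way to supply it.
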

\begin{proof}
	For every fixed $M^{\bullet} \in D^b(\Mcal^0(S))$, the natural isomorphisms of functors $D(\Ind \Mcal^0(S)) \rightarrow D(\Ind \Mcal^0(S))$ in the statement are obtained by adjunction from those of \Cref{lem-projPcal}. Since the latter natural isomorphisms are functorial in $M^{\bullet} \in D^b(\Mcal^0(S))$, so are those in the statement.
\end{proof}

\subsection{Proof of the main result}

The strategy for the proof of \Cref{thm_homcal} is based on two abstract results that we now describe. For sake of clarity we formulate them in the setting of perverse motives and perverse sheaves, but they might be useful in more general settings as well. 

Firstly, we need a criterion ensuring that the abstract internal homomorphisms just constructed respect the categories $D^b(\Mcal(S))$.

\begin{prop}\label{prop-crit-homcal}
	Suppose that, for every $k$-variety $S$ and every $M \in \Mcal^0(S)$, there is a dense affine open immersion $j: U \hookrightarrow S$ such that the functor
	\begin{equation*}
		\Homcal_U(j^* M, -): D(\Ind \Mcal^0(U)) \rightarrow D(\Ind \Mcal^0(U))
	\end{equation*}
	sends $D^b(\Mcal^0(U))$ to itself. Then, for every $k$-variety $S$, the functor
	\begin{equation}\label{homcal-propstat}
		\Homcal_S(-,-): D^b(\Mcal^0(S))^{op} \times D(\Ind \Mcal^0(S)) \rightarrow D(\Ind \Mcal^0(S))
	\end{equation}
	sends $D^b(\Mcal^0(S))^{op} \times D^b(\Mcal^0(S))$ to $D^b(\Mcal^0(S))$.
\end{prop}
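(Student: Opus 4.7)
The plan is to use Noetherian induction on $S$, with a localization triangle splitting the computation of $\Homcal_S(M,N)$ into one piece controlled by the hypothesis of the proposition and one piece controlled by the inductive step. A preliminary dévissage will first reduce to the case where both arguments are single objects in the heart.

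For the dévissage: fixing $M \in D^b(\Mcal^0(S))$, the functor $\Homcal_S(M,-): D(\Ind\Mcal^0(S)) \to D(\Ind\Mcal^0(S))$ is triangulated by \Cref{constr_homcal}, so the class of $N^\bullet \in D^b(\Mcal^0(S))$ for which $\Homcal_S(M,N^\bullet) \in D^b(\Mcal^0(S))$ is a triangulated subcategory; standard truncation dévissage against the natural $t$-structure on $D^b(\Mcal^0(S))$ reduces us to $N^\bullet \in \Mcal^0(S)$. A symmetric argument in the first (contravariant) variable, using the bi-triangulated nature of the tensor--hom adjunction, further reduces us to proving that $\Homcal_S(M,N) \in D^b(\Mcal^0(S))$ for every $M,N \in \Mcal^0(S)$.

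For this, I argue by Noetherian induction on $S$, the empty case being vacuous. By hypothesis choose a dense affine open immersion $j: U \hookrightarrow S$ such that $\Homcal_U(j^* M, -)$ preserves $D^b(\Mcal^0(U))$, and let $i: Z \hookrightarrow S$ denote the complementary closed immersion, so that $\dim Z < \dim S$. The four functor formalism of \cite[Thm.~5.1]{IM19} supplies a localization triangle
\begin{equation*}
    j_! j^* M \longrightarrow M \longrightarrow i_* i^* M \xrightarrow{+}
\end{equation*}
in $D^b(\Mcal^0(S))$. Applying the contravariant triangulated functor $\Homcal_S(-, N)$ and invoking \Cref{cor-projPcal} to rewrite both outer terms produces the distinguished triangle
\begin{equation*}
    i_* \Homcal_Z(i^* M, i^! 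N) \longrightarrow \Homcal_S(M,N) \longrightarrow j_* \Homcal_U(j^* M, j^* N) \xrightarrow{+}
\end{equation*}
in $D(\Ind\Mcal^0(S))$. The right-hand term lies in $D^b(\Mcal^0(S))$ since $\Homcal_U(j^* M, j^* N) \in D^b(\Mcal^0(U))$ by the preservation hypothesis (with $j^* N \in \Mcal^0(U)$) and since $j_*$ is exact, $j$ being affine. The left-hand term lies in $D^b(\Mcal^0(S))$ because $i^* M, i^! N \in D^b(\Mcal^0(Z))$ via the four functor formalism, the inductive hypothesis applied to $Z$ yields $\Homcal_Z(i^* M, i^! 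N) \in D^b(\Mcal^0(Z))$, and $i_*$ is exact. Triangulated closure of $D^b(\Mcal^0(S))$ inside $D(\Ind\Mcal^0(S))$ then places $\Homcal_S(M,N)$ in $D^b(\Mcal^0(S))$, completing the induction.

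The principal technical obstacle is securing the coherence between the four functor formalism on the bounded derived categories $D^b(\Mcal^0(\cdot))$ of \cite[Thm.~5.1]{IM19} and its ind-categorical extension underlying \Cref{prop-i_*D^+Pcal}, \Cref{constr_homcal}, and \Cref{cor-projPcal}: one must check that the localization triangle of \cite{IM19} maps to a bona fide distinguished triangle in $D(\Ind\Mcal^0(S))$ of the shape expected by the corollary, and that the ind-categorical operations $j_!, j_*, i_*, i^*, i^!$ restrict to the bounded operations when evaluated on objects of $D^b(\Mcal^0(\cdot))$. Once these compatibilities are in place, the dévissage-and-induction scheme proceeds cleanly.
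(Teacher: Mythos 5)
Your proof is correct and takes essentially the same approach as the paper: Noetherian induction on $S$, dévissage to the heart, the localization triangle $j_! j^* M \to M \to i_* i^* M$, and the identifications of \Cref{cor-projPcal} to control the two outer terms (the open part by the hypothesis, the closed part by the inductive hypothesis). The only organizational difference is that you dévissage both variables to the heart up front, whereas the paper reduces only the first variable $M^\bullet$ to a single object $M \in \Mcal^0(S)$ and keeps $B^\bullet$ a general bounded complex; the paper's variant is marginally tidier since the inductive step naturally produces complexes $i^! B^\bullet$, $j^* B^\bullet$ in the second slot anyway, but both bookkeeping choices work and the mathematics is the same.
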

\begin{proof}
	We have to show that, for every $k$-variety $S$ and every $M^{\bullet} \in D^b(\Mcal^0(S))$, the functor
	\begin{equation*}
		\Homcal_S(M^{\bullet},-): D(\Ind \Mcal^0(S)) \rightarrow D(\Ind \Mcal^0(S))
	\end{equation*}
	sends $D^b(\Mcal^0(S))$ to itself. We argue by Noetherian induction on $S$. If $\dim(S) = 0$, the result holds by hypothesis if $M^{\bullet}$ is concentrated in degree $0$; the general case follows by dévissage, since $D^b(\Mcal^0(S))$ is a triangulated subcategory of $D(\Ind \Mcal^0(S))$ and the functor \eqref{homcal-propstat} is triangulated in the first variable.
	
	For the inductive step, suppose that $\dim(S) > 0$ and that the result is known to hold for all proper closed subvarieties of $S$. As in the previous paragraph, we reduce immediately to the case where the complex $M^{\bullet}$ is concentrated in degree $0$, so that we may identify it with an object $M \in \Mcal^0(S)$. In this case, choose a dense affine open immersion $j: U \hookrightarrow S$ satisfying the hypothesis in the statement; let $i: Z \hookrightarrow S$ denote the complementary closed immersion. For every $B^{\bullet} \in D^b(\Mcal^0(S))$, applying the functor $\Homcal_S(-,B^{\bullet})$ to the localization triangle
	\begin{equation*}
		j_! j^* M \rightarrow M \rightarrow i_* i^* M \xrightarrow{+}
	\end{equation*}
	we obtain the distinguished triangle in $D(\Ind \Mcal^0(S))$
	\begin{equation*}
		\Homcal_S(i_* i^* M,B^{\bullet}) \rightarrow \Homcal_S(M,B^{\bullet}) \rightarrow \Homcal_S(j_! j^* M,B^{\bullet}) \xrightarrow{+}.
	\end{equation*}
	Since $D^b(\Mcal^0(S))$ is a triangulated subcategory of $D(\Ind \Mcal^0(S))$, it suffices to show that the two lateral terms in the latter triangle belong to $D^b(\Mcal^0(S))$ in order to conclude the same for the term in the middle. By \Cref{cor-projPcal} we have canonical isomorphisms
	\begin{equation*}
		\Homcal_S(i_* i^* M,B^{\bullet}) = i_* \Homcal_Z(i^* M, i^! B^{\bullet}), \qquad \Homcal_S(j_! j^* M, B^{\bullet}) = j_* \Homcal_U (j^* M, j^* B^{\bullet}).
	\end{equation*}
	Since we know that $i^* M, i^! B^{\bullet} \in D^b(\Mcal^0(S))$, applying the inductive hypothesis we deduce that $\Homcal_Z(i^* M,i^! B^{\bullet}) \in D^b(\Mcal^0(Z))$, and therefore that $i_* \Homcal_Z(i^* M,i^! B^{\bullet}) \in D^b(\Mcal^0(S))$. Similarly, since we know that $j^* B^{\bullet} \in D^b(\Mcal^0(U))$, applying the hypothesis on $j^* M$ we deduce that $\Homcal_U(j^* M,j^* B^{\bullet}) \in D^b(\Mcal^0(U))$. This concludes the proof.
\end{proof}

Secondly, we need a criterion ensuring that the monoidal morphism $\iota: D^b(\Mcal^0(\cdot)) \rightarrow D^b(\Perv(\cdot))$ is compatible with internal homomorphisms.

\begin{prop}\label{prop-crit-homcal-R}
	Suppose that the hypotheses (and hence the conclusions) of \Cref{prop-crit-homcal} are satisfied. Suppose in addition that, for every $k$-variety $S$ and every $M \in \Mcal^0(S)$, there is a dense affine open immersion $j: U \hookrightarrow S$ such that the natural transformation of functors $D^b(\Mcal^0(U)) \rightarrow D^b(\Perv(U))$
	\begin{equation*}
		\iota_U \Homcal_U(j^* M,B^{\bullet}) \rightarrow \Homcal_S(\iota_U(j^* M),\iota_U(B^{\bullet}))
	\end{equation*}
	is invertible. Then, for every $k$-variety $S$, the natural transformation of functors $D^b(\Mcal^0(S)) \rightarrow D^b(\Perv(S))$
	\begin{equation*}
		\iota_S \Homcal_S(M^{\bullet},B^{\bullet}) \rightarrow \Homcal_S(\iota_S(M^{\bullet}),\iota_S(B^{\bullet}))
	\end{equation*}
	is invertible.
\end{prop}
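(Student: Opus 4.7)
The plan is to follow closely the Noetherian induction strategy of \Cref{prop-crit-homcal}, replacing the statement "belongs to $D^b(\Mcal^0(S))$" by "the comparison morphism is invertible". The key formal input will be the naturality of the canonical comparison map $\iota_S \Homcal_S(-,-) \to \Homcal_S(\iota_S(-), \iota_S(-))$ with respect to the adjunction identifications of \Cref{cor-projPcal}, which hold identically on the perverse side since Ivorra--Morel's stable homotopy $2$-functor $\iota$ is compatible with inverse and direct images under open and closed immersions.

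More precisely, I would argue by Noetherian induction on $S$. If $\dim(S) = 0$, reduce immediately by dévissage in the first variable to the case where $M^\bullet$ is concentrated in degree zero, and invoke the hypothesis of the statement (with $j = \id_S$, which is the only dense affine open immersion available). For the inductive step, assume $\dim(S) > 0$ and that the conclusion holds over all proper closed subvarieties of $S$; dévissage in $M^\bullet$ again reduces us to $M^\bullet = M \in \Mcal^0(S)$. Pick a dense affine open $j : U \hookrightarrow S$ satisfying the assumption with complementary closed immersion $i : Z \hookrightarrow S$, and apply $\Homcal_S(-,\iota_S B^\bullet)$ and $\iota_S \Homcal_S(-,B^\bullet)$ to the localization triangle $j_! j^* M \to M \to i_* i^* M \xrightarrow{+}$. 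This produces a morphism of distinguished triangles in $D^b(\Perv(S))$ of the form
\begin{equation*}
    \begin{tikzcd}[font=\small]
        \iota_S \Homcal_S(i_* i^* M, B^\bullet) \arrow{r} \arrow{d} & \iota_S \Homcal_S(M, B^\bullet) \arrow{r} \arrow{d} & \iota_S \Homcal_S(j_! j^* M, B^\bullet) \arrow{d} \arrow{r}{+} & {} \\
        \Homcal_S(i_* i^* \iota_S M, \iota_S B^\bullet) \arrow{r} & \Homcal_S(\iota_S M, \iota_S B^\bullet) \arrow{r} & \Homcal_S(j_! j^* \iota_S M, \iota_S B^\bullet) \arrow{r}{+} & {}
    \end{tikzcd}
\end{equation*}
so by the triangulated five-lemma it suffices to prove that the two outer vertical arrows are invertible. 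Using \Cref{cor-projPcal} on both the motivic side and its classical analogue on the perverse side, the left vertical arrow fits into a commutative diagram identifying it with $i_*$ applied to the comparison arrow $\iota_Z \Homcal_Z(i^* M, i^! B^\bullet) \to \Homcal_Z(\iota_Z i^* M, i^! \iota_S B^\bullet)$, which is invertible by the inductive hypothesis applied to the proper closed subvariety $Z$ (noting that $i^! B^\bullet \in D^b(\Mcal^0(Z))$ by the conclusions of \Cref{prop-crit-homcal}, already granted by hypothesis). Similarly the right vertical arrow is identified with $j_*$ applied to $\iota_U \Homcal_U(j^* M, j^* B^\bullet) \to \Homcal_U(\iota_U j^* M, \iota_U j^* B^\bullet)$, which is invertible precisely by the hypothesis placed on the chosen $j$.

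The main step that requires care is checking that the two identifications of \Cref{cor-projPcal} are genuinely compatible with the comparison morphism $\iota \Homcal \to \Homcal(\iota(-), \iota(-))$; this is a diagram chase involving the naturality of the adjunctions $(j_!, j^*)$, $(i^*, i_*)$ and the fact that, at the level of the morphism of stable homotopy $2$-functors $\iota$, the projection formulas of \Cref{lem-projPcal} are instances of the unitary symmetric monoidal structure on $\iota$ obtained in \Cref{thm-boxtimesM}. Once this compatibility is spelt out, the Noetherian induction runs smoothly and the base case reduces to the statement assumed in the hypothesis. Thus the only real content beyond formal manipulations is the verification of these compatibility squares, which I expect to be the longest but not the hardest part of the argument.
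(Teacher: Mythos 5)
Your proposal matches the paper's proof: the same Noetherian induction, the same dévissage to a degree-zero object $M$, the same localization triangle $j_! j^* M \to M \to i_* i^* M$, the same morphism of distinguished triangles, and the same reduction of the two outer arrows to \Cref{cor-projPcal} combined with the inductive hypothesis on $Z$ and the hypothesis on $j$. The one step you flag but do not carry out — compatibility of the identifications of \Cref{cor-projPcal} with the comparison map — is exactly what the paper isolates as \Cref{lem-comm-Rproj} and \Cref{cor-comm-Rproj}, so your diagnosis of where the real work lies is also correct.
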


As a preparation for the proof of \Cref{prop-crit-homcal-R}, we need to describe the relations among the various natural isomorphisms witnessing the compatibility of internal homomorphisms with open and closed immersions. For simplicity, we state these auxiliary results in the setting of (closed) monoidal stable homotopy $2$-functors of \cite[\S~2.3]{Ayo07a}.

\begin{lem}\label{lem-comm-Rproj}
	Let $R: \Hbb_1 \rightarrow \Hbb_2$ be a monoidal morphism of monoidal stable homotopy $2$-functors. Then the following assertions hold:
	\begin{enumerate}
		\item For every closed immersion $i: Z \hookrightarrow S$, and every $M \in \Hbb_1(Z)$, the diagram of functors $\Hbb_1(S) \rightarrow \Hbb_2(S)$
		\begin{equation*}
			\begin{tikzcd}
				R_S(i_*(i^* A \otimes M)) \isoarrow{dd} & i_* R_Z(i^* A \otimes M) \arrow{l}{\bar{\theta}^{-1}} & i_*(R_Z(i^* A) \otimes R_Z(M)) \arrow{l}{\rho} \\
				&& i_*(i^* R_S(A) \otimes R_Z(M)) \arrow{u}{\theta} \isoarrow{d} \\
				R_S(A \otimes i_* M) & R_S(A) \otimes R_S(i_* M) \arrow{l}{\rho}  & R_S(A) \otimes i_* R_Z(M) \arrow{l}{\bar{\theta}^{-1}}
			\end{tikzcd}
		\end{equation*}
		is commutative.
		\item For every open immersion $j: U \hookrightarrow S$, and every $M \in \Hbb_1(U)$, the diagram of functors $\Hbb_1(S) \rightarrow \Hbb_2(S)$
		\begin{equation*}
			\begin{tikzcd}
				R_S(j_!(j^* A \otimes M)) \isoarrow{dd} & j_! R_U(j^* A \otimes M) \arrow{l}{\bar{\theta}} & j_!(R_U(j^* A) \otimes R_U(M)) \arrow{l}{\rho} \\
				&& j_!(j^* R_S(A) \otimes R_U(M)) \arrow{u}{\theta} \isoarrow{d} \\
				R_S(A \otimes j_! M) & R_S(A) \otimes R_S(j_! M) \arrow{l}{\rho} & R_S(A) \otimes j_! R_U(M) \arrow{l}{\bar{\theta}}
			\end{tikzcd}
		\end{equation*}
		is commutative.
	\end{enumerate}
\end{lem}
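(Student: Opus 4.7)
The plan is to recognize that both diagrams express a single categorical phenomenon: the compatibility of the projection formula isomorphism $\rho$ of $R$ with the ``dual'' base change isomorphisms $\bar{\theta}$ along $i_*$ and $j_!$ respectively. These dual isomorphisms are not primary data but are defined as mates: by construction, $\bar{\theta}_i : R_S \circ i_* \xrightarrow{\sim} i_* \circ R_Z$ is the mate of $\theta_i^{-1} : R_Z \circ i^* \xrightarrow{\sim} i^* \circ R_S$ under the adjunction $(i^*, i_*)$, and similarly $\bar{\theta}_j : R_S \circ j_! \xrightarrow{\sim} j_! \circ R_U$ is the mate of $\theta_j : j^* \circ R_S \xrightarrow{\sim} R_U \circ j^*$ under $(j_!, j^*)$. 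Both assertions then become standard instances of the mate calculus applied to a monoidal morphism of stable homotopy $2$-functors.

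For assertion (1), the plan is to apply the adjunction $(i^*, i_*)$ to transpose the whole diagram to one on $Z$. Concretely, precomposing the two paths with the unit $\eta: \id \Rightarrow i_* i^*$ (or, equivalently, restricting along $i^*$) and using the definition of $\bar{\theta}_i$ as a mate replaces every occurrence of $\bar{\theta}_i$ by an appropriate $\theta_i$, modulo insertions of unit/counit morphisms. After this transposition, the resulting diagram involves only $\theta_i$ and $\rho$, both of which are axiomatic data for the monoidal morphism $R$; the axiom (mor-$\Var_k$-fib) of a monoidal morphism precisely asserts that $\rho$ is compatible with $\theta_i$ along $i^*$. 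The residual check is then a naturality diagram chase using the triangle identities for $(i^*, i_*)$.

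For assertion (2), the strategy is dual: transpose across $(j_!, j^*)$ by applying $j^*$ to both paths. Here $\bar{\theta}_j$ is replaced by $\theta_j$, and the corresponding compatibility of $\rho$ with $\theta_j$ along $j^*$ is again axiomatic. A minor additional point is that one uses the invertibility of the projection formula $j_!(j^* A \otimes M) \xrightarrow{\sim} A \otimes j_! M$ (which follows from the closed monoidal structure together with localization triangles), and the fact that this isomorphism is natural in both variables and compatible with $\rho$ on both sides via the same adjunction.

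The main obstacle is purely bookkeeping: keeping track of the directions and inverses of the various natural isomorphisms, because $\bar{\theta}_i$ and $\bar{\theta}_j$ are defined by opposite adjunctions (right adjoint for closed immersions, left adjoint for open immersions), so that $\rho$ and $\theta$ appear in opposite vertical positions in the two diagrams. Once the mate formalism is set up and the two clauses of axiom (mor-$\Var_k$-fib) for a monoidal morphism are invoked systematically, neither assertion requires any new geometric or categorical input beyond the triangle identities for the relevant adjunctions and the naturality of $\rho$, $\theta$, unit and counit.
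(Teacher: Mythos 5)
Your proposal is correct and takes essentially the same approach as the paper. The paper proves assertion (2) (and notes (1) is similar) by expanding the two vertical projection-formula isomorphisms into their unit/monoidality/counit components — which is precisely the concrete incarnation of your mate-calculus transposition — and then decomposes the resulting diagram into naturality squares flanking a central rectangle that commutes by the axiom relating $\rho$, $m$ and $\theta$ for a monoidal morphism of stable homotopy $2$-functors.
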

\begin{proof}
	We only prove the second assertion; the first one can be proved in a similar way.
	Developing the definitions of the two vertical isomorphisms in the second diagram, we obtain the more explicit diagram
	\begin{equation*}
		\begin{tikzcd}[font=\small]
			R_S(j_!(j^* A \otimes M)) \arrow{d}{\eta} & j_! R_U(j^* A \otimes M) \arrow{l}{\bar{\theta}} & j_!(R_U(j^* A) \otimes R_U(M)) \arrow{l}{\rho} \\
			R_S(j_!(j^* A \otimes j^* j_! M)) \arrow{dd}{m} && j_!(j^* R_S(A) \otimes \iota_U(M)) \arrow{u}{\theta} \arrow{d}{\eta} \\
			&& j_!(j^* R_S(A) \otimes j^* j_! R_U(M)) \arrow{d}{m} \\
			R_S(j_! j^*(A \otimes j_! M)) \arrow{d}{\epsilon} && j_! j^*(R_S(A) \otimes j_! R_U(M)) \arrow{d}{\epsilon} \\
			R_S(A \otimes j_! M) & R_S(A) \otimes R_S(j_! M) \arrow{l}{\rho} & R_S(A) \otimes j_! R_U(M). \arrow{l}{\bar{\theta}}
		\end{tikzcd}
	\end{equation*}
	By inserting, on the top side of the latter, the diagram
	\begin{equation*}
		\begin{tikzcd}[font=\tiny]
			R_S(j_!(j^* A \otimes M)) \arrow{d}{\eta} & j_! R_U(j^* A \otimes M) \arrow{l}{\theta} \arrow{d}{\eta} & j_!(R_U(j^* A) \otimes R_U(M)) \arrow{l}{m} \arrow{d}{\eta} \\
			R_S(j_!(j^* A \otimes j^* j_! M)) & j_! R_U(j^* A \otimes j^* j_! M) \arrow{l}{\theta} & j_!(R_U(j^* A) \otimes R_U(j^* j_! M)) \arrow{l}{m} & j_!(j^* R_S(A) \otimes R_U(M)) \arrow{ul}{\theta} \arrow{d}{\eta} \\
			&&& R_S(A) \otimes R_U(j^* j_! M)) \arrow{ul}{\theta}
		\end{tikzcd}
	\end{equation*}
	and, on its bottom side, the diagram
	\begin{equation*}
		\begin{tikzcd}[font=\tiny]
			j_! R_U(j^*(A \otimes j_! M)) \arrow{d}{\bar{\theta}} \\
			R_S(j_! j^*(A \otimes j_! M)) \arrow{dr}{\epsilon} & j_! j^* (R_S(A \otimes j_! M)) \arrow{d}{\epsilon} \arrow{ul}{\theta} & j_! j^* (R_S(A) \otimes R_S(j_! M)) \arrow{l}{m} \arrow{d}{\epsilon} & j_! j^*(R_S(A) \otimes j_! R_U(M)) \arrow{d}{\epsilon} \arrow{l}{\bar{\theta}} \\
			& R_S(A \otimes j_! M) & R_S(A) \otimes R_S(j_! M) \arrow{l}{m} & j_! (j^* R_S(A) \otimes j_! R_U(M))\arrow{l}{\bar{\theta}}
		\end{tikzcd}
	\end{equation*}
	we are reduced to considering the outer part of the diagram
	\begin{equation*}
		\begin{tikzcd}[font=\tiny]
			R_S(j_!(j^* A \otimes j^* j_! M)) \arrow{dd}{m} & j_! R_U(j^* A \otimes j^* j_! M) \arrow{l}{\theta} \arrow{dd}{m} & j_!(R_U(j^* A) \otimes R_U(j^* j_! M)) \arrow{l}{m} \\
			&& j_!(j^* R_S(A) \otimes R_U(j^* j_! M))) \arrow{u}{\theta} \arrow{r}{\eta} & j_!(j^* R_S(A) \otimes R_U(M)) \arrow{d}{\eta} \\
			R_S(j_! j^*(A \otimes j_! M)) & j_! R_U(j^*(A \otimes j_! M)) \arrow{l}{\theta} & j_!(j^* R_S(A) \otimes j^* R_S(j_! M)) \arrow{d}{m} \arrow{u}{\theta} & j_!(j^* R_S(A) \otimes j^* j_! R_U(M)) \arrow{d}{m} \arrow{l}{\theta} \\
			& j_! j^* (R_S(A \otimes j_! M)) \arrow{u}{\theta} & j_! j^* (R_S(A) \otimes R_S(j_! M)) \arrow{l}{m}  & j_! j^*(R_S(A) \otimes j_! R_U(M)) \arrow{l}{\theta}
		\end{tikzcd}
	\end{equation*}
	where the two lateral pieces are commutative by naturality while the central rectangle is commutative by definition.
\end{proof}

\begin{cor}\label{cor-comm-Rproj}
	Let $R: \Hbb_1 \rightarrow \Hbb_2$ be a monoidal morphism of monoidal stable homotopy $2$-functors. Then the following assertions hold:
	\begin{enumerate}
		\item For every closed immersion $i: Z \hookrightarrow S$, the diagram of functors $\Hbb_1(S) \rightarrow \Hbb_2(S)$
		\begin{equation*}
			\begin{tikzcd}
				R_S i_* \Homcal_Z(M, i^! B) \arrow{r}{\bar{\theta}} & i_* R_Z \Homcal_Z(M, i^! B) \arrow{r}{\rho} & i_* \Homcal_Z(R_Z(M), R_Z(i^! B)) \arrow{d}{\theta^!} \\
				&& i_* \Homcal_Z(R_Z(M), i^! R_S(B)) \\
				R_S \Homcal_S(i_* M, B) \isoarrow{uu} \arrow{r}{\rho} & \Homcal_S(R_S(i_* M), R_S(B)) \arrow{r}{\bar{\theta}} & \Homcal_S(i_* R_Z(M), R_S(B)) \isoarrow{u}
			\end{tikzcd}
		\end{equation*}
		is commutative.
		\item For every open immersion $j: U \hookrightarrow S$, and every $K \in \Hbb_1(U)$ the diagram of functors $\Hbb_1(S) \rightarrow \Hbb_2(S)$
		\begin{equation*}
			\begin{tikzcd}
				R_S j_* \Homcal_U(M, j^! B) \arrow{r}{\bar{\theta}} & j_* R_U \Homcal_U(M, j^! B) \arrow{r}{\rho} & j_* \Homcal_U(R_U(M), R_U(j^! B)) \arrow{d}{\theta^!} \\
				&& j_* \Homcal_U(R_U(M), j^! R_S(B)) \\
				R_S \Homcal_S(j_! M, B) \isoarrow{uu} \arrow{r}{\rho} & \Homcal_S(R_S(j_! M), R_S(B)) \arrow{r}{\bar{\theta}} & \Homcal_S(j_! R_U(M), R_S(B)) \isoarrow{u}
			\end{tikzcd}
		\end{equation*}
		is commutative.
	\end{enumerate}
\end{cor}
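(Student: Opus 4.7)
The strategy is to deduce the corollary from \Cref{lem-comm-Rproj} by passing to right adjoints via the various tensor–Hom adjunctions. The vertical isomorphisms in each of the two diagrams of the corollary — namely
\[
R_S\Homcal_S(i_*M,B) \xrightarrow{\sim} i_*\Homcal_Z(R_Z(M),i^!R_S(B))
\]
and
\[
R_S\Homcal_S(j_!M,B) \xrightarrow{\sim} j_*\Homcal_U(R_U(M),j^!R_S(B))
\]
(obtained by composing the three outer arrows) are, by construction, the images under $R_S$ of the classical ``co-projection'' isomorphisms $\Homcal_S(i_*M,B) \cong i_*\Homcal_Z(M,i^!B)$ and $\Homcal_S(j_!M,B) \cong j_*\Homcal_U(M,j^!B)$. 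These co-projection isomorphisms are nothing else than the right adjoints (in the variable $B$) of the projection isomorphisms $i_*(i^*A \otimes M) \cong A \otimes i_*M$ and $j_!(j^*A \otimes M) \cong A \otimes j_!M$ appearing in \Cref{lem-comm-Rproj}. So the whole point is to argue that the commutativity established in \Cref{lem-comm-Rproj} passes to the adjoint diagrams.

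First I would formalize the adjoint correspondence. Fix a closed immersion $i:Z \hookrightarrow S$ and an object $M \in \Hbb_1(Z)$; the projection formula $P(M): i_*(i^*(-) \otimes M) \xrightarrow{\sim} (-) \otimes i_*M$ has as adjoint mate (using $i_* \dashv i^!$ and the tensor–Hom adjunctions on both sides) the co-projection formula $Q(M): \Homcal_S(i_*M,-) \xrightarrow{\sim} i_*\Homcal_Z(M,i^!(-))$. Apply $R_S$ to $Q(M)$ and compose with the compatibility data of $R$ (namely $\bar\theta$ for $i_*$, $\theta^!$ for $i^!$, and $\rho$ for $\Homcal$): one obtains the perimeter of the first diagram. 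Similarly, the perimeter of the second diagram is obtained from the projection formula $j_!(j^*(-) \otimes M) \xrightarrow{\sim} (-) \otimes j_!M$ (with adjoint mate involving $j^* \dashv j_*$ and $j_! \dashv j^!$ in place of $i_* \dashv i^!$).

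The main step is the following abstract ``adjunction lemma'': given two adjoint pairs $L_r \dashv R_r$ ($r = 1, 2$) of triangulated functors and a morphism $R$ of the ambient monoidal structure endowed with connection isomorphisms $\ell: R \circ L_1 \xrightarrow{\sim} L_2 \circ R$ and $r: R \circ R_1 \xrightarrow{\sim} R_2 \circ R$ which are mates under the adjunctions, then a diagram built from $\ell$ (together with some natural transformation) commutes if and only if the corresponding adjoint-transposed diagram built from $r$ commutes. In our situation, the ``non-trivial'' natural transformation is precisely $\rho$; the interaction between $\rho$ and the tensor–Hom adjunction turns the commutativity of the diagrams of \Cref{lem-comm-Rproj} into the commutativity of the diagrams of the corollary. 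Concretely, one transposes each diagram edge by edge: $\bar\theta$ for $i_*$ (resp.\ $j_!$) becomes $\bar\theta$ for $i_*$ (resp.\ $j_*$); $\theta$ for $i^*$ (resp.\ $j^*$) becomes $\theta^!$ for $i^!$ (resp.\ $j^!$); the projection isomorphism becomes the co-projection isomorphism; and $\rho$ for the tensor product becomes $\rho$ for $\Homcal$.

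The only delicate point will be bookkeeping: one has to verify that the adjoint mate of each arrow in the hexagonal diagrams of \Cref{lem-comm-Rproj} is indeed the expected arrow in the corresponding diagram of the corollary, a task that reduces, via the universal property of the various adjunctions, to chasing units and co-units. I do not expect any genuine obstruction here: once the correspondence of diagrams under adjunction is spelled out, the commutativity transfers formally, so the bulk of the proof is a careful — but essentially mechanical — diagram chase carried out in two parallel cases (closed and open immersions).
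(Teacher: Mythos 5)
Your proposal is correct and matches the paper's approach exactly: the paper likewise deduces the corollary from \Cref{lem-comm-Rproj} by a formal adjoint-transposition argument, pointing to the proof of Proposition 2.3.57 in Ayoub's thesis for the mechanism of passing from the projection formula to the co-projection formula and transposing each side of the hexagon. The "adjunction lemma" you invoke (mates of commutative diagrams commute, with $\theta \leftrightarrow \theta^!$, $\bar\theta_{i_*} \leftrightarrow \bar\theta_{i_*}$, $\rho_\otimes \leftrightarrow \rho_{\Homcal}$, projection $\leftrightarrow$ co-projection) is precisely the content being adapted there, so your plan spells out the step the paper leaves to the reader.
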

\begin{proof}
	By adapting the argument in the proof of \cite[Prop.~2.3.57]{Ayo07a}, one sees that the two statements are formal consequences of the corresponding statements of \Cref{lem-comm-Rproj}. We omit the details.
\end{proof}

\begin{proof}[Proof of \Cref{prop-crit-homcal-R}]
	We have to show that, for every $k$-variety $S$ and every $M^{\bullet} \in D^b(\Mcal^0(S))$, the natural transformation of functors $D^b(\Mcal^0(S)) \rightarrow D^b(\Perv(S))$
	\begin{equation*}
		\iota_S \Homcal_S(M^{\bullet},B^{\bullet}) \rightarrow \Homcal_S(\iota_S(M^{\bullet}),\iota_S(B^{\bullet}))
	\end{equation*}
	is invertible. We argue by Noetherian induction on $S$. If $\dim(S) = 0$, the result holds by hypothesis if $M^{\bullet}$ is concentrated in degree $0$; the general case follows immediately by dévissage.
	
	For the inductive step, assume that $\dim(S) > 0$ and that the result is known to hold for all proper closed subvarieties of $S$. As in the previous paragraph, we may reduce immediately to the case where the complex $M^{\bullet}$ is concentrated in degree $0$, so that we may identify it with an object $M \in \Mcal^0(S)$. In this case, choose a Zariski-dense affine open immersion $j: U \hookrightarrow S$ satisfying the hypothesis in the statement; let $i: Z \hookrightarrow S$ denote the complementary closed immersion. For every $B^{\bullet} \in D^b(\Mcal^0(S))$, we obtain the morphism of distinguished triangles in $D^b(\Mcal^0(S))$
	\begin{equation}\label{dia_tr_homcal}
		\begin{tikzcd}
			\iota_S \Homcal_S(i_* i^* M,B^{\bullet}) \arrow{r} \arrow{d} & \iota_S\Homcal_S(M,B^{\bullet}) \arrow{r} \arrow{d} & \iota_S \Homcal_S(j_! j^* M,B^{\bullet}) \arrow{r}{+1} \arrow{d} & {}\\
			\Homcal_S(\iota_S(i_* i^* M), \iota_S(B^{\bullet})) \arrow{r} & \Homcal_S(\iota_S(M), \iota_S(B^{\bullet})) \arrow{r} & \Homcal_S^{\Mcal}(\iota_S(j_! j^* M), \iota_S(B^{\bullet})) \arrow{r}{+1} & {}.
		\end{tikzcd}
	\end{equation}
	To show that the central vertical arrow is an isomorphism, it suffices to show that so are the two lateral vertical arrows. But the two arrows in question are the two marked arrows in the diagrams in $D^b_c(S,\Q)$
	\begin{equation*}
		\begin{tikzcd}[font=\small]
			\iota_S i_* \Homcal_Z(i^* M, i^! B^{\bullet}) \arrow{r}{\sim} & i_* \iota_Z \Homcal_Z(i^* M, i^! B^{\bullet}) \arrow{r}{\sim} & i_* \Homcal_Z(\iota_Z(i^* M), \iota_Z(i^! B^{\bullet})) \isoarrow{d} \\
			&& i_* \Homcal_Z(\iota_Z(i^* M), i^! \iota_S(B^{\bullet})) \\
			\iota_S \Homcal_S(i_* i^* M, B^{\bullet}) \isoarrow{uu} \arrow{r}{\star} & \Homcal_S(\iota_S(i_* i^* M), \iota_S(B^{\bullet})) \arrow{r}{\sim} & \Homcal_S(i_* \iota_Z(i^* M), \iota_S(B^{\bullet})) \isoarrow{u}
		\end{tikzcd}
	\end{equation*}
	and
	\begin{equation*}
		\begin{tikzcd}[font=\small]
			\iota_S j_* \Homcal_U(j^* M, j^* B^{\bullet}) \arrow{r}{\sim} & j_* \iota_U \Homcal_U(j^* M, j^* B^{\bullet}) \arrow{r}{\sim} & j_* \Homcal_U(\iota_U(j^* M), \iota_U(j^* B^{\bullet})) \isoarrow{d} \\
			&& j_* \Homcal_U(\iota_U(j^* M), j^* \iota_S(B^{\bullet})) \\
			\iota_S \Homcal_S(j_! j^* M, B^{\bullet}) \isoarrow{uu} \arrow{r}{\star} & \Homcal_S(\iota_S(j_! j^* M), \iota_S(B^{\bullet})) \arrow{r}{\sim} & \Homcal_S(j_! \iota_U(j^* M), \iota_S(B^{\bullet})), \isoarrow{u}
		\end{tikzcd}
	\end{equation*}
	respectively. Now, taking into account \Cref{rem-pbc-DInd}(4), we see that the result of \Cref{lem-comm-Rproj} applies to our setting as well, and thus the same holds for \Cref{cor-comm-Rproj}. Therefore the two natural diagrams above are commutative. Moreover, in each of them, the vertical arrow in the lower-right corner is invertible by \Cref{cor-projPcal}, while the horizontal arrow in the upper-right corner is invertible by assumption. We deduce that the marked arrow is invertible as well.
	This concludes the proof.
\end{proof}

\begin{proof}[Proof of \Cref{thm_homcal}]
	We want to apply the criteria of \Cref{prop-crit-homcal} and \Cref{prop-crit-homcal-R}. To this end, fix an object $M \in \Mcal^0(S)$. Choose a smooth Zariski-dense affine open immersion $j: U \hookrightarrow S$ such that $j^* M \in \Mcal^{0,loc}(U)$: this is possible since the $\Sm\Op(S)$-fibered subcategory $\Mcal^{0,loc}$ of $\Mcal^0$ is weakly cofinal by \Cref{lem:Mcal^0loc-wcof}.
	Up to writing $U$ as the disjoint union of its connected components and treating each component separately, we may assume that $U$ is connected of dimension $d \geq 0$. We divide the proof into two main steps: first we check that the hypotheses are satisfied when $j^* M$ lies in the image of $\mathbf{R}(\Dcal^{0,loc}(U))$, then we explain how to deduce the general case. 
	
	Suppose first that $j^* M$ lies in the image of $\mathbf{R}(\Dcal^{0,loc}(U))$ under the quotient functor $\pi_U: \A(\Dcal^{0,loc}(U)) \rightarrow \Mcal^{0,loc}(U)$. 
	In this case, by \Cref{cor:adj-Mloc}, we have a canonical adjunction of exact functors
	\begin{equation}\label{adj:F-G-proofHomcal}
		F: \Mcal^0(U) \rightleftarrows \Mcal^0(U): G,
	\end{equation}
	where $F := - \otimes j^* M[-d]$ and $G := \Dbb_U(j^* M[-d] \otimes \Dbb_U(-))$.
	In order to deduce the thesis, the main point is to make sure that the induced exact functor $\Ind(F): \Ind \Mcal^0(U) \rightarrow \Ind \Mcal^0(U)$ is naturally isomorphic to (the co-restriction of) the functor
	\begin{equation*}
		- \otimes j^* M[-d]: \Ind \Mcal^0(U) \rightarrow D(\Ind \Mcal^0(U))
	\end{equation*} 
	obtained from \Cref{constr_homcal}. 
	In order to establish this claim, we only need to analyze carefully the various constructions involved. Since the functor $\Delta_{U,*}: \Ind \Mcal^0(U) \rightarrow \Ind \Mcal^0(U \times U)$ is exact and fully faithful by \Cref{prop-IndF}, this is equivalent to saying that the composite functor
	\begin{equation*}
		\Ind \Mcal^0(U) \xrightarrow{\Ind(F)} \Ind \Mcal^0(U) \xrightarrow{\Delta_{U,*}} \Ind \Mcal^0(U \times U)
	\end{equation*} 
	is naturally isomorphic to (the co-restriction of) the composite functor
	\begin{equation*}
		\Ind \Mcal^0(U) \xrightarrow{- \otimes j^* M[-d]} D(\Ind \Mcal^0(U)) \xrightarrow{\Delta_{U,*}} D(\Ind \Mcal^0(U \times U)).
	\end{equation*}
	By construction, the latter can be written as the composite
	\begin{equation*}
		\Ind \Mcal^0(U) \xrightarrow{- \boxtimes j^* M[-d]} \Ind \Mcal^0(U \times U)[-d] \xrightarrow{\Delta_{U,*} \Delta_U^*} D(\Ind \Mcal^0(U \times U)).
	\end{equation*}
	Moreover, following the idea of \Cref{constr:cech-perv}, the triangulated functor $\Delta_{U,*} \Delta_U^*: \Ind \Mcal^0(U \times U)[-d] \rightarrow D(\Ind \Mcal^0(U \times U))$ admits a model of the form
	\begin{equation*}
		\begin{aligned}
			\Ind \Mcal^0(U \times U)[-d] \xrightarrow{\check{C}_{U \times U,\Ucal}^{\bullet}} C^{[-n,d-n]}(\Ind \Mcal^0(U \times U)) &\rightarrow D^{[-n,d-n]}(\Ind \Mcal^0(U \times U)) \\ 
			&\subset D(\Ind \Mcal^0(U \times U)),
		\end{aligned}
	\end{equation*}
	where $\Ucal$ denotes some fixed affine open covering of $U \times U \setminus \Delta_U(U)$ with $n$ elements. Now, for every Grothendieck abelian category $\Acal$, the additive functors $H^i: C^b(\Acal) \rightarrow \Acal$ commute with filtered colimits. Since the category $\Ind \Mcal^0(U \times U)$ is Grothendieck abelian by \Cref{prop-IndAcal}(1), we deduce that the composite functors
	\begin{equation*}
		\Ind \Mcal^0(U) \xrightarrow{- \boxtimes j^* M[-d]} \Ind \Mcal^0(U \times U)[-d]  \xrightarrow{\check{C}_{U \times U,\Ucal}^{\bullet}} C^{[-n,d-n]}(\Ind \Mcal^0(U \times U)) \xrightarrow{H^i} \Ind \Mcal^0(U \times U)
	\end{equation*}
	vanish for every $i \neq 0$, and that the composite functor
	\begin{equation*}
		\Ind \Mcal^0(U) \xrightarrow{- \boxtimes j^* M[-d]} \Ind \Mcal^0(U \times U)[-d]  \xrightarrow{\check{C}_{U \times U,\Ucal}^{\bullet}} C^{[-n,d-n]}(\Ind \Mcal^0(U \times U)) \xrightarrow{H^0} \Ind \Mcal^0(U \times U)
	\end{equation*} 
	commutes with filtered colimits. Hence the two functors considered in the claim both satisfy the conditions of \Cref{prop-IndF}(1). By the essential uniqueness in the statement of \Cref{prop-IndF}(1), we conclude that they are naturally isomorphic as claimed.
	Now, consider the adjunction
	\begin{equation*}
		\Ind(F): D(\Ind \Mcal^0(U)) \rightleftarrows D(\Ind \Mcal^0(U)): \Ind(G)
	\end{equation*}
	obtained from \eqref{adj:F-G-proofHomcal} by applying \Cref{cor:ind-adj} and then passing to the derived categories. As a consequence of the claim just proved, the triangulated functor $\Ind(F): D(\Ind \Mcal^0(U)) \rightarrow D(\Ind \Mcal^0(U))$ is naturally isomorphic to the functor 
	\begin{equation*}
		- \otimes j^* M[-d]: D(\Ind \Mcal^0(U)) \rightarrow D(\Ind \Mcal^0(U))
	\end{equation*}
	obtained from \Cref{constr_homcal}: indeed, since the latter triangulated functor is dg-enhanced by construction and $t$-exact for the obvious $t$-structures by the claim, by \Cref{prop:VoloDG}(1) it is naturally isomorphic to the trivial derived functor of the induced exact functor between the hearts.
	Taking the right adjoints, we deduce that the functor $\Ind(G): D(\Ind \Mcal^0(U)) \rightarrow D(\Ind \Mcal^0(U))$ is naturally isomorphic to the triangulated functor $\Homcal_U(j^* M[-d],-)$ obtained from \Cref{constr_homcal}. But $\Ind(G)$ sends $D^b(\Mcal^0(U))$ to itself by construction, and the induced natural transformation of functors $D^b(\Mcal^0(U)) \rightarrow D^b(\Perv(U))$
	\begin{equation*}
		\iota_U G(B^{\bullet}) \rightarrow \Homcal_U(\iota_U(j^* M)[-d], \iota_U(B^{\bullet}))
	\end{equation*}
	is invertible as a consequence of Corollary \ref{cor:adj-Mloc}. We deduce that the triangulated functor
	\begin{equation*}
		\Homcal_U(j^* M,-) = \Homcal_U(j^* M[-d],-)[-d]: D(\Ind \Mcal^0(U)) \rightarrow D(\Ind \Mcal^0(U))
	\end{equation*}  
	sends $D^b(\Mcal^0(U))$ to itself, and that the natural transformation of functors $D^b(\Mcal^0(U)) \rightarrow D^b(\Perv(U))$
	\begin{equation*}
		\iota_U \Homcal_U(j^* M,B^{\bullet}) \rightarrow \Homcal_U(\iota_U(j^* M),\iota_U(B^{\bullet}))
	\end{equation*}
	is invertible. This concludes the proof in this particular case.
	
	For a general choice of $M \in \Mcal^0(S)$, choose a co-presentation of the form
	\begin{equation*}
		j^* M = \ker\left\{\pi_U(C_1) \rightarrow \pi_U(C_2)\right\}
	\end{equation*} 
	for some morphism $C_1 \rightarrow C_2$ in $\mathbf{R}(\Dcal^{0,loc}(U))$. 
	Write $N_1 := \pi_U(C_1)$, $N_2 := \pi_U(C_2)$ for convenience, and set $N_3 := \coker\left\{\pi_U(C_1) \rightarrow \pi_U(C_2)\right\}$. Since the image of $\mathbf{R}(\Dcal^{0,loc}(U))$ in $\A(\Dcal^{0,loc}(U))$ is closed under cokernels (as every cokernel of presheaves of finite presentation is itself of finite presentation), the object $N_3$ belongs to the image of $\mathbf{R}(\Dcal^{0,loc}(U))$. By construction, the object $j^* M$ is canonically isomorphic to the complex $[N_1 \rightarrow N_2 \rightarrow N_3]$ in $D^{[0,3]}(\Mcal^0(U))$. Moreover, by the previous step of the proof, each functor
	\begin{equation*}
		\Homcal_U(N_i,-): D(\Ind \Mcal^0(U)) \rightarrow D(\Ind \Mcal^0(U)) \qquad (i = 1,2,3)
	\end{equation*}
	sends $D^b(\Mcal^0(U))$ to itself, and each natural transformation of functors $D^b(\Mcal^0(U)) \rightarrow D^b(\Perv(U))$ 
	\begin{equation*}
		\iota_U \Homcal_S(N_i,B^{\bullet}) \rightarrow \Homcal_U(\iota_U(N_i),\iota_U(B^{\bullet})) \qquad (i = 1,2,3)
	\end{equation*}
	is invertible. Since both of these properties are triangulated with respect to the first variable, we conclude that they hold for $M$ as well. This concludes the proof.
\end{proof}

\begin{rem}
	As explained in \cite[\S~6]{IM19}, there exists a theory of weights for perverse motives compatible with the analogous notion of weight for perverse sheaves of geometric origin. By construction, the operations $f^*$, $f_*$, $f_!$ and $f^!$ constructed in \cite{IM19} on the level of perverse motives enjoy the same weight-exactness properties as the corresponding operations on perverse sheaves as described in \cite[\S~5.1.14]{BBD82}. 
	
	The same observation applies to the tensor product and internal homomorphisms functors constructed here: namely, given a $k$-variety $S$, and letting $D^b_{\leq w}(\Mcal(S))$ (resp. $D^b_{\geq w}(\Mcal(S))$) denote the full subcategory of $D^b(\Mcal(S))$ collecting the complexes of weights at most $w$ (resp. at least $w$), the internal tensor product functor restricts to
	\begin{equation*}
		- \otimes -: D^b_{\leq w_1}(\Mcal(S)) \times D^b_{\leq w_2}(\Mcal(S)) \rightarrow D^b_{\leq w_1 + w_2}(\Mcal(S)),
	\end{equation*}
    while the internal homomorphisms functor restricts to
    \begin{equation*}
    	\Homcal_S(-,-): D^b_{\leq w_1}(\Mcal(S)) \times D^b_{\geq w_2}(\Mcal(S)) \rightarrow D^b_{\geq -w_1 + w_2}(\Mcal(S)).
    \end{equation*}
\end{rem}

\subsection{Motivic Verdier duality}

We close this section with some useful remarks about Verdier duality for perverse motives.
Recall that, for every $k$-variety $S$, the motivic Verdier duality functor
\begin{equation*}
	\Dbb_S: \Mcal(S) \rightarrow \Mcal(S)^{op}
\end{equation*} 
is defined in \cite[\S~2.8]{IM19} as an application of the general lifting principles of universal abelian factorizations. 
The functor $\Dbb_S$ canonically defines an involutive anti-equivalence of $\Mcal(S)$: 
there is a natural isomorphism between functors $\Mcal(S) \rightarrow \Mcal(S)$
\begin{equation*}
	M \xrightarrow{\sim} \Dbb_S(\Dbb_S(M)),
\end{equation*} 
induced by the usual bi-duality isomorphisms on $\DA_{ct}^{\et}(S,\Q)$ and on $\Perv(S)$. 

Typically, Verdier duality in a six functor formalism is related to internal homomorphisms in a precise way, as explained in \cite{Ayo07a}.
In the setting of perverse motives, a partial result in this direction is given by \Cref{cor:adj-Mloc}.
In order to say more, we have to use the Nori realization of Voevodsky motivic sheaves, obtained by Tubach in \cite{Tub23} and, independently, by Jacobsen and the author in \cite{JacTer25}:
\begin{thm}[{\cite[Thm.~4.5]{Tub23}}, {\cite[Thm.~6.3]{JacTer25}}]\label{thm:Nri-real}
	There exists a canonical monoidal morphism between closed monoidal stable homotopy $2$-functors
	\begin{equation}\label{Nri^*:sh2f}
		\Nri^*: \DA_{ct}^{\et}(\cdot,\Q) \rightarrow D^b(\Mcal(\cdot)),
	\end{equation}
	with the property that the composite morphism
	\begin{equation*}
		\DA_{ct}^{\et}(\cdot,\Q) \xrightarrow{\Nri^*} D^b(\Mcal(\cdot)) \xrightarrow{\iota} D^b(\Perv(\cdot)) = D^b_c(\cdot,\Q)
	\end{equation*}
	recovers the Betti realization $\Bti^*: \DA_{ct}^{\et}(\cdot,\Q) \rightarrow D^b_c(\cdot,\Q)$ up to canonical $2$-isomorphism.
\end{thm}
\noindent
We do not need to discuss the details of the two proof;
let us just mention that both of them rely heavily on our construction of the monoidal structure on perverse motives, up to \Cref{thm_homcal} (the argument of \cite{JacTer25} also uses \Cref{thm_Mloc} below, which does not depend on the results of the present subsection).
In both approaches, the realization from Voevodsky motivic sheaves is ultimately an application of Drew--Gallauer's universality theorem \cite[Thm.~7.14]{DrewGal}.

Note that, since the Nori realization \eqref{Nri^*:sh2f} is compatible with the Betti realization, the composite functor
\begin{equation*}
	\DA_{ct}(S,\Q) \xrightarrow{\Nri_S^*} D^b(\Mcal(S)) \xrightarrow{H^0} \Mcal(S)
\end{equation*}
is naturally isomorphic to the structural homological functor $\pi_S \colon \DA_{ct}(S,\Q) \to \Mcal(S)$.
Here is the promised application to motivic Verdier duality:
\begin{prop}\label{prop:MVerdier}
	Fix a $k$-variety $S$, with structural morphism $a_S: S \rightarrow \Spec(k)$. 
	Then:
	\begin{enumerate}
		\item There is a canonical natural isomorphism of functors $D^b(\Mcal(S)) \rightarrow D^b(\Mcal(S))^{op}$
		\begin{equation*}
			\Dbb_S(M^{\bullet}) = \Homcal_S(M^{\bullet}, a_S^! \unit_k).
		\end{equation*}
	    \item There is a canonical natural isomorphism of functors $D^b(\Mcal(S))^{op} \times D^b(\Mcal(S)) \rightarrow D^b(\Mcal(S))$
	    \begin{equation*}
	    	\Homcal_S(M^{\bullet},N^{\bullet}) \xrightarrow{\sim} \Dbb_S(M^{\bullet} \otimes \Dbb_S(N^{\bullet})).
	    \end{equation*}
	\end{enumerate} 
\end{prop}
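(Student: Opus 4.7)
The plan is to use Tubach's realization morphism \eqref{Nri:intro} from \cite{Tub23} as the key tool: it provides an $\infty$-categorical enhancement of the six operations on $D^b(\Mcal(\cdot))$, thereby supplying the dg-enhancements needed to recognize functors on derived categories of perverse motives as derived functors of their restrictions to the hearts via the criterion of \Cref{prop:VoloDG}.

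For Part (1), I would introduce the auxiliary functor $\Dbb_S^{\textup{new}} := \Homcal_S(-, a_S^! \unit^{\Mcal}_{\Spec(k)}): D^b(\Mcal(S)) \rightarrow D^b(\Mcal(S))^{op}$ using the internal homomorphisms from \Cref{thm_homcal}. The compatibility of $\iota$ with $a_S^!$ (as a morphism of stable homotopy $2$-functors from \cite{IM19}) and with the unit section (\Cref{prop-unitMcal}) give a canonical isomorphism $\iota_S(a_S^! \unit^{\Mcal}_{\Spec(k)}) = a_S^! \unit$. Combining this with the compatibility of $\iota$ with internal hom provided by \Cref{thm_homcal} yields
\begin{equation*}
\iota_S \Dbb_S^{\textup{new}}(M^{\bullet}) = \Homcal_S(\iota_S(M^{\bullet}), a_S^! \unit) = \Dbb_S(\iota_S(M^{\bullet})).
\end{equation*}
Since Verdier duality on $D^b_c(S,\Q)$ is $t$-exact for the perverse $t$-structure, and $\iota_S$ is conservative and $t$-exact, the functor $\Dbb_S^{\textup{new}}$ is $t$-exact for the standard $t$-structure on $D^b(\Mcal(S))$. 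Its restriction to an exact functor $\Mcal(S) \rightarrow \Mcal(S)^{op}$ thus lifts the classical Verdier duality through $\iota_S$; by the universal property exploited in \cite[\S~2.8]{IM19}, such a lift is unique up to canonical isomorphism and therefore coincides with the motivic Verdier duality $\Dbb_S$. To promote the identification from $\Mcal(S)$ to all of $D^b(\Mcal(S))$, I would combine the $\infty$-categorical enhancement from \cite{Tub23}, which supplies a dg-enhancement of $\Dbb_S^{\textup{new}}$, with \Cref{prop:VoloDG}(1): the latter identifies any $t$-exact dg-enhanced triangulated functor with the trivial derived functor of its restriction to the heart, and the Ivorra--Morel $\Dbb_S$ is already, by construction, such a trivial derived functor.

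Part (2) then follows from Part (1) by the standard six-functor-formalism manipulation. Using bi-duality $N^{\bullet} \xrightarrow{\sim} \Dbb_S(\Dbb_S(N^{\bullet}))$ established in \cite[\S~2.8]{IM19}, the tensor-hom adjunction of \Cref{thm_homcal}, and Part (1) applied to $M^{\bullet} \otimes \Dbb_S(N^{\bullet})$, we compute
\begin{equation*}
\Homcal_S(M^{\bullet}, N^{\bullet}) = \Homcal_S(M^{\bullet}, \Homcal_S(\Dbb_S(N^{\bullet}), a_S^! \unit^{\Mcal}_{\Spec(k)})) = \Homcal_S(M^{\bullet} \otimes \Dbb_S(N^{\bullet}), a_S^! \unit^{\Mcal}_{\Spec(k)}) = \Dbb_S(M^{\bullet} \otimes \Dbb_S(N^{\bullet})).
\end{equation*}

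The hard part will be the transition from the restriction-level identification on $\Mcal(S)$ to the derived-level identification on $D^b(\Mcal(S))$. Our construction of $\Homcal_S$ in \Cref{constr_homcal} produces it abstractly via Brown representability on the ind-category $D(\Ind \Mcal(S))$, not as a manifestly derived functor, so without an external dg-enhancement the criterion of \Cref{prop:VoloDG}(1) cannot be invoked directly. Tubach's result in \cite{Tub23} that $D^b(\Mcal(S))$ coincides with the bounded derived category of its constructible heart, together with the attendant $\infty$-categorical enhancement of the six operations, is precisely what bridges this gap; it is for this reason that the statement genuinely depends on the results of \cite{Tub23} and cannot be obtained purely from the constructions developed in the body of this paper.
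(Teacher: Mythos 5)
Your Part (2) is essentially the paper's argument, but your Part (1) has a gap at the crucial step. After establishing that $\iota_S \Dbb_S^{\textup{new}}(M^\bullet) \cong \Dbb_S(\iota_S(M^\bullet))$ and that $\Dbb_S^{\textup{new}}$ is $t$-exact, you assert that the restriction of $\Dbb_S^{\textup{new}}$ to $\Mcal(S)$ coincides with $\Dbb_S$ ``by the universal property'' because both lift classical Verdier duality through $\iota_S$. But the universal property of $\Mcal(S)$ as a universal abelian factorization only gives uniqueness of an exact extension of a given additive functor from $\Dcal(S)$ to $\Mcal(S)^{op}$. To invoke it you would need a canonical isomorphism $\Dbb_S^{\textup{new}} \circ \pi_S \cong \Dbb_S \circ \pi_S$ \emph{in} $\Mcal(S)^{op}$, not merely after applying the faithful (but not full) functor $\iota_S$. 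Two exact functors $F, G: \Mcal(S) \rightarrow \Mcal(S)^{op}$ with $\iota_S^{op} F \cong \iota_S^{op} G$ need not be isomorphic, and your comparison isomorphism is assembled entirely out of structural isomorphisms living in $\Perv(S)$, so there is no a priori reason it should descend to $\Mcal(S)$.

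This is precisely the point the paper handles via the realization morphism $\Nri^*$ from \cite{Tub23}: using the monoidality of $\Nri_S^*$ together with conservativity of $\iota_S$, it produces a canonical isomorphism $\Nri_S^*(\Homcal_S(A, a_S^! \unit_{\Spec(k)})) \xrightarrow{\sim} \Homcal_S(\Nri_S^*(A), a_S^! \unit^{\Mcal}_{\Spec(k)})$ of functors with values in $D^b(\Mcal(S))$ rather than $D^b(\Perv(S))$, and then feeds this into \cite[Cor.~2.9]{Ter23UAF}. You are right that Tubach's results are indispensable, but you have misidentified their role: what is needed is the monoidal realization $\Nri_S^*$ to supply the comparison isomorphism at the level of perverse motives rather than perverse sheaves, not the $\infty$-enhancement as an input to \Cref{prop:VoloDG}. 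Once the isomorphism is landed in the right category, the rest (including your argument for Part (2) via bi-duality and the internal tensor-hom adjunction) goes through as you describe.
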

\begin{proof}
	\begin{enumerate}
		\item Using the conservativity of $\iota_S: D^b(\Mcal(S)) \rightarrow D^b(\Perv(S)) = D^b_c(S,\Q)$, one easily sees that the natural transformation of functors $\DA_{ct}^{\et}(S,\Q)^{op} \times \DA_{ct}^{\et}(S,\Q) \rightarrow \DA_{ct}^{\et}(S,\Q)$
		\begin{equation*}
			\Nri_S^* \Homcal_S(A,B) \rightarrow \Homcal_S(\Nri_S^* (A), \Nri_S^* (B))
		\end{equation*}
	    induced by the monoidality of $\Nri_S^*$ is necessarily invertible, since the same holds true for the Betti realization. 
	    Hence we get a canonical natural isomorphism of functors $\DA_{ct}^{\et}(S,\Q)^{op} \rightarrow D^b(\Mcal(S))$
	    \begin{equation*}
	    	\Nri_S^* (\Homcal_S(A, a_S^! \unit_k)) \xrightarrow{\sim} \Homcal_S(\Nri_S^* (A), a_S^! \unit_k).
	    \end{equation*}
	    Since Verdier duality on $\DA^{\et}_{ct}(S,\Q)$ is defined by the formula
	    \begin{equation*}
	    	\Dbb_S(A) := \Homcal_S(A,a_S^! \unit_{\Spec(k)}),
	    \end{equation*}
	    the above natural isomorphism satisfies the hypotheses of the recognition criterion \cite[Cor.~2.9]{Ter23UAF}.
	    The thesis follows.
		\item We have a canonical natural transformation of functors $D^b(\Mcal(S))^{op} \times D^b(\Mcal(S)) \rightarrow D^b(\Mcal(S))$
		\begin{equation}\label{arrow:bidual}
			\Homcal_S(M^{\bullet},N^{\bullet}) \rightarrow \Homcal_S(M^{\bullet} \otimes \Homcal_S(N^{\bullet},a_S^! \unit_k),a_S^! \unit_k)
		\end{equation}
		whose value on the pair $(M^{\bullet},N^{\bullet})$ is the morphism in $D^b(\Mcal(S))$ corresponding under adjunction to the composite
		\begin{equation*}
			\begin{tikzcd}
				M^{\bullet} \otimes \Homcal_S(N^{\bullet},a_S^! \unit_k) \otimes \Homcal_S(M^{\bullet},N^{\bullet}) \arrow[equal]{d} \\
				\Homcal_S(N^{\bullet},a_S^! \unit_k) \otimes M^{\bullet} \otimes \Homcal_S(M^{\bullet},N^{\bullet}) \arrow[equal]{d} \\
				\Homcal_S(N^{\bullet},a_S^! \unit_k) \otimes \Homcal_S(M^{\bullet},N^{\bullet}) \otimes M^{\bullet} \arrow{r}{ev} & \Homcal_S(N^{\bullet},a_S^! \unit_k) \otimes N^{\bullet} \arrow{r}{ev} & a_S^! \unit_k.
			\end{tikzcd}
		\end{equation*}
		Since the analogous natural transformation on $D^b(\cdot,\Q)$ is known to be invertible, it follows from \Cref{thm_homcal} that \eqref{arrow:bidual} is invertible as well. Combining this with the previous point, we obtain the result.
	\end{enumerate}
\end{proof}

\begin{rem}
	Clearly, the natural isomorphism of \Cref{prop:MVerdier}(1) extends the one of \Cref{cor:adj-Mloc}.
\end{rem}

\section{Applications and complements}\label{sect:appli-compl}

This final section is devoted to important consequences of our main construction.
In the first part, we define Tannakian categories of motivic local systems over smooth, geometrically connected varieties. 
We also show that the derived categories of perverse motives satisfy the axiomatic of \textit{constructible systems} from Gallauer's paper \cite{GalSupp}, and we deduce a topological reconstruction theorem for perverse motives.
In the final part, using the Nori realization of Voevodsky motivic sheaves, we define a canonical orientation on the derived categories of perverse motives.
This yields a construction of Chern class of vector bundles in terms of the six operations, which leads to a motivic version of the relative Hard Lefschetz Theorem.

\subsection{Tannakian categories of motivic local systems}

We refer to \cite{DM82} for the general theory of Tannakian categories.
As explained in \cite[\S~II.9]{HMS17}, Nori's $\Q$-linear abelian category $\Mcal(k)$ is neutral Tannakian over $\Q$, with canonical fibre functor provided by the forgetful functor $\iota_k: \Mcal(k) \hookrightarrow \vect_{\Q}$. 
The Tannaka dual of $\Mcal(k)$ is a pro-affine $\Q$-group scheme $\Gcal_{mot}(k)$, called the \textit{motivic Galois group} of $k$. 

Over more general bases, a candidate category of motivic local systems was proposed by Arapura in \cite[\S~6]{Ara-mot}, within his own theory of motivic sheaves;
see also \cite[\S~2]{Ara-rev}. 
While internal homomorphisms are not available in full generality in Arapura's setting, one can still show that these categories of motivic local systems are Tannakian by using Beilinson's Basic Lemma and Poincaré--Verider duality. 
A technical defect of Arapura's construction is that his categories of motivic local systems are not visibly full inside the corresponding categories of motivic sheaves. 

Here we propose an alternative approach which exploits the six functor formalism on perverse motives in its full strength. 
Let $S$ be a smooth, geometrically connected $k$-variety of dimension $d$. Recall the subcategory of shifted local systems $\Loc_p(S) \subset \Perv(S)$ introduced in \Cref{nota-diamond1}.
Endowed with the shifted tensor product
\begin{equation*}
	- \otimes^{\dagger} -: \Loc_p(S) \times \Loc_p(S) \rightarrow \Loc_p(S), \quad (\Fcal,\Gcal) \rightsquigarrow (\Fcal[-d] \otimes \Gcal[-d])[d]
\end{equation*}
it becomes a neutral Tannakian category over $\Q$ with unit object $K_S := \unit_S[d]$. 
Indeed, the shifted tensor product is designed in such a way that the diagram
\begin{equation*}
	\begin{tikzcd}
		\Loc_p(S) \times \Loc_p(S) \arrow{rr}{- \otimes^{\dagger} -} \arrow{d}{[-d] \times [-d]} && \Loc_p(S) \arrow{d}{[-d]} \\
		\Loc(S) \times \Loc(S) \arrow{rr}{- \otimes -} && \Loc(S)
	\end{tikzcd}
\end{equation*}
commutes. 
This also means that $\Loc_p(S)$ can be endowed with compatible associativity, commutativity, and unit constraints, obtained by transport of structure from those of the neutral Tannakian category $\Loc(S)$: 
as the interested reader can easily check, the validity of the various coherence and mutual compatibility conditions for the constraints on $\Loc_p(S)$ is just a formal consequence of the definitions.
Every object $\Fcal \in \Loc_p(S)$ has a strong dual $\Fcal^{\lor} \in \Loc_p(S)$, defined by the formula
\begin{equation*}
	\Fcal^{\lor} := \Homcal_S(\Fcal[-d],\unit_S)[d].
\end{equation*}
Again, this is designed in such a way that the diagram
\begin{equation*}
	\begin{tikzcd}
		\Loc_p(S)^{op} \arrow{rr}{(-)^{\lor}} \arrow{d}{[-d]} && \Loc_p(S) \arrow{d}{[-d]} \\
		\Loc(S)^{op} \arrow{rr}{(-)^{\lor}} && \Loc(S)
	\end{tikzcd}
\end{equation*}
commutes.  
The category $\Loc_p(S)$ is neutral Tannakian over $\Q$:
the fibre functor at a point $s \in S^{\sigma}$ is the shifted inverse image functor $s^{\dagger} \colon \Loc_p(S) \to \vect_{\Q}$.

 
\begin{defn}\label{defn:MLoc_p}
	Let $S$ be a smooth, geometrically connected $k$-variety. 
	We introduce the full abelian subcategory
	\begin{equation*}
		\Mcal \Loc_p(S) := \left\{M \in \Mcal(S) \; | \; \iota_S(M) \in \Loc_p(S) \right\} \subset \Mcal(S).
	\end{equation*}
	We call \textit{motivic (shifted) local systems} over $S$ the objects of $\Mcal\Loc_p(S)$.
\end{defn}

\begin{rem}\label{rem:MLoc_p-subquots-ext}
	The abelian subcategory $\Loc_p(S)$ of $\Perv(S)$ is stable under subquotients and extensions:
	stability under subquotients follows from the description of simple perverse sheaves (in particular, see \cite[Lem.~4.3.3]{BBD82}), while stability under extensions follows from the fact that ordinary local systems are stable under extensions inside ordinary sheaves.
	Hence $\Mcal\Loc_p(S)$ enjoys the same stability properties inside $\Mcal(S)$.
\end{rem}

We can prove straightaway the main structural result about motivic local systems:
\begin{thm}\label{thm_Mloc}
	Let $S$ be a smooth, geometrically connected $k$-variety of dimension $d$. 
	The abelian category $\Mcal\Loc_p(S)$, endowed with the shifted tensor product
	\begin{equation}\label{shift-otimes}
		- \otimes^{\dagger} -: \Mcal\Loc_p(S) \times \Mcal\Loc_p(S) \rightarrow \Mcal\Loc_p(S), \quad (M,N) \rightsquigarrow (M[-d] \otimes N[-d])[d],
	\end{equation}
	is neutral Tannakian over $\Q$.
\end{thm}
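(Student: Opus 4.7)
The plan is to verify that $\Mcal\Loc_p(S)$, equipped with $\otimes^{\dagger}$, is a rigid abelian $\otimes$-category with $\End(\unit) = \Q$ admitting a fibre functor to $\vect_\Q$. All the heavy lifting has already been performed in \Cref{thm-boxtimesM}, \Cref{thm_homcal} and \Cref{prop-distinguished-otimes}; what remains is to check that the tensor structure, duals and fibre functor restrict cleanly to the full subcategory $\Mcal\Loc_p(S) \subset \Mcal(S)$.

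First I would check that $\otimes^{\dagger}$ preserves $\Mcal\Loc_p(S)$. For $M, N \in \Mcal\Loc_p(S)$, by \Cref{thm-boxtimesM} the functor $\iota$ is a monoidal morphism of stable homotopy $2$-functors, so $\iota_S(M \otimes N) \cong \iota_S(M) \otimes \iota_S(N)$ in $D^b(\Perv(S))$. Writing $\iota_S(M) = L_0[d]$ and $\iota_S(N) = L_1[d]$ for ordinary local systems $L_0, L_1 \in \Loc(S)$, this tensor product equals $(L_0 \otimes L_1)[2d]$, which is concentrated in perverse degree $-d$. Using that $\iota_S: D^b(\Mcal(S)) \to D^b(\Perv(S))$ is $t$-exact and conservative, this forces $M \otimes N$ to lie in (cohomological degree $-d$ of) $D^b(\Mcal(S))$, so $M \otimes^{\dagger} N := (M \otimes N)[-d]$ lies in $\Mcal(S)$, and its image under $\iota_S$ is $(L_0 \otimes L_1)[d] \in \Loc_p(S)$. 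The unit is $K^{\Mcal}_S := \unit_S^{\Mcal}[d]$, which by the isomorphism $\tilde w$ of \Cref{prop-unitMcal} satisfies $\iota_S(K^{\Mcal}_S) = \unit_S[d] \in \Loc_p(S)$. Associativity, symmetry and the unit constraints are inherited from the internal symmetric monoidal structure on $D^b(\Mcal(S))$ guaranteed by \Cref{thm-boxtimesM}.

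Next I would establish rigidity. For $M \in \Mcal\Loc_p(S)$, set $M^{\vee} := \Homcal_S(M, \unit_S^{\Mcal}[2d])$ so that $M^{\vee}$ plays the role of the internal Hom for $\otimes^{\dagger}$ into the unit $K^{\Mcal}_S$. By \Cref{thm_homcal}, the canonical natural transformation $\iota_S \Homcal_S(-,-) \to \Homcal_S(\iota_S(-), \iota_S(-))$ is invertible, so $\iota_S(M^{\vee}) = \Homcal_S(L_0[d], \unit_S[2d]) = L_0^{\vee}[d] \in \Loc_p(S)$, giving $M^{\vee} \in \Mcal\Loc_p(S)$. Evaluation $\mathrm{ev}: M^{\vee} \otimes^{\dagger} M \to K^{\Mcal}_S$ is defined by adjunction from $\id_{M^{\vee}}$; a coevaluation $\mathrm{coev}: K^{\Mcal}_S \to M \otimes^{\dagger} M^{\vee}$ is defined similarly using the symmetry of $\otimes^{\dagger}$. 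The triangle identities hold after applying $\iota_S$, because they are precisely the rigidity identities for the perverse local system $\iota_S(M)$; by the faithfulness of $\iota_S$ on $\Mcal\Loc_p(S)$, they then hold in $\Mcal\Loc_p(S)$.

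For the fibre functor, I would choose any complex point $s \in S^{\sigma}$, which exists since $\sigma: k \hookrightarrow \C$ gives $S^{\sigma} \neq \emptyset$. Because $S$ is geometrically connected, $S^{\sigma}$ is connected, so the shifted stalk
\[
 s^{\dagger}: \Loc_p(S) \to \vect_\Q, \qquad L \mapsto (L[-d])_s,
\]
is a $\Q$-linear, exact, faithful, symmetric monoidal functor. Setting $\omega_s := s^{\dagger} \circ \iota_S : \Mcal\Loc_p(S) \to \vect_\Q$ gives the desired fibre functor: exactness and $\Q$-linearity are inherited from the two factors; faithfulness follows from the faithfulness of $\iota_S$ combined with that of $s^{\dagger}$; and monoidality with respect to $\otimes^{\dagger}$ combines the monoidality of $\iota$ (Theorem \ref{thm-boxtimesM}) with the evident fact that stalks commute with tensor products of local systems. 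Finally, $\End_{\Mcal\Loc_p(S)}(K^{\Mcal}_S)$ injects via $\omega_s$ into $\End_{\vect_\Q}(\Q) = \Q$, and contains $\Q$, hence equals $\Q$; essential smallness is inherited from $\Mcal(S)$. Together these verify all the axioms of a neutral Tannakian category.

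The main obstacle in this plan is really packaged inside \Cref{thm_homcal}: without knowing that motivic internal homomorphisms exist in $\Mcal(S)$ and are compatible with their perverse-sheaf counterparts under $\iota_S$, one cannot construct the dual $M^{\vee}$ inside $\Mcal(S)$, and the rigidity of $\Mcal\Loc_p(S)$ would remain inaccessible. Once \Cref{thm_homcal} and \Cref{prop-distinguished-otimes} are available, every step above is essentially formal, amounting to tracking the preservation of the subcategory $\Loc_p(S)$ under the six operations and transporting the classical Tannakian structure on shifted local systems up through the faithful exact functor $\iota_S$.
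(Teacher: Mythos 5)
Your proposal is correct and follows essentially the same route as the paper: transport the Tannakian structure of $\Loc_p(S)$ through the faithful, conservative, exact monoidal functor $\iota_S$, using \Cref{thm-boxtimesM} for the tensor structure, \Cref{thm_homcal} to construct duals inside $\Mcal(S)$, and the shifted stalk at a complex point as fibre functor. The one small imprecision is your shorthand $M\otimes^{\dagger}N := (M\otimes N)[-d]$, which agrees with the definition in the statement only up to a sign $(-1)^d$ (as the paper itself points out in the remark following the theorem); this does not affect the argument, but the paper is careful to use the symmetric formula $(M[-d]\otimes N[-d])[d]$ precisely so that the associativity, commutativity and unit constraints of $\Mcal\Loc_p(S)$ are obtained by transport of structure without ambiguity.
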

\begin{proof}
	Arguing as for $\Loc_p(S)$, one can define compatible associativity, commutativity, and unit constraints on $\Mcal\Loc_p(S)$ with respect to the shifted tensor product \eqref{shift-otimes}, starting from the homologous constraints for the usual tensor product on $D^b(\Mcal\Loc_p(S))$. 
	In this way the faithful exact functor 
	\begin{equation}\label{iota_S:Loc}
		\iota_S \colon \Mcal \Loc_p(S) \hookrightarrow \Loc_p(S)
	\end{equation} 
    becomes a tensor functor. 
    Composing any fibre functor of $\Loc_p(S)$ with the forgetful functor \eqref{iota_S:Loc}, we obtain a fibre functor for $\Mcal\Loc_p(S)$.
	The motivic shifted unit object $K^{\Mcal}_S := \unit_S^{\Mcal}[d] \in \Mcal \Loc_p(S)$ satisfies
	\begin{equation*}
		\Hom_{\Mcal \Loc_p(S)}(K_S^{\Mcal}, K_S^{\Mcal}) = \Q,
	\end{equation*} 
	because the same holds for the shifted unit object of $\Loc_p(S)$ and the functor \eqref{iota_S:Loc} is faithful and conservative. Thus, in order to conclude that $\Mcal \Loc_p(S)$ is neutral Tannakian over $\Q$, we only have to prove the existence of strong duals inside it.
	Given an object $M \in \Mcal(S)$, it is natural to define its dual $M^{\lor}$ by the formula
	\begin{equation*}
		M^{\lor} := \Homcal_S(M[-d], \unit_S^{\Mcal})[d].
	\end{equation*}
	As a consequence of \Cref{thm_homcal}, this defines a functor $\Mcal \Loc_p(S) \rightarrow \Mcal \Loc_p(S)^{op}$ endowed with a canonical isomorphisms of functors $\Mcal \Loc_p(S) \rightarrow \Loc_p(S)^{op}$
	\begin{equation*}
		\iota_S(M^{\lor}) \xrightarrow{\sim} (\iota_S M)^{\lor}.
	\end{equation*} 
	In order to conclude that the construction $M \rightsquigarrow M^{\lor}$ defines the strong dual in $\Mcal \Loc_p(S)$, we have to check the following two conditions:
	\begin{enumerate}
		\item[(i)] The canonical natural transformation of functors $\Mcal \Loc_p(S)^{op} \times \Mcal \Loc_p(S) \rightarrow \Mcal \Loc_p(S)$
		\begin{equation*}
			M^{\lor}[-d] \otimes N[-d] := \Homcal_S(M[-d],\unit^{\Mcal}_S) \otimes N[-d] \rightarrow \Homcal_S(M[-d],N[-d])
		\end{equation*}
	    corresponding under adjunction to
	    \begin{equation*}
	    	\begin{tikzcd}
	    		\Homcal_S(M[-d],\unit_S^{\Mcal}) \arrow{r}{N[-d] \otimes -} & \Homcal_S(N[-d] \otimes M[-d],N[-d]) \arrow[equal]{d}  \\
	    		& \Homcal_S(N[-d], \Homcal_S(M[-d],N[-d])).
	    	\end{tikzcd}
	    \end{equation*}
        is invertible.
		\item[(ii)] The canonical natural transformation of functors $\Mcal \Loc_p(S) \rightarrow \Mcal \Loc_p(S)$
		\begin{equation*}
			M \rightarrow \Homcal_S(\Homcal_S(M[-d],\unit^{\Mcal}_S)[d][-d],\unit^{\Mcal}_S)[d] =: (M^{\lor})^{\lor}
		\end{equation*}
	    defined as the composite
	    \begin{equation*}
	    	M = M[-d][d] \rightarrow \Homcal_S(\Homcal_S(M[-d],\unit^{\Mcal}_S),\unit^{\Mcal}_S)[d] = \Homcal_S(\Homcal_S(M[-d],\unit^{\Mcal}_S)[d][-d],\unit^{\Mcal}_S)[d]
	    \end{equation*}
	    is invertible.
	\end{enumerate} 
    As usual, the validity of each of these two conditions follows from the validity of the corresponding condition on $\Loc_p(S)$ and the conservativity of \eqref{iota_S:Loc}. 
    This concludes the proof.
\end{proof}

\begin{rem}
	The definition of the shifted tensor product in \eqref{shift-otimes} does not coincide with the one given in \Cref{sect_otimes-dist}. Of course, the two functors in question are naturally isomorphic via any of the two composites in the diagram of functors $\Mcal \Loc_p(S) \times \Mcal \Loc_p(S) \rightarrow \Mcal \Loc_p(S)[-d]$
	\begin{equation*}
		\begin{tikzcd}
			M[-d] \otimes N[-d] \arrow{r}{\sim} \isoarrow{d} & (M \otimes N[-d])[-d] \isoarrow{d} \\
			(M[-d] \otimes N)[-d] \arrow{r}{\sim} & (M \otimes N)[-2d].
		\end{tikzcd}
	\end{equation*}
	However, the latter diagram is $(-1)^d$-commutative. Hence, if $d$ is odd, the two versions of the shifted tensor product are not canonically isomorphic. The definition of \Cref{sect_otimes-dist} is adapted to the setting of \Cref{prop-distinguished-otimes}, but it is not well-suited to defining the shifted associativity, commutativity and unit constraints.
\end{rem}

\begin{defn}
	Let $S$ be a smooth, geometrically connected $k$-variety.
	For every point $s \in S^{\sigma}$, we let $\Gcal_{mot}(S,s)$ denote the Tannaka dual of $\Mcal\Loc_p(S)$ with respect to the fibre functor
	\begin{equation*}
		\Mcal\Loc_p(S) \xrightarrow{\iota_S} \Loc_p(S) \xrightarrow{s^{\dagger}} \vect_{\Q},
	\end{equation*}
	and we call it the \textit{motivic Galois group} of $S$ with base-point $s$.
\end{defn}

By construction, the motivic Galois group $\Gcal_{mot}(S,s)$ is a pro-affine $\Q$-group scheme.
The most natural choice for the base-point $s$ is a closed point in $S(\bar{k})$.
In this case, the difference between $\Gcal_{mot}(S,s)$ and Nori's motivic Galois group $\Gcal_{mot}(k)$ can be measured precisely in terms of the theory of local systems of geometric origin on the complex-analytic space $S^{\sigma}$:
the fundamental case, when $s$ is a $k$-rational point, is established by Jacobsen in \cite[Thm.~7.7]{JacMalcev}, and the general case is deduced in \cite[Thm.~2.19]{JacTer25}.

\subsection{Reconstruction theorem}

Gallauer's paper \cite{GalSupp} describes abstract reconstruction results in the framework of tensor-triangulated geometry. 
The general question is how to recover topological information about a $k$-variety $S$ from the structure of a tensor-triangulated category attached to it. 
This is particularly interesting when such a category is the value at $S$ of a monoidal $\Var_k$-fibered category:
in this case, Gallauer's approach allows one to address the topological reconstruction question for all $k$-varieties at once. 

In \cite[Defn.~2.3]{GalSupp}, a \textit{constructible system} over $\Var_k$ is defined as a monoidal $\Var_k$-fibered category $\Hbb$ satisfying the following two conditions:
\begin{enumerate}
	\item[(i)] Given an open immersion of $k$-varieties $j \colon U \hookrightarrow S$, with complementary closed immersion $i \colon Z \hookrightarrow S$, the functors
	\begin{equation*}
		\Hbb(U) \xleftarrow{j^*} \Hbb(S) \xrightarrow{i^*} \Hbb(Z)
	\end{equation*} 
	define a \textit{recollement} in the sense of \cite[\S~1.4.3]{BBD82}.
	\item[(ii)] For every smooth $k$-variety $S$, there is a monoidal triangulated full subcategory $\Hbb^{ls}(S) \subset \Hbb(S)$ such that:
	\begin{itemize}
		\item[(a)] given a $k$-variety $S$, for every $A \in \Hbb(S)$ there exists a dense open immersion $j \colon U \hookrightarrow S$ with $U$ smooth such that $j^* A \in \Hbb_{ls}(U)$;
		\item[(b)] for every (locally closed) immersion $i \colon Z \hookrightarrow S$ between smooth $k$-varieties, the functor $i^* \colon \Hbb(S) \to \Hbb(Z)$ takes $\Hbb^{ls}(S)$ to $\Hbb_{ls}(Z)$;
		\item[(c)] if $S$ is smooth and connected, for every schematic point $s \in S(k)$ with closure $Z$, the natural functor
		\begin{equation*}
			\Hbb_{ls}(S) \to \Hbb(Z) \to \twocolim_{V \in \Op(Z)^{op}} \Hbb(V)
		\end{equation*}  
		is conservative.
	\end{itemize}
\end{enumerate}
In \cite[Defn.~2.13]{GalSupp}, a constructible system $\Hbb$ over $\Var_k$ is called \textit{generically simple} if, for every $k$-variety $S$, the monoidal triangulated category
\begin{equation*}
	\twocolim_{U \in \Sm\Op(S)^{op}} \Hbb(U)
\end{equation*}
is simple, meaning that it contains no non-zero proper thick tensor-ideal.

Let $\Hbb$ be a generically simple constructible system over $\Var_k$.
By \cite[Thm.~5.21]{GalSupp}, the Zariski topological space underlying any given $k$-variety $S$ can be canonically recovered as the smashing spectrum of the tensor-triangulated category $\Hbb(S)$, provided the following \textit{Lefschetz-type hypothesis} is satisfied:
For every connected $k$-curve $C$, we have $\Hom_{\Hbb(C)}(\unit_C,\unit_C[-1]) = 0$ and, for every dense open immersion $j \colon U \hookrightarrow C$, the map
\begin{equation*}
	\Hom_{\Hbb(C)}(\unit_C,\unit_C) \to \Hom_{\Hbb(U)}(\unit_U,\unit_U)
\end{equation*}  
is injective.
As explained in \cite[\S~3]{GalSupp}, many six functor formalisms of interests give rise to generically constructible systems satisfying the Lefschetz-type hypothesis:
for example, by \cite[Prop.~3.1]{GalSupp}, this is the case for the $\Var_k$-fibered category $D^b_c(\cdot,\Q)$, at least when $k$ is algebraically closed;
in general the generic simplicity fails, but a weaker reconstruction result holds (see \cite[Thm.~4.8]{GalSupp}).
Other known examples of constructible systems include $\ell$-adic sheaves and mixed Hodge modules.

Conjecturally, the $\Var_k$-fibered category $\DA_{ct}(\cdot,\Q)$ should define a constructible system as well (see \cite[\S~3.5]{GalSupp}).
In the setting of perverse motives, we get an unconditional result:
\begin{prop}\label{prop:constr-syst}
	The $\Var_k$-fibered category $D^b(\Mcal(\cdot))$ is a constructible system, which is moreover generically simple if $k$ is algebraically closed.
\end{prop}
\begin{proof}
	The fact that, for every open immersion $j \colon U \hookrightarrow S$ with complementary closed immersion $i \colon Z \hookrightarrow S$, the functors
	\begin{equation*}
		D^b(\Mcal(U)) \xleftarrow{j^*} D^b(\Mcal(S)) \xrightarrow{i^*} D^b(\Mcal(Z))
	\end{equation*}
	define a recollement follows from the basic properties of stable homotopy $2$-functors (see \cite[\S~1.4.4]{Ayo07a}).
	For every smooth $k$-variety $S$, we define the subcategory of lisse objects in $D^b(\Mcal(S))$ to be
	\begin{equation*}
		D^b_{ls}(\Mcal(S)) := \left\{M^{\bullet} \in D^b(\Mcal(S)) \; | \; H^i(M^{\bullet}) \in \Mcal \Loc_p(S) \;\; \forall i \in \Z \right\} \subset D^b(\Mcal(S)).
	\end{equation*}
	By construction, a complex $M^{\bullet} \in D^b(\Mcal(S))$ belongs to $D^b_{ls}(\Mcal(S))$ if and only if the ordinary cohomology objects of the underlying complex $\iota_S(M^{\bullet}) \in D^b(\Perv(S)) = D^b_c(S,\Q)$ are local systems: 
	in other words, $D^b_{ls}(\Mcal(S))$ is the inverse image of the subcategory 
	\begin{equation*}
		D^b_{ls}(S,\Q) := \left\{K^{\bullet} \in D^b_c(S,\Q) \; | \; H^i(K^{\bullet}) \in \Loc(S) \;\; \forall i \in \Z \right\}
	\end{equation*}
	under $\iota_S: D^b(\Mcal(S)) \rightarrow D^b(\Perv(S)) = D^b_c(S,\Q)$.
	Since $D^b_{ls}(S,\Q)$ is a tensor-triangulated subcategory of $D^b_c(S,\Q)$, we deduce that $D^b_{ls}(\Mcal(S))$ is a tensor-triangulated subcategory of $D^b(\Mcal(S))$. 
	As observed in \cite{GalSupp}, the subcategories $D^b_{ls}(S,\Q)$ satisfy the three conditions in \cite[Defn.~2.3]{GalSupp}.
	It is easy to see that each of these conditions implies the analogous condition for the subcategories $D^b_{ls}(\Mcal(S))$.
	Thus we have a constructible system.
	
	Now assume that $k$ is algebraically closed, and let us check that this constructible system is generically simple:
    we have to show that, for every smooth connected $k$-variety $S$, the tensor-triangulated category 
	\begin{equation*}
		\twocolim_{U \in \Sm\Op(S)^{op}} D^b(\Mcal(U))
	\end{equation*} 
	is simple. 
	To check this claim, it is convenient to replace the tensor product by the shifted tensor product, which is adapted to shifted local systems;
	this does not affect the conclusion.
	We have canonical equivalences of tensor-triangulated categories
	\begin{align*}
		\twocolim_{U \in \Sm\Op(S)^{op}} D^b(\Mcal(U))
		&= D^b(\twocolim_{U \in \Sm\Op(S)^{op}} \Mcal(U)) &&\textup{by \cite[Lemma~2.6]{GalNote}} \\
		&= D^b(\twocolim_{U \in \Sm\Op(S)^{op}} \Mcal \Loc_p(U)) && \textup{by cofinality}.
	\end{align*}
	Since $k$ is assumed to be algebraically closed, each $U \in \Sm\Op(S)$ is geometrically connected.
	Hence, by \Cref{thm_Mloc}, the abelian tensor categories $\Mcal \Loc_p(U)$ are neutral Tannakian over $\Q$, and the restriction functors along inclusions in $\Sm\Op(S)$ form a filtered system of exact tensor functors. 
	The simplicity of the tensor-triangulated category 
	\begin{equation*}
		D^b(\twocolim_{U \in \Sm\Op(S)^{op}} \Mcal \Loc_p(U))
	\end{equation*} 
	follows from \cite[Prop.~2.18]{GalSupp}. 
	This concludes the proof.
\end{proof}

This leads us to the following motivic reconstruction result:
\begin{thm}\label{thm:reconstr}
	Suppose that $k$ is algebraically closed.
	Then the underlying topological space of a $k$-variety $S$ is canonically homeomorphic to the smashing spectrum of the tensor-triangulated category $D^b(\Mcal(S))$.
\end{thm}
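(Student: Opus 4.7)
The strategy is to reduce the statement to Gallauer's reconstruction theorem \cite[Thm.~5.21]{GalSupp}, which recovers the underlying topological space of $S$ as the smashing spectrum of the associated tt-category in any generically simple constructible system in the sense of \cite[Defn.~2.3, Defn.~2.13]{GalSupp}. The plan is therefore to verify that $D^b(\Mcal(\cdot))$ is such a system over $\Var_k$, paralleling the verification for $D^b(\Perv(\cdot))$ carried out in \cite[Prop.~3.1]{GalSupp}, and then invoke Gallauer's theorem verbatim.

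First, I would verify the axioms of a constructible system. These reduce to the existence of a closed unitary symmetric monoidal six functor formalism on $D^b(\Mcal(\cdot))$, which is precisely the combined output of \cite[Thm.~5.1]{IM19} together with our \Cref{thm-boxtimesM} and \Cref{thm_homcal}. The remaining structural conditions of \cite[Defn.~2.3]{GalSupp} (compatibility with open–closed decompositions, correct behaviour of the unit, constructible generation) can be transferred directly from the perverse side using the conservative, unitary symmetric monoidal morphism of stable homotopy $2$-functors $\iota: D^b(\Mcal(\cdot)) \rightarrow D^b(\Perv(\cdot))$, which, thanks to \Cref{thm_homcal}, is now also fully compatible with internal homomorphisms. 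Conservativity of each $\iota_S$ ensures that the relevant thick tensor-ideal generation properties on the motivic side are detected by the corresponding properties on the perverse side.

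The genuine obstacle is checking generic simplicity. By \cite[Defn.~2.13]{GalSupp} this is a condition on the behaviour of the system over the generic points of $k$-varieties. Using \Cref{constr:M(L)} together with the independence results \Cref{prop:M(L)-indep} and \Cref{cor:M(L)}, the condition translates into a statement about the categories $\Mcal(L)$ of Nori motives over function fields $L$ of $k$-varieties. Here the hypothesis that $k$ is algebraically closed is decisive: it implies that every such $L$ admits a complex embedding extending $\sigma$, so \Cref{prop:M(L)-indep} identifies $\Mcal(\Spec(L))$ with the genuine Nori category $\Mcal(L)$. The conservative monoidal functor $\iota_L: D^b(\Mcal(L)) \rightarrow D^b(\Perv(L))$ then pulls tensor-prime ideals back to tensor-prime ideals, and one combines this with the generic simplicity of $D^b(\Perv(\cdot))$ extracted from \cite[Prop.~3.1]{GalSupp} to transport the property to the motivic side.

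The main technical difficulty is precisely this last verification: a priori the Balmer spectrum of $D^b(\Mcal(L))$ carries extra content coming from the motivic Galois group $\Gcal_{mot}(L)$, and is therefore not literally reduced to a point as in the perverse case. What needs to be shown is that Gallauer's notion of generic simplicity — which concerns the image of the map on smashing spectra induced by $\iota$ at the generic points rather than the spectra themselves — is stable under the conservative monoidal realization $\iota_L$. Once this is in place, the conclusion follows from \cite[Thm.~5.21]{GalSupp} without further work.
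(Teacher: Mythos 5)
There is a genuine gap, and the central mechanism in your argument is wrong. Two distinct problems:

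\emph{The role of $k$ algebraically closed.} You claim that $k$ algebraically closed implies that every function field $L$ of a $k$-variety admits a complex embedding extending $\sigma$. This is false: if $k = \C$ and $\sigma = \id_\C$ (the prototypical algebraically closed case), the function field $\C(t)$ admits no $\C$-algebra embedding into $\C$; the paper even remarks on exactly this example just before \Cref{cor:M(L)}. The actual role of algebraic closedness in \Cref{lem:reconstr-Mp} is elsewhere: it guarantees that every smooth connected $k$-variety is geometrically connected, which is the hypothesis needed to apply \Cref{thm_Mloc} and obtain a neutral Tannakian category $\Mcal\Loc_p(U)$ for every $U$ in the relevant filtered system.

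\emph{The transport of generic simplicity.} You propose to deduce generic simplicity of $D^b(\Mcal(\cdot))$ from that of $D^b(\Perv(\cdot))$ via the conservative monoidal morphism $\iota$. This cannot work as a direction of implication: a conservative tt-functor $F:\Acal\to\Bcal$ induces a \emph{surjective} map $\mathrm{Spc}(\Bcal)\to\mathrm{Spc}(\Acal)$ on Balmer spectra, so $\mathrm{Spc}(\Bcal)$ being a point tells you that $\mathrm{Spc}(\Acal)$ is nonempty, not that it is a point. Your own aside that $\mathrm{Spc}(D^b(\Mcal(L)))$ ``carries extra content coming from the motivic Galois group'' and ``is not literally reduced to a point'' is actually inverted: the spectrum \emph{is} a point, because the derived category of any Tannakian category over a field is simple; the extra content lives in the category itself, not in its spectrum. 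The paper's proof of generic simplicity proceeds exactly along that line, identifying the relevant $2$-colimit as $D^b\bigl(2{-}\varinjlim_U \Mcal\Loc_p(U)\bigr)$ using \cite[Lemma~2.6]{GalNote}, invoking \Cref{thm_Mloc} for Tannakianness, and concluding simplicity from \cite[Prop.~2.18]{GalSupp}. There is no transport from the perverse side at this step.

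Finally, even after generic simplicity is in hand, \cite[Thm.~5.21]{GalSupp} requires an additional Lefschetz-type hypothesis (vanishing of $\Hom_{D^b(\Mcal(C))}(\unit^{\Mcal}_C,\unit^{\Mcal}_C[-1])$ and injectivity of the restriction of endomorphisms of the unit under dense open immersions, for irreducible curves $C$). The paper verifies this using the constructible $t$-structure on $D^b(\Mcal(S))$ and the faithful functor $\Mcal_{ct}(S)\hookrightarrow\Ct(S)$. Your proposal asserts the conclusion ``follows without further work,'' which omits this required step entirely.
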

\begin{proof}
	It suffices to show that the tensor-triangulated category $D^b(\Mcal(S))$ satisfies the hypotheses of \cite[Thm.~5.21]{GalSupp}. 
	In view of \Cref{prop:constr-syst}, it remains to check that the the constructible system $D^b(\Mcal(\cdot))$ satisfies the Lefschetz-type condition on curves.
	
	In order to deal with possibly singular curves, we need to use the constructible $t$-structure on $D^b(\Mcal(S))$, defined at the end of \cite[\S~5.3]{IM19}:
	an object $M^{\bullet} \in D^b(\Mcal(S))$ belongs to its heart $\Mcal_{ct}(S)$ if and only if the object $\iota_S(M^{\bullet}) \in D^b(\Perv(S)) = D^b_c(S,\Q)$ belongs to the usual constructible heart $\Ct(S)$.
    In particular, for every $k$-variety $S$, the motivic unit object $\unit_S^{\Mcal}$ of \Cref{prop-unitMcal} belongs to $\Mcal_{ct}(S)$. The conservative functor $\iota_S: D^b(\Mcal(S)) \rightarrow D^b(\Perv(S))$ restricts to a faithful exact functor $\iota_S: \Mcal_{ct}(S) \hookrightarrow \Ct(S)$.
	
	Hence, for an irreducible $k$-curve $C$, the vanishing of $\Hom_{D^b(\Mcal(C))}(\unit_C,\unit_C[-1])$ follows directly from the semi-orthogonality axiom of $t$-structures \cite[Defn.~1.3.1(i)]{BBD82}, while the injectivity of $\Hom_{D^b(\Mcal(C))}(\unit_C,\unit_C) \to \Hom_{D^b(\Mcal(U))}(\unit_U,\unit_U)$ follows from the commutative diagram
	\begin{equation*}
		\begin{tikzcd}
			\Hom_{\Mcal_{ct}(C)}(\unit^{\Mcal}_C, \unit^{\Mcal}_C) \arrow[hook]{d} \arrow{r} & \Hom_{\Mcal_{ct}(U)}(\unit^{\Mcal}_U, \unit^{\Mcal}_U) \arrow[hook]{d} \\
			\Hom_{\Ct(C)}(\unit_C,\unit_C) \arrow[hook]{r} & \Hom_{\Ct(U)}(\unit_U,\unit_U),
		\end{tikzcd}
	\end{equation*}
	where the two vertical maps are injective by faithfulness and
	the lower horizontal arrow is known to be injective.
	This concludes the proof.
\end{proof}

\begin{rem}
	As remarked in \cite[Cor.~5.2]{GalSupp}, in certain cases the scheme $S$ is completely determined by the knowledge of its underlying topological space. In these cases, it follows that the scheme $S$ is determined by the tensor-triangulated category $D^b(\Mcal(S))$.
\end{rem}

\subsection{Motivic relative Hard Lefschetz}

The existence of the Nori realization, as stated in \Cref{thm:Nri-real}, leads to a well-behaved theory of Chern classes in the setting of perverse motives.
This is based on the abstract notion of \textit{orientation} on stable homotopy $2$-functors, as introduced in~\cite[\S~2.4.c]{CisDeg}, and we start with a reminder about this.

Let $\Hbb$ be a stable homotopy $2$-functor in the sense of \cite{Ayo07a,Ayo07b}.
To every $k$-variety $S$ and every vector bundle $E$ over $S$ one associates the \textit{Thom equivalence}
\begin{equation*}
	\Thom_{E/S}\colon \Hbb(S) \xrightarrow{s_*} \Hbb(E) \xrightarrow{p_{\#}} \Hbb(S),
\end{equation*}
where $p\colon E \rightarrow S$ denotes the structural projection and $s\colon S \hookrightarrow E$ denotes the zero-section.
The fact that $\Thom_{E/S}$ defines an autoequivalence of $\Hbb(S)$ is the content of the stability axiom of stable homotopy $2$-functors (see~\cite[Defn.~1.4.1]{Ayo07a}).
Thom equivalences are important in the construction of the exceptional functors $f_!$ and $f^!$, as described in~\cite[\S~1.5]{Ayo07a}.
Let us point out the following remarkable properties:
\begin{enumerate}
	\item[(i)] Fix a $k$-variety $S$. 
	For $i = 1,2$, let $E_i$ be vector bundle over $S$, with projection $p_i\colon E_i \rightarrow S$ and zero-section $s_i\colon S \hookrightarrow E_i$; 
	moreover, let $g\colon E_1 \xrightarrow{\sim} E_2$ be an isomorphism of vector bundles over $S$, and consider the commutative diagram
	\begin{equation*}
		\begin{tikzcd}
			& S \arrow{dl}{s_1} \arrow{dr}{s_2} \\
			E_1 \arrow{rr}{g} \arrow{dr}{p_1} && E_2 \arrow{dl}{p_2} \\
			& S.
		\end{tikzcd}
	\end{equation*}
	Then there is a canonical natural isomorphism between functors $\Hbb(S) \rightarrow \Hbb(S)$
	\begin{equation*}
		\psi_g\colon \Thom_{E_1/S}(A) := p_{1,\#} s_{1,*} A = p_{2,\#} g_{\#} s_{1,*} A \xrightarrow{\sim} p_{2,\#} g_* s_{1,*} A = p_{2,\#} s_{2,*} A =: \Thom_{E_2/S}(A),
	\end{equation*}
	where the central arrow is induced by the natural isomorphism $g_{\#} \xrightarrow{\sim} g_*$.
	\item[(ii)] Fix a morphism of $k$-varieties $f\colon T \rightarrow S$. 
	Let $E$ be vector bundle over $S$, with projection $p\colon E \rightarrow S$ and zero-section $s\colon S \hookrightarrow E$; let $E_T := E \times_S T$ be the induced vector bundle over $T$, with projection $p_T\colon E_T \rightarrow T$ and zero-section $t\colon T \hookrightarrow E_T$, and consider the diagram with Cartesian squares
	\begin{equation*}
		\begin{tikzcd}
			T \arrow{d}{t} \arrow{r}{f} & S \arrow{d}{s} \\
			E_T \arrow{r}{f_E} \arrow{d}{p_T} & E \arrow{d}{p} \\
			T \arrow{r}{f} & S.
		\end{tikzcd}
	\end{equation*}
	Then there is a canonical natural isomorphism between functors $\Hbb(S) \rightarrow \Hbb(T)$
	\begin{equation*}
		\xi_f\colon f^* \Thom_{E/S}(A) := f^* p_{T,\#} s_* A \xleftarrow{\sim} p_{T,\#} f_E^* s_* \xrightarrow{\sim} p_{T,\#} t_* f^* A =: \Thom_{E_T/T}(f^* A),
	\end{equation*}
	where the two exchange transformations are both invertible by smooth and proper base-change.
	\item[(iii)] Fix a $k$-variety $S$. 
	Let
	\begin{equation*}
		0 \rightarrow E' \xrightarrow{i} E \xrightarrow{q} E'' \rightarrow 0
	\end{equation*}
	be a short exact sequence of vector bundle over $S$, with projections
	\begin{equation*}
		p' \colon E' \rightarrow S, \quad p \colon E \rightarrow S, \quad p'' \colon E'' \rightarrow S
	\end{equation*}
	and zero-sections
	\begin{equation*}
		s' \colon S \hookrightarrow E', \quad s \colon S \hookrightarrow E, \quad s'' \colon S \hookrightarrow E'',
	\end{equation*}
	and consider the commutative diagram with Cartesian square
	\begin{equation*}
		\begin{tikzcd}
			S \arrow{d}{s'} \arrow{dr}{s} \\
			E' \arrow{r}{i} \arrow{d}{p'} & E \arrow{d}{q} \arrow{dr}{p} \\
			S \arrow{r}{s''} & E'' \arrow{r}{p''} & S.
		\end{tikzcd}
	\end{equation*}
	Then there is a canonical natural isomorphism between functors $\Hbb(S) \rightarrow \Hbb(S)$
	\begin{equation*}
		\Thom_{E/S}(A) := p_{\#} s_* A = p''_{\#} q_{\#} i_* s'_* A \xrightarrow{\sim} p''_{\#} s''_* p'_{\#} s'_{\#} A =: \Thom_{E''/S}(\Thom_{E'/S}(A)),
	\end{equation*}
	where the central arrow is invertible by smooth base-change. If moreover $\Hbb$ is monoidal in the sense of~\cite[\S~2.3]{Ayo07a}, there is another natural isomorphism between functors $\Hbb(S) \rightarrow \Hbb(S)$
	\begin{equation*}
		\begin{aligned}
			\Thom_{E''/S}(B) &:= p''_{\#} s''_* B = p''_{\#} s''_*(\unit_S \otimes B) = p''_{\#} s''_* (\unit_S \otimes {s''}^* {p''}^* B) \xrightarrow{\sim} \\
			&\xrightarrow{\sim} p''_{\#}(s''_* \unit_S \otimes {p''}^* B) \xrightarrow{\sim} p''_{\#} s''_* \unit_S \otimes p''_{\#} {p''}^* B \xrightarrow{\epsilon} p''_{\#} s''_* \unit_S \otimes B =: \Thom_{E''/S}(\unit_S) \otimes B,
		\end{aligned}
	\end{equation*}
	where the central isomorphism witnesses the projection formula. Applying this to $B = \Thom_{E'/S}(A)$, and combining it with the previous isomorphism, we obtain a natural isomorphism of functors $\Hbb(S) \rightarrow \Hbb(S)$
	\begin{equation*}
		\phi_{i,q}\colon \Thom_{E/S}(A) \xrightarrow{\sim} \Thom_{E''/S}(\unit_S) \otimes \Thom_{E'/S}(A).
	\end{equation*}
\end{enumerate}
Recall from~\cite[Defn.~2.4.38]{CisDeg} that an \textit{orientation} $\tfr$ on $\Hbb$ is the datum of
\begin{itemize}
	\item for every $k$-variety $S$ and every rank $r$ vector bundle $E$ over $S$, an isomorphism in $\Hbb(S)$
	\begin{equation*}
		\tfr_{E/S}\colon \Thom_{E/S}(\unit_S) \xrightarrow{\sim} \unit_S(r)[2r],
	\end{equation*}
	called the \textit{Thom isomorphism},
\end{itemize}
satisfying the following conditions:
\begin{enumerate}
	\item[(a)] For every $k$-variety $S$, and every isomorphism of rank $r$ vector bundles $g\colon E_1 \xrightarrow{\sim} E_2$ over $S$, the diagram in $\Hbb(S)$
	\begin{equation*}
		\begin{tikzcd}
			\Thom_{E_1/S}(\unit_S) \arrow{rr}{\psi_g} \arrow{dr}{\tfr_{E_1/S}} && \Thom_{E_2/S}(\unit_S) \arrow{dl}{\tfr_{E_2/S}} \\
			& \unit_S(r)[2r]
		\end{tikzcd}
	\end{equation*}
	commutes.
	\item[(b)] For every morphism of $k$-varieties $f\colon T \rightarrow S$, and every rank $r$ vector bundle $E$ over $S$, the diagram in $\Hbb(T)$
	\begin{equation*}
		\begin{tikzcd}
			f^* \Thom_{E/S}(\unit_S) \arrow{rr}{\xi_f} \arrow{d}{\tfr_{E/S}} && \Thom_{E_T/T}(\unit_T) \arrow{d}{\tfr_{E_T/T}} \\
			f^* \unit_S(r)[2r]  \arrow{rr}{\sim} && \unit_T(r)[2r]
		\end{tikzcd}
	\end{equation*}
	commutes.
	\item[(c)] For every $k$-variety $S$, and every short exact sequence of vector bundles over $S$
	\begin{equation*}
		0 \rightarrow E' \xrightarrow{i} E \xrightarrow{q} E'' \rightarrow 0,
	\end{equation*}
	of respective ranks $r'$, $r$, $r''$, the diagram in $\Hbb(S)$
	\begin{equation*}
		\begin{tikzcd}
			\Thom_{E/S}(\unit_S) \arrow{d}{\tfr_{E/S}} \arrow{rr}{\phi_{i,q}} && \Thom_{E''/S}(\unit_S) \otimes \Thom_{E'/S}(\unit_S) \arrow{d}{\tfr_{E''/S} \otimes \tfr_{E'/S}}  \\
			\unit_S[2r](r) \arrow{rr}{\sim} && \unit_S[2r''](r'') \otimes \unit_S[2 r'](r')
		\end{tikzcd}
	\end{equation*}
	commutes.
\end{enumerate}
The structure of an orientation allows one to construct Chern classes of vector bundles in terms of the six operations on $\Hbb$.
Let us focus on the first Chern class of a line bundle, which is the only case needed in our applications.
Given a $k$-variety $S$, and a line bundle $L$ over $S$, with projection $p \colon L \to S$ and zero-section $s \colon S \hookrightarrow L$, one defines the \textit{first Chern class} to be the morphism in $\Hbb(S)$
\begin{equation*}
	c_1(L) \colon \unit_S = s^* s_* \unit_S \xrightarrow{\eta} s^* p^* p_{\#} s_* \unit_S = p_{\#} s_* \unit_S =: \Thom_{L/S}(\unit_S) \xrightarrow{\tfr_{L/S}} \unit_S(1)[2],
\end{equation*}
where the third passage is induced by the natural isomorphism $s^* p^* = \id_{\Hbb(S)}$.

In general, the existence of orientations is not a mere formal by-product of the six functor formalism.
In fact, not every stable homotopy $2$-functor can be endowed with an orientation:
for instance, this is not possible for Morel--Voevodsky's stable homotopy theory $\mathsf{SH}(\cdot,\Q)$.
The existence of an orientation on $\mathsf{SH}(\cdot,\Q)$ would imply the invertibility of the natural transformation of functors $\mathsf{SH}(S,\Q) \rightarrow \mathsf{SH}(S,\Q)$
\begin{equation}\label{eq:Hopf-fibr}
	\eta \colon A \rightarrow H_B \otimes A
\end{equation}
induced by the algebraic Hopf fibration.
In fact, the full subcategory of $\mathsf{SH}(S,\Q)$ on which \eqref{eq:Hopf-fibr} is invertible is precisely the category of \textit{Beilinson motives}, introduced in~\cite[\S~14.2]{CisDeg}. 
As a consequence of~\cite[Thm.~14.3.4]{CisDeg}, the latter is equivalent to Ayoub's category $\DA^{\et}(S,\Q)$, and this identification is compatible with the six operations.
The theory of Beilinson motives admits a canonical orientation: 
it is constructed by the method of~\cite{Deg08} starting from the canonical identification
\begin{equation*}
	\mathsf{Pic}(S) = \Hom_{\DA^{\et}(S,\Q)}(\unit_S, \unit_S(1)[2]).
\end{equation*}
Since the Thom isomorphisms only involve constructible objects, this orientation is completely described within the smaller stable homotopy $2$-functor $\DA^{\et}_{ct}(\cdot,\Q)$.

Note that, given a monoidal morphism of stable homotopy $2$-functors $R \colon \Hbb_1 \to \Hbb_2$, any orientation $\tfr^{(1)}$ for $\Hbb_1$ induces an orientation $\tfr^{(2)}$ for $\Hbb_2$:
to every $k$-variety $S$, and every rank $r$ vector bundle $E$ over $S$, one defines the Thom isomorphism $\tfr^{(2)}_{E/S} \colon \Thom_{E/S}(\unit_S) \xrightarrow{\sim} \unit_S(r)[2r]$ as the composite
\begin{equation}
	\tfr^{(2)}_{E/S} \colon \Thom_{E/S}(\unit_S) = \Thom_{E/S}(R_S(\unit_S)) = R_S(\Thom_{E/S}(\unit_S)) \xrightarrow{\tfr^{(1)}_{E/S}} R_S(\unit_S) = \unit_S,
\end{equation}
where the various equalities witness the compatibility of $R$ with Thom equivalences.
The validity of conditions (a), (b), (c) for $\tfr^{(2)}$ follows formally from the validity of the same conditions for $\tfr^{(1)}$;
we leave the details to the interested reader (see \cite[Ex.~2.4.40]{CisDeg} for a related discussion).
In particular, the canonical orientation of $\DA^{\et}(\cdot,\Q)$ induces an orientation on every stable homotopy $2$-functor to which it maps.
Applying this to the Betti realization
\begin{equation*}
	\Bti^* \colon \DA^{\et}_{ct}(\cdot,\Q) \to D^b(\cdot,\Q),
\end{equation*}
one recovers the classical theory of Chern classes of topological vector bundles:
for instance, the first Chern class of line bundles over $S$ is described by the map
\begin{equation*}
	\mathsf{Pic}(S) = \Hom_{\DA^{\et}_{ct}(S,\Q)}(\unit_S, \unit_S(1)[2]) \xrightarrow{\Bti_S^*} \Hom_{D^b(S,\Q)}(\unit_S,\unit_S(1)[2]).
\end{equation*}
This leads to the following application to perverse motives:
\begin{prop}\label{prop:orient_Nori}
	The stable homotopy $2$-functor $D^b(\Mcal(\cdot))$ is endowed with a canonical orientation $\tfr^{\Mcal}$ such that, for every $k$-variety $S$, and every rank $r$ vector bundle $E$ over $S$, the diagram in $D^b(S,\Q)$
	\begin{equation}\label{dia:Thom-iota}
		\begin{tikzcd}
			\iota_S(\Thom_{E/S}(\unit^{\Mcal}_S)) \arrow{r}{\tfr^{\Mcal}_{E/S}} \arrow[equal]{d} & \iota_S(\unit_S^{\Mcal}(r)[2r]) \arrow[equal]{d} \\
			\Thom_{E/S}(\unit_S) \arrow{r}{\tfr_{E/S}} & \unit_S(r)[2r]
		\end{tikzcd}
	\end{equation}
	commutes.
\end{prop}
\begin{proof}
	By the above discussion, the Nori realization $\Nri^* \colon \DA^{\et}_{ct}(\cdot,\Q) \to D^b(\Mcal(\cdot))$ induces an orientation on $D^b(\Mcal(\cdot))$.
	The compatibility of the latter with the orientation of $D^b(\cdot,\Q)$ follows formally from the $2$-isomorphism $\Bti^* = \iota \circ \Nri^*$ of \Cref{thm:Nri-real}. 
\end{proof}

\begin{cor}\label{cor:chern}
	For every $k$-variety $S$, and every line bundle $L$ over $S$, there exists a canonical morphism in $D^b(\Mcal(S))$
	\begin{equation*}
		c_1^{\Mcal}(L) \colon \unit^{\Mcal}_S \rightarrow \unit^{\Mcal}_S(1)[2]
	\end{equation*}
	rendering the diagram in $D^b(S,\Q)$
	\begin{equation}\label{dia:c_1-Nri}
		\begin{tikzcd}
			\iota_S(\unit^{\Mcal}_S) \arrow{rr}{c_1^{\Mcal}(L)} \arrow[equal]{d} && \iota_S(\unit_S(1)[2]) \arrow[equal]{d} \\
			\unit_S \arrow{rr}{c_1(L)} && \unit_S(1)[2]
		\end{tikzcd}
	\end{equation}
	commutative.
\end{cor}
\begin{proof}
	In view of the way the first Chern class is defined starting from an orientation, the commutativity of \eqref{dia:c_1-Nri} follows formally from the commutativity of \eqref{dia:Thom-iota}.
\end{proof}

We can finally state the main consequence of this discussion:
\begin{thm}[Motivic relative Hard Lefschetz]\label{thm:Lefs}
	Let $f \colon X \rightarrow S$ be a projective morphism of $k$-varieties, and fix a relatively ample line bundle $L$ over $X$. 
	Then, for every $r \geq 0$, the natural transformation between functors $\Mcal(X) \rightarrow \Mcal(S)$
	\begin{equation}\label{eq:mot_rHL}
		{^p H^{-r}} (f_* M) = {^p H^{-r}}(f_* M \otimes \unit_S^{\Mcal}) \xrightarrow{c_1^{\Mcal}(L)^r} {^p H^{-r}}(f_* M \otimes \unit_S^{\Mcal}(r)[2r]) = {^p H^r} (f_* M)(r)
	\end{equation}
	is invertible when evaluated on semi-simple objects.
\end{thm}

The proof is based on the theory of weights for perverse motives, as developed in~\cite[\S~6]{IM19}.
For every $k$-variety $S$, objects of $\Mcal(S)$ are endowed with a strictly functorial weight filtration lifting the one on the underlying perverse sheaves of geometric origin, defined in \cite{BBD82};
when $S = \Spec(k)$, this was proved by Arapura in \cite[Thm.~6.3.5, Thm.~6.3.6]{Ara-mot} (see \cite[Thm.~10.2.5]{HMS17} for an alternative proof).
Note that the abelian category $\Mcal(S)$ is Artinian and Noetherian, since the same holds for $\Perv(S)$.
Thus every object $M \in \Mcal(S)$ can be obtained in a finite number of steps by successive extensions of simple objects.
Every simple object is necessarily pure of some weight;
conversely, for every $w \in \Z$, the full abelian subcategory $\Mcal(S)_w \subset \Mcal(S)$ spanned by the objects which are pure of weight $w$ is semi-simple by~\cite[Thm.~6.24]{IM19}.
It follows that a perverse motive is semi-simple if and only if it is the direct sum of its weight-graded pieces.
Moreover, there is a natural notion of \textit{strict support} for perverse motives, introduced in \cite[Defn.~6.21]{IM19};
it is compatible with the classical notion for perverse sheaves.
By \cite[Prop.~6.23]{IM19}, every simple perverse motive $M \in \Mcal(S)$ has strict support on a unique irreducible closed subvariety $Z$ of $S$, and in particular $M \in \Mcal_Z(S)$.

For our applications, we need to relate semi-simplicity of a perverse motive to that of the underlying perverse sheaf of geometric origin:
\begin{lem}\label{lem:MLoc_p-semisimple}
	Let $S$ be a $k$-variety.
	If an object $M \in \Mcal(S)$ is semi-simple, then the object $\iota_S(M) \in \Perv(S)$ is semi-simple as well.
\end{lem}
\begin{proof}
	Without loss of generality, we may assume that $M$ is simple.
	In this case, $M$ has strict support on an irreducible closed subvariety $Z$ of $S$.
	By definition, the perverse sheaf $\iota_S(M)$ has strict support in $Z$ as well.
	Up to replacing $S$ by $Z$ via the equivalences $\Mcal(Z) \xrightarrow{\sim} \Mcal_Z(S)$ and $\Perv(Z) \xrightarrow{\sim} \Perv_Z(S)$, we may assume that $Z = S$.
	
	In this case, fix a dense open immersion $j \colon U \hookrightarrow S$ with $U$ smooth such that the object $j^* M \in \Mcal(U)$ belongs to $\Mcal\Loc_p(U)$.
	In order to conclude, we want to prove the chain of implications
	\begin{equation*}
		\textup{$M$ simple} \implies \textup{$j^* M$ simple} \implies \textup{$\iota_U(j^* M) = j^* \iota_S(M)$ semi-simple} \implies \textup{$\iota_S(M)$ semi-simple}.
	\end{equation*}
	To begin with, note that $M$ is pure (being simple), say of weight $w$.
	By definition of weights, $j^* M$ is then pure of weight $w$ as well.
	Since $M$ has strict support in $S$, it is isomorphic to $j_{!*} j^* M$ by definition.
	But the intermediate extension functor $j_{!*} : \Mcal(U) \to \Mcal(S)$ is faithful (since it has a right inverse $j^*$) and becomes exact when restricted to $\Mcal(S)_w$ (by \cite[Prop.~6.20]{IM19}). 
	Therefore, any non-zero proper subobject $N \subset j^* M$ would yield a non-zero proper subobject $j_{!*} N \subset M$.
	We deduce that $j^* M$ is simple in $\Mcal(U)$, hence also in $\Mcal\Loc_p(U)$.
	
	Since $\Loc_p(U)$ is stable under subquotients inside $\Perv(U)$ (see \Cref{rem:MLoc_p-subquots-ext}), the object $\iota_U(j^* M)$ is semi-simple in $\Perv(U)$ if and only if it is semi-simple in $\Loc_p(U)$.
	The fact that $\iota_U(j^* M)$ is semi-simple as a (shifted) local system was proved by Jacobsen in \cite[Lem.~7.6]{JacMalcev}:
	the key point is that it underlies a pure variation of Hodge structure, to which one can apply Deligne's Semi-simplicity Theorem.
	This argument is based on the Tubach's results for mixed Hodge modules, in particular \cite[Prop.~4.9]{Tub23}.
	
	Lastly, since $\iota_S(M)$ has strict support in $S$, it is isomorphic to $j_{!*} j^* \iota_S(M)$ by definition.
	Writing the direct sum decomposition
	\begin{equation*}
		j^* \iota_S(M) = \bigoplus_{i=1}^{n} L_i
	\end{equation*}
	into simple shifted local systems $L_i$, we get the decomposition in $\Perv(S)$
	\begin{equation*}
		\iota_S(M) = \bigoplus_{i=1}^{n} j_{!*} L_i.
	\end{equation*}
	Since each $j_{!*} L_i$ is simple in $\Perv(S)$ by \cite[Thm.~4.3.1(ii)]{BBD82}, we see that $\iota_S(M)$ is semi-simple.
\end{proof}

\begin{proof}[Proof of \Cref{thm:Lefs}]
	It suffices to check that the image of \eqref{eq:mot_rHL} under the conservative functor $\iota_S \colon D^b(\Mcal(S)) \to D^b_c(S,\Q)$ is invertible.
	Using \Cref{cor:chern}, this is equivalent to checking that the morphism in $\Perv(S)$
	\begin{equation*}
		{^p H^{-r}}(f_* \iota_X(M)) = {^p H^{-r}}(f_* \iota_X(M) \otimes \unit_S) \xrightarrow{c_1(L)^r} {^p H^r}(f_* \iota_X(M) \otimes \unit_S(r)[2r]) = {^p H^r}(f_* \iota_X(M))(r)
	\end{equation*}
	is invertible as soon as $M$ is semi-simple.
	Since $\iota_X(M)$ is then semi-simple by \Cref{lem:MLoc_p-semisimple}, this follows from the classical relative Hard Lefschetz Theorem~\cite[Thm.~6.2.10]{BBD82}.
\end{proof}

In the case where $S = \Spec(k)$, \Cref{thm:Lefs} yields a motivic version of the Hard Lefschetz Theorem for intersection cohomology.
Recall that the intersection cohomology groups of a projective $k$-variety $X$ of pure dimension $d \geq 0$ are defined as
\begin{equation*}
	IH^n(X;\Q) := H^{n-d}(a_{X,*} j_{!*} K_U),
\end{equation*}
where $a_X \colon X \to \Spec(k)$ denotes the structural projection, $j \colon U \hookrightarrow X$ is a dense open immersion with $U$ smooth affine, and $K_U := \unit_U[d]$ denotes the shifted unit object.
By \cite[Thm.~4.3.1(ii)]{BBD82}, the perverse sheaf $K_U$ is semi-simple by \cite[Lem.~4.3.3]{BBD82}.
Hence the Hard Lefschetz Theorem gives an isomorphism
\begin{equation*}
	IH^{d-r}(X;\Q) := H^{-r}(a_{X,*} j_{!*} K_U) \xrightarrow{\sim} H^r(a_{X,*} j_{!*} K_U)(r) =: IH^{d+r}(X;\Q)(r),
\end{equation*}
and \Cref{thm:Lefs} asserts that this lifts to an isomorphism of the corresponding Nori motives.

These Nori motives are pure by \cite[Cor.~6.29]{IM19}, hence semi-simple by \cite[Thm.~6.24]{IM19}.
Using the theory of motivic Chern classes, we get further information: 
\begin{prop}\label{prop:primitive}
	Let $X$ be a projective $k$-variety of pure dimension $d \geq 0$, and let $L$ be an ample line bundle over $X$.
	Then, for every every $0 \leq r \leq d$, the primitive intersection cohomology group
	\begin{equation*}
		P^{d-r}(X;\Q) := \ker\left\{c_1(L)^{r+1} \colon IH^{d-r}(X;\Q) \to IH^{d+r+2}(X;\Q)(r + 1)\right\}
	\end{equation*} 
	canonically underlies a Nori motive over $k$.
\end{prop}
\begin{proof}
	Since the functor $\iota_k \colon \Mcal(k) \to \vect_{\Q}$ is exact, this follows immediately from \Cref{cor:chern}.
\end{proof}

\begin{cor}\label{cor:Lefs-dec}
	Let $X$ be a projective $k$-variety of pure dimension $d \geq 0$, and let $L$ be an ample line bundle over $X$.
	Then, for every $n \geq 0$, the Lefschetz decomposition in intersection cohomology
	\begin{equation*}
		IH^n(X;\Q) = \bigoplus_{r\geq \max(n-d,0)} c_1(L)^r (P^{n-2r}(X;\Q))(-r)
	\end{equation*}
	underlies a decomposition of Nori motives over $k$.
\end{cor}
\begin{proof}
	The classical argument deducing the Lefschetz decomposition from the Hard Lefschetz Theorem (for example, see \cite[Rem.~1.31]{PetSte}) carries over to the motivic setting. 
\end{proof}

If $X$ is a smooth projective $k$-variety, intersection cohomology groups recover usual cohomology groups. 
To the author's knowledge, the result of \Cref{cor:Lefs-dec} is new even in this case.


\begin{thebibliography}{9}
	\bibitem[SGA4]{SGA4} 
	\textit{Théorie des topos et cohomologie étale des schemas.} Séminaire de Géometrie Algébrique du Bois-Marie, dirigé par M. Artin, A. Grothendieck et J.-L. Verdier, Lecture Notes in Mathematics \textbf{269}, \textbf{270}, \textbf{305}, Springer-Verlag (1972-73).
	\bibitem[Ara13]{Ara-mot} D. Arapura,
	\textit{An abelian category of motivic sheaves}. Adv. Math. \textbf{233}, 2013, pp. 135–195.
	\bibitem[Ara19]{Ara-rev} D. Arapura,
	\textit{Motivic sheaves revisited}. J. Pure Appl. Alg. \textbf{227}, no. 8 (2019).
	\bibitem[Ayo07a]{Ayo07a} J. Ayoub,
	\textit{Les six opérations de Grothendieck et le formalisme des cycles évanescents dans le monde motivique, I}. Astérisque (2007), no. 314, x + 466 pp. (2008). 
	\bibitem[Ayo07b]{Ayo07b} J. Ayoub,
	\textit{Les six opérations de Grothendieck et le formalisme des cycles évanescents dans le monde motivique, II}. Astérisque (2007), no. 315, vi + 364 pp. (2008).
	\bibitem[Ayo10]{Ayo10} J. Ayoub,
	\textit{Note sur les opérations de Grothendieck et la réalization de Betti}. J. Inst. Math Jussieu \textbf{9} (2010), no. 2, pp. 225–263.
	\bibitem[Ayo14a]{AyoEt} J. Ayoub,
	\textit{La réalization étale et les opérations de Grothedieck}. Ann. Sci. École Norm. Sup. \textbf{47} (2014), no.1, pp. 1–141.
	\bibitem[Ayo14b]{Ayo14H1} J. Ayoub,
	\textit{L'algèbre de Hopf et le groupe de Galois motivique d'un corps de caractéristique nulle, I}. J. Reine Angew. Math \textbf{693} (2014), pp. 1--149.
	\bibitem[Ayo14c]{Ayo14H2} J. Ayoub,
	\textit{L'algèbre de Hopf et le groupe de Galois motivique d'un corps de caractéristique nulle, II}. J. Reine Angew. Math \textbf{693} (2014), pp. 151--226.
	\bibitem[Ayo17]{AyoConj} J. Ayoub,
	\textit{Motives and algebraic cycles: a selection of conjectures and open questions}. Hodge theory and $L^2$-analysis, Adv. Lect. Math. \textbf{39}, Int. Press, Somerville, MA (2017), pp. 87-125.
	\bibitem[Ayo21]{AyoAnab} J. Ayoub,
	\textit{Anabelian presentation of the motivic Galois group in characteristic zero}. Preprint 2021, available at \url{http://user.math.uzh.ch/ayoub/PDF-Files/Anabel.pdf}.
	\bibitem[BVCL18]{BVCL18} L. Barbieri-Viale, O. Caramello, L. Lafforgue,
	\textit{Syntactic categories for Nori motives}. Sel. Math. \textbf{24}, no. 4 (2018), pp. 3619–3648.
	\bibitem[BVP18]{BV-P} L. Barbieri-Viale, M. Prest,
	\textit{Definable categories and $\mathbb{T}$-motives}. Rend. Sem. Mat. Univ. Padova \textbf{139} (2018), pp. 205–224.
	\bibitem[BVHP20]{BVHP20} L. Barbieri-Viale, A. Huber, M. Prest,
	\textit{Tensor structure for Nori motives}. Pacific Journal of Mathematics \textbf{206} (2020), pp. 1–30.
	\bibitem[BS15]{BS} B. Bhatt, P. Scholze,
	\textit{The pro-étale topology for schemes}. Astérisque \textbf{369} (2015), pp. 99–201.
	\bibitem[Bei87a]{Bei87} A. Beilinson,
	\textit{On the derived category of perverse sheaves}. K-theory, arithmetic and geometry (Moscow, 1984–1986), Lecture Notes in Math. \textbf{1289}, Springer, Berlin (1987), pp. 27–41.
	\bibitem[Bei87b]{BeiGlue} A. Beilinson,
	\textit{How to glue perverse sheaves}. K-theory, arithmetic and geometry (Moscow,
	1984–1986), Lecture Notes in Math. \textbf{1289}, Springer, Berlin (1987), pp. 42–51.
	\bibitem[Bei87c]{BeiHeight} A. Beilinson,
	\textit{Height pairing between algebraic cycles}. K-theory, Arithmetic and Geometry, Lecture Notes in Mathematics \textbf{1289}, Springer, Berlin (1987), pp. 1–26.
	\bibitem[BBD82]{BBD82} A. Beilinson, J. Bernstein, P. Deligne,
	\textit{Faisceaux pervers}. Analysis and topology on singular spaces, I (Luminy, 1981), Astérisque \textbf{100}, Soc. Math. France, Paris (1982), pp. 5–171.
	\bibitem[Bon14]{Bon14} M. V. Bondarko,
	\textit{Weights for relative motives: relation with mixed complexes of sheaves}. Int. Math. Res. Not. IMRN (2014), no. 17, pp. 4715–4767.
	\bibitem[Bon15]{BonWeights} M. V. Bondarko,
	\textit{Mixed motivic sheaves (and weights for them) exist if 'ordinary' mixed motives do}. Compos. Math., \textbf{151}, no. 5 (2015), pp. 917–956.
	\bibitem[CG17]{ChudGal} U. Choudhury, M. Gallauer Alves de Souza,
	\textit{An isomorphism of motivic Galois groups}. Adv. Math. \textbf{313} (2017), pp. 470–536. 
	\bibitem[Cis24]{Cis24} D.-C. Cisisnki,
	\textit{Uniform approximation of Betti numbers}.
	Preprint 2024, available at \url{https://arxiv.org/abs/2404.05104}.
	\bibitem[CD19]{CisDeg} D.-C. Cisinski, F. Déglise,
	\textit{Triangulated categories of mixed motives}. Springer Monographs in Mathematics, Springer (2019), xlii+406 pp.
	\bibitem[DCM10]{DeC-Mig} M. A. de Cataldo, L. Migliorini,
	\textit{The perverse filtration and the Lefschetz hyperplane theorem}. Ann. of Math. (2) \textbf{171}, no. 3 (2010), pp. 2089–2133.
	\bibitem[D{\'e}g08]{Deg08} F. Déglise,
	\textit{Around the Gysin triangle II}.
	Doc. Math. \textbf{13} (2008), pp. 613--675.
	\bibitem[Del71]{DelHodge2} P. Deligne,
	\textit{Théorie de Hodge, II}. Publ. Math. Inst. Hautes Etud. Sci. \textbf{40} (1971), pp. 5–57.
	\bibitem[Del94]{DelAquoi} P. Deligne,
	\textit{À quoi servent les motifs?}. Motives (Seattle, WA, 1991), Proc. Sympos. Pure Math. \textbf{55}, Amer. Math. Soc., Providence, RI (1994), pp. 143–161.
	\bibitem[Del01]{DelVoe} P. Deligne,
	\textit{Voevodsky's lectures on cross functors}. Motivic Homotopy Theory program, IAS Princeton, Fall 2001. 
	\bibitem[DM82]{DM82} P. Deligne, J. Milne,
	\textit{Tannakian categories}. In: Deligne, P., Milne, J.S., Ogus, A., Shih, K. (eds.) Hodge Cycles. Motives, and Shimura Varieties, Lecture Notes in Mathematics \textbf{900}, Springer, Berlin (1982), pp. 101–228.
	\bibitem[DG22]{DrewGal} B. Drew, M. Gallauer,
	\textit{The universal six-functor formalism}. Ann. K-theory, \textbf{7}, n. 4 (2022).
	\bibitem[Eke90]{Eke} T. Ekedahl,
	\textit{On the adic formalism}. The Grothendieck Festschrift, Vol. II, Progr. Math. \textbf{87}, Birkhäuser Boston, Boston, MA (1990) pp. 197–218.
	\bibitem[Fra01]{Franke} J. Franke,
	\textit{On the Brown representability theorem for triangulated categories}. Topology \textbf{40} (2001).
	\bibitem[Fre65]{Freyd} P. Freyd,
	\textit{Representations in abelian categories}. 1966 Proc. Conf. Categorical Algebra (La Jolla, Calif., 1965), Springer, New York, pp. 95–120.
	\bibitem[Gal21]{GalNote} M. Gallauer,
	\textit{A note on Tannakian categories and mixed motives}. Bulletin of the London Mathematical Society \textbf{53}, no. 1 (2021), pp. 119–129.
	\bibitem[Gal22]{GalSupp} M. Gallauer,
	\textit{Supports for constructible systems}. Doc. Math. \textbf{27}, pp. 1739–1772.
	\bibitem[Har16]{Harrer} D. Harrer,
	\textit{Comparison of the Categories of Motives defined by Voevodsky and by Nori}, Thesis Freiburg, 2016.
	\bibitem[Hör17]{Hoer17} F. Hörmann,
	\textit{Six-functor-formalisms and fibered multiderivators}. Sel. Math. New Ser. \textbf{24} (2018), pp. 2841–2925.
	\bibitem[Hub93]{HubInd} A. Huber,
	\textit{Calculation of derived functors via Ind-categories}. J. Pure Appl. Alg. \textbf{90}, no. 1 (1993), pp. 39–48.
	\bibitem[Hub95]{Hub95} A. Huber,
	\textit{Mixed Motives and their Realization in Derived Categories}. Lecture Notes in Mathematics \textbf{1604}, Springer, Berlin (1995).
	\bibitem[Hub00]{Hub00} A. Huber,
	\textit{Realization of Voevodsky's motives}. J. Algeb. Geom. \textbf{9}, no. 4 (2000), pp. 755--799.
	\bibitem[HMS17]{HMS17} A. Huber, S. Müller-Stachs,
	\textit{Periods and Nori motives}. Ergebnisse der Mathematik
	und ihrer Grenzgebiete. 3. Folge. A Series of Modern Surveys in Mathematics \textbf{65}, Springer, Cham (2017). 
	With contributions by Benjamin Friedrich and Jonas von Wangenheim.
	\bibitem[Ivo16]{IvReal} F. Ivorra,
	\textit{Perverse, Hodge and motivic realizations of étale motives}. Compos. Math.
	\textbf{152}, no. 6 (2016), pp. 1237–1285.
	\bibitem[Ivo17]{IvPerv} F. Ivorra,
	\textit{Perverse Nori motives}. Math. Res. Lett. \textbf{24}, no. 4 (2017), pp. 1097–1131.
	\bibitem[IM19]{IM19} F. Ivorra, S. Morel,
	\textit{The four operations on perverse motives}. J. Eur. Math. Soc. 2024 (published online first), available at \url{https://arxiv.org/abs/1901.02096}.
	\bibitem[Jac23]{JacMalcev} E. Jacobsen,
	\textit{Malcev Completions, Hodge Theory, and Motives}. Algebra Number Theory (to appear), available at \url{https://arxiv.org/abs/2205.13073}. 
	\bibitem[JT25]{JacTer25} E. Jacobsen, L. Terenzi,
	\textit{A comparison of categories of Nori motivic sheaves}.
	Preprint 2025, available at \url{https://sites.google.com/view/luca-terenzi/research}.
	\bibitem[KS06]{KS} M. Kashiwara, P. Schapira,
	\textit{Categories and sheaves}. Grundlehren der mathematischen Wissenschaften [Fundamental
	Principles of Mathematical Sciences] \textbf{332}, Springer-Verlag, Berlin (2006).
	\bibitem[Mac63]{Mac63} S. Mac Lane,
	\textit{Natural associativity and commutativity}. Rice University Studies \textbf{49}, no. 4 (1963), pp. 28–46.
	\bibitem[Mac71]{Mac71} S. Mac Lane,
	\textit{Categories for the working mathematician}. Graduate Texts in Mathematics \textbf{5}, Springer-Verlag (1971).
	\bibitem[Mor19]{MorelEll} S. Morel,
	\textit{Mixed $\ell$-adic complexes for schemes over number fields}. Preprint, available at \url{https://arxiv.org/abs/1806.03096}.
	\bibitem[Mor18]{MorelGlue} S. Morel,
	\textit{Beilinson's construction of nearby cycles and gluing}. Available at \url{http://perso.ens-lyon.fr/sophie.morel/gluing.pdf}.
	\bibitem[Nee01]{NeeTri} A. Neeman,
	\textit{Triangulated categories}. Annals of Mathematics Studies \textbf{148}, Princeton University Press, Princeton, NJ (2001).
	\bibitem[Nor18]{NoriConstr} M. Nori,
	\textit{Constructible sheaves}. In: Algebra, arithmetic and geometry, Part I, II (Mumbai, 2000), vol. 16 of Tata Inst. Fund. Res. Stud. Math., Tata Inst. Fund. Res., Bombay (2002), pp. 471–491.
	\bibitem[PS08]{PetSte} C.A.M. Peters, J.H.M. Steenbrick,
	\textit{Mixed Hodge structures}.
	Ergebnisse der Mathematik
	und ihrer Grenzgebiete. 3. Folge. A Series of Modern Surveys in Mathematics \textbf{52}, Springer-Verlag (2008).
	\bibitem[Sai90]{Sai90} M. Saito,
	\textit{Mixed Hodge modules}. Publ. Res. Inst. Math. Sci., \textbf{26}, no. 2 (1990), pp. 221–333.
	\bibitem[Ter24a]{Ter23Fib} L. Terenzi,
	\textit{Tensor structures on fibered categories}. Preprint 2024, available at \url{https://arxiv.org/abs/2401.13491}.
	\bibitem[Ter24b]{Ter23Fact} L. Terenzi,
	\textit{Constructing monoidal structures on fibered categories via factorizations}. Preprint 2024, available at \url{https://arxiv.org/abs/2401.13489}.
	\bibitem[Ter24c]{Ter23Emb} L. Terenzi,
	\textit{Extending monoidal structures on fibered categories via embeddings}. Preprint 2024, available at \url{https://arxiv.org/abs/2401.13517}.
	\bibitem[Ter23d]{Ter23UAF} L. Terenzi,
	\textit{On the functoriality of universal abelian factorizations}. Preprint 2024, available at \url{https://arxiv.org/abs/2401.13583}.
	\bibitem[Tub23]{Tub23} S. Tubach,
	\textit{On the Nori and Hodge realisations of Voevodsky motives}. 
	Compos. Math. (to appear), available at \url{https://arxiv.org/abs/2309.11999}.
	\bibitem[Vol10]{VoloDG} V. Vologodsky,
	\textit{On the derived DG functors}. Math. Res. Lett. \textbf{17}, no. 6 (2010), pp. 1155–1170.
\end{thebibliography}
\end{document}